\documentclass[12pt]{amsart}
\usepackage{lmodern}
\usepackage{ifthen}
\usepackage{amsfonts}
\usepackage{amsxtra}
\usepackage{amssymb}
\usepackage{upgreek}
\usepackage{bbm}
\usepackage{mathdots}
\usepackage{amsthm}
\usepackage{array}
\usepackage[margin=1.0in]{geometry}
\usepackage{xcolor}

\definecolor{cite}{rgb}{0.30,0.20,1.00}
\definecolor{url}{rgb}{0.00,0.00,0.80}
\definecolor{link}{rgb}{0.20,0.70,0.40}
\usepackage[colorlinks,linkcolor=link,urlcolor=url,citecolor=cite,pagebackref,breaklinks]{hyperref}
\usepackage{tikz-cd}
\usepackage{mathtools}
\usepackage{mathrsfs}
\usepackage{appendix}
\usepackage[all]{xy}
\usepackage[lite,abbrev,msc-links,alphabetic]{amsrefs}

\usepackage{graphicx}
\usepackage{multirow}
\usepackage{pstricks}
\usepackage{pst-pdf}
\usepackage[shortlabels]{enumitem}
\usepackage{pifont}
\usepackage{marvosym}
\usepackage{newtxtext}

\usepackage[OT2,T1]{fontenc}
\DeclareSymbolFont{cyrletters}{OT2}{wncyr}{m}{n}
\DeclareMathSymbol{\Sha}{\mathalpha}{cyrletters}{"58}

\usepackage{graphicx}

\makeatletter
\providecommand*{\Dashv}{%
  \mathrel{%
    \mathpalette\@Dashv\vDash
  }%
}
\newcommand*{\@Dashv}[2]{%
  \reflectbox{$\m@th#1#2$}%
}
\makeatother

\usepackage{nameref}

\makeatletter
\let\orgdescriptionlabel\descriptionlabel
\renewcommand*{\descriptionlabel}[1]{%
  \let\orglabel\label
  \let\label\@gobble
  \phantomsection
  \edef\@currentlabel{#1\unskip}%
  \let\label\orglabel
  \orgdescriptionlabel{#1}%
}
\makeatother

\numberwithin{equation}{section}

\theoremstyle{plain}
\newtheorem{proposition}{Proposition}[section]

\newtheorem{corollary}[proposition]{Corollary}
\newtheorem{lemma}[proposition]{Lemma}
\newtheorem{theorem}[proposition]{Theorem}

\theoremstyle{definition}
\newtheorem{definition}[proposition]{Definition}

\newtheorem{notation}[proposition]{Notation}
\newtheorem{assumption}[proposition]{Assumption}

\theoremstyle{remark}
\newtheorem{remark}[proposition]{Remark}

\renewcommand{\b}[1]{\mathbf{#1}}
\renewcommand{\c}[1]{\mathcal{#1}}
\renewcommand{\d}[1]{\mathbb{#1}}
\newcommand{\f}[1]{\mathfrak{#1}}
\renewcommand{\r}[1]{\mathrm{#1}}
\newcommand{\s}[1]{\mathscr{#1}}

\newcommand{\res}{\mathbin{|}}
\newcommand{\ol}[1]{\overline{#1}{}}

\newcommand{\ul}{\underline}
\renewcommand{\leq}{\leqslant}
\renewcommand{\geq}{\geqslant}

\newcommand{\bB}{\b B}

\newcommand{\bS}{\b S}

\newcommand{\bX}{\b X}

\newcommand{\bu}{\b u}

\newcommand{\bw}{\b w}

\newcommand{\cA}{\c A}
\newcommand{\cB}{\c B}

\newcommand{\cD}{\c D}

\newcommand{\cF}{\c F}

\newcommand{\cH}{\c H}

\newcommand{\cN}{\c N}
\newcommand{\cO}{\c O}

\newcommand{\cS}{\c S}

\newcommand{\cV}{\c V}
\newcommand{\cW}{\c W}
\newcommand{\cX}{\c X}
\newcommand{\cY}{\c Y}
\newcommand{\cZ}{\c Z}

\newcommand{\dA}{\d A}

\newcommand{\dC}{\d C}

\newcommand{\dE}{\d E}
\newcommand{\dF}{\d F}
\newcommand{\dG}{\d G}

\newcommand{\dL}{\d L}

\newcommand{\dP}{\d P}
\newcommand{\dQ}{\d Q}
\newcommand{\dR}{\d R}
\newcommand{\dS}{\d S}
\newcommand{\dT}{\d T}

\newcommand{\dZ}{\d Z}

\newcommand{\fE}{\f E}

\newcommand{\fS}{\f S}

\newcommand{\fW}{\f W}

\newcommand{\fZ}{\f Z}

\newcommand{\fe}{\f e}

\newcommand{\fg}{\f g}

\newcommand{\fm}{\f m}

\newcommand{\fp}{\f p}

\newcommand{\rH}{\r H}
\newcommand{\rI}{\r I}

\newcommand{\rN}{\r N}
\newcommand{\rO}{\r O}

\newcommand{\rU}{\r U}
\newcommand{\rV}{\r V}

\newcommand{\rZ}{\r Z}

\newcommand{\rd}{\r d}
\newcommand{\re}{\r e}

\newcommand{\rt}{\r t}

\newcommand{\sS}{\s S}

\newcommand{\tR}{\mathtt{R}}
\newcommand{\tS}{\mathtt{S}}
\newcommand{\tT}{\mathtt{T}}

\newcommand{\tV}{\mathtt{V}}

\newcommand{\tc}{\mathtt{c}}

\newcommand{\ts}{\mathtt{s}}

\newcommand{\tw}{\mathtt{w}}

\renewcommand{\ge}{\geqslant}
\renewcommand{\le}{\leqslant}

\newcommand{\biota}{\boldsymbol{\iota}}

\newcommand{\qbinom}[2]{\genfrac{[}{]}{0pt}{}{#1}{#2}}
\newcommand{\llangle}{\langle\!\langle}
\newcommand{\rrangle}{\rangle\!\rangle}

\newcommand{\pres}[2]{\prescript{#1}{}{#2}}

\newcommand{\ac}{\r{ac}}

\newcommand{\CF}{\mathbbm{1}}

\newcommand{\cl}{\r{cl}}

\newcommand{\cusp}{\r{cusp}}

\newcommand{\dr}{\r{dR}}

\newcommand{\fin}{\r{fin}}

\newcommand{\id}{\r{id}}

\newcommand{\ord}{\r{ord}}

\newcommand{\ram}{\r{ram}}

\newcommand{\spl}{\r{spl}}

\DeclareMathOperator{\BC}{BC}
\DeclareMathOperator{\CH}{CH}

\DeclareMathOperator{\diag}{diag}

\DeclareMathOperator{\Gal}{Gal}

\DeclareMathOperator{\GL}{GL}

\DeclareMathOperator{\Hom}{Hom}
\DeclareMathOperator{\Herm}{Herm}

\DeclareMathOperator{\Ind}{Ind}

\DeclareMathOperator{\Ker}{ker}

\DeclareMathOperator{\Lie}{Lie}

\DeclareMathOperator{\Mat}{Mat}

\DeclareMathOperator{\modulo}{mod}

\DeclareMathOperator{\rank}{rank}
\DeclareMathOperator{\RE}{Re}
\DeclareMathOperator{\Res}{Res}

\DeclareMathOperator{\Sh}{Sh}

\DeclareMathOperator{\SO}{SO}

\DeclareMathOperator{\Spec}{Spec}

\DeclareMathOperator{\Tr}{Tr}
\DeclareMathOperator{\tr}{tr}

\DeclareMathOperator{\vol}{vol}

\title[Spherical Representations and AIPF]{Spherical representations of unitary groups at ramified places and the arithmetic inner product formula}
\author{Zhuoni Chi}
\thanks{Email: znchi@zju.edu.cn}

\begin{document}
\begin{abstract}
    In this article, we study admissible representations of even unitary groups over local fields, where the quadratic extension is ramified, with invariant vectors under the action of the stabilizer of a unimodular lattice and some properties of the corresponding integral model of unitary Shimura varieties. As a direct application, we are able to improve the arithmetic inner product formula so that the places with local root number \((-1)\) are allowed to be ramified. 
\end{abstract}
\maketitle
\tableofcontents
\section{Introduction}

    In 1986, Gross and Zagier \cite{gross1986heegner} proved a remarkable formula that relates the N\'eron--Tate heights of Heegner points on a rational elliptic curve to the central derivative of the corresponding Rankin--Selberg \(L\)-function. A decade later, Kudla \cite{kudla1997central} revealed another striking relation between Gillet--Soul\'e heights of special cycles on Shimura curves and derivatives of Siegel Eisenstein series of genus 2, suggesting an arithmetic version of theta lifting and the Siegel--Weil formula (see, for example, \cites{kudla2002modular,kudla2002derivatives}). This was later further developed in his joint work with Rapoport and Yang \cites{kudla1997central,kudla1999arithmetic,kudla1999derivative,kudla2000height,kudla2004derivatives,kudla2006modular}.

    For the higher dimensional case, in a series of papers starting from the late 1990s, Kudla and Rapoport developed the theory of special cycles on integral models of Shimura varieties for GSpin groups in lower rank cases and for unitary groups of arbitrary ranks \cites{kudla2011special,kudla2014special}. They also studied special cycles on the relevant Rapoport--Zink spaces over non-Archimedean local fields. In particular, they formulated a conjecture relating the arithmetic intersection number of special cycles on the unitary Rapoport--Zink space to the first derivative of local Whittaker functions \cite{kudla2011special}*{Conjecture 1.3}.

    In his thesis work \cites{liu2011arithmetic,liu2011arithmetica}, Liu studied special cycles as elements in the Chow group of the unitary Shimura variety over its reflex field (rather than in the arithmetic Chow group of a certain integral model) and the Beilinson--Bloch height of the arithmetic theta lifting (rather than the Gillet--Soul\'e height). In particular, in the setting of unitary groups, he proposed an explicit conjectural formula for the Beilinson--Bloch height in terms of the central \(L\)-derivative and local doubling zeta integrals. Such a formula is completely parallel to the Rallis inner product formula \cite{rallis1984injectivity}, which computes the Petersson inner product of the global theta lifting and hence was named arithmetic inner product formula in \cite{liu2011arithmetic} and can be regarded as a higher dimensional generalisation of the Gross--Zagier formula. 
    
    In the case of \(\rU(1,1)\) over an arbitrary CM extension, such a conjectural formula was completely confirmed in \cite{liu2011arithmetica}, while the case for \(\rU(r,r)\) with \(r>1\) is significantly harder. 
    
    Recently, the Kudla--Rapoport conjecture for unitary groups over local fields has been proved in several cases:
    \begin{itemize}
      \item 
      If the quadratic extension is unramified, W. Zhang and C. Li proved the Kudla--Rapoport conjecture for self-dual and almost self-dual levels in \cite{li2021kudla} using a local method. For all maximal parahoric levels, Y. Luo proved the Kudla--Rapoport conjecture in \cite{luo2025kudlarapoport} using a global method.
      \item 
      If the quadratic extension is ramified, C. Li and Y. Liu proved the Kudla--Rapoport conjecture for the so-called exotic smooth model in even rank \cite{li2022chow} and H. Yao extended the result to odd rank in \cite{yao2024kudlarapoport}. Q. He, C. Li, Y. Shi and T. Yang proved the Kudla--Rapoport conjecture for Kr\"amer model in \cite{he2023proof}.
    \end{itemize}
    Their results make it possible to attack the cases for higher rank groups. In \cite{li2021chow}, Li and Liu proved that for certain cuspidal automorphic representations \(\pi\) of \(\rU(r,r)\), if the central derivative \(L'(1/2,\pi)\) is nonvanishing, then the \(\pi\)-nearly isotypic localisation of the Chow group of a certain unitary Shimura variety over its reflex field does not vanish. Their work proved part of the Beilinson--Bloch conjecture for Chow groups and \(L\)-functions (see \cite{li2021chow}*{Section 1} for a precise formulation in our setting). Moreover, using the modularity of Kudla's generating series proved by Raum \cite{raum2026geometrica}, they further proved the arithmetic inner product formula relating \(L'(1/2,\pi)\) and the height of arithmetic theta liftings. In the latter article \cite{li2022chow}, they improve the main results from \cite{li2021chow} in two directions: First, they allow ramified places in the CM extension \(\dE/\dF\) at which they consider representations that are spherical with respect to a certain special maximal compact subgroup, by formulating and proving an analogue of the Kudla--Rapoport conjecture for exotic smooth Rapoport--Zink spaces in even rank. Second, they lift the restriction on the components at split places of the automorphic representation, by proving a more general vanishing result on certain cohomology of integral models of unitary Shimura varieties with Drinfeld level structures. 
    
    However, because the (nontrivial) exotic smooth model only exists when the local root number is \(+1\) and the corresponding spherical representations which should be considered have not been classified in the literature, the results in \cite{li2022chow} still exclude the case when the local root number is \(-1\) at ramified places. Fortunately, the Kudla--Rapoport conjecture for Kr\"amer model is proved in \cite{he2023proof} and then we are able to treat the ramified places with local root number \(-1\) as well.

\subsection{Main results}
Let \(\dE/\dF\) be a CM extension of number fields with nontrivial Galois automorphism \(\tc\). We denote the set of finite places of \(\dF\) (resp. \(\dE\)) by \(\tV_\dF^{\fin}\) (resp. \(\tV_\dE^{\fin}\)). We denote by \(\tV_\dF^\spl\) (resp. \(\tV_\dF^\ram\), \(\tV_\dF^{\text{int}}\)) the set of places of \(\dF\) that split (resp. ramify, inert) in \(\dE\). We denote by \(\tV_\dF\) the set of places of \(\dF\), and by \(\tV_\dF^{(p)}\) (resp. \(\tV_\dF^{(\infty)}\)) the subset of places above a rational prime \(p\) (resp. archimedean places).

Let \(n=2r\) be an even positive integer and equip \(W_r:=\dE^n\) with the skew-hermitian form defined by the matrix \(\begin{pmatrix} & 1_r \\ -1_r & \end{pmatrix}\). We denote by \(G_r:=\rU(W_r)\) the corresponding quasi-split unitary group over \(\dF\). For each place \(v\) of \(\dF\), we denote the stabilizer of the lattice \(\cO_{\dE_v}^n\) by \(K_{r,v}\subset G_r(\dF_v)\) which is a special maximal compact subgroup. 

\begin{definition}\label{def:heart}
    We define the subset $\tV_\dF^\heartsuit$ of $\tV_\dF^\spl\cup\tV_\dF^{\text{int}}$ consisting of $v$ satisfying that for every $v'\in\tV_\dF^{(p)}\cap\tV_\dF^\ram$, where $p$ is the underlying rational prime of $v$, the subfield of $\ol{\dF_v}$ generated by $\dF_v$ and the Galois closure of $\dE_{v'}$ is unramified over $\dF_v$.
\end{definition}

\begin{remark}
  As explained in \cite{li2022chow}*{Remark~1.2}, this condition is to ensure that the reflex field of certain CM type is not ramified over \(\dE_{v}\).
\end{remark}

\begin{assumption}\label{asp:main}
Suppose that $\dF\neq\dQ$ and $\tV_\dF^\spl$ contains all 2-adic places. We consider a cuspidal automorphic representation $\pi$ of $G_r(\dA_\dF)$ realized on a space $\cV_\pi$ of cusp forms, satisfying:
\begin{enumerate}
  \item For every $v\in\tV_\dF^{(\infty)}$, $\pi_v$ is the holomorphic discrete series representation of Harish-Chandra parameter $\{\tfrac{n-1}{2},\tfrac{n-3}{2},\dots,\tfrac{3-n}{2},\tfrac{1-n}{2}\}$ (see \cite{li2021chow}*{Remark~1.4(1)}).

  \item For every $v\in\tV_\dF^\ram$, $\pi_v$ is either spherical or regularly almost spherical (see Definition \ref{def:almost_spherical_representation}) with respect to $K_{r,v}$.

  \item For every $v\in\tV_\dF^\text{int}$, $\pi_v$ is either unramified or almost unramified (see \cite{li2021chow}*{Remark~1.4(3)}) with respect to $K_{r,v}$; moreover, if $\pi_v$ is almost unramified, then $v$ is unramified over $\dQ$.

  \item For every $v\in\tV_\dF^{\fin}$, $\pi_v$ is tempered.

  \item We have $\tR_\pi\cup\tS_\pi\subseteq\tV_\dF^\heartsuit$ (Definition \ref{def:heart}), where
     \begin{itemize}
         \item $\tR_\pi\subseteq\tV_\dF^\spl$ denotes the (finite) subset for which $\pi_v$ is ramified,

         \item $\tS_\pi\subseteq\tV_\dF^\text{int}$ denotes the (finite) subset for which $\pi_v$ is almost unramified.
        \end{itemize}
  \item Let $\tS'_\pi\subseteq\tV_\dF^\ram$ denote the (finite) subset for which $\pi_v$ is regularly almost spherical with respect to $K_{r,v}$.
\end{enumerate}
\end{assumption}

Let \((\pi,\cV_\pi)\) be as in Assumption \ref{asp:main}. Denote by $L(s,\pi)$ the doubling $L$-function associated to \(\pi\) \cite{yamana2014lfunctions}. The cuspidal automorphic representation $\pi$ determines a hermitian space $V_\pi$ over $\dA_\dE$ of rank $n$ via local theta dichotomy as follows (see \cite{liu2022theta}): for every place $v$ of $\dF$, there is a unique (up to isomorphism) hermitian space $V_{\pi,v}$ over $\dE_v$ of rank $n$ such that the local theta lifting of $\pi_v$ to $V_{\pi,v}$ is nontrivial. For a finite place \(v\) of \(\dF\), \(V_{\pi,v}\) is split (resp. non-split) if and only if the local root number \(\epsilon_v(\pi)=1\) (resp. -1) and for \(v\mid \infty\), \(V_{\pi,v}\) is positive definite. After patching these local spaces together, we obtain the desired hermitian space $V_\pi$ over $\dA_\dE$. It is called coherent (resp. incoherent) if it is a base change of a global hermitian space over $\dE$ (resp. not). Then we have $\epsilon(\pi)=(-1)^{r[\dF:\dQ]+|\tS_\pi\cup\tS'_\pi|}$ for the global (doubling) root number, so that the vanishing order of $L(s,\pi)$ at the center $s=\tfrac{1}{2}$ has the same parity as $r[\dF:\dQ]+|\tS_\pi\cup \tS'_\pi|$. Moreover, it is known that \(V_\pi\) is coherent if and only if the global root number \(\epsilon(\pi)=+1\). When \(\epsilon(\pi)=1\), we have the global theta lifting of \(\pi\) and the famous Rallis inner product formula \cite{rallis1984injectivity} which computes the Petersson inner product of the global theta lifting in terms of the central value \(L(\tfrac{1}{2},\pi)\). When \(\epsilon(\pi)=-1\), the space \(V_\pi\) is incoherent, and there is no global theta lifting of \(\pi\). However, in this case, one can consider the arithmetic theta lifting of \(\pi\), which is a space of algebraic cycles on the Shimura variety associated to \(V_\pi\) proposed by Kudla. Liu \cite{liu2011arithmetic} conjectured an arithmetic inner product formula which relates the Beilinson--Bloch height pairing of the arithmetic theta lifting to the central derivative \(L'(\tfrac{1}{2},\pi)\) and proved it for \(\rU(1,1)\) over an arbitrary CM extension in \cite{liu2011arithmetica}. In \cites{li2021chow,li2022chow}, Li and Liu verified the arithmetic inner product formula, under certain hypothesis.

Now suppose that $r[\dF:\dQ]+|\tS_\pi\cup\tS'_\pi|$ is odd hence $\epsilon(\pi)=-1$, which is equivalent to that $V_\pi$ is incoherent. In what follows, we take $V=V_\pi$ in the context of \cite{li2021chow}*{Conjecture~1.1}, hence $H=\rU(V_\pi)$. Let $\tR$ be a finite subset of $\tV_\dF^{\fin}$. We fix a `special'\footnote{In our setting, this group is indeed not special in the terminology of Bruhat--Tits theory, but the abstract Hecke algebra is still commutative.} maximal compact subgroup $L^\tR$ of $H(\dA_\dF^{\infty,\tR})$ that is the stabilizer of a lattice $\Lambda^\tR$ in $V\otimes_{\dA_\dF}\dA_\dF^{\infty,\tR}$. For a field $\dL$, we denote by $\dT^{\tR}_\dL$ the (abstract) Hecke algebra $\dL[L^{\tR}\backslash H(\dA_\dF^{\infty,\tR})/L^{\tR}]$, which is a commutative $\dL$-algebra. When $\tR$ contains $\tR_\pi$, the cuspidal automorphic representation $\pi$ gives rise to a character
\[
\chi_\pi^{\tR}\colon\dT^{\tR}_{\dQ^\ac}\to\dQ^\ac,
\]
where $\dQ^\ac$ denotes the subfield of $\dC$ of algebraic numbers; and we put $\fm_\pi^{\tR}\coloneqq\Ker\chi_\pi^{\tR}$, which is a maximal ideal of $\dT^{\tR}_{\dQ^\ac}$. 

\begin{remark}
  Now we explain more on the regularity assumption:
  \begin{enumerate}[(1)]
    \item 
    The existence of this character is given by \cite{li2021chow}*{Definition 6.8} and \cite{liu2022theta}*{Definition 5.3, Theorem 1.1(1)} at unramified primes. At ramified places the original construction in \cite{liu2022theta} needs some modification for \(\epsilon_v=-1\) and that is why we put the regularity assumption in Assumption \ref{asp:main} (2). We put the regularity to make  sure that the theta lifting of \(\pi_v\) is spherical with respect to a special maximal compact subgroup. However, we do not necessarily define \(\chi_\pi^\tR\) at those places in our application but we still define it for the integrity of the definition, see Remark \ref{rem:ideal_redundant} for more details\footnote{If we drop the definition of \(\chi_\pi^\tR\) at those places, the statement for main results need to be modified.};
    \item This is also why our doubling zeta integrals on the right hand side of Corollary \ref{co:aipf} are more complicated at those places \(v\in\tS'_\pi\): we are not using the right test vectors yet, they just happen to lie in the same representation under our regularity assumption.
  \end{enumerate}
  
  We hope to remove this condition in future work by giving a complete classification of the representations spherical with respect to the stabilizer of a unimodular or an almost \(\pi\)-modular lattice. 
\end{remark}

In what follows, we will fix an arbitrary embedding $\biota\colon \dE\hookrightarrow\dC$ and denote by $\{X_L\}$ the system of unitary Shimura varieties of dimension $n-1$ over $\biota(\dE)$ indexed by open compact subgroups $L\subseteq H(\dA_\dF^\infty)$. The following is the first main theorem of this article.

\begin{theorem}\label{thm:main}
  Let \((\pi,\cV_\pi)\) be as in Assumption \ref{asp:main} with \(r[\dF:\dQ]+|\tS_\pi\cup\tS'_\pi|\) odd\footnote{We do not need to  assume \cite{li2021chow}*{Hypothesis 6.6} by \cite{kisin2021stable}.}. If \(L'(\frac{1}{2},\pi)\neq0\), that is, \(\ord_{s=\frac{1}{2}}L(s,\pi)=1\), then as long as \(\tR_\pi\subset \tR\) and \(|\tR\cap \tV_\dF^\spl\cap\tV^\heartsuit_\dF|\geqslant 2\), the nonvanishing
  \[
    \varinjlim_{L_{\tR}}(\CH^r(X_{L_\tR L^\tR})_{\dQ^\ac}^0)_{\fm_\pi^{\tR}}\neq0
  \]
  holds, where the colimit is taken over open compact subgroups \(L_{\tR}\subseteq H(\dF_\tR)\).
\end{theorem}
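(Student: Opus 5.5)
The argument will follow the strategy of Li--Liu \cite{li2021chow,li2022chow}. The nonvanishing is deduced from the arithmetic inner product formula: we exhibit a cycle class in the $\fm_\pi^{\tR}$-localisation whose Beilinson--Bloch height is a nonzero multiple of $L'(\tfrac12,\pi)$. Concretely, for test vectors $\varphi_1,\varphi_2\in\cV_\pi$ and Schwartz functions $\phi_1,\phi_2$, form the arithmetic theta lifts $\Theta_{\phi_i}(\varphi_i)$. These are defined using Kudla's generating series of special cycles, whose modularity is given by Raum's theorem. They are then projected to the $\fm_\pi^{\tR}$-isotypic part of $\CH^r(X_L)^0$.

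The central step is the height identity
\[
\langle\Theta_{\phi_1}(\varphi_1),\Theta_{\phi_2}(\varphi_2)\rangle_{\rm BB}
= c\cdot L'(\tfrac12,\pi)\prod_{v}Z_v^\natural(\varphi_{1,v},\varphi_{2,v},\phi_{1,v}\otimes\phi_{2,v}),
\]
with $c\neq0$. If the left side is nonzero, then the limit over $L_{\tR}$ of the localised Chow groups cannot vanish.

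To prove this identity we decompose the height into local contributions, using the integral model of $X_L$ together with the doubling method. The two sides are then compared place by place.
\begin{itemize}
\item At places in $\tR$ (split places in $\tV^\heartsuit_\dF$) we use Li--Liu's vanishing result for the cohomology of models with Drinfeld level structure. This result requires $|\tR\cap\tV^\spl_\dF\cap\tV^\heartsuit_\dF|\ge2$, which is why that hypothesis appears.
\item At inert places and at ramified places with $\epsilon_v=+1$ we invoke the Kudla--Rapoport results already used in \cite{li2022chow}: the self-dual/almost self-dual cases and the exotic smooth model.
\item At the new places $v\in\tS'_\pi$ we use the Kr\"amer model of the Shimura variety and the theorem of He--Li--Shi--Yang \cite{he2023proof}. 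This identifies the local arithmetic intersection numbers with derivatives of local Whittaker functions, hence with derivatives of the local Siegel--Eisenstein contributions.
\end{itemize}
The archimedean places are handled exactly as in \cite{li2021chow}.

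The main obstacle is the ramified places with $\epsilon_v=-1$. Two things must be established there. First, for a regularly almost spherical $\pi_v$ we need the structure results on these representations: the local theta lift $\theta(\pi_v)$ to the nonsplit space $V_{\pi,v}$ must be spherical for the stabilizer of the relevant lattice. This is what makes the Hecke character $\chi_\pi^\tR$ and the localisation meaningful, and it lets the generating series be built from characteristic functions of that lattice. Second, we must control the Kr\"amer model's intersection theory well enough that the local height contribution matches the derivative term in the doubling zeta integral. This includes showing that the correction coming from the exceptional divisor of the Kr\"amer model is annihilated after localising at $\fm_\pi^{\tR}$. I would try to show this via the spherical-representation analysis: the exceptional components carry Hecke eigensystems for a different (non-regular) type, so they vanish after localisation.

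Finally, we must choose test vectors with $Z_v^\natural\neq0$ at every place. At $v\in\tS'_\pi$ the spherical vector is not the right test vector, so the local zeta integral has to be computed directly. Regularity of $\pi_v$ should ensure it is nonzero, since the relevant vectors lie in the same representation. Combining this nonvanishing with $L'(\tfrac12,\pi)\neq0$ gives a nonzero height, and hence the claimed nonvanishing of the localised Chow group.
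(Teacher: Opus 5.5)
You take a different route from the paper, and there is also a real gap in your local analysis at $\tS'_\pi$.

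\textbf{Route.} You deduce the nonvanishing from the arithmetic inner product formula, so you need the arithmetic theta lifts $\Theta_{\phi_i}(\varphi_i)$ and hence Raum's modularity theorem. In the paper, Theorem~\ref{thm:main} is stated deliberately before the sentence ``Our remaining results rely on the modularity\ldots''. It is proved as in \cite{li2021chow}*{Section~11}, without modularity:
\begin{itemize}
\item Suppose the localisation at $\fm_\pi^\tR$ vanished.
\item Then the heights of the Hecke-projected special cycles $s_i^*Z_{T_i}(\phi_i)$, with suitable Hecke operators in $\dS^\tR$ on which $\chi_\pi^\tR$ equals $1$, would all vanish.
\item By the local index formulas, the positive-definite Fourier coefficients of the corrected analytic kernel $\fE^S_{T_1,T_2}$ would then vanish.
\item Pairing with $\varphi_1^\tc\otimes\varphi_2$ via the doubling method would force $L'(\tfrac12,\pi)\prod_v\fZ^\natural_{\pi_v,V_v}=0$, a contradiction.
\end{itemize}
Only heights of individual cycles enter, so no generating series has to be modular. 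Your route is legitimate given \cite{raum2026geometrica}; it amounts to Theorem~\ref{thm:aipf}(2) implying Theorem~\ref{thm:main}. But it makes the result depend on an input the paper avoids.

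\textbf{The gap at $v\in\tS'_\pi$.} The Kudla--Rapoport theorem for Kr\"amer models does not identify the local intersection numbers with $W'_{T^\square}(0,1_{4r},\Phi^0_{\underline u})$ alone. Its analytic side uses the corrected function $\Phi_{\underline u}=\Phi^0_{\underline u}+\sum_{i=1}^r A^i_{p,+}(s)\Phi^i_{\underline u}$ of \cite{he2023kudla}*{(12.10)}. Consequently the local index at $u$ of the Hecke-projected cycles (the proposition at the end of Section~\ref{sec:local-indices}) equals the $u$-component of $\fE_{T_1,T_2}$ \emph{plus} the term $\log q_u\sum_i c^+_{2r,i}q_{\underline u}^{-2i}E_{T_1,T_2}(\cdots\otimes\Phi^i_{\underline u})$. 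These are values, not derivatives, of Eisenstein series, and they survive after the operators $s_i^u$ are applied.

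The localisation argument you sketch is in the paper: the balloon stratum is a $\dP^{2r-1}$-bundle over a Shimura set of level $L^\star_{\underline u}$, and a regularly almost spherical $\pi_{\underline u}$ cannot share base change with an $L^\star$-spherical representation. Its role, however, is Proposition~\ref{prop:cohomology_vanishing}, which also has to cover the ground stratum and the link. That vanishing lets the local index be computed as an intersection number on the Kr\"amer model; it does not remove the $\Phi^i$-terms.

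The missing idea is that these terms are killed globally, after pairing with $\varphi_1^\tc\otimes\varphi_2$, by the Rallis inner product formula, since $L(\tfrac12,\pi)=0$ when $\epsilon(\pi)=-1$. This is exactly the new subtraction in $\fE^S$ in Section~\ref{sec:aipf}. Without it the place-by-place comparison at $\tS'_\pi$ does not close, in your route or the paper's.

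\textbf{Local zeta integrals.} The nonvanishing of $\fZ^\natural_v$ for $v\in\tS'_\pi$ does not follow from regularity. It comes from the explicit evaluation in Proposition~\ref{prop:zeta_almost_spherical_unimodular_non_split}, which gives a nonzero multiple of $L^\sigma_-(\tfrac12)/b_{2r}(0)$. That computation pairs the $K_r$-almost-spherical matrix coefficient with the section attached to the unimodular lattice in the non-split space.
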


  Our remaining results rely on the modularity of Kudla's generating series proved by Raum \cite{raum2026geometrica}.

\begin{theorem}\label{thm:aipf}
Let $(\pi,\cV_\pi)$ be as in Assumption \ref{asp:main} with $r[\dF:\dQ]+|\tS_\pi\cup\tS'_\pi|$ odd. By the modularity of Kudla's generating series \cite{raum2026geometrica},
\begin{enumerate}
  \item For every test vector
     \begin{itemize}
       \item $\varphi_1=\otimes_v\varphi_{1v}\in\cV_{\pi}$ and $\varphi_2=\otimes_v\varphi_{2v}\in\cV_{\pi}$ such that for every $v\in\tV_\dF^{(\infty)}$, $\varphi_{1v}$ and $\varphi_{2v}$ have the lowest weight and satisfy $\langle\varphi_{1v}^\tc,\varphi_{2v}\rangle_{\pi_v}=1$,

       \item $\phi^\infty_1=\otimes_v\phi^\infty_{1v}\in\sS(V^r\otimes_{\dA_\dF}\dA_\dF^\infty)$ and $\phi^\infty_2=\otimes_v\phi^\infty_{2v}\in\sS(V^r\otimes_{\dA_\dF}\dA_\dF^\infty)$,
     \end{itemize}
     the identity
     \[
     \langle\Theta_{\phi^\infty_1}(\varphi_1),\Theta_{\phi^\infty_2}(\varphi_2)\rangle_{X,\dE}^\natural=
     \frac{L'(\tfrac{1}{2},\pi)}{b_{2r}(0)}\cdot C_r^{[\dF:\dQ]}
     \cdot\prod_{v\in\tV_\dF^{\fin}}\fZ^\natural_{\pi_v,V_v}(\varphi^\tc_{1v},\varphi_{2v},\phi_{1v}^\infty\otimes(\phi_{2v}^\infty)^\tc)
     \]
     holds. Here,
     \begin{itemize}
      \item $\Theta_{\phi^\infty_i}(\varphi_i)\in\varinjlim_L\CH^r(X_L)^0_\dC$ is the arithmetic theta lifting, which is well-defined by the modularity of Kudla's generating series \cite{raum2026geometrica};

       \item $\langle\Theta_{\phi^\infty_1}(\varphi_1),\Theta_{\phi^\infty_2}(\varphi_2)\rangle_{X,\dE}^\natural$ is the normalized height pairing, which is constructed based on Beilinson's notion of height pairing;

       \item $b_{2r}(0)$ is the same as that in \cite{li2022chow}*{Notation 4.1 (F4)};

       \item $C_r=(-1)^r2^{r(r-1)}\pi^{r^2}\frac{\Gamma(1)\cdots\Gamma(r)}{\Gamma(r+1)\cdots\Gamma(2r)}$, which is the exact value of a certain archimedean doubling zeta integral; and

       \item $\fZ^\natural_{\pi_v,V_v}(\varphi^\tc_{1v},\varphi_{2v},\phi_{1v}^\infty\otimes(\phi_{2v}^\infty)^\tc)$ is the normalized local doubling zeta integral \cite{li2021chow}*{Section~3}, which equals $1$ for all but finitely many $v$.
     \end{itemize}

  \item In the context of \cite{li2021chow}*{Conjecture~1.1}, take ($V=V_\pi$ and) $\tilde\pi^\infty$ to be the theta lifting of $\pi^\infty$ to $H(\dA_\dF^\infty)$. If $L'(\tfrac{1}{2},\pi)\neq 0$, that is, $\ord_{s=\frac{1}{2}}L(s,\pi)=1$, then
     \[
     \Hom_{H(\dA_\dF^\infty)}\left(\tilde\pi^\infty,\varinjlim_{L}\CH^r(X_L)^0_\dC\right)\neq\{0\}
     \]
     holds.
\end{enumerate}
\end{theorem}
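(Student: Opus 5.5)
The plan is to follow the strategy of Li--Liu \cites{li2021chow,li2022chow}: realise the height pairing as the $\varphi$-projection of a global arithmetic intersection of Kudla generating series, and compare it place by place with the derivative of a Siegel--Eisenstein series via the doubling method. The only new ingredient is local, at $v\in\tS'_\pi$ (ramified, $\epsilon_v=-1$). There we use the Kr\"amer integral model and the Kudla--Rapoport theorem of \cite{he2023proof} in place of the exotic smooth model.

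First, by modularity \cite{raum2026geometrica}, $\Theta_{\phi^\infty_i}(\varphi_i)$ is well defined. Exactly as in \cite{li2021chow}*{Section~6}, we reduce to $\phi^\infty_{i}$ lying in a suitable $\tR$-good, Hecke-stable class. We choose an auxiliary split place in $\tR\cap\tV_\dF^\heartsuit$ and use the localisation at $\fm_\pi^\tR$ from Theorem~\ref{thm:main}. This kills the non-cuspidal and non-$\pi$ parts of the cohomology, so the Beilinson--Bloch pairing can be computed as an arithmetic intersection on a regular integral model $\cX$ over $\cO_\dE$. At $v\in\tS'_\pi$, $\cX$ is the Kr\"amer model attached to the almost $\pi$-modular lattice stabilised by $L_v$. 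At places of $\tS_\pi$ and at ramified places with $\epsilon_v=+1$, we use the models of \cite{li2022chow}. Because the doubling zeta integral is bilinear and $H$-equivariant, it suffices to treat $\phi^\infty_{1v}\otimes(\phi^\infty_{2v})^\tc$ equal to the characteristic function of the self-dual (resp.\ almost self-dual) lattice at almost all, respectively the bad, places. General test vectors then follow from the argument of \cite{li2021chow}*{Section~11}.

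Second, we decompose the arithmetic intersection $\langle\widehat{Z}_{T_1},\widehat{Z}_{T_2}\rangle$ of generating series into local contributions. At archimedean places we use Liu's computation, which gives $C_r$. At split places we use the vanishing results of \cite{li2022chow}. At inert and $\epsilon_v=+1$ ramified places we use the known Kudla--Rapoport identities. At $v\in\tS'_\pi$, uniformisation of the supersingular locus of the Kr\"amer model by the Rapoport--Zink space, together with \cite{he2023proof}, identifies the local intersection number with the derivative of the local Whittaker function for the corresponding lattice function. The horizontal correction terms at $v$ must be shown to be killed by the $\pi$-projection. For this we use that $\pi_v$ is regularly almost spherical: the correction functions are $K_{r,v}$-invariant and lie in the image of the degenerate principal series that pairs to zero against $\pi_v$. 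Matching with the analytic side $E'(0,\cdot)$ and applying the doubling method then produces $L'(\tfrac12,\pi)/b_{2r}(0)$ times the product of local zeta integrals. The factor at $v\in\tS'_\pi$ appears only in unnormalised form, which explains the remark that the test vectors there are not the "right" ones.

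Third, part~(2) follows from part~(1). Given $L'(\tfrac12,\pi)\neq0$, we choose test vectors with all $\fZ^\natural_{\pi_v,V_v}\neq0$, which is possible by nonvanishing of local theta lifts. The height is then nonzero, so $\Theta(\varphi)\neq0$ and spans a copy of $\tilde\pi^\infty$.

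The main obstacle will be the local step at $v\in\tS'_\pi$. There we must show that, after $\pi_v$-projection, the Kr\"amer-model intersection numbers together with the exceptional-divisor corrections coincide with the derivative of the doubling zeta integral. Concretely this needs: (a) that the relevant $K_{r,v}$-invariant functions in $\sS(V_v^r)$ generate, modulo the kernel of the doubling pairing, the theta lift of $\pi_v$ (the role of regularity); and (b) that the boundary/correction terms of \cite{he2023proof} integrate to zero against $\pi_v$. I would first attempt (b) by computing the action of the spherical Hecke operator of Definition~\ref{def:almost_spherical_representation} on these terms and showing they lie in an eigenspace whose eigenvalue differs from that of $\pi_v$.
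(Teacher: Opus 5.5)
Your overall architecture matches the paper's: Li--Liu's argument, with the Kr\"amer model and \cite{he2023proof} replacing the exotic smooth model at $v\in\tS'_\pi$, the doubling method on the analytic side, and (2) deduced from (1). However, the local input at $v\in\tS'_\pi$ is set up incorrectly. The paper takes $\Lambda^\tR_v$ \emph{unimodular}, i.e.\ $\Lambda^\tR_v=(\Lambda^\tR_v)^\sharp$, in the non-split $V_v$. It takes $L_v$ to be its stabilizer and $\phi^\infty_{iv}=\CF_{(\Lambda^\tR_v)^r}$, whose zeta integral is Proposition~\ref{prop:zeta_almost_spherical_unimodular_non_split}. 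This self-dual level is exactly the one for which the Kr\"amer model and the Kudla--Rapoport theorem of \cite{he2023proof} exist. The almost $\pi$-modular lattice (stabilizer $K^-_r$) enters only through the auxiliary computation of Proposition~\ref{prop:zeta_almost_spherical_almost_self_dual}, used to pin down the theta lift. With your level and test functions, the local theorem you cite does not apply.

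You also skip the genuinely new geometric step. Computing the local index at $u$ by intersection on $\cX$ requires Proposition~\ref{prop:cohomology_vanishing}: the $\fm$-localized cohomology of the balloon, ground and link strata of the Kr\"amer special fiber must vanish. Now $\bX^\circ$ is a $\dP^{2r-1}$-fibration over a Shimura set for the nearby group at level $L^\star_{\underline u}$, the stabilizer of a unimodular lattice in the \emph{split} space ${}^uV_{\underline u}$. So one needs that no representation with $L^\star_{\underline u}$-invariants has the same base change as $\pi_v$. This is where regularity is used, via Theorem~\ref{thm:spherical_L_is_spherical_K}: without $-q^{\pm\frac12}$ in the Satake parameter, $L$-spherical forces $K$-spherical. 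Your sentence that localisation at $\fm_\pi^\tR$ ``kills the non-$\pi$ parts'' presupposes exactly this.

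Second, your plan for the correction terms, which you call the main obstacle (b), is neither justified nor needed. Via Proposition~\ref{prop:local_siegel_weil}, the function $\Phi_{\underline u}=\Phi^0_{\underline u}+\sum_i A^i(s)\Phi^i_{\underline u}$ of \cite{he2023kudla} produces extra terms
$\log q_{\underline u}\sum_{i=1}^r c^+_{2r,i}q_{\underline u}^{-2i}E_{T_1,T_2}((g_1,g_2),\Phi^0_\infty\otimes\Phi^{\infty,\underline u}\otimes\Phi^i_{\underline u})$.
These are \emph{values} at $s=0$, not derivatives, of Eisenstein series attached to the \emph{coherent} nearby space ${}^uV$, since $\Phi^i_{\underline u}$ lives on $({}^uV_{\underline u})^{2r}$. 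Paired with $\varphi_1\otimes\varphi_2^\tc$, the Rallis inner product formula makes them multiples of $L(\tfrac12,\pi)$. This vanishes because $\epsilon(\pi)=-1$, just as for the $\tS_\pi$-terms $\frac{\log q_u}{q_u^r-1}E_{T_1,T_2}(\cdots\otimes\CF_{(\Lambda^\star_{\underline u})^{2r}})$ in $\fE^{S}_{T_1,T_2}$, and that is all the paper does.

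Your intuition that these terms ``pair to zero against $\pi_v$'' has a correct local form. Since $\epsilon_v(\pi)=-1$, theta dichotomy gives $\Theta(\pi_v,{}^uV_{\underline u})=0$, so $Z(0,\xi,f_\Phi)=0$ for all $\Phi\in\sS(({}^uV_{\underline u})^{2r})$. But that reason is theta dichotomy, not $K_{r,v}$-invariance plus distinct Hecke eigenvalues. Nothing in your sketch shows the $\Phi^i_{\underline u}$ are Hecke eigenvectors with an eigenvalue different from that of $\pi_v$. So (b) is a one-line consequence of $L(\tfrac12,\pi)=0$. The real new local inputs are the unimodular-level geometry and the cohomology vanishing described above.
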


In the case where $\tR_\pi=\emptyset$, we have a very explicit height formula for test vectors that are new everywhere.

\begin{corollary}\label{co:aipf}
Let $(\pi,\cV_\pi)$ be as in Assumption \ref{asp:main} with $r[\dF:\dQ]+|\tS_\pi\cup\tS'_\pi|$ odd. By the modularity of Kudla's generating series \cite{raum2026geometrica}, in the situation of Theorem \ref{thm:aipf}(1), suppose further that
\begin{itemize}
  \item $\tR_\pi=\emptyset$;

  \item $\varphi_1=\varphi_2=\varphi\in\cV_\pi^{[r]\emptyset}$ (see Notation \ref{notation:automorphic_forms} for the precise definition of the one-dimensional space $\cV_\pi^{[r]\emptyset}$ of holomorphic new forms) such that for every $v\in\tV_\dF$, $\langle\varphi_v^\tc,\varphi_v\rangle_{\pi_v}=1$; 

  \item $\phi^\infty_1=\phi^\infty_2=\phi^\infty$ such that for every $v\in\tV_\dF^{\fin}$, $\phi^\infty_v=\CF_{(\Lambda_v^\emptyset)^r}$.
\end{itemize}
Then the identity
\[
\begin{aligned}
\langle\Theta_{\phi^\infty}(\varphi),\Theta_{\phi^\infty}(\varphi)\rangle_{X,\dE}^\natural
=&(-1)^r\cdot \frac{L'(\tfrac{1}{2},\pi)}{b_{2r}(0)}\cdot |C_r|^{[\dF:\dQ]} \\
&\cdot\prod_{v\in\tS_\pi}\frac{q_v^{r-1}(q_v+1)}{(q_v^{2r-1}+1)(q_v^{2r}-1)}\cdot\prod_{v\in\tS'_\pi}\frac{(q_v^{r-1}+1)}{q_v^{r^2-1}(q_v+1)(q_v^r-1)}
\end{aligned}
\]
holds.
\end{corollary}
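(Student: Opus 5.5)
The corollary will follow from Theorem~\ref{thm:aipf}(1) by specialising the test vectors and evaluating every local factor on the right-hand side. Take $\varphi_1=\varphi_2=\varphi$ and $\phi^\infty_1=\phi^\infty_2=\phi^\infty$ as in the statement. Then it suffices to compute
\[
\fZ^\natural_{\pi_v,V_v}\bigl(\varphi_v^\tc,\varphi_v,\CF_{(\Lambda_v^\emptyset)^r}\otimes(\CF_{(\Lambda_v^\emptyset)^r})^\tc\bigr)
\]
for every $v\in\tV_\dF^{\fin}$, and to match the archimedean constant $C_r^{[\dF:\dQ]}$ with $(-1)^r|C_r|^{[\dF:\dQ]}$.

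For the sign, note that $C_r=(-1)^r|C_r|$, so $C_r^{[\dF:\dQ]}=(-1)^{r[\dF:\dQ]}|C_r|^{[\dF:\dQ]}$. The parity hypothesis on $r[\dF:\dQ]+|\tS_\pi\cup\tS'_\pi|$ rewrites this sign as $(-1)^{1+|\tS_\pi|+|\tS'_\pi|}$. The remaining factor $(-1)^{|\tS_\pi|+|\tS'_\pi|}$ will be absorbed by one sign $(-1)$ in each local zeta integral at $v\in\tS_\pi\cup\tS'_\pi$. Here I still need to reconcile the leftover $-1$ with the claimed $(-1)^r$. I will do this by checking the normalisation conventions of $\fZ^\natural$ and $C_r$ against \cite{li2022chow}, and I expect this sign bookkeeping to be delicate.

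The finite places split into four cases.
\begin{itemize}
\item At split places, and at inert or ramified places where $\pi_v$ is unramified or spherical with $V_v$ split, the vectors are spherical and the normalised local doubling zeta integral equals $1$. This is the standard unramified computation from \cite{li2021chow}*{Section~3} and \cite{li2022chow}; since $\tR_\pi=\emptyset$, no ramified split places occur.
\item At $v\in\tS_\pi$ (inert, almost unramified), the value $\frac{q_v^{r-1}(q_v+1)}{(q_v^{2r-1}+1)(q_v^{2r}-1)}$, with its sign, is exactly the computation in \cite{li2021chow}, and I will quote it.
\item At ramified places with $\pi_v$ spherical and $\epsilon_v=+1$, the value $1$ is quoted from \cite{li2022chow}.
\item The new input is at $v\in\tS'_\pi$, where $\pi_v$ is regularly almost spherical and $V_v$ is the non-split hermitian space with lattice $\Lambda_v^\emptyset$. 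There I need an explicit value of the doubling zeta integral.
\end{itemize}

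For $v\in\tS'_\pi$ the plan is to unfold the zeta integral as
\[
\int_{G_r(\dF_v)}\langle\pi_v(g)\varphi_v,\varphi_v\rangle\, f_{\phi}(\delta(g,1))\,dg,
\]
where $f_\phi$ is the Siegel--Weil section attached to $\CF_{(\Lambda_v^\emptyset)^{2r}}$. I would restrict the integrand to $K_{r,v}$-invariants and use the structure of the almost spherical vector developed earlier in the paper, namely the description of the $K_{r,v}$-fixed line and the associated Hecke eigenvalues from Definition~\ref{def:almost_spherical_representation}. The section $f_\phi$ is $K_{2r,v}$-invariant up to a character, so the integral reduces to a finite sum over $K_{r,v}\backslash G_r/K_{r,v}$ weighted by Satake-type data. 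Alternatively, I would compute it as the ratio of the local Siegel series at $T=0$ for the non-split lattice to the normalising $L$-factor, using the Rallis inner product and the Siegel--Weil identity, which is cleaner. With this approach the key quantity is $\vol(K_{r,v})$-type constants together with the representation density of the unimodular non-split lattice. This computation is the main obstacle: I expect to evaluate the local density via the known formulas for ramified hermitian Siegel series (Cho--Yamauchi or Li--Liu style) and to check that it yields $\frac{q_v^{r-1}+1}{q_v^{r^2-1}(q_v+1)(q_v^r-1)}$ with a negative sign. Multiplying all local factors together with the archimedean constant then gives the corollary.
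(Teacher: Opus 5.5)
Your reduction to Theorem~\ref{thm:aipf}(1) is what the paper does, and so is quoting \cite{li2021chow} and \cite{li2022chow} at places outside $\tS'_\pi$. The gap is the one genuinely new input, the local factor at $v\in\tS'_\pi$: you do not obtain it, and neither route you sketch would produce it.

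\textbf{The first route.} Since $v$ is ramified with $\epsilon_v=-1$, $\Lambda_v^\emptyset$ is the \emph{unimodular} lattice of the non-split space. By Proposition~\ref{prop:non_split_sigel_weil_section}(5) and Remark~\ref{def:another_one}, its section at $s=0$ is $\sum_i(-1)^iq^{-ri}\phi_{i,0}=\phi^-_{L_{2r},\sigma^\square_0}$. This is adapted to the non-parahoric group $L_{2r}$. For $r\ge2$ it is not an eigenvector of the Iwahori--Hecke algebra of $K_{2r}$, let alone equivariant by a character. The $\kappa^-$-eigenvector is $\phi^-_{K_{2r}}=\sum_i(-q)^{-i}\phi_{i,0}$, the section of the almost $\pi$-modular lattice. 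Its zeta integral is Proposition~\ref{prop:zeta_almost_spherical_almost_self_dual}, a different number for $r\ge2$; the two lattices coincide only when $r=1$.

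Likewise, $\varphi_v$ is not a $K_{r,v}$-fixed vector. It is the $\kappa_r^-$-eigenvector in $\pi_v^{I_r}$. Pairing a $K_r$-type test vector with an $L_{2r}$-type section is exactly the difficulty, so no bi-$K$-invariant reduction is available.

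\textbf{The second route.} $Z(\xi,f)$ depends on $\pi_v$ through its matrix coefficient; its $\sigma$-dependence is precisely $L(s+\frac12,\pi_v)$. So no Siegel series or representation density of the lattice alone computes it. Such densities enter only through Lemma~\ref{lemma:local_siegel_weil}, on the geometric side.

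\textbf{What the paper does.} The argument is Proposition~\ref{prop:zeta_almost_spherical_unimodular_non_split}:
\begin{enumerate}[(1)]
\item Unfold as in \cite{liu2022theta}*{Proposition~5.6}, with $\varphi_0$ taking the value $(-q)^{-i}\tau_0$ on $\cB_i$; the normalising constant $C$ comes from Lemma~\ref{lem:volume_computation}.
\item Compute $T_{\tw'_r}\phi^-_{L_{2r},\sigma^\square_s}=\prod c'''_\alpha(\sigma^\square_s)\,\phi^-_{L_{2r},\tw'_r\sigma^\square_s}$ using the modified basis $\phi'_w$ and Theorem~\ref{thm:casselman_relation_semi_standard}.
\item Use \eqref{eq:twist_sigma} to see that the transformed section is constant on each $\cB_i\times\cB_j$ with weight $(-1)^{i+j}q^{si-j(s+r)}$. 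The double sum therefore gives $C_-(s)C'_-(s)$.
\item Reduce the remaining $\GL_r$ integral to \cite{li2022chow}*{Proposition~3.6}.
\end{enumerate}
At $s=0$, divide by $L(\tfrac12,\pi_v)/b_{2r,v}(0)$, using $L(s,\pi_v)=L^\sigma_-(s)$ from Theorem~\ref{thm:L_function}(2). This gives
\[
\fZ^\natural_{\pi_v,V_v}=(-1)^r\,\frac{q_v^{r-1}+1}{q_v^{r^2-1}(q_v+1)(q_v^r-1)}.
\]

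\textbf{The sign.} The local sign is $(-1)^r$, not $-1$. The same holds for the factor you quote from \cite{li2021chow} at $v\in\tS_\pi$; this is forced by that corollary's own sign. The total sign is then
\[
(-1)^{r[\dF:\dQ]}\,(-1)^{r|\tS_\pi\cup\tS'_\pi|}=(-1)^{r([\dF:\dQ]+|\tS_\pi\cup\tS'_\pi|)}.
\]
For $r$ even this is $1$. For $r$ odd, $[\dF:\dQ]+|\tS_\pi\cup\tS'_\pi|\equiv r[\dF:\dQ]+|\tS_\pi\cup\tS'_\pi|$ is odd, so it is $-1$. Either way it equals $(-1)^r$, and no renormalisation of $\fZ^\natural$ or $C_r$ is needed. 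Your bookkeeping with $-1$ per place gives $-1$ overall, which is wrong for even $r$. That is the source of the leftover sign you could not reconcile.
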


\subsection{Structure of this paper}
    The proof is organized in three interacting layers (representation theory, geometry of the integral models, and arithmetic assembly). The new ingredient beyond \cites{li2021chow,li2022chow} is the allowance of ramified places with local root number $-1$, which forces new almost spherical local data, new normalized doubling zeta integrals, and a semi-global Kr\"amer-type integral model retaining enough regularity and vanishing to control special cycle intersections.

    There are several obstacles to overcome in order to resolve the above problems:
    \begin{enumerate}[(1)]
      \item 
      First, we need to study the spherical representations of unitary groups with respect to the stabilizer of a unimodular lattice and their theta lifting when the quadratic extension is ramified. This is different from the spherical representations considered in \cite{li2022chow}, which are with respect to the stabilizer of a \(\varpi\)-modular lattice because in a split Hermitian space, a unimodular lattice does not give a special vertex in the Bruhat--Tits theory \cite{pappas2008twisted}. 
      \item 
      Second, the section defined in \cite{he2023kudla}*{Section 12} involves some error terms so it will be hard to detect its behavior in the Weil representation and the corresponding doubling zeta integral is hard to compute explicitly. 
      \item
      Third, we still need to prove some vanishing result of the cohomology of the integral model of the Shimura variety.
    \end{enumerate}

    In this paper we combine the methods in \cites{li2021chow,li2022chow} and the result of \cite{he2023proof} to extend the main results of \cite{li2021chow} to allow ramified places with local root number \(-1\) and attack the above obstacles in the following manner:
    \begin{enumerate}[(1)]
      \item We study the spherical representations of unitary groups with respect to the stabilizer of a unimodular lattice in both quasi-split and non-quasi-split unitary groups and prove that, if we put some mild constraints on the Satake parameter of the component of the cuspidal representation at some ramified places,  such representations, at least under these constraints, are the same as those spherical with respect to a special maximal compact subgroup. Then we can study their theta liftings following the method \cite{liu2022theta}. We also prove a version of Satake isomorphism for such non-special maximal compact subgroups.
      \item We compute the doubling zeta integral with respect to the leading term of the section defined in \cite{he2023kudla}*{Section 12} and show that their critical values are the same.
      \item We give the moduli description of the special fiber of the corresponding integral model and prove the required vanishing result of the cohomology of the integral model of the Shimura variety following the method in \cite{li2021chow}.
    \end{enumerate}
    
\medskip
\noindent\textbf{(I) Representation-theoretic layer (Sections \ref{sec:quasi-split}, \ref{sec:non-quasi-split} and \ref{sec:doubling}).}
\begin{itemize}
  \item We refine unramified (and Iwahori-spherical) principal series for quasi-split unitary groups. We give explicit Iwahori bases, intertwining operator normalizations, and Casselman-type $c$-functions needed to track local constants in doubling integrals.
  \item We parallel these constructions for non--quasi-split unitary groups. Although the doubled group is split, the non--quasi-split calculations enter through test vectors at ramified root number $-1$ places and nearby space considerations.
  \item We isolate ``almost spherical'' vectors and compute their doubling zeta integrals in all lattice configurations (\(\pi\)-modular, almost \(\pi\)-modular, unimodular; split / non-split) obtaining Propositions \ref{prop:zeta_spherical_self_dual}, \ref{prop:zeta_spherical_unimodular_split}, \ref{prop:zeta_almost_spherical_almost_self_dual}, \ref{prop:zeta_almost_spherical_unimodular_non_split}.
  \item These explicit normalized local formulas feed simultaneously into (a) local theta correspondence (non-vanishing and identification of $V_\pi$) and (b) the analytic side of the Rallis inner product formula giving the $L'(\tfrac{1}{2},\pi)$ factor with precise local constants, including the new ramified contributions.
\end{itemize}

\noindent\textbf{(II) Geometric layer (Section \ref{sec:semi-global}).}
\begin{itemize}
  \item 
  We study a semi-global integral model of RSZ Shimura varieties adapted to a chosen ramified place with local root number $-1$ using a Kr\"amer condition; despite the ramification, it preserve regularity and semistability sufficient for intersection theory.
  \item
  We prove a vanishing result of the \(\ell\)-adic cohomology of this integral model (localized) at certain maximal ideal of the Hecke algebra following the method in \cite{li2021chow}*{Section~9 and Appendix~B}, which allows us to relate the height pairing to intersection numbers of special cycles.
\end{itemize}

\noindent\textbf{(III) Local indices and the Arithmetic inner product formula (Sections \ref{sec:local-indices}--\ref{sec:aipf}).}
\begin{itemize}
  \item We relate local intersection indices to derivatives of Whittaker functions via an enhanced local Siegel--Weil identity (Lemma \ref{lemma:local_siegel_weil}, Proposition \ref{prop:local_siegel_weil}), expressing them in terms of $b_{2r,\underline{u}}(0)$, lattice volumes, and $W'_{T^\square}$.
  \item The additional ``error'' terms at places in $\tS'_\pi$ (ramified almost spherical) are shown to vanish globally after summation by the Rallis inner product formula once the almost spherical sections from (I) are inserted with their prescribed normalizations.
  \item Putting the geometric decomposition together with the analytic (Rallis) identity yields the arithmetic inner product formula (Theorem \ref{thm:aipf}), giving the normalized height pairing as $L'(\tfrac{1}{2},\pi)$ times an explicit product of local constants (now including new ramified factors).
  \item The appendix collects some combinatorial identities on $q$-binomial coefficients which are related to the computation.
\end{itemize}
\subsection{Acknowledgements}
    The author wishes to thank my advisor Yifeng Liu who suggested this problem, provided encouragement and many helpful ideas for this work. The author gratefully acknowledges Yu Luo and Wei Zhang for the valuable insights gained from for their helpful discussions and corrections on an earlier draft of this paper which greatly improved the exposition. The author would also like to thank Jiu-Kang Yu and Xiaoxiang Zhou for their helpful comments on the representation theory of \(p\)-adic groups. Part of this work was carried out when the author was a visiting student at Massachusetts Institute of Technology, and he would like to thank the institution for its hospitality and the support of Qiushifeiying Program of Zhejiang University for this visit.
\section{Notations}
\label{sec:notations}
In this section, we will introduce some notations that will be used throughout this article.

\subsection{Notations for Local Theory}
\label{subsec:notations_local}

We shall always assume that \(p\) is an odd prime number.

If \(K\) is a \(p\)-adic field with its residue field \(k_K\), we will denote the cardinality of \(k_K\) by \(q_K\) and its characteristic by \(p\). We will denote the ring of integers of \(K\) by \(\cO_K\), the maximal ideal of \(\cO_K\) by \(\fm_K\). We will fix a uniformizer of \(K\) and denote it by \(\varpi_K\). The multiplicative valuation \(|\cdot|_K\) on \(K\) is normalized so that \(|\varpi_K|_K=q_K^{-1}\).

Let \(E/F\) be a quadratic extension of \(p\)-adic fields. We will denote the non-trivial element in \(\Gal(E/F)\) by \(\tc\). We will denote the ring of integers of \(E\) (resp. \(F\)) by \(\cO_E\) (resp. \(\cO_F\)), the maximal ideal of \(\cO_E\) (resp. \(\cO_F\)) by \(\fp_E\) (resp. \(\fp_F\)). We will fix an additive character \(\psi_F:F\rightarrow\dC^\times\) with conductor \(\cO_F\).

In most cases of this paper, \(E/F\) is ramified, and we will fix a uniformizer \(\varpi_E\) of \(E\) such that \(\varpi_E^2=\varpi_F\), then \(E=F(\varpi_E)\) and \(\varpi_E^c=-\varpi_E\). We will usually denote the cardinality of the residue fields by \(q=q_E=q_F\) as they are the same.

\begin{enumerate}[(LS1)]
    \item 
    \label{notion:local_skew_hermitian}
    We will denote a skew-Hermitian space with respect to \(E/F\) together with its skew-Hermitian form by \((W,\langle\cdot,\cdot\rangle_W)\). The unitary group associated to \((W,\langle\cdot,\cdot\rangle_W)\) will be denoted by \(\rU(W,\langle\cdot,\cdot\rangle_W)\) or \(\rU(W)\) if there is no confusion. 
    \item\label{notion:standard_skew_hermitian}
    We equip \(W_r:=\bigoplus_{i=1}^{2r}Ee_i\) with the skew-Hermitian form \(\langle\cdot,\cdot\rangle_r\) defined by the matrix
    \[
        \begin{pmatrix}
            0&I_r\\
            -I_r&0
        \end{pmatrix},
        \]
        it is called the standard split skew-Hermitian space of dimension \(2r\). If \(r=0\), then \(W_0=0\). The unitary group \(\rU(W_r)\) will also be denoted by \(G_r\). \(G_r\) is a quasi-split unitary group defined over \(F\) of rank \(r\).
    \item        
    \label{notion:vee_dual_lattice_skew_hermitian}
    For a lattice \(\Lambda\subset W_r\) we set the \(\vee\)-dual lattice by 
    \[\Lambda^\vee:=\left\{x\in W_r\mid\Tr_{E/F}\langle x,\Lambda\rangle_r\subset\cO_F\right\}.\] 

    \item 
    \label{notion:sharp_dual_lattice_skew_hermitian}
    \[\Lambda^\sharp=\left\{x\in W_r\mid\langle x,\Lambda\rangle_r\subset\cO_E\right\}.\]
    Because we are considering ramified extension where \(E=F(\sqrt{\varpi_F})\) by our definition, we have \(\Tr_{E/F}(x)\in\cO_F\Leftrightarrow x\in \varpi_E^{-1}\cO_E\) and thus \(\Lambda^\vee=\varpi_E^{-1}\Lambda^\sharp\).
    \item
    \label{notion:vee_integral_lattice_skew_hermitian}
    A lattice \(\Lambda\) is called \(\vee\)-integral if \(\Lambda\subset\Lambda^\vee\), in other words, \(\llangle\Lambda,\Lambda\rrangle_r\subset\cO_F\) where \(\llangle\cdot,\cdot\rrangle_r=\Tr_{E/F}(\langle\cdot,\cdot\rangle)\). 
    \item
    \label{notion:sharp_integral_lattice_skew_hermitian}
    A lattice \(\Lambda\) is called \(\sharp\)-integral if \(\Lambda\subset\Lambda^\sharp\), in other words, \(\langle\Lambda,\Lambda\rangle_r\subset\cO_E\).
    \item
    \label{notion:standard_lattice_skew_hermitian}
    The standard lattice \(\bigoplus_{i=1}^{2r}\cO_Ee_i\) satisfies \(\Lambda^\sharp=\Lambda\) then \(\Lambda^\vee=\varpi_E^{-1}\Lambda\). This lattice will be called the standard lattice in \(W_r\) and will be denoted by \(\Lambda_r\). 
    \item
    \label{notion:semi_standard_lattice_skew_hermitian}
    There is another lattice \(\Lambda_r'\) in \(W_r\) defined by
    \[
        (\bigoplus_{i=1}^r\cO_Ee_i)\oplus(\bigoplus_{i=r+1}^{2r}\varpi_E\cO_Ee_i)
    \]\footnote{It is equivalent to consider the lattice \((\bigoplus_{i=1}^r\varpi_E\cO_Ee_i)\oplus(\bigoplus_{i=r+1}^{2r}\cO_Ee_i)\) since they are conjugate in \(G_r\), but the choice of positive roots may need to be modified.}
        It satisfies \(\Lambda_r'^\vee=\varpi_E^{-2}\Lambda_r'\) and \(\Lambda_r'^\sharp=\varpi_E^{-1}\Lambda_r'\). It will be called the semi-standard lattice in \(W_r\).
        
    \end{enumerate}
\begin{remark}
    If \(E/F\) is unramified, then \(\vee\)-dual is the same as \(\sharp\)-dual
\end{remark}
\begin{remark}
    In fact, the condition of \(\Lambda'_r\) seems absurd, we will not consider it in \(W_r\), we will consider the corresponding lattice \(\Lambda_{V_r^+}'\) in \(V_r^+\), see \ref{unimodular_lattice_split} for a definition.
\end{remark}

\begin{enumerate}[(LH1)]
    \item 
    \label{notion:local_split_hermitian}
    We will denote a Hermitian space with respect to a ramified quadratic extension \(E/F\) together with its Hermitian form by \((V,(\cdot,\cdot)_V)\). The unitary group associated to \((V,(\cdot,\cdot)_V)\) will be denoted by \(\rU(V,(\cdot,\cdot)_V)\) and \(\rU(V)\) if there is no confusion.
    \item 
    \label{notion:vee_dual_lattice_hermitian}
    For a lattice \(\Lambda\) in Hermitian space \((V,(\cdot,\cdot)_V)\), we will also consider the \(\vee\)-dual lattice defined by
    \[
        \Lambda^\vee:=\left\{x\in V\mid \Tr_{E/F}(x,\Lambda)_V\subset\cO_F\right\}
        \]
        which is the usual notion used in \cites{liu2022theta,li2021kudla}.
    \item
    \label{notion:sharp_dual_lattice_hermitian}
    And the \(\sharp\)-dual lattice defined by
    \[
        \Lambda^\sharp:=\left\{x\in V\mid (x,\Lambda)_V\subset\cO_E\right\}
    \]
        which is the notion used in \cites{he2023proof}.
            
    For the same reason in the skew-Hermitian case, we have \(\Lambda^\vee=\varpi_E^{-1}\Lambda^\sharp\).
    \item
    \label{notion:vee_integral_hermitian}
    Similar to the skew-Hermitian case, we say that a lattice \(\Lambda\) is \(\vee\)-integral if \(\Lambda\subset\Lambda^\vee\).
    \item
    \label{notion:sharp_integral_hermitian}
        We say that a lattice \(\Lambda\) is \(\sharp\)-integral if \(\Lambda\subset\Lambda^\sharp\).
    \item\label{notion:unimodular_lattice}
        We say that a lattice \(\Lambda\) is unimodular if \(\Lambda=\Lambda^\sharp\).
    \item\label{notion:self_dual_lattice}
        If the Hermitian space \(V\) is \emph{split}, we say that a lattice \(\Lambda\) is \(\pi\)-modular if \(\Lambda=\varpi_E^{-1}\Lambda^\sharp=\Lambda^\vee\).
    \item\label{notion:almost_self_dual_lattice}
        If the Hermitian space \(V\) is \emph{non-split}. We say that a lattice \(\Lambda\) is almost \(\pi\)-modular if 
        \[\Lambda^\sharp\subset\Lambda\subset\varpi_E^{-1}\Lambda^\sharp=\Lambda^\vee,\quad\text{and }\dim_{k_E}\Lambda^\vee/\Lambda=2\]
    \item\label{notion:standard_split_hermitian}
        We equip \(V_r^+:=\bigoplus_{i=1}^{2r}Ev_i\) with the Hermitian form \((\cdot,\cdot)_r\) given by the matrix
    \[
        \begin{pmatrix}
            0&\varpi_E^{-1}I_r\\
            -\varpi_E^{-1}I_r&0
        \end{pmatrix},
    \]
    it is called the standard split hermitian space of dimension \(2r\). In fact, \((V_r^+,(\cdot,\cdot)_r)\cong(W_r,\varpi_E^{-1}\langle\cdot,\cdot\rangle_r)\) if we identify \(e_i\) with \(v_i\)\footnote{This identification is indeed \cite{li2022chow}*{Remark 3.1}}, then we also have \(G_r=\rU(V_r^+)\).
    \item \label{self_dual_lattice_split}
    We denote the lattice \[\bigoplus_{i=1}^{2r}\cO_Ev_i,\] by \(\Lambda_{V_r^+}\) it satisfies
    \[
        \Lambda_{V_r^+}=\Lambda_{V_r^+}^\vee,\quad\Lambda_{V_r^+}^\sharp=\varpi_E\Lambda_{V_r^+}
    \]
    this is called the standard \(\pi\)-modular lattice in \(V_r^+\).
    \item \label{unimodular_lattice_split}
    We denote the lattice \[(\bigoplus_{i=1}^r\cO_Ev_i)\oplus(\bigoplus_{i=r+1}^{2r}\varpi_E\cO_Ev_i),\] by \(\Lambda'_{V_r^+}\) it satisfies
    \[
        (\Lambda'_{V_r^+})^\vee=\varpi_E^{-1}\Lambda'_{V_r^+},\quad(\Lambda'_{V_r^+})^\sharp=\Lambda'_{V_r^+}
    \]
    this is called the standard unimodular lattice in \(V_r^+\).
    \item
    \label{notion:standard_non_split_hermitian}
    We fix an element \(\ts\) in \(\cO_F^\times\) so that \(\ts\) is not a square modulo \(\fp_F\). 
                
    For \(r\ge0\), we equip \(V_r^-:=\bigoplus_{i=1}^{2r+2}Ev_i\) with the Hermitian form \((\cdot,\cdot)_r^-\) defined by the matrix
    \[
    \begin{pmatrix}
        0&\varpi_E^{-1}I_r&0&0\\
        -\varpi_E^{-1}I_r&0&0&0\\
        0&0&1&0\\
        0&0&0&-\ts
    \end{pmatrix}
    \]
    We will call this Hermitian space the standard non-split Hermitian space.
    \item \label{almost_self_dual_lattice_non_split}
    We denote the lattice \[\bigoplus_{i=1}^{2r+2}\cO_Ev_i,\] by \(\Lambda_{V_r^-}\) it satisfies
    \[
        \Lambda_{V_r^-}^\vee=(\bigoplus_{i=1}^{2r}\cO_Ev_i)\oplus(\varpi_E^{-1}\cO_E v_{2r+1}\oplus\varpi_E^{-1}\cO_E v_{2r+2}) ,\quad\dim_{k_E}\Lambda_{V_r^-}^\vee/\Lambda_{V_r^-}=2
    \]
    this is called the standard almost \(\pi\)-modular lattice in \(V_r^-\).
    \item \label{unimodular_lattice_non_split}
    We denote the lattice \[(\bigoplus_{i=1}^{r}\varpi_E\cO_Ev_i)\oplus(\bigoplus_{i=r+1}^{2r}\cO_Ev_i)\oplus(\cO_Ev_{2r+1}\oplus\cO_Ev_{2r+2}),\] by \(\Lambda'_{V_r^-}\) it satisfies
    \[
        (\Lambda'_{V_r^-})^\vee=\varpi_E^{-1}\Lambda'_{V_r^-},\quad(\Lambda'_{V_r^-})^\sharp=\Lambda'_{V_r^-}
    \]
    this is called the standard unimodular lattice in \(V_r^-\). Note that this is different from the unimodular lattice in the split case because we need to take care of the choice of positive roots.
\end{enumerate}
\begin{remark}
    The lattices of our interests are the unimodular lattices and (almost) \(\pi\)-modular lattices. If we identify \(G_r\) with \(\rU(V_r^+)\), then the unimodular lattices are the semi-standard lattices, and the \(\pi\)-modular lattices are the standard lattices.
\end{remark}
\begin{enumerate}[(LG1)]
    \item\label{notion:torus}
    For each unitary group defined for the standard spaces with rank \(r\), we will fix the diagonal maximal split torus \(S\cong(\dG_m)^r\) and the diagonal maximal torus \(T\cong(\Res_{E/F}\dG_m)^r\). 
    \item\label{notion:roots_quasi_split}
    We choose the positive roots so that for quasi-split unitary groups (for example, \(G_r:=\rU(W_r)=\rU(V_r^+)\) above), the positive roots are given by\footnote{Our choice of positive roots is made so that they are compatible with the choice of the minimal parabolic subgroup in \cite{liu2022theta}.}
    \[
        \Phi^+=\{\epsilon_j\pm\epsilon_i\mid 1\le i<j\le r\}\cup\{2\epsilon_i\mid 1\le i\le r\},
    \]
    \[
    \Delta=\{\epsilon_{i+1}-\epsilon_{i}\mid 1\le i\le r-1\}\cup\{2\epsilon_1\}
    \]
    \item\label{notion:borel}
    The Borel subgroup \(P_r\) with respect to this choice of positive roots consists of elements of the form
    \[
        \begin{pmatrix}
            a&b\\
            0&^ta^{\tc,-1}
        \end{pmatrix}
    \]
    in which \(a\) is a \emph{lower-triangular} matrix in \(\Res_{E/F}\GL_r\).
    \item\label{notion:siegel_parabolic}
    We denote the Siegel parabolic subgroup of \(G_r\) by \(P_r^0\). It consists of elements of the form
    \[
        \begin{pmatrix}
            a&b\\
            0&^ta^{\tc,-1}
        \end{pmatrix}
    \]
    in which \(a\) is a matrix in \(\Res_{E/F}\GL_r\).
    \item\label{notion:stabilizer_standard}
    The stabilizer, of the standard lattice in \(W_r\), in \(G_r\) will be denoted by \(K_r\), it is a special maximal open compact subgroup of \(G_r\) \cite{li2022chow}*{Section 3}. It is identified with the stabilizer (in \(\rU(V_r^+)\)) of a \(\pi\)-modular lattice in \(V_r^+\).
    \item\label{notion:stabilizer_semi_standard}
    The stabilizer, of the semi-standard lattice in \(W_r\), in \(G_r\) will be denoted by \(L_r\). It can be identified with the stabilizer (in \(\rU(V_r^+)\)) of a unimodular lattice in \(V_r^+\).

    In the ramified case, this subgroup does not contain a special subgroup so we will treat it more carefully.
    \item\label{notion:iwahori}
    The Iwahori subgroup of \(K_r\) corresponding to \(P_r\) will be denoted by \(I_r\).
    \item\label{notion:siegel_parahoric}
    The parahoric subgroup of \(K_r\) corresponding to \(P_r^0\) will be denoted by \(I_r^0\).
    \item\label{notion:roots_non_quasi_split}
    For non-quasi-split unitary groups, the positive roots are given by
    \[
        \Phi^+=\{\epsilon_j\pm\epsilon_i\mid 1\le i<j\le r\}\cup\{\epsilon_i,2\epsilon_i\mid 1\le i\le r\},
    \]
    \[
    \Delta=\{\epsilon_{i+1}-\epsilon_{i}\mid 1\le i\le r-1\}\cup\{\epsilon_1\}.
    \]
    \item\label{notion:minimal_parabolic}
    The minimal parabolic subgroup \(P_r^-\) with respect to this choice of positive roots consists of elements of the form
    \[
        \begin{pmatrix}
            a&b&0\\
            0&^ta^{\tc,-1}&0\\
            0&0&c
        \end{pmatrix}
    \]
    in which \(a\) is a \emph{lower-triangular} matrix in \(\Res_{E/F}\GL_r\), and \(c\) is an element in \(\rU(V_0^-)\).
    \item\label{notion:stabilizer_almost_self_dual}The stabilizer of the almost \(\pi\)-modular lattice \(\Lambda_r^-\) in \(\rU(V_r^-)\) will be denoted by \(K_r^-\). 
    \item\label{notion:stabilizer_unimodular} The stabilizer of unimodular lattice, in \(V_r^-\), in \(\rU(V_r^-)\) will be denoted by \(L_r^-\).
    
    Both \(K_r^-\) and \(L_r^-\) contain a special maximal open compact subgroup of \(\rU(V_r^-)\) \cite{tits1979reductive} with index 2.
    \item The Iwahori subgroup of \(K_r^-\) corresponding to \(P_r^-\) will be denoted by \(I_r^-\).
    \item For any unitary group of rank \(r\) defined above, the Weyl group is always isomorphic to \(\fW_r=\{\pm1\}^r\rtimes S_r\) as an abstract group, with generators \(w_1:=w_{2\epsilon_1},w_{i,i+1}:=w_{\epsilon_{i+1}-\epsilon_i}\). We can then define the length of an element \(w\) and denote it by \(\ell(w)\). The longest element is \(\prod_{i=1}^r w_i=({-1}^r,\id)\in\fW_r\).
    \item \label{notion:weyl_group_decomposition}
    Every element \(w\in\fW_r\) can be uniquely written as \(w=w_I w_\tau\) where
    \(w_I\in\{\pm 1\}^r\) and \(w_\tau\in S_r\), \(I\) is then understood as the set of coordinates with \(-1\). Let \(\ell_{long}(w):=|I|\) be the function sending \(w\) to the number of \(-1\)'s in \(w_I\), which is also the cardinality of \(w_1\) in a reduced representation of \(w\).

\end{enumerate}
\begin{theorem}[Iwasawa decomposition, see, for example, \cite{cartier1979representations}]\label{thm:iwasawa_decomposition}
    We have refined Iwasawa decompositions for each group defined above:
    \[
        G_r=P_rK_r=\bigsqcup_{w\in\fW_r}P_rwI_r,\qquad \rU(V_r^-)=P_r^-K_r^-=P_r^- L_r^-=\bigsqcup_{w\in\fW_r}P_r^-wI_r^-
    \]
    and all are in good positions.
\end{theorem}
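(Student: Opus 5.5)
The plan is to deduce all three decompositions from Bruhat--Tits theory, reducing each to two facts: a coarse Iwasawa decomposition $G=PK$ for a maximal compact $K$ that contains a special maximal compact subgroup, and a Bruhat-type decomposition $K=\bigsqcup_{w}(P\cap K)\,w\,I$ of $K$ relative to its Iwahori subgroup.

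\textbf{Step 1: coarse decompositions.} For $G_r$, the group $K_r$ is special maximal by (LG5). The standard result \cite{cartier1979representations} then gives $G_r=P_rK_r$, once we check that the apartment of the diagonal torus $S$ contains the vertex fixed by $K_r$; this holds because $\Lambda_r=\bigoplus\cO_Ee_i$ is split by the basis adapted to $S$. For $\rU(V_r^-)$, both $K_r^-$ and $L_r^-$ contain a special maximal compact subgroup $K^\circ$ of index $2$ (LG11). The lattices $\Lambda_{V_r^-}$ and $\Lambda'_{V_r^-}$ are both diagonal for the basis $v_i$, so the corresponding special vertices lie in the apartment of $S$. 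Hence $\rU(V_r^-)=P_r^-K^\circ$, and a fortiori $\rU(V_r^-)=P_r^-K_r^-=P_r^-L_r^-$.

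\textbf{Step 2: refinement to Iwahori double cosets.} It suffices to show $K=\bigsqcup_{w\in\fW_r}(P\cap K)\,w\,I$, where $P\cap K$ is the integral Borel. Reduction modulo $\varpi_E$ maps $K$ onto the $k_E$-points of the reductive quotient $\bar G$ of the special fibre of the parahoric. Under this map $I$ is the preimage of a Borel $\bar B$, and the image of $P\cap K$ is the same Borel $\bar B$ (up to the unipotent radical). The identity therefore follows from the finite Bruhat decomposition $\bar G(k)=\bigsqcup_{w\in W(\bar G)}\bar B w\bar B$.

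For this we must identify $W(\bar G)$ with $\fW_r$. For $K_r$ in the ramified case, the reductive quotient is $\mathrm{Sp}_{2r}$ over $k_E$, whose Weyl group is $\{\pm1\}^r\rtimes S_r$. For $K_r^-$ (and $L_r^-$), the maximal reductive quotient of $K^\circ$ is of type $B_r$ or $C_r$, namely an orthogonal group $\mathrm{O}_{2r+1}$ or $\mathrm{Sp}_{2r}$ up to the index-$2$ component. Its Weyl group is again $\fW_r$. The disconnected component of $K^-$ is absorbed into $\rU(V_0^-)\subset P_r^-$, since the extra component is represented by an element acting only on $v_{2r+1},v_{2r+2}$.

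\textbf{Step 3: disjointness and good position.} Disjointness of the union over $\fW_r$ follows from the uniqueness in the finite Bruhat decomposition combined with $P\cap wIw^{-1}$ considerations. Equivalently, distinct $w$ give distinct relative positions of the image in $P\backslash G/I$, detected by the retraction onto the apartment. \emph{Good position} means that $S$ and the chosen positive system are compatible with the Iwahori, i.e.\ that the alcove of $I$ lies in the chamber determined by $\Phi^+$. We check this directly from the lower-triangular convention in (LG3) and (LG9).

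\textbf{Main obstacle.} The expected hard step is Step 2 for the non-quasi-split group and its non-special stabilizers. There the reductive quotient is disconnected, and we must verify carefully that the Iwahori $I_r^-$ lies in both $K_r^-$ and $L_r^-$ with the same Weyl set $\fW_r$. The plan is to write the quotients explicitly via $\Lambda/\varpi_E\Lambda$ with its induced (anti)symmetric form, and to count double cosets.
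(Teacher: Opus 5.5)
Your argument is correct. It is the standard Bruhat--Tits proof: a coarse Iwasawa decomposition from a special vertex in the apartment of $S$, refined by the finite Bruhat decomposition of the reductive quotient. The paper does not prove this statement itself; it simply cites this argument from Cartier.

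There are two harmless slips.

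\begin{itemize}
\item The reductive quotient at $\Lambda'_{V_r^-}$ is the non-split $\mathrm{O}^-_{2r+2}$ over $k_E$, of relative type $B_r$, not $\mathrm{O}_{2r+1}$. At $\Lambda_{V_r^-}$ it is $\mathrm{Sp}_{2r}\times\mathrm{O}^-_2$. Only the relative Weyl group $\fW_r$ enters the argument, so nothing changes.
\item Your ``main obstacle'' $I_r^-\subseteq L_r^-$ is not needed. The refined decomposition $\bigsqcup_{w\in\fW_r}P_r^-wI_r^-$ uses only the special parahoric inside $K_r^-$, and $P_r^-L_r^-$ already follows from your Step 1.
\end{itemize}
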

\begin{remark}
    Here we use \(w\) to indicate some representative of \(w\in\fW_r\). This decomposition is independent of the choice of the representative by \cite{cartier1979representations}.
\end{remark}
\subsection{Notations for Global Theory}
\label{subsec:notations_global}
        

    For most of the notations we will follow \cite{li2022chow}*{Notation 4.1, 4.2 and 4.3}.

    We state some notations which will be used frequently in this article, some are already used in the introduction.
    \begin{notation} 
        Let \(\dE/\dF\) be a CM extension of number fields, so that \(\tc\) is the non-trivial element in \(\Gal(\dE/\dF)\). We also fix an embedding \(\biota:\dE\hookrightarrow\dC\). We denote by \(\bu\) the archimedean place of \(\dE\) induced by \(\biota\) and regard \(\dE\) as a subfield of \(\dC\) via \(\bu\).
        \begin{enumerate}[(F1)]
        \item\begin{itemize}
        \item For a global field \(\dL\), we denote the set of places (resp. finite places, resp. archimedean places) of \(\dL\) by \(\tV_\dL\) (resp. \(\tV_\dL^\fin\), resp. \(\tV_\dL^{(\infty)}\)).
        
        \item If $S$ is a place or a subset of places of $\dL$, we use $\tV_\dL^S$ to denote the places away from $S$ and $\tV_\dL^{(S)}$ to denote the places above $S$ (Comparing with \cite{li2022chow}*{4.1 (F1)}).
        \item We denote by \(\tV_\dF^\spl\) (resp. \(\tV_\dF^\ram\), \(\tV_\dF^{\text{int}}\)) the set of places of \(\dF\) that split (resp. ramify, inert) in \(\dE\). 
        \item For every place \(u\in\tV_\dE\), we denote by \(\underline{u}\in\tV_\dF\) the underlying place of \(u\) in \(\dF\).
        \item For every \(v\in\tV_\dF^\fin\), we denote by \(\fp_v\) the maximal ideal of \(\cO_{\dF,v}\) and put \(q_v:=|\cO_{\dF,v}/\fp_v|\).
        \item For every \(v\in\tV_\dF\), we put \(\dE_v:=\dE\otimes_{\dF}\dF_v\) and denote by \(||\cdot||_{\dE_v}:\dE_v\rightarrow\dC^\times\) the normalised norm character.
        
        \end{itemize}
        \item Let \(m\ge0\) be an integer.
        \begin{itemize}
        \item We denote by \(\Herm_m\) the subscheme of \(\Res_{\dE/\dF}\Mat_{m,m}\) of \(m\)-by-\(m\) matrices \(b\) satisfying \(^tb^\tc=b\). Put \(\Herm^\circ_m:=\Herm_m\cap\Res_{\dE/\dF}\GL_m\).
        
        \item For every ordered partition \(m=m_1+\cdots+m_s\) with \(m_i\) a positive integer, we denote by \(\partial_{m_1,\cdots,m_s}:\Herm_m\rightarrow\Herm_{m_1}\times\cdots\times\Herm_{m_s}\) the morphism that extracts the diagonal blocks with corresponding ranks.
        
        \item In addition we denote by \(\Herm_m(\dF)^+\) (respectively \(\Herm^\circ_m(\dF)^+\)) the subset of \(\Herm_m(\dF)\) of elements that are totally semi-positive definite (respectively totally positive definite).
        \end{itemize}

        \item For every \(u\in\tV_\dE^{(\infty)}\), we fix an embedding \(\iota_u:\dE\hookrightarrow\dC\) inducing \(u\) (with \(\iota_\bu=\biota\)) and identify \(\dE_u\) with \(\dC\) via \(\iota_u\).
        \item Let \(\eta:=\eta_{\dE/\dF}:\dA_\dF^\times\rightarrow\dC^\times\) be the quadratic character associated to \(\dE/\dF\). For every \(v\in\tV_\dF\) and every positive integer \(m\), put
        \[
            b_{m,v}(s):=\prod_{i=1}^{m} L(2s+i,\eta_v^{m-i}).
        \]
        Put \(b_m(s):=\prod_{v\in\tV_\dF} b_{m,v}(s)\).
        \item Fix an additive character \(\psi_\dF:\dA_\dF/\dF\rightarrow\dC^\times\). For every element \(T\in\Herm_m(\dA_\dF)\), let \(\psi_T:\Herm_m(\dA_\dF)\rightarrow\dC^\times\) be given by \(\psi_T(b):=\psi_\dF(\tr bT)\).
        \item Let \(R\) be a commutative \(\dF\)-algebra. A (skew-)Hermitian space over \(R\otimes_\dF \dE\) is a free \(R\otimes_\dF \dE\)-module \(V\) of finite rank, equipped with a (skew-)Hermitian form \((\cdot,\cdot)_V\) with respect to the involution \(\tc\) that is nondegenerate.
    \end{enumerate}

        \smallskip
        Let $(V,(\cdot,\cdot)_V)$ be a Hermitian space over $\dE$ of dimension $n=2r$ that is totally positive definite.
        \smallskip
    \begin{enumerate}[(H1)]
        \item For every commutative \(\dA_\dF\)-algebra \(R\) and every integer \(m\geq 0\), we denote by 
        \[
            T(x):=((x_i,x_j)_V)_{1\leq i,j\leq m}\in \Herm_m(R)
        \]
        the moment matrix of \(x=(x_1,\cdots,x_m)\in V^m\otimes_{\dA_\dF} R\).

        \item For every place \(v\in\tV_\dF\), we put \(V_v\coloneqq V\otimes_{\dA_\dF}\dF_v\), which is a Hermitian space over \(\dE_v\) and define the local Hasse invariant of \(V_v\) to be \(\epsilon(V_v):=\eta_v((-1)^r\det V_v)\in\{\pm1\}\), which equals 1 for all but finitely many \(v\). In what follows, we will abbreviate \(\epsilon(V_v)\) as \(\epsilon_v\). Recall that \(V\) is coherent (resp. incoherent) if \(\prod_{v\in\tV_\dF}\epsilon_v=1\) (resp. \(-1\)).
    \item Let $v$ be a place of $\dF$ and $m\geq 0$ an integer.
      \begin{itemize}
                \item For $T\in\Herm_m(\dF_v)$, we put $(V^m_v)_T\coloneqq\{x\in V^m_v\res T(x)=T\}$, and
            \[
                        (V^m_v)_{\mathrm{reg}}\coloneqq\bigcup_{T\in\Herm_m^\circ(\dF_v)}(V^m_v)_T.
            \]

        \item We denote by $\sS(V_v^m)$ the space of (complex valued) Bruhat--Schwartz functions on $V_v^m$. When $v\in\tV_\dF^{(\infty)}$, we have the Gaussian function $\phi^0_v\in\sS(V_v^m)$ given by the formula $\phi^0_v(x)=\re^{-2\pi\tr T(x)}$.

        \item We have a Fourier transform map $\widehat{\phantom{a}}\colon \sS(V_v^m)\to \sS(V_v^m)$ sending $\phi$ to $\widehat\phi$ defined by the formula
            \[
            \widehat\phi(x)\coloneqq\int_{V_v^m}\phi(y)\psi_{\dE,v}\left(\sum_{i=1}^m(x_i,y_i)_V\right)\rd y,
            \]
            where $\r{d}y$ is the self-dual Haar measure on $V_v^m$ with respect to $\psi_{\dE,v}$.

        \item In what follows, we will always use this self-dual Haar measure on $V_v^m$.
      \end{itemize}

  \item Let $m\geq 0$ be an integer. For $T\in\Herm_m(\dF)$, we put
      \[
      \mathrm{Diff}(T,V)\coloneqq\{v\in\tV_\dF\res(V^m_v)_T=\emptyset\},
      \]
      which is a finite subset of $\tV_\dF\setminus\tV_\dF^\spl$.
    \item Take a nonempty finite subset $\tR\subseteq\tV_\dF^\fin$ that contains
      \[
      \{v\in\tV_\dF^\ram\res\text{either $2\mid v$, or $v$ is ramified over $\dQ$}\}.
      \]
    Let $\tS$ be the subset of $\tV_\dF^\fin\setminus\tR$ consisting of $v$ such that $\epsilon_v=-1$ (Comparing with \cite{li2022chow}*{4.2 (H5)}).
    \item We fix a $\prod_{v\in\tV_\dF^\fin\setminus\tR}\cO_{\dE_v}$-lattice $\Lambda^\tR$ in $V\otimes_{\dA_\dF}\dA_\dF^{\infty,\tR}$ such that for every $v\in\tV_\dF^\fin\setminus\tR$:
  
  If \(v\) is unramified in \(\dE\), then $\Lambda^\tR_v$ is a subgroup of $(\Lambda^\tR_v)^\vee$ of index $q_v^{1-\epsilon_v}$, where
      \[
      (\Lambda^\tR_v)^\vee\coloneqq\{x\in V_v\res\psi_{\dE,v}((x,y)_V)=1\text{ for every }y\in\Lambda^\tR_v\}
      \]
      is the $\psi_{\dE,v}$-dual lattice of $\Lambda^\tR_v$.

    If \(v\) is ramified in \(\dE\), then \(\Lambda_v^\tR= (\Lambda^\tR_v)^\vee\) if \(\epsilon_v=1\); and \(\Lambda_v^\tR\) is unimodular , i.e. \(\Lambda_v^\tR=(\Lambda_v^\tR)^\sharp= \varpi_{\dE_v}(\Lambda^\tR_v)^\vee\), if \(\epsilon_v=-1\).
    \item Put $H\coloneqq\rU(V)$, which is a reductive group over $\dA_\dF$.

  \item Denote by $L^\tR\subseteq H(\dA_\dF^{\infty,\tR})$ the stabilizer of $\Lambda^\tR$, which is a special maximal subgroup.  We have the (abstract) Hecke algebra away from $\tR$\label{notation:special_hecke_algebra}
      \[
      \dT^\tR\coloneqq\dZ[L^\tR\backslash H(\dA_\dF^{\infty,\tR})/L^\tR],
      \]
      which is a ring with the unit $\CF_{L^\tR}$, and denote by $\dS^\tR$ the subring
      \[
    \varinjlim_{\substack{\tT\subseteq\tV_\dF^\spl\setminus\tR\\|\tT|<\infty}}
    \dZ[(L^\tR)_\tT\backslash H(\dF_\tT)/(L^\tR)_\tT]\otimes\CF_{(L^\tR)^\tT}
      \]
      of $\dT^\tR$.

    \item Suppose that $V$ is \emph{incoherent}, namely, $\prod_{v\in\tV_\dF}\epsilon_v=-1$. For every $u\in\tV_\dE\setminus\tV_\dE^\spl$, we fix a \emph{$u$-nearby space} $\pres{u}{V}$ of $V$, which is a hermitian space over $\dE$, and an isomorphism $\pres{u}{V}\otimes_{\dF}\dA_\dF^{\ul{u}}\simeq V\otimes_{\dA_\dF}\dA_\dF^{\ul{u}}$. More precisely,
      \begin{itemize}
                \item if $u\in\tV_\dE^{(\infty)}$, then $\pres{u}{V}$ is the hermitian space over $\dE$, unique up to isomorphism, that has signature $(n-1,1)$ at $u$ and satisfies $\pres{u}{V}\otimes_{\dF}\dA_\dF^{\ul{u}}\simeq V\otimes_{\dA_\dF}\dA_\dF^{\ul{u}}$;

                \item if $u\in\tV_\dE^\fin\setminus\tV_\dE^\spl$, then $\pres{u}{V}$ is the hermitian space over $\dE$, unique up to isomorphism, that satisfies $\pres{u}{V}\otimes_{\dF}\dA_\dF^{\ul{u}}\simeq V\otimes_{\dA_\dF}\dA_\dF^{\ul{u}}$.
      \end{itemize}
            Put $\pres{u}{H}\coloneqq\rU(\pres{u}{V})$, which is a reductive group over $\dF$. Then $\pres{u}{H}(\dA_\dF^{\ul{u}})$ and $H(\dA_\dF^{\ul{u}})$ are identified.
\end{enumerate}
\end{notation}

\begin{notation}\label{st:g}
Let $m\geq 0$ be an integer. We equip $W_m=\dE^{2m}$ and $\bar{W}_m=\dE^{2m}$ with the skew-hermitian forms given by the matrices $\tw_m$ and $-\tw_m$, respectively.
\begin{enumerate}[label=(G\arabic*)]
  \item Let $G_m$ be the unitary group of both $W_m$ and $\bar{W}_m$. We write elements of $W_m$ and $\bar{W}_m$ in the row form, on which $G_m$ acts from the right.

  \item We denote by $\{e_1,\dots,e_{2m}\}$ and $\{\bar{e}_1,\dots,\bar{e}_{2m}\}$ the natural bases of $W_m$ and $\bar{W}_m$, respectively.

  \item Let $P_m\subseteq G_m$ be the parabolic subgroup stabilizing the subspace generated by $\{e_{m+1},\dots,e_{2m}\}$, and $N_m\subseteq P_m$ its unipotent radical.

    \item We have
     \begin{itemize}
             \item a homomorphism $m\colon\Res_{\dE/\dF}\GL_m\to P_m$ sending $a$ to
          \[
          m(a)\coloneqq
          \begin{pmatrix}
              a &  \\
            & \pres{\rt}{a}^{\tc,-1}
          \end{pmatrix}
          ,
          \]
          which identifies $\Res_{\dE/\dF}\GL_m$ as a Levi factor of $P_m$.

       \item a homomorphism $n\colon\Herm_m\to N_m$ sending $b$ to
          \[
          n(b)\coloneqq
          \begin{pmatrix}
              1_m & b \\
            & 1_m
          \end{pmatrix}
          ,
          \]
          which is an isomorphism.
     \end{itemize}

    \item We define a maximal compact subgroup $K_m=\prod_{v\in\tV_\dF}K_{m,v}$ of $G_m(\dA_\dF)$ in the following way:
    \begin{itemize}
            \item for $v\in\tV_\dF^\fin$, $K_{m,v}$ is the stabilizer of the lattice $\cO_{\dE_v}^{2m}$;

    \item for $v\in\tV_\dF^{(\infty)}$, $K_{m,v}$ is the subgroup of the form
         \[
         [k_1,k_2]\coloneqq\frac{1}{2}
         \begin{pmatrix}
           k_1+k_2   & -ik_1+ik_2 \\
           ik_1-ik_2   & k_1+k_2 \\
         \end{pmatrix}
         ,
         \]
            in which $k_i\in\GL_m(\dC)$ satisfying $k_i\pres{\rt}{k}_i^\tc=1_m$ for $i=1,2$. Here, we have identified $G_m(\dF_v)$ as a subgroup of $\GL_{2m}(\dC)$ via the chosen embedding $\iota_u\colon\dE\hookrightarrow\dC$ inducing $u$ with $v=\underline{u}$.
    \end{itemize}

    \item For every $v\in\tV_\dF^{(\infty)}$, we have a character $\kappa_{m,v}\colon K_{m,v}\to\dC^\times$ that sends $[k_1,k_2]$ to $\det k_1/\det k_2$.\footnote{In fact, neither $K_{m,v}$ nor $\kappa_{m,v}$ depends on the choice of the embedding $\iota_v$ for $v=\underline{u}\in\tV_\dF^{(\infty)}$.}

    \item For every $v\in\tV_\dF$, we define a Haar measure $\r{d}g_v$ on $G_m(\dF_v)$ as follows:
    \begin{itemize}
            \item for $v\in\tV_\dF^\fin$, $\r{d}g_v$ is the Haar measure under which $K_{m,v}$ has volume $1$;

            \item for $v\in\tV_\dF^{(\infty)}$, $\r{d}g_v$ is the product of the measure on $K_{m,v}$ of total volume $1$ and the standard hyperbolic measure on $G_m(\dF_v)/K_{m,v}$.
    \end{itemize}
    Put $\r{d}g=\prod_{v}\r{d}g_v$, which is a Haar measure on $G_m(\dA_\dF)$.

    \item \label{notation:automorphic_forms} We denote by $\cA(G_m(\dF)\backslash G_m(\dA_\dF))$ the space of both $\cZ(\fg_{m,\infty})$-finite and $K_{m,\infty}$-finite automorphic forms on $G_m(\dA_\dF)$, where $\cZ(\fg_{m,\infty})$ denotes the center of the complexified universal enveloping algebra of the Lie algebra $\fg_{m,\infty}$ of $G_m\otimes_{\dF}\dF_\infty$. We denote by
      \begin{itemize}
                \item $\cA^{[r]}(G_m(\dF)\backslash G_m(\dA_\dF))$ the maximal subspace of $\cA(G_m(\dF)\backslash G_m(\dA_\dF))$ on which for every $v\in\tV_\dF^{(\infty)}$, $K_{m,v}$ acts by the character $\kappa_{m,v}^r$,

         \item $\cA^{[r]\tR}(G_m(\dF)\backslash G_m(\dA_\dF))$ the maximal subspace of $\cA^{[r]}(G_m(\dF)\backslash G_m(\dA_\dF))$ on which
             \begin{itemize}
             \item for every $v\in\tV_\dF^\fin\setminus(\tR\cup\tS)$, $K_{m,v}$ acts trivially; and

                             \item for every $v\in\tS$, the standard Iwahori subgroup $I_{m,v}$ acts trivially and $\dC[I_{m,v}\backslash K_{m,v}/I_{m,v}]$ acts by the character $\kappa_{m,v}^-$ (\cite{liu2022theta}*{Definition~2.1}),
             \end{itemize}

                \item $\cA_\cusp(G_m(\dF)\backslash G_m(\dA_\dF))$ the subspace of $\cA(G_m(\dF)\backslash G_m(\dA_\dF))$ of cusp forms, and by $\langle\;,\;\rangle_{G_m}$ the hermitian form on $\cA_\cusp(G_m(\dF)\backslash G_m(\dA_\dF))$ given by the Petersson inner product with respect to the Haar measure $\r{d}g$.
      \end{itemize}
            For a subspace $\cV$ of $\cA(G_m(\dF)\backslash G_m(\dA_\dF))$, we denote by
      \begin{itemize}
                \item $\cV^{[r]}$ the intersection of $\cV$ and $\cA^{[r]}(G_m(\dF)\backslash G_m(\dA_\dF))$,

        \item $\cV^{[r]\tR}$ the intersection of $\cV$ and $\cA^{[r]\tR}(G_m(\dF)\backslash G_m(\dA_\dF))$,

        \item $\cV^\tc$ the subspace $\{\varphi^\tc\res\varphi\in\cV\}$.
      \end{itemize}
\end{enumerate}
\end{notation}

\begin{notation}\label{st:w}
We review the Weil representation.
\begin{enumerate}[label=(W\arabic*)]
    \item For every $v\in\tV_\dF$, we have the Weil representation $\omega_{m,v}$ of $G_m(\dF_v)\times H(\dF_v)$, with respect to the additive character $\psi_{\dF,v}$ and the trivial splitting character, realized on the Schr\"{o}dinger model $\sS(V_v^m)$. For the readers' convenience, we review the formulas:
      \begin{itemize}
                \item for $a\in\GL_m(\dE_v)$ and $\phi\in \sS(V_v^m)$, we have
          \[
                    \omega_{m,v}(m(a))\phi(x)=|\det a|_{\dE_v}^r\cdot \phi(x a);
          \]

                \item for $b\in\Herm_m(\dF_v)$ and $\phi\in \sS(V_v^m)$, we have
          \[
          \omega_{m,v}(n(b))\phi(x)=\psi_{T(x)}(b)\cdot \phi(x)
          \]
                    where $\psi_{T(x)}(b)\coloneqq\psi_{\dF,v}(\tr(b\,T(x)))$;

        \item for $\phi\in \sS(V_v^m)$, we have
          \[
                    \omega_{m,v}(\tw_m)\phi(x)=\gamma_{V_v,\psi_{\dF,v}}^m\cdot\widehat\phi(x),
          \]
          where $\gamma_{V_v,\psi_{\dF,v}}$ is certain Weil constant determined by $V_v$ and $\psi_{\dF,v}$;

                \item for $h\in H(\dF_v)$ and $\phi\in \sS(V_v^m)$, we have
          \[
          \omega_{m,v}(h)\phi(x)=\phi(h^{-1}x).
          \]
      \end{itemize}
      We put $\omega_m\coloneqq\otimes_v\omega_{m,v}$ as the ad\`{e}lic version, realized on $\sS(V^m)$.

    \item For every $v$ of $\dF$, we also realize the contragredient representation $\omega_{m,v}^\vee$ on the space $\sS(V_v^m)$ as well via the bilinear pairing
      \[
      \langle\;,\;\rangle_{\omega_{m,v}}\colon \sS(V_v^m)\times \sS(V_v^m)\to\dC
      \]
      defined by the formula
      \[
      \langle\phi^\vee,\phi\rangle_{\omega_{m,v}}\coloneqq\int_{V_v^m}\phi(x)\phi^\vee(x)\rd x
      \]
      for $\phi,\phi^\vee\in\sS(V_v^m)$.
\end{enumerate}
\end{notation}

\begin{notation}\label{st:c}
For a locally Noetherian scheme $X$ and an integer $m\geq 0$, we denote by $\rZ^m(X)$ the free abelian group generated by irreducible closed subschemes of codimension $m$ and $\CH^m(X)$ the quotient by rational equivalence. Suppose that $X$ is smooth over a field $K$ of characteristic zero. Let $\ell$ be a rational prime.
\begin{enumerate}[label=(C\arabic*)]
  \item We denote by $\rZ^m(X)^0$ the kernel of the de Rham cycle class map
     \[
     \cl_{X,\dr}\colon\rZ^m(X)\to\rH^{2m}_{\dr}(X/K)(m),
     \]
     and by $\CH^m(X)^0$ the image of $\rZ^m(X)^0$ in $\CH^m(X)$.

  \item When $K$ is a non-archimedean local field, we denote by $\rZ^m(X)^{\langle\ell\rangle}$ the kernels of the $\ell$-adic cycle class map
      \[
      \cl_{X,\ell}\colon\rZ^m(X)\to\rH^{2m}(X,\dQ_\ell(m)).
      \]

  \item When $K$ is a number field, we define $\rZ^m(X)^{\langle\ell\rangle}$ via the following Cartesian diagram
      \[
      \xymatrix{
      \rZ^m(X)^{\langle\ell\rangle} \ar[r]\ar[d] &  \prod_{v}\rZ^m(X_{K_v})^{\langle\ell\rangle} \ar[d] \\
      \rZ^m(X) \ar[r] & \prod_{v}\rZ^m(X_{K_v})
      }
      \]
      where the product is taken over all non-archimedean places of $K$. We denote by $\CH^m(X)^{\langle\ell\rangle}$ the image of $\rZ^m(X)^{\langle\ell\rangle}$ in $\CH^m(X)$, which is contained in $\CH^m(X)^0$ by the comparison theorem between de Rham and $\ell$-adic cohomology.
\end{enumerate}
\end{notation}


\section{Spherical Representations and Almost Spherical Representations of Quasi-split Unitary groups}\label{sec:quasi-split}

When we talk about representations of a reductive group \(G\) defined over \(F\), we mean admissible representations of \(G(F)\) with coefficients in \(\dC\). We will not distinguish \(G(F)\) and \(G\) in the notation.
\subsection{Classical Results of Unramified Principal Series Representations}

Now we collect some classical results of Casselman on the spherical representations of quasi-split unitary groups. We will follow the exposition in \cite{casselman1980unramified} and \cite{liu2022theta}*{Section 4}. 
\begin{assumption}
    Assume that \(E/F\) is a quadratic ramified extension of \(p\)-adic fields. 
\end{assumption}

Let \((W_r,\langle\cdot,\cdot\rangle_r,G_r,T,S,P_r)\) be as in \ref{notion:local_skew_hermitian}, \ref{notion:torus} and \ref{notion:borel}. 

For an element \(\sigma=(\sigma_1,\cdots,\sigma_r)\in\left(\dC/(\frac{2\pi}{\log q}\dZ)\right)^r\), we define a character 
\[\chi_r^\sigma:T\rightarrow\dC^\times\quad t\mapsto \prod_{i=1}^r|a_i|_E^{\sigma_i}\]
in which \(a_i\) is the eigenvalue of \(t\) acts on \(e_i\) for \(1\le i\le r\). Such characters are called unramified characters of \(T\) and obviously every unramified character of \(T\) is uniquely written as \(\chi_r^\sigma\).

This gives a character of the parabolic subgroup \(P_r\) by \(T\cong P_r/N_r\) where \(N_r\) is the unipotent subgroup.

Set the normalized unramified principal series representation of \(G_r(F)\) with parameter \(\sigma\) by
\[\rI_{W_r}^\sigma:=\Ind_{P_r}^{G_r}(\delta_{P_r}^{\frac{1}{2}}\chi_r^\sigma)=\{f\in C^\infty(G_r(F))|f(pg)=\delta_{P_r}^{\frac{1}{2}}(p)\chi_r^\sigma(p)f(g)\}\]
which is a representation of \(G_r(F)\) via the right translation.

Throughout this article, our objects are representations of \(G_r(F)\) with Iwahori-fixed vectors. The following is a famous theorem of Borel (in our notations, and this is generally true for classical groups), which illustrates the importance of unramified principal series representations in the study of such representations:
\begin{theorem}[\protect{\cite{borel1976admissible}}]\label{thm:borel_iwahori}
    If \(\pi\) is an irreducible representation of \(G_r\) and \(\pi^{I_r}\neq 0\), then there exists an unramified character \(\chi^\sigma_r\) such that \(\pi\) is a subrepresentation of \(\rI^\sigma_{W_r}\).
\end{theorem}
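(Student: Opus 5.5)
The plan is the standard Borel--Casselman argument via Jacquet modules, specialised to \(G_r\) with its Iwahori subgroup \(I_r\), Borel subgroup \(P_r=TN_r\) and the decomposition of Theorem \ref{thm:iwasawa_decomposition}.

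First, use the Iwahori factorization. The subgroup \(I_r\) decomposes as \(I_r=(I_r\cap \bar N_r)(I_r\cap T)(I_r\cap N_r)\), where \(\bar N_r\) is the opposite unipotent radical and \(I_r\cap T=T(\cO_F)\) is the maximal compact subgroup of \(T\). Let \(\pi_{N_r}\) denote the (unnormalized) Jacquet module. The key step is Borel's lemma: the natural projection \(\pi\to\pi_{N_r}\) induces an isomorphism
\[
\pi^{I_r}\xrightarrow{\ \sim\ }(\pi_{N_r})^{T(\cO_F)} .
\]
I would prove this by Jacquet's lemma for admissible representations. Choose a strictly contracting element \(t\in T\) (one with \(t(I_r\cap N_r)t^{-1}\subset I_r\cap N_r\) shrinking to \(1\)). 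Then, in the Iwahori--Hecke algebra, the operator \(\CF_{I_r t I_r}\) acts on \(\pi^{I_r}\) with image mapping isomorphically onto the \(T(\cO_F)\)-invariants of the Jacquet module. Injectivity uses that a vector in \(\pi^{I_r}\cap\pi(N_r)\) is killed by averaging over a large compact open subgroup of \(N_r\), which is conjugate to averaging over \(I_r\cap N_r\) by powers of \(t\). Surjectivity uses the \(\bar N_r\)-part of the Iwahori factorization. I expect this to be the main obstacle. In the ramified quasi-split case one must check that \(I_r\) (the Iwahori in the special group \(K_r\)) really admits this factorization with respect to \(P_r\) and the chosen positive roots. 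This follows from Bruhat--Tits theory, since \(I_r\) is the stabilizer of an alcove whose closure contains the special vertex fixed by \(K_r\), and the good-position statement in Theorem \ref{thm:iwasawa_decomposition} is exactly what is needed.

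Second, since \(\pi^{I_r}\neq0\), the space \((\pi_{N_r})^{T(\cO_F)}\) is nonzero. Moreover \(\pi_{N_r}\) is an admissible, finitely generated representation of \(T\) (Jacquet), and \(T/T(\cO_F)\cong\dZ^r\) is a finitely generated abelian group acting on the finite-dimensional space \((\pi_{N_r})^{T(\cO_F)}\). Hence this space contains a \(T\)-eigenvector, giving a nonzero \(T\)-equivariant quotient map \(\pi_{N_r}\to\chi\) for some character \(\chi\) of \(T\) trivial on \(T(\cO_F)\). Such a character is unramified, so \(\chi=\delta_{P_r}^{1/2}\chi_r^\sigma\) for a unique \(\sigma\), as noted before the theorem.

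Third, apply Frobenius reciprocity, \(\Hom_{G_r}(\pi,\Ind_{P_r}^{G_r}\chi')\cong\Hom_T(\pi_{N_r},\chi'\delta_{P_r}^{1/2})\) with suitable normalization. The quotient map from the second step then yields a nonzero \(G_r\)-homomorphism \(\pi\to\rI_{W_r}^{\sigma}\) for the appropriate unramified \(\sigma\), absorbing the modulus shift into \(\sigma\). Since \(\pi\) is irreducible, this map is injective, so \(\pi\) is a subrepresentation of \(\rI^\sigma_{W_r}\), as claimed.
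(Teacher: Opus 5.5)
\paragraph{The injectivity step has a gap.} The paper gives no proof here and simply cites Borel. Your outline (Iwahori factorization, comparison of \(\pi^{I_r}\) with \((\pi_{N_r})^{T_0}\) where \(T_0=T\cap I_r\), an unramified quotient character, Frobenius reciprocity) is Borel's argument. However, the step that carries the theorem is not justified by what you wrote: the implication \(\pi^{I_r}\neq0\Rightarrow(\pi_{N_r})^{T_0}\neq0\), i.e.\ injectivity.

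Your sketch shows the following. If \(v\in\pi^{I_r}\) has zero image in \(\pi_{N_r}\), then \(e_{N_r\cap I_r}\pi(t^k)v=0\) for \(k\gg0\). Hence, using \(I_r=(I_r\cap\bar N_r)T_0(I_r\cap N_r)\), you get \(\CF_{I_rt^kI_r}\cdot v=0\). This does not give \(v=0\), because \(\pi(t^k)v\) is only invariant under the smaller group \(t^k(N_r\cap I_r)t^{-k}\).

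The ingredients you use (a contracting element plus an Iwahori factorization) are shared by deep principal congruence subgroups, and there the analogous injectivity is false. A supercuspidal representation has vectors fixed by such a subgroup but has zero Jacquet module. What your argument proves in general is Casselman's form of Jacquet's lemma: surjectivity onto \((\pi_{N_r})^{T_0}\), with kernel the vectors killed by some power of \(\CF_{I_rtI_r}\). Surjectivity is not what the theorem needs. Likewise, the Iwahori factorization you flagged as the main obstacle is standard and is not where \(I_r\) being Iwahori actually matters.

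\paragraph{The missing input.} What is needed is that \(\CF_{I_rtI_r}\) (equivalently \(\CF_{I_rt^kI_r}\)) is \emph{invertible} in the Iwahori--Hecke algebra \(\cH(G_r,I_r)\). By the Iwahori--Matsumoto presentation, \(\CF_{I_rwI_r}\) for \(w\) in the Iwahori--Weyl group is a product of a length-zero element (a unit) and generators \(T_s\) satisfying \(T_s^2=(q_s-1)T_s+q_s\) with \(q_s\neq0\). Each \(T_s\) is therefore invertible, and this persists with the unequal parameters of the ramified group. With this, \(\CF_{I_rt^kI_r}\cdot v=0\) forces \(v=0\), which is exactly Borel's argument, and the rest of your proof goes through.

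\paragraph{A smaller slip.} A \(T\)-eigenvector in \((\pi_{N_r})^{T_0}\) gives a \(T\)-submodule, not a quotient, so it does not by itself give the map \(\pi_{N_r}\to\chi\) you need. Instead, take a one-dimensional quotient of the finite-dimensional \(T\)-module \((\pi_{N_r})^{T_0}\), e.g.\ a common eigenvector of the transposed operators. Then compose it with the \(T\)-equivariant projection \(\pi_{N_r}\to(\pi_{N_r})^{T_0}\) given by averaging over \(T_0\); this projection is \(T\)-equivariant because \(T\) is abelian.
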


We should specify a special compact subgroup of \(G_r\) to study spherical representations. We choose \(K_r\) as in \ref{notion:stabilizer_standard}, which is a special maximal parahoric subgroup of \(G_r\) \cite{tits1979reductive}. Then we have the following notations from \cite{casselman1980unramified}
\begin{notation}
    For \(\alpha\in\Phi^+\) (the set of positive roots), we have \(q_\alpha=q\) and \(q_{\alpha/2}=1\).
\end{notation}

Now we choose representatives for elements in \(\fW_r\) so that \(\fW_r\) will be considered as a subgroup of \(G_r\), this is from \cite{liu2022theta}*{Section 2}.
\begin{notation}\label{notion:representative_standard}
    We choose representative \(\omega(w_1)\) of \(w_1\) as
    \[
        \omega(w_1):=\begin{pmatrix}
            0&0&1&0\\
            0&1_{r-1}&0&0\\
            -1&0&0&0\\
            0&0&0&1_{r-1}
        \end{pmatrix}
    \]
    and \(\omega(w_{i,i+1})\) is the element in \(K_r\) that permutes \(\{e_1,\cdots,e_r\}\) by swap \(e_i,e_{i+1}\), then the definition is extended to all elements in \(\fW_r\).
\end{notation}

We have a good basis of \((\rI_{W_r}^\sigma)^{I_r}\) by \cite{cartier1979representations}*{(28)} or \cite{casselman1980unramified}*{Proposition 2.1}
\begin{proposition}\label{prop:casselman_basis}
    Let \(\phi_{w,\sigma}\) be the unique element of \(\rI_{W_r}^\sigma\) such that for \(p\in P_r,w'\in\fW_r\) and \(i\in I_r\),
    \[
        \phi_{w,\sigma}(p\omega(w')i)=
        \begin{cases}
        \delta_{P_r}^{\frac{1}{2}}\chi_r^\sigma(p)&w'=w\\
        0&w'\neq w   
        \end{cases}
    \]
    then the functions \(\left\{\phi_{w,\sigma}\right\}_{w\in\fW_r}\) form a basis of \(\rI_{W_r}^\sigma\).
\end{proposition}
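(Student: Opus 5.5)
Two things have to be shown: each \(\phi_{w,\sigma}\) is a well-defined element of \(\rI_{W_r}^\sigma\), and the family \(\{\phi_{w,\sigma}\}_{w\in\fW_r}\) is a basis of the Iwahori-fixed subspace \((\rI_{W_r}^\sigma)^{I_r}\). The second is what the statement means, since the functions are right \(I_r\)-invariant by construction. The tool throughout is the refined Iwasawa decomposition of Theorem \ref{thm:iwasawa_decomposition}, \(G_r=\bigsqcup_{w\in\fW_r}P_r\omega(w)I_r\). By the remark after it, this is independent of the choice of representatives, so we may use the representatives \(\omega(w)\) of Notation \ref{notion:representative_standard}.

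\emph{Well-definedness.} We define \(\phi_{w,\sigma}\) to be zero off the cell \(P_r\omega(w)I_r\), and on the cell by the displayed formula. The only issue is that a point of the cell may be written as \(p\omega(w)i=p'\omega(w)i'\) in more than one way. In that case \(p^{-1}p'=\omega(w)\,i(i')^{-1}\omega(w)^{-1}\in P_r\cap\omega(w)I_r\omega(w)^{-1}\), so it suffices to show that \(\delta_{P_r}^{1/2}\chi_r^\sigma\) is trivial on \(P_r\cap\omega(w)I_r\omega(w)^{-1}\).

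This group is compact, so its image in \(T\cong P_r/N_r\) is a compact subgroup of \(T\). Every compact subgroup of \(T\) lies in the maximal compact subgroup \(T(\cO_F)\), on which both the modulus character and the unramified character \(\chi_r^\sigma\) are trivial: for \(t\in T(\cO_F)\) each eigenvalue \(a_i\) on \(e_i\) is a unit, so \(|a_i|_E=1\). Hence the formula is independent of the decomposition.

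The resulting function satisfies the left \((P_r,\delta_{P_r}^{1/2}\chi_r^\sigma)\)-equivariance by construction and is right \(I_r\)-invariant. It is locally constant because each double coset \(P_r\omega(w)I_r\) is open: it is a union of right \(I_r\)-cosets, and \(I_r\) is open. So \(\phi_{w,\sigma}\in(\rI_{W_r}^\sigma)^{I_r}\).

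\emph{Linear independence.} The \(\phi_{w,\sigma}\) have disjoint supports, and \(\phi_{w,\sigma}(\omega(w))=1\neq0\). Evaluating a linear relation at \(\omega(w)\) therefore kills the coefficient of \(\phi_{w,\sigma}\), for each \(w\).

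\emph{Spanning.} Let \(f\in(\rI_{W_r}^\sigma)^{I_r}\). Left \(P_r\)-equivariance and right \(I_r\)-invariance show that \(f\) is determined by its values \(f(\omega(w))\), \(w\in\fW_r\), because the cells exhaust \(G_r\). Then \(f-\sum_w f(\omega(w))\phi_{w,\sigma}\) vanishes at every \(\omega(w)\), hence everywhere. So \((\rI_{W_r}^\sigma)^{I_r}\) has dimension \(|\fW_r|\) with the stated basis.

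The only nonformal point is the triviality of \(\delta_{P_r}^{1/2}\chi_r^\sigma\) on \(P_r\cap\omega(w)I_r\omega(w)^{-1}\), which reduces to the compactness argument above. In the ramified setting one must use the non-split torus \(T\cong(\Res_{E/F}\dG_m)^r\) rather than \(S\); its maximal compact subgroup is still \(T(\cO_F)\) and the unramified characters are trivial on it, so this causes no trouble. Alternatively one can cite \cite{casselman1980unramified}*{Proposition 2.1} directly, since its hypotheses (an Iwahori subgroup in good position relative to the special parahoric \(K_r\)) are part of Theorem \ref{thm:iwasawa_decomposition}.
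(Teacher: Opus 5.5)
Your proof is correct and is the standard argument behind \cite{casselman1980unramified}*{Proposition 2.1} and \cite{cartier1979representations}*{(28)}, which the paper cites without further proof: the disjoint decomposition \(G_r=\bigsqcup_{w\in\fW_r}P_r\omega(w)I_r\), together with the triviality of \(\delta_{P_r}^{1/2}\chi_r^\sigma\) on the compact group \(P_r\cap\omega(w)I_r\omega(w)^{-1}\). You are also right that the conclusion should be read as a basis of the Iwahori-fixed space \((\rI_{W_r}^\sigma)^{I_r}\), as the paper itself states later in Definition \ref{def:modified_basis}, and not of the whole principal series.
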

\begin{remark}
    Our \(\phi_{w,\sigma}\) is \(\phi_{w,\chi}\) in Casselman's notation where \(\chi=\chi_r^\sigma\).

    We will sometimes omit the index \(\sigma\).
\end{remark}

Assuming for a while that \(\sigma\) is regular, i.e. for \(w\in\fW_r\), \((w.\sigma=\sigma)\Longrightarrow w=1\)
\begin{definition}[Intertwining Operator \protect{\cite{casselman1980unramified}*{Section 3, (1)}}]
    For each \(x\in K_r\) representing \(w\in \fW_r\), we denote the intertwining operator in loc. cit. also by \(T_w\).
\end{definition}
\begin{remark}
    See \cite{brubaker2024colored}*{Above Proposition 3.7} for a normalized version of the intertwining operator.
\end{remark}
\begin{definition}\label{def:casselman_c}
    For each \(1\le i\le r\), we define 
    \[
        c_{2\epsilon_i}(\sigma):=c_{2\epsilon_i}(\chi_r^\sigma)=\frac{1-q^{-1}q^{-2\sigma_i}}{1-q^{-2\sigma_i}}
    \]
    and for \(1\le i<j\le r\), we define
    \[
        c_{\epsilon_j-\epsilon_i}(\sigma):=c_{\epsilon_j-\epsilon_i}(\chi_r^\sigma)=\frac{1-q^{-1}q^{\sigma_i-\sigma_j}}{1-q^{\sigma_i-\sigma_j}}.
    \]

\end{definition}
\begin{remark}
    In Casselman's notation \cite{casselman1980unramified}*{Section 3}, we have \(q_{\alpha/2}=1,q_{\alpha}=q\),\(\chi(a_{\epsilon_i+\epsilon_j})=q^{-\sigma_i-\sigma_j}\) for \(1\le i,j\le r\) and \(\chi_{\epsilon_{j}-\epsilon_i}(a_{\epsilon_j-\epsilon_i})=q^{\sigma_i-\sigma_j}\) for \(i<j\). This definition easily extends to all roots, not only positive roots. If one wants \(|q^{-1+\sigma_i-\sigma_j}|<1\) for all positive roots \(\alpha\), then we have \(0<\RE\sigma_1<\RE\sigma_2<\cdots<\RE\sigma_r\), or equivalently, \(|q^{-\sigma_r}|<|q^{-\sigma_2}|<\cdots<|q^{-\sigma_1}|<1\).
    
    One can also refer to \cite{li1992nonvanishing}*{Section 3, around Page 188} and \cite{li2022chow}*{Section 3}.

    Use the notation from \cite{kato1981irreducibility}*{1.13}, we know that there are polynomials \(e_\alpha(\sigma),d_\alpha(\sigma)\) in \(\dC[q^{\sigma_i},q^{-\sigma_i}]\) such that \(c_\alpha(\sigma)=\frac{e_\alpha(\sigma)}{d_\alpha(\sigma)}\).
\end{remark}
\begin{remark}
    Also note that our definitions of \(T_w\) and \(c_\alpha(\sigma)\) are not correct for \emph{irregular} \(\sigma\), but most results will be extended to irregular ones by holomorphicity.
\end{remark}
\begin{definition}
    For \(w\in\fW_r\), we define
    \[
        c_w(\sigma):=c_w(\chi_r^\sigma)=\prod_{\alpha>0,w\alpha<0} c_\alpha(\sigma)
    \]
\end{definition}
The action of \(T_w\) on \((\rI^\sigma_{W_r})^{I_r}\) can be described explicitly by the following theorem of Casselman:
\begin{theorem}\label{thm:casselman_relation}\cite{casselman1980unramified}*{Theorem 3.4}
    If \(\alpha\in\Delta\) and \(\ell(w_\alpha w)>\ell(w)\), then
    \begin{equation}
        \begin{aligned}
            T_{w_\alpha}(\phi_{w,\sigma})=&(c_\alpha(\sigma)-1)\phi_{w,w_\alpha\sigma}+q^{-1}\phi_{w_\alpha w,w_\alpha\sigma}\\
            T_{w_\alpha}(\phi_{w_\alpha w,\sigma})=&\phi_{w,w_\alpha\sigma}+(c_\alpha(\sigma)-q^{-1})\phi_{w_\alpha w,w_\alpha\sigma}
        \end{aligned}
    \end{equation}
\end{theorem}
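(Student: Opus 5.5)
This is Casselman's Theorem 3.4 transported to our setting. I would follow his argument and check only that the normalizations specialize correctly: \(q_\alpha=q\) and \(q_{\alpha/2}=1\) for every positive root. The argument reduces to a rank-one computation.

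\emph{Reduction to rank one.} Fix a simple root \(\alpha\in\Delta\), with \(P_\alpha\supset P_r\) the corresponding minimal standard parabolic and \(M_\alpha\) its Levi. Realize \(T_{w_\alpha}\) as the integral
\[
T_{w_\alpha}f(g)=\int_{N_\alpha}f\bigl(\omega(w_\alpha)^{-1}ng\bigr)\,dn
\]
over the root subgroup \(N_\alpha\), with \(\mathrm{vol}(N_\alpha\cap K_r)=1\); this converges for \(\sigma\) in a cone and extends meromorphically. Since \(T_{w_\alpha}\) is \(G_r\)-equivariant, it maps \((\rI^\sigma_{W_r})^{I_r}\) into \((\rI^{w_\alpha\sigma}_{W_r})^{I_r}\). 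By Proposition~\ref{prop:casselman_basis} it is then determined by its values at the points \(\omega(w')\), \(w'\in\fW_r\).

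The condition \(\ell(w_\alpha w)>\ell(w)\) means \(w^{-1}\alpha>0\). Using the Iwahori factorization \(I_r=(I_r\cap N_r^-)(I_r\cap T)(I_r\cap N_r)\), one checks that \(\omega(w)^{-1}N_\alpha\omega(w)\) lands in \(N_r\)-directions. Consequently \(T_{w_\alpha}\phi_w\) and \(T_{w_\alpha}\phi_{w_\alpha w}\) are supported on \(P_r\omega(w)I_r\cup P_r\omega(w_\alpha w)I_r\). The whole computation therefore takes place inside \(P_r\backslash P_rP_\alpha(w\,\cdot)\), i.e. inside the rank-one group \(M_\alpha\). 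Concretely, it comes down to evaluating four integrals of the form \(\int_{N_\alpha}\phi(\omega(w_\alpha)^{-1}n\,\omega(w''))\,dn\).

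\emph{Rank-one computations.} For \(\alpha=\epsilon_{i+1}-\epsilon_i\), the relevant Levi has derived group \(\mathrm{SL}_2\) over \(E\)-points of \(\Res_{E/F}\GL_2\), and \(N_\alpha\cong E\) with residue field of size \(q\). For \(\alpha=2\epsilon_1\), the Levi is \(\rU(1,1)\) for the ramified extension, and \(N_\alpha\cong\{b\in E: b^{\tc}=-b\}\)-type group, which is \(\cong F\) (again \(q_\alpha=q\)). Because \(E/F\) is ramified there is no intermediate \(\alpha/2\) contribution, so \(q_{\alpha/2}=1\).

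In each case I decompose \(N_\alpha=(N_\alpha\cap K_r)\sqcup\bigsqcup_{k\ge1}(\text{shell of valuation }-k)\). On a shell, write \(\omega(w_\alpha)^{-1}n\) via the rank-one Iwasawa/Bruhat decomposition as an element of \(P\) times an element of \(I_r\) or \(\omega(w_\alpha)I_r\). The integral then becomes the geometric series \(\sum_{k\ge1}(1-q^{-1})\chi(a_\alpha)^k\). This series equals \(c_\alpha(\sigma)-1\) or \(c_\alpha(\sigma)-q^{-1}\), using
\[
c_\alpha=\frac{1-q^{-1}\chi(a_\alpha)}{1-\chi(a_\alpha)},
\]
with \(\chi(a_{2\epsilon_i})=q^{-2\sigma_i}\) and \(\chi(a_{\epsilon_j-\epsilon_i})=q^{\sigma_i-\sigma_j}\), as in Definition~\ref{def:casselman_c}. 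The integral over the unit piece contributes either \(1\) or \(q^{-1}\); the \(q^{-1}\) arises as the volume of \(N_\alpha\cap I_r\) relative to \(N_\alpha\cap K_r\). Matching coefficients at \(\omega(w)\) and \(\omega(w_\alpha w)\) yields the two displayed formulas. Finally, the identity is extended from the region of convergence to all regular \(\sigma\) by meromorphic continuation.

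\emph{Main obstacle.} The delicate step is the rank-one case \(\alpha=2\epsilon_1\) in the ramified \(\rU(1,1)\). There I have to verify the explicit Bruhat decomposition of \(\omega(w_1)^{-1}n(b)\) for the chosen representative \(\omega(w_1)\), and confirm that \(K_r\) (stabilizer of the \(\sharp\)-self-dual standard lattice) is special, so that no extra term analogous to \(q_{\alpha/2}\) appears. I would do this by an explicit \(2\times2\) matrix factorization
\[
\begin{pmatrix}0&1\\-1&b\end{pmatrix}=\begin{pmatrix}b^{-1}&*\\0&b\end{pmatrix}\begin{pmatrix}1&0\\-b^{-1}&1\end{pmatrix}
\]
for \(b\notin\cO\), and check that the lower unipotent factor lies in \(I_r\).
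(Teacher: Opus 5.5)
Your plan is Casselman's own rank-one proof of his Theorem~3.4, specialized to \(q_\alpha=q\) and \(q_{\alpha/2}=1\); the paper offers no argument beyond citing that theorem, so your approach matches. One bookkeeping correction: because \(N_r\cap K_r\subset I_r\) you have \(N_\alpha\cap I_r=N_\alpha\cap K_r\), so the \(q^{-1}\) appearing as \(T_{w_\alpha}\phi_w(\omega(w_\alpha w))\) is really the relative volume of \(N_\alpha\cap\omega(w_\alpha)I_r\omega(w_\alpha)^{-1}\) in \(N_\alpha\cap K_r\), which for \(\alpha=2\epsilon_1\), where \(N_\alpha=\{n(b):b\in F\}\) consists of hermitian \(b\), is \(n(\fp_F)\subset n(\cO_F)\); this integral structure, and not the trace-zero line \(\{b:b^{\tc}=-b\}\) you mention (on which the two lattices coincide and the coefficient would be \(1\)), is what makes \(q_\alpha=q\) for \(K_r\).
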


\subsection{Spherical Representations with Respect to the Standard Lattice}
In this subsection we summarize the basic properties of irreducible admissible representations of \(G_r\) with \(K_r\)-fixed vector. Some results are already known since 1970s.

Because the Hecke algebra \(\cH(G_r//K_r)\) is commutative (where we endow \(K_r\) with Haar measure 1), we know that, if \((\pi,\cV)\) is an irreducible representation of \(G_r\) such that \(\cV^{K_r}\neq 0\), then \(\dim_\dC \cV^{K_r}=1\). Furthermore, if we consider \(\rI_{W_r}^\sigma\) by Theorem \ref{thm:borel_iwahori}, we will have \((\rI_{W_r}^\sigma)^K_r\) is spanned by the function
\[
    \phi_{K_r,\sigma}(pk):=\delta_{P_r}^{\frac{1}{2}}(p)\chi^\sigma_r(p)\qquad p\in P_r,k\in K_r
\]
and 
\[
    \phi_{K_r}=\sum_{w\in\fW_r}\phi_w
\]
\begin{remark}
    Even if \(\sigma\) is not regular, \(\phi_w\)'s and \(\phi_{K_r}\) are well-defined.
\end{remark}
\begin{theorem}\label{thm:casselman_spherical_vector}\cite{casselman1980unramified}*{Theorem 3.1}
    For regular \(\sigma\),
    \[T_w(\phi_{K_r,\sigma})=c_w(\sigma)\phi_{K_r,w\sigma}\]
\end{theorem}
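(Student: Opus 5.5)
The plan is to reduce to simple reflections and then use the basis of Proposition \ref{prop:casselman_basis}. Write $\phi_{K_r,\sigma}=\sum_{w\in\fW_r}\phi_{w,\sigma}$. Since $T_w$ maps $(\rI^\sigma_{W_r})^{K_r}$ into the one-dimensional space $(\rI^{w\sigma}_{W_r})^{K_r}$ (the representative $\omega(w)$ lies in $K_r$ and $T_w$ commutes with right translation), we already know $T_w(\phi_{K_r,\sigma})=\lambda_w(\sigma)\phi_{K_r,w\sigma}$ for some scalar $\lambda_w(\sigma)$. It remains to identify $\lambda_w(\sigma)$ with $c_w(\sigma)$.

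\emph{Simple reflections.} Fix $\alpha\in\Delta$. Partition $\fW_r$ into pairs $\{w,w_\alpha w\}$ with $\ell(w_\alpha w)>\ell(w)$, and apply Theorem \ref{thm:casselman_relation} to each pair. Adding the two formulas gives
\[
T_{w_\alpha}(\phi_{w,\sigma}+\phi_{w_\alpha w,\sigma})=c_\alpha(\sigma)\phi_{w,w_\alpha\sigma}+c_\alpha(\sigma)\phi_{w_\alpha w,w_\alpha\sigma},
\]
because the coefficients combine as $(c_\alpha-1)+1=c_\alpha$ and $q^{-1}+(c_\alpha-q^{-1})=c_\alpha$. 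Summing over all pairs yields $T_{w_\alpha}\phi_{K_r,\sigma}=c_\alpha(\sigma)\phi_{K_r,w_\alpha\sigma}$, which is the statement for $w=w_\alpha$, since $c_{w_\alpha}=c_\alpha$ (the only positive root sent to a negative root by $w_\alpha$ is $\alpha$, noting $q_{\alpha/2}=1$ so no extra factor from $\epsilon_1$-type roots arises in the quasi-split case).

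\emph{General $w$.} Take a reduced expression $w=w_{\alpha_1}\cdots w_{\alpha_k}$. Use the cocycle relation $T_{w}=T_{w_{\alpha_1}}\circ T_{w'}$ for $w'=w_{\alpha_2}\cdots w_{\alpha_k}$ with $\ell(w)=\ell(w')+1$; this is valid for regular $\sigma$ from the defining integrals in \cite{casselman1980unramified}, since the unipotent integration domains factor when lengths add. Induction then gives $\lambda_w(\sigma)=c_{\alpha_1}(w'\sigma)\lambda_{w'}(\sigma)$. Finally, use the standard decomposition
\[
\{\beta>0: w\beta<0\}=\{\beta>0:w'\beta<0\}\sqcup\{w'^{-1}\alpha_1\},
\]
together with $c_{\alpha_1}(w'\sigma)=c_{w'^{-1}\alpha_1}(\sigma)$ (the roots and characters transform compatibly because $c_\beta(\chi)$ depends only on $\chi(a_\beta)$). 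This gives $\lambda_w=c_w$.

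\emph{Main obstacle.} The delicate step is the bookkeeping in this last identity. One has to check that the ad hoc formulas of Definition \ref{def:casselman_c} for the long roots $2\epsilon_i$ and the short roots $\epsilon_j-\epsilon_i$ are compatible under the Weyl action, including the negative roots $-\epsilon_i-\epsilon_j$ that appear as $w'^{-1}\alpha_1$. This amounts to verifying that $c_\beta(\sigma)$ depends only on $\chi^\sigma_r(a_\beta)$, with the remark's convention $q_\alpha=q$, $q_{\alpha/2}=1$ for all roots in the ramified quasi-split case. If this becomes messy, one can instead simply cite \cite{casselman1980unramified}*{Theorem 3.1}, whose proof is exactly this argument.
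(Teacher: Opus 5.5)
Your argument is correct. For this statement the paper gives no proof of its own and simply cites \cite{casselman1980unramified}. Your route agrees with Casselman's in the inductive step (the cocycle relation along a reduced word, plus the decomposition of $\{\beta>0: w\beta<0\}$), but it handles the base case differently. The standard proof, and Casselman's, treats a simple reflection by a rank-one computation of the defining integral. You instead obtain $T_{w_\alpha}\phi_{K_r,\sigma}=c_\alpha(\sigma)\phi_{K_r,w_\alpha\sigma}$ by summing the two relations of Theorem~\ref{thm:casselman_relation} over the pairs $\{w,w_\alpha w\}$.

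That pairing device is exactly what the paper itself uses for $\phi_{K_r}^-$, $\phi_{L_r}$ and $\phi_{L_r}^-$ in Theorems~\ref{thm:casselman_relation_almost_spherical} and~\ref{thm:casselman_relation_semi_standard}. So your version has the merit of putting the spherical case on the same footing as those arguments, and it needs no integral computation. What it does not do is reprove the analytic input. The coefficient $c_\alpha(\sigma)-1=T_{w_\alpha}\phi_{1,\sigma}(1)$ in Theorem~\ref{thm:casselman_relation} is the same rank-one integral, so you are deriving one cited result from another. That is legitimate here, since Theorem~\ref{thm:casselman_relation} is quoted before the statement, but it is not an independent proof of Casselman's formula.

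The ``main obstacle'' you flag is not really one. With the conventions in the remark after Definition~\ref{def:casselman_c}, every root $\beta=\sum_k\beta_k\epsilon_k$ satisfies $c_\beta(\sigma)=(1-q^{-1-\langle\sigma,\beta\rangle})/(1-q^{-\langle\sigma,\beta\rangle})$, where $\langle\sigma,\beta\rangle=\sum_k\beta_k\sigma_k$. This single formula reproduces $c_{2\epsilon_i}$, $c_{\epsilon_j-\epsilon_i}$, and the $c_{\epsilon_i+\epsilon_j}$ used in Proposition~\ref{prop:zeta_spherical_unimodular_split}. Since $\fW_r$ acts by signed permutations, it preserves this pairing, so $\langle w'\sigma,\alpha_1\rangle=\langle\sigma,w'^{-1}\alpha_1\rangle$. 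Hence $c_{\alpha_1}(w'\sigma)=c_{w'^{-1}\alpha_1}(\sigma)$ is immediate. Also, $w'^{-1}\alpha_1$ is always a positive root; that is exactly what $\ell(w_{\alpha_1}w')=\ell(w')+1$ means. So no negative roots such as $-\epsilon_i-\epsilon_j$ ever enter the bookkeeping.
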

Now we define the so-called \textit{spherical representations with respect to the standard lattice} as in the title of this subsection.

\begin{definition}\label{def:spherical_representation_standard}
    A spherical representation with respect to the standard lattice is an irreducible representation \((\pi,\cV)\) of \(G_r\) such that
    \(\cV^{K_r}\neq 0\). 

    If \(G_r\) is identified with the unitary group of a Hermitian space, then such representations will also be called spherical representations with respect to \(K_r\).
    
    By the Iwasawa decomposition \ref{thm:iwasawa_decomposition} and \ref{thm:borel_iwahori}, the irreducible component with \(K_r\)-fixed vector of an unramified principal series representation \(\rI_{W_r}^\sigma\) is nonzero and will be denoted by \(\pi^{\sigma}_{W_r,+}\) as in \cites{liu2022theta,li2022chow}.
\end{definition}
\begin{remark}
    This is indeed the analogue of unramified representations for \(E/F\) unramified in our ramified setting.

    We also call such representations spherical representations with respect to \(K_r\).

\end{remark}
As an easy consequence of the Satake isomorphism, the isomorphism classes of spherical representations with respect to \(K_r\) are in one-to-one correspondence with the set of \(\sigma\) modulo the action of \(\fW_r\).

The action of the finite Hecke algebra can be described as follows:
\begin{definition}\label{def:character_spherical}
    If we endow \(I_r\) with Haar measure 1. Then we have a character \[\kappa_r:\dC[I_r\backslash K_r/I_r]\longrightarrow\dC\] such that
    \[
    1_{I_r\omega(w_1)I_r}\mapsto q,\qquad 1_{I_r\omega(w_{i,i+1})I_r}\mapsto q
    \]

    If \((\pi,\cV)\) is a spherical representation with respect to \(K_r\), then the action of the finite Hecke algebra on \(\cV^{K_r}\) is given by \(\kappa_r\).
\end{definition}
\begin{remark}
    This is the same as \cite{liu2022theta}*{Definition 2.1, where \(\epsilon=+\)} except now we have \([I_r\omega(w_{i,i+1})I_r:I_r]=q\) instead of \(q^2\).
\end{remark}

We prove a modification of the Langlands classification for the \(K_r\)-spherical representations:

\begin{theorem}\label{thm:Langlands_classification_spherical}
    Assume that \(|q^{-\sigma_r}|\le\cdots\le|q^{-\sigma_1}|\le 1\), then the \(K_r\)-spherical irreducible component \(\pi_K\) of \(\rI_{W_r}^\sigma\) is a quotient of \(\rI_{W_r}^\sigma\).
\end{theorem}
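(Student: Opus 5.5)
Under the hypothesis \(|q^{-\sigma_r}|\le\cdots\le|q^{-\sigma_1}|\le 1\), the plan is to show that the spherical vector \(\phi_{K_r,\sigma}\) generates \(\rI_{W_r}^\sigma\) after passing to the contragredient side. Concretely, it suffices to prove that every nonzero subrepresentation of the \emph{dual} principal series \(\rI_{W_r}^{-\sigma}\cong(\rI_{W_r}^\sigma)^\vee\) contains the spherical vector \(\phi_{K_r,-\sigma}\). Granting this, the subrepresentation of \(\rI_{W_r}^{-\sigma}\) generated by \(\phi_{K_r,-\sigma}\) is irreducible and contained in every nonzero subrepresentation. Dualising, \(\rI_{W_r}^\sigma\) has a unique irreducible quotient, and that quotient is \(K_r\)-spherical, since the pairing between the spherical vectors is nonzero. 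By uniqueness of the \(K_r\)-spherical constituent (\(\dim(\rI^\sigma_{W_r})^{K_r}=1\)), this quotient is \(\pi_K\).

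To prove the claim we follow Casselman's standard criterion. Let \(w_0\) be the longest element of \(\fW_r\). By Theorem \ref{thm:casselman_spherical_vector} we have \(T_{w_0}\phi_{K_r,\sigma}=c_{w_0}(\sigma)\phi_{K_r,w_0\sigma}\), and \(w_0\sigma=-\sigma\). The argument proceeds in three steps.

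\textbf{Step 1.} We show that any nonzero \(G_r\)-submodule \(U\subset \rI^{-\sigma}_{W_r}\) has \(U^{I_r}\neq0\). This follows from Borel's theorem (Theorem \ref{thm:borel_iwahori}) and the equivalence of categories for Iwahori-spherical representations: every subquotient of an unramified principal series is generated by its Iwahori fixed vectors.

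\textbf{Step 2.} We show that \(U^{I_r}\) contains \(\phi_{K_r,-\sigma}\). Here \(U^{I_r}\) is a nonzero submodule of \((\rI^{-\sigma}_{W_r})^{I_r}\) for the Iwahori--Hecke algebra \(\cH(G_r/\!/I_r)\). We use the Jacquet-module description: \((\rI^{-\sigma})^{I_r}\) restricted to the commutative translation subalgebra decomposes into generalized eigenspaces indexed by \(w(-\sigma)\). The spherical vector is the image of \(e_{K}=\sum_w 1_{I_r wI_r}\) applied to any vector, and \(e_K\) acting on \(\phi_{w,-\sigma}\) can be computed explicitly from Theorem \ref{thm:casselman_relation}. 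The natural route is Casselman's formula \(e_K\cdot\phi_{1}=\) (explicit nonzero multiple of) \(\sum_w c_{w}\)-weighted terms. Under the dominance assumption, the vector in \(U^{I_r}\) with exponent \(-\sigma\) (the "antidominant" one) maps under \(e_K\) to \(c_{w_0}(\sigma)\)-type factor times \(\phi_{K_r,-\sigma}\).

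\textbf{Step 3.} We verify that this factor is nonzero. The factor is a product of numerators \(e_\alpha\), i.e.\ terms \(1-q^{-1}q^{-2\sigma_i}\) and \(1-q^{-1}q^{\sigma_i-\sigma_j}\) for \(i<j\). Each is nonzero because \(|q^{-2\sigma_i}|\le1<q\) and \(|q^{\sigma_i-\sigma_j}|\le1\) by the ordering hypothesis.

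The main obstacle is Step 2 when \(\sigma\) is irregular. There \(T_w\) and \(c_\alpha\) are not defined (denominators vanish, e.g.\ \(|q^{-\sigma_1}|=1\) or \(\sigma_i=\sigma_j\)). We would handle this by working with the normalized operators \(d_\alpha T_{w_\alpha}\), which are holomorphic in \(\sigma\), and by writing the identity \(e_K\ast(\cdot)\) as a polynomial identity in \(q^{\pm\sigma_i}\) valid on the regular locus, hence everywhere. What must then be checked is that only the numerators \(e_\alpha\) survive in the coefficient relating the minimal-exponent vector to \(\phi_{K_r,-\sigma}\). The difference from the unramified case is only \(q_\alpha=q,q_{\alpha/2}=1\); this changes the numerators but not the sign pattern, so the positivity argument of Step 3 still goes through.
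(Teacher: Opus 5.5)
\textbf{Gap: irregular \(\sigma\) on the walls.} Your reduction proves a strictly stronger statement than the theorem: that \(\phi_{K_r,\sigma}\) generates \(\rI^\sigma_{W_r}\), i.e.\ that \(\rI^\sigma_{W_r}\) has a \emph{unique} irreducible quotient. For regular \(\sigma\) this is Casselman's generation criterion, and your Steps 1--3 are fine in outline. The hypothesis, however, explicitly allows the walls of the closed chamber: \(|q^{-\sigma_1}|=1\), \(\sigma_i=\sigma_j\), \(q^{\sigma_i}=\pm1\), and so on. These walls contain the tempered parameters one actually needs, and there Step 2 fails.

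When \(w\sigma=\sigma\), the generalized eigenspaces of the translation subalgebra merge. A nonzero submodule \(U\subset\rI^{-\sigma}_{W_r}\) can then have \(U^{I_r}\) inside the kernel of \(e_K\), so nonvanishing of the numerators \(e_\alpha(\sigma)\) gives nothing. The paper's remark after Theorem \ref{thm:irreducibility_unitary_principal_series} is exactly such an example, for unramified \(\rU(1,1)\):
\begin{itemize}
\item \(q^{-2\sigma}=-1\) lies on the wall \(|q^{-\sigma}|=1\);
\item \(e_\alpha(\sigma)=1+q^{-1}\neq0\) and even \(d_\alpha(\sigma)=2\neq0\);
\item yet \(\rI^\sigma\) is a direct sum of two irreducibles, only one of them spherical, so the spherical vector does not generate.
\end{itemize}
Your argument never separates this from the ramified case; you assert the only difference is \(q_\alpha,q_{\alpha/2}\). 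So it would prove a false statement there. The proposed rescue by a polynomial identity on the regular locus cannot work either. Generation by \(\phi_K\) is a rank condition on \(\cH\cdot\phi_K\), hence open in \(\sigma\), and does not pass to limits.

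\textbf{The missing input.} What is missing is precisely where ramification enters: irreducibility of unitary principal series, i.e.\ Kato's condition \(W_\sigma=W_{(\sigma)}\), which is Theorem \ref{thm:irreducibility_unitary_principal_series}. With it your route can be repaired as follows.
\begin{enumerate}
\item Write \(\chi^\sigma_r=\chi_u\nu\), with \(\nu\) strictly dominant for the standard parabolic \(P=MN\) determined by the walls containing \(\RE\sigma\).
\item The unitary principal series \(\Ind^M\chi_u\) is irreducible: use Theorem \ref{thm:irreducibility_unitary_principal_series} for the unitary factor of \(M\), and the classical result for the \(\GL\)-factors.
\item Hence \(\rI^\sigma_{W_r}\) is a standard module with a unique Langlands quotient.
\item That quotient is spherical: for the roots of \(N\) one has \(|\chi(a_\alpha)|<1\), so the Gindikin--Karpelevich factor is finite and nonzero.
\end{enumerate}

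\textbf{Comparison with the paper.} The paper instead inducts on \(r\) through the Langlands classification. In the tempered case it uses only \emph{semisimplicity} of the unitary principal series. Since it claims only that \(\pi_K\) is \emph{a} quotient, it needs neither uniqueness of the quotient nor irreducibility.
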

\begin{proof}
    We prove by induction on \(r\). If \(r=1\) this is trivial, we may assume that the statement is true for all groups with rank at most \(r-1\).

    By the Langlands classification, we know that \(\pi_K\) is the Langlands quotient of some \(\rI_P^{G_r}\tau\chi\) (see \cite{liu2022beilinson}*{Appendix C.1} for the notations, but they did not choose a set of positive roots so the conditions differed) where \(P=MN\) and it is possible that \(P=M=G_r\). If \(P=G_r\) then \(\pi_K\) is tempered and has to be a component of unitary principal series which is semisimple and forced to be a quotient. 
    
    Now we assume \(P\neq G_r\), if \(P=P_r\), then the result is trivial again. Next we prove that \(\tau\chi\) is a quotient of \(\rI_{P_r}^M\chi_r^\sigma\), this is because by Theorem \ref{thm:borel_iwahori}, it is a component of such principal series, and it must be \(\rI_{P_r}^M\chi_r^\sigma\) (up to conjugating by an element of the relative Weyl group) by comparing the action of \(T\), then applying the induction hypothesis to \(\tau\) and the Langlands classification of general linear groups to \(\chi\), we know that \(\tau\chi\) is a quotient of \(\rI_{P_r}^M\chi_r^\sigma\). Now we get
    \begin{equation}
        \rI_{W_r}^\sigma=\rI_{P_r}^{G_r}\chi_r^\sigma=\rI_{P}^{G_r}\rI_{P_r}^M\chi^\sigma_r\twoheadrightarrow\rI_{P}^{G_r}\tau\chi\twoheadrightarrow\pi_K.
    \end{equation}
    The conclusion follows.
\end{proof}
\begin{remark}
    Dually, we know that if \(1\le|q^{-\sigma_1}|\le\cdots\le|q^{-\sigma_r}|\), then the \(K_r\)-spherical irreducible component \(\pi_K\) of \(\rI_{W_r}^\sigma\) is a subrepresentation of \(\rI_{W_r}^\sigma\).
\end{remark}

\subsection{Almost Spherical Representations with Respect to the Standard Lattice}
This subsection is to establish some results in \cite{liu2022theta}*{Section 2, Section 4} in our circumstance.

\begin{definition}\label{def:character_almost_spherical}
    We define a character \(\kappa_r^-:\dC[I_r\backslash K_r/I_r]\rightarrow\dC\) similar to \cite{liu2022theta}*{Definition 2.1}, given by
    \[
    1_{I_r\omega(w_1)I_r}\mapsto -1,\qquad 1_{I_rw_{i,i+1}I_r}\mapsto q
    \]
    for \(1\le i\le r\).
\end{definition}

Below is an analogue of \cite{liu2022theta}*{Lemma 2.2}.
\begin{lemma}\label{lem:eigenspace_span}
    The eigenspace \(\dC[I_r\backslash K_r/I_r][\kappa_r^-]\) is spanned over \(\dC\) by the function
    \[
    \fe_r^-:=\sum_{i=0}^r(-q)^{-i}1_{I_r^0\omega(w_1\cdots w_i)I_r^0}.
    \]
\end{lemma}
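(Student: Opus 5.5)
We will work in the Iwahori--Hecke algebra \(\cH_0:=\dC[I_r\backslash K_r/I_r]\) of the finite Weyl group \(\fW_r\). By the refined Bruhat decomposition \(K_r=\bigsqcup_{w\in\fW_r}I_r\omega(w)I_r\), \(\cH_0\) has basis \(T_w:=1_{I_r\omega(w)I_r}\). It is a generic Hecke algebra of type \(B_r/C_r\), with parameters read off from the indices: \([I_r\omega(w_1)I_r:I_r]=1\)? No — by Definition \ref{def:character_spherical}, \(\kappa_r(T_{w_1})=q\) and \(\kappa_r(T_{w_{i,i+1}})=q\). So each simple generator \(T_s\) satisfies the quadratic relation \((T_s-q)(T_s+1)=0\), together with the braid relations, and \(T_wT_{w'}=T_{ww'}\) whenever \(\ell(ww')=\ell(w)+\ell(w')\). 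Consequently the characters of \(\cH_0\) are given by choosing \(T_s\mapsto q\) or \(-1\), with equal values on conjugate simple reflections. Since \(w_1\) is not conjugate to \(w_{i,i+1}\) in type \(B\), \(\kappa_r^-\) (with \(T_{w_1}\mapsto-1\), \(T_{w_{i,i+1}}\mapsto q\)) is a well-defined character.

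The standard fact we use is that for a character \(\kappa\) of \(\cH_0\), the eigenspace
\[
\cH_0[\kappa]=\{f\in\cH_0: T_s * f=\kappa(T_s)f \text{ for all simple } s\}
\]
is one-dimensional. Indeed, writing \(f=\sum_w a_wT_w\), the condition \(T_s*f=\kappa(T_s)f\) is equivalent, via the quadratic relation, to \(a_{sw}=\kappa(T_s)\,q^{-1}a_w\) if \(\kappa(T_s)=q\), and to \(a_{sw}=-a_w\) if \(\kappa(T_s)=-1\), for \(\ell(sw)>\ell(w)\). This forces \(a_w=\prod\) (factors along a reduced word) \(\cdot a_1\), and consistency follows from the existence of \(\kappa\). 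Concretely, the element is
\[
\sum_w \kappa(T_w)\,q^{-\ell(w)}\cdot\bigl(\text{sign-adjusted}\bigr)\,T_w.
\]
For \(\kappa_r^-\) this gives the coefficient \((-1)^{\ell_{long}(w)}\cdot q^{-\ell_{long}(w)}\)-type weighting: generators \(w_{i,i+1}\) contribute coefficient \(1\) relative to \(T_w\) (since \(\kappa=q\) and \(T_s T_w=T_{sw}\) adds the \(q\)-weight consistently), while each occurrence of \(w_1\) contributes \(-1\) times the ratio of index factors. Hence
\[
f=\sum_w (-q)^{-\ell_{long}(w)}\,q^{\,?}\,T_w,
\]
with the exponent to be fixed by the indices \([I_r\omega(w)I_r:I_r]\).

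Next we regroup by \(I_r^0\)-double cosets. The Siegel parahoric \(I_r^0\) corresponds to the parabolic subgroup of \(\fW_r\) generated by the \(w_{i,i+1}\), namely \(S_r\). The double cosets \(S_r\backslash\fW_r/S_r\) are indexed by \(i=\ell_{long}\in\{0,\dots,r\}\), with representatives \(w_1\cdots w_i\). Since the coefficient found above depends only on \(\ell_{long}(w)\) once the \(S_r\)-directions carry \(\kappa=q\) with trivial relative weight, we get
\[
\sum_{\ell_{long}(w)=i}T_w=1_{I_r^0\omega(w_1\cdots w_i)I_r^0}
\]
as functions, because \(I_r^0 x I_r^0\) is the union of the \(I_r\)-double cosets \(I_r\omega(w)I_r\) with \(w\in S_r (w_1\cdots w_i) S_r\). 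Substituting yields \(\fe_r^-\), possibly after multiplication by a scalar.

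The main obstacle is the bookkeeping of the index factors \([I_r\omega(w)I_r:I_r]\) in the ramified setting: these are \(q\) for every simple reflection including \(w_1\), not \(q^2\) for \(w_{i,i+1}\) as in \cite{liu2022theta}. We must check that this bookkeeping gives precisely the factor \((-q)^{-i}\), uniform within each \(S_r\)-double coset. We would verify this by comparing with the analogous computation in \cite{liu2022theta}*{Lemma 2.2}, and by checking directly that \(T_{w_1}*\fe_r^-=-\fe_r^-\) and \(T_{w_{i,i+1}}*\fe_r^-=q\,\fe_r^-\). The latter holds because each \(1_{I_r^0xI_r^0}\) is left \(I_r^0\)-invariant, so \(T_{w_{i,i+1}}\) acts on it by the index \(q\). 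The relation for \(w_1\) comes from a telescoping check between adjacent terms \(i\) and \(i+1\), using the quadratic relation.
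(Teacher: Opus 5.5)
Your overall strategy is the right one, and it matches what the paper does by deferring to \cite{liu2022theta}*{Lemma 2.2}. An eigenvector in \(\dC[I_r\backslash K_r/I_r]\) is determined by its value on \(I_r\), its coefficients are constant on \(S_r\)-double cosets, and regrouping gives the functions \(1_{I_r^0\omega(w_1\cdots w_i)I_r^0}\).

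The gap is in the one computation that carries the content of the lemma: the ratio between the coefficients at levels \(i\) and \(i+1\). On a pair \(\{w,w_1w\}\) with \(\ell(w_1w)>\ell(w)\) one has \(T_{w_1}(a_wT_w+a_{w_1w}T_{w_1w})=q\,a_{w_1w}T_w+(a_w+(q-1)a_{w_1w})T_{w_1w}\). The eigenvalue \(-1\) therefore forces \(q\,a_{w_1w}=-a_w\), i.e.\ \(a_{w_1w}=-q^{-1}a_w\), not \(a_{w_1w}=-a_w\) as you state.

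Followed literally, your recursion gives \(\sum_i(-1)^i1_{I_r^0\omega(w_1\cdots w_i)I_r^0}\), which is not an eigenvector of \(T_{w_1}\) at all. ``Possibly after multiplication by a scalar'' cannot repair a wrong ratio between different \(i\). You then leave the exponent as \(q^{?}\) and defer the decisive ``telescoping check'', so the proposal never actually arrives at \((-q)^{-i}\).

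The repair is short. Your own uniform recursion \(a_{sw}=\kappa(T_s)q^{-1}a_w\) is correct for both eigenvalues and gives \(a_w=\kappa_r^-(T_w)\,q^{-\ell(w)}a_1\). Every reduced word for \(w\) contains \(w_1\) exactly \(\ell_{long}(w)\) times, so \(\kappa_r^-(T_w)=(-1)^{\ell_{long}(w)}q^{\ell(w)-\ell_{long}(w)}\). Hence \(a_w=(-q)^{-\ell_{long}(w)}a_1\), with no ``sign adjustment''. This shows the eigenspace is at most one-dimensional and identifies the only candidate.

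For the converse, that \(\fe_r^-\) really lies in the eigenspace, two checks are needed:
\begin{enumerate}
\item The \(w_{i,i+1}\)-condition is your left \(I_r^0\)-invariance argument.
\item The \(w_1\)-condition needs exactly Lemma \ref{lem:trick}: \(\ell(w_1w)>\ell(w)\) forces \(\ell_{long}(w_1w)=\ell_{long}(w)+1\). So on each such pair the coefficients are \((-q)^{-i}\) and \((-q)^{-i-1}\), which satisfy the \(2\times2\) eigen-equation above.
\end{enumerate}
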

\begin{proof}
    It is identical to that in \cite{liu2022theta}*{Lemma 2.2}.
\end{proof}
\begin{definition}\label{def:almost_spherical_representation}
    An irreducible admissible representation \((\pi,\cV)\) of \(G_r\) is called almost spherical with respect to the standard lattice, or almost spherical with respect to \(K_r\), if \(\cV^{I_r}[\kappa_r^-]\neq 0\) and the Satake parameter of \(\pi\) contains either \( q^{\pm\frac{1}{2}}\) or \(- q^{\pm\frac{1}{2}}\). We say it is regularly almost spherical when it does not contain \(-q^{\pm\frac{1}{2}}\).
\end{definition}
\begin{remark}
    The last condition is equivalent to say, if we realize \(\pi\) as an irreducible component of an unramified principal series representation \(\rI_{W_r}^\sigma\), then \(q^\sigma\) contains \(q^{\pm\frac{1}{2}}\) or \(-q^{\pm\frac{1}{2}}\). This irreducible component will be denoted by \(\pi_{W_r,-}^\sigma\) as in \cite{liu2022theta}*{Notion 5.5}.

    In most proofs in this article, the last condition is not necessary; it is only needed to reduce to the well-known spherical case.
\end{remark}
Similar to the spherical case, we want to know about \((\rI_{W_r}^\sigma)^{I_r}[\kappa_r^-]\). We have the following:
\begin{proposition}\label{prop:eigenspace_span_almost_spherical}
    The eigenspace \((\rI_{W_r}^\sigma)^{I_r}[\kappa_r^-]\) is one-dimensional and spanned by the function
    \[
        \phi_{K_r}^-:=\sum_{w\in\fW_r}(-q)^{-\ell_{long}(w)}\phi_w
    \]
\end{proposition}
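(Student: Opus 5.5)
We work on the \(|\fW_r|\)-dimensional space \((\rI_{W_r}^\sigma)^{I_r}\), which has the basis \(\{\phi_{w}\}_{w\in\fW_r}\) of Proposition \ref{prop:casselman_basis}. The finite Hecke algebra \(\dC[I_r\backslash K_r/I_r]\) acts on it by right convolution. This algebra is the Iwahori--Hecke algebra of the finite Weyl group \(\fW_r\), generated by \(T_1:=1_{I_r\omega(w_1)I_r}\) and \(T_{i}:=1_{I_r\omega(w_{i,i+1})I_r}\), with quadratic relations \((T_s-q)(T_s+1)=0\) (recall \(q_\alpha=q\) for every simple root in the ramified case). An element lies in the \(\kappa_r^-\)-eigenspace exactly when it is killed by \(T_1+1\) and by \(T_i-q\) for \(1\le i\le r-1\). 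The plan is to compute the action of each generator on the basis \(\phi_w\) explicitly, then solve these linear conditions.

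\textbf{Step 1: action of the generators.} For \(s\) a simple reflection and \(w\in\fW_r\), evaluate \(\phi_w * 1_{I_r\omega(s)I_r}\) at the points \(\omega(w')\). Write \(I_r\omega(s)I_r=\bigsqcup_{u}u\,\omega(s)I_r\), with \(u\) running over \(q\) representatives taken in the root subgroup \(U_{-\alpha_s}\cap I_r\) (or its positive analogue). The usual Bruhat-cell bookkeeping then gives the standard right-action formulas:
\begin{itemize}
\item if \(\ell(ws)>\ell(w)\), then \(\phi_w\cdot T_s=\phi_{ws}\);
\item if \(\ell(ws)<\ell(w)\), then \(\phi_w\cdot T_s=q\phi_{ws}+(q-1)\phi_w\).
\end{itemize}
Depending on conventions the roles of \(w\) and \(ws\) may be swapped, but the structure is the same. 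This matches the unramified computation in \cite{liu2022theta}*{Section 4}, except that all \(q^2\) there become \(q\). The main obstacle is checking that the unipotent elements \(u\) land in \(P_r\omega(w')I_r\) in the predicted way, especially for the long root \(2\epsilon_1\) in the ramified group. Here the root subgroup \(U_{2\epsilon_1}\) is parametrized by trace-zero elements \(\varpi_E\cO_F\)-type. I will verify the \(r=1\) case directly inside \(\rU(1,1)\) using the explicit \(\omega(w_1)\) of Notation \ref{notion:representative_standard}. The character values \(\delta_{P_r}^{1/2}\chi_r^\sigma\) do not enter, since everything is evaluated at the Weyl representatives, on which the unipotent parts are integral.

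\textbf{Step 2: solving the eigen-equations.} The map \(\phi_w\mapsto 1_{I_r\omega(w)I_r}\) intertwines the right module \((\rI_{W_r}^\sigma)^{I_r}\) with the regular right module of \(\dC[I_r\backslash K_r/I_r]\). Indeed, \(\phi_1\) generates the module, and \(\phi_1\cdot 1_{I_r\omega(w)I_r}=\phi_w\) by Step 1 applied along a reduced word. The module is therefore free of rank one, so every eigenspace for a character is one-dimensional, being isomorphic to the corresponding eigen-line in the Hecke algebra. By Lemma \ref{lem:eigenspace_span}, or its Iwahori-level analogue, that line is spanned by \(\sum_w \kappa_r^-(T_w)^{-1}\dots\)-type combinations. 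Concretely, one checks directly that \(\sum_w a_w\phi_w\) is killed by \(T_s-\kappa_r^-(T_s)\) for all \(s\) if and only if \(a_{ws}/a_w\) equals \(1\) when \(s=w_{i,i+1}\) and \(-q^{-1}\) when \(s=w_1\), for \(\ell(ws)>\ell(w)\). Iterating along reduced words gives \(a_w=(-q)^{-\ell_{long}(w)}a_1\), because \(\ell_{long}(w)\) counts the occurrences of \(w_1\) in any reduced expression. The same check with \(\kappa_r\) reproduces \(\phi_{K_r}=\sum_w\phi_w\), which serves as a sanity test of the normalizations.
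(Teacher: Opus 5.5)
Your proposal is correct and follows the same route as the paper. Both arguments identify \((\rI_{W_r}^\sigma)^{I_r}\) with the regular module of \(\dC[I_r\backslash K_r/I_r]\) via \(\phi_w\leftrightarrow 1_{I_r\omega(w)I_r}\) and then read off the \(\kappa_r^-\)-eigenline; the paper gets the identification by citing Casselman's restriction-to-\(K_r\) isomorphism, which also makes your Step 1 bookkeeping immediate, and it gets the eigenline by quoting Lemma \ref{lem:eigenspace_span} and regrouping along \(I_r^0\omega(w_1\cdots w_i)I_r^0=\bigsqcup_{\ell_{long}(w)=i}I_r\omega(w)I_r\), whereas you solve the eigen-equations directly on Iwahori double cosets.
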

\begin{proof}
    It suffices to prove that \(\phi_{K_r}^-\) lies in the eigenspace.

    We have
    \[
        \phi_{K_r}^-=\sum_{i=0}^r(-q)^{-i}\sum_{w\in\fW_r,\ell_{long}(w)=i}\phi_w
    \]
    then the proof follows from Lemma \ref{lem:eigenspace_span} and \cite{casselman1980unramified}*{Above Proposition 2.1}.
\end{proof}
We have the following easy identities:
\begin{lemma}\label{lem:casselman_relation_almost_spherical}
    Suppose that \(\sigma\) is regular. Assume that \(\ell(w_\alpha w)>\ell(w)\),
    then we have:
    \begin{equation}
        \begin{aligned}
            T_{w_\alpha}(\phi_{w,\sigma}-q^{-1}\phi_{w_\alpha w})=&(c_\alpha(\sigma)-q^{-1}-1)(\phi_{w}-q^{-1}\phi_{w_\alpha w})&\alpha=&2\epsilon_1\\
            T_{w_\alpha}(\phi_{w,\sigma}+\phi_{w_\alpha w})=&c_\alpha(\sigma)(\phi_{w}+\phi_{w_\alpha w})&\alpha=&\epsilon_{i+1}-\epsilon_i
        \end{aligned}
    \end{equation}
\end{lemma}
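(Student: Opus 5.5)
These identities follow by linearity from Casselman's relations in Theorem \ref{thm:casselman_relation}. The only preliminary point is the meaning of \(\phi_{w_\alpha w}\) on the left. It is \(\phi_{w_\alpha w,\sigma}\), while on the right all basis vectors are taken at the parameter \(w_\alpha\sigma\), since \(T_{w_\alpha}\) maps \(\rI_{W_r}^\sigma\) to \(\rI_{W_r}^{w_\alpha\sigma}\). I will write \(\phi'_{u}:=\phi_{u,w_\alpha\sigma}\) for the target basis. Regularity of \(\sigma\) is assumed so that \(T_{w_\alpha}\) and \(c_\alpha(\sigma)\) are defined and Theorem \ref{thm:casselman_relation} applies.

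Fix \(\alpha\in\Delta\) and \(w\) with \(\ell(w_\alpha w)>\ell(w)\). Applying Theorem \ref{thm:casselman_relation} to a general combination \(a\,\phi_{w,\sigma}+b\,\phi_{w_\alpha w,\sigma}\) gives
\[
T_{w_\alpha}(a\phi_{w,\sigma}+b\phi_{w_\alpha w,\sigma})=\bigl(a(c_\alpha(\sigma)-1)+b\bigr)\phi'_w+\bigl(aq^{-1}+b(c_\alpha(\sigma)-q^{-1})\bigr)\phi'_{w_\alpha w}.
\]
This is a \(2\times2\) matrix \(\begin{pmatrix}c-1&1\\ q^{-1}&c-q^{-1}\end{pmatrix}\), with \(c=c_\alpha(\sigma)\), acting on coefficient vectors \((a,b)\). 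The task is to check that \((a,b)=(1,-q^{-1})\) and \((a,b)=(1,1)\) are eigenvectors.

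For \((1,1)\), the coefficients are \(c-1+1=c\) and \(q^{-1}+c-q^{-1}=c\). This gives the eigenvalue \(c_\alpha(\sigma)\), which is the second identity.

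For \((1,-q^{-1})\), the first coefficient is \(c-1-q^{-1}\). The second is \(q^{-1}-q^{-1}c+q^{-2}=-q^{-1}(c-1-q^{-1})\). So the image is \((c-1-q^{-1})(\phi'_w-q^{-1}\phi'_{w_\alpha w})\), which is the first identity. (The trace \(2c-1-q^{-1}\) confirms that the two eigenvalues are \(c\) and \(c-1-q^{-1}\).)

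The computation is the same for both types of simple root; the choice of \(\alpha\) only decides which eigenvector is recorded. The case \(\alpha=2\epsilon_1\) corresponds to the sign \(\kappa_r^-(1_{I_r\omega(w_1)I_r})=-1\) and the coefficient ratio \((-q)^{-1}\) in \(\phi_{K_r}^-\). The case \(\alpha=\epsilon_{i+1}-\epsilon_i\) corresponds to the trivial sign, as in \(\phi_{K_r}\).

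The only real obstacle is bookkeeping. One must check that Casselman's normalization in Theorem \ref{thm:casselman_relation} uses \(q_\alpha=q\) for both root types in our ramified setting, which holds by the Notation. One must also read the right-hand sides with basis vectors at \(w_\alpha\sigma\). Neither step requires new input.
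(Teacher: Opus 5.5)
Your proposal is correct and is the verification the paper leaves implicit: the paper states the lemma as an ``easy identity'' with no proof, and it follows by applying Theorem~\ref{thm:casselman_relation} to the two indicated linear combinations, exactly as your $2\times 2$ eigenvector computation does, reading the basis vectors on the right-hand side at $w_\alpha\sigma$. Your side remark is also correct: both eigenvectors, with eigenvalues $c_\alpha(\sigma)$ and $c_\alpha(\sigma)-1-q^{-1}$, work for either type of simple root, and the paper only records the one needed for each root in the proof of Theorem~\ref{thm:casselman_relation_almost_spherical}.
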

Our main theorem in this subsection is 
\begin{theorem}\label{thm:casselman_relation_almost_spherical}
    Assume that \(\sigma\) is regular, then
    \[T_w(\phi_{K_r}^-)=\prod_{\alpha>0,w\alpha<0} c_{\alpha}''(\sigma)\phi_{K_r}^-\]
    where
    \[
        c_\alpha''(\sigma)=\begin{cases}
            c_\alpha(\sigma)-q^{-1}-1=\frac{q^{-2\sigma_i+1}-1}{q(1-q^{-2\sigma_i})}&\alpha=2\epsilon_i,1\le i\le r\\
            c_\alpha(\sigma)&\alpha=\epsilon_j\pm\epsilon_i,1\le i<j\le r
        \end{cases}
    \]
\end{theorem}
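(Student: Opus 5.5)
We argue by induction on \(\ell(w)\), reducing to the case of a simple reflection and then using the cocycle property of Casselman's intertwining operators. For regular \(\sigma\) we have \(T_{w_\alpha w}=T_{w_\alpha}\circ T_w\) whenever \(\ell(w_\alpha w)=\ell(w)+1\) (with \(T_w\) viewed as a map \(\rI^{\sigma}_{W_r}\to\rI^{w\sigma}_{W_r}\), as in \cite{casselman1980unramified}*{Section 3}). The set of positive roots made negative by \(w_\alpha w\) is
\[
\{\beta>0: w_\alpha w\beta<0\}=\{\beta>0:w\beta<0\}\sqcup\{w^{-1}\alpha\}.
\]
Hence it suffices to prove the rank-one identity
\[
T_{w_\alpha}(\phi^-_{K_r,\sigma})=c''_\alpha(\sigma)\,\phi^-_{K_r,w_\alpha\sigma}\qquad(\alpha\in\Delta),
\]
and then to check that the factors match. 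The induction gives \(c''_{\alpha}(w\sigma)\) for the last step, and \(c''_\alpha(w\sigma)=c''_{w^{-1}\alpha}(\sigma)\) by the definition of \(c_\alpha\) extended to all roots. The formula for \(c''\) is visibly \(\fW_r\)-equivariant: long roots \(2\epsilon_i\) are permuted among long roots, and short roots among short roots. The only subtlety is when \(w^{-1}\alpha\) is a negative-root version such as \(-2\epsilon_i\) or \(\epsilon_i-\epsilon_j\). Since \(w^{-1}\alpha>0\) in our situation, the stated formula directly applies.

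For the rank-one identity we pair the basis vectors \(\{\phi_w\}\) into orbits \(\{w,w_\alpha w\}\) with \(\ell(w_\alpha w)>\ell(w)\), and rewrite \(\phi^-_{K_r}\) as a sum over these pairs.

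For \(\alpha=\epsilon_{i+1}-\epsilon_i\), left multiplication by \(w_\alpha\) permutes coordinates and does not change the number of \(-1\)'s. So \(\ell_{long}(w_\alpha w)=\ell_{long}(w)\), and each pair contributes \((-q)^{-\ell_{long}(w)}(\phi_w+\phi_{w_\alpha w})\). Lemma \ref{lem:casselman_relation_almost_spherical} then gives the eigenvalue \(c_\alpha(\sigma)\).

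For \(\alpha=2\epsilon_1\), left multiplication by \(w_1\) changes the sign in exactly one coordinate. We must check that \(\ell(w_1w)>\ell(w)\) happens exactly when \(\ell_{long}(w_1w)=\ell_{long}(w)+1\). The statement requires the correct convention for which coordinate of \(w_I\) is flipped by \(w_1 w\) versus \(w w_1\), compatible with the representatives of Notation \ref{notion:representative_standard}. Granting it, the pair contributes \((-q)^{-\ell_{long}(w)}(\phi_w-q^{-1}\phi_{w_1w})\), and Lemma \ref{lem:casselman_relation_almost_spherical} gives the eigenvalue \(c_{2\epsilon_1}(\sigma)-q^{-1}-1\). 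Simplifying, using \(c_{2\epsilon_1}(\sigma)=(1-q^{-1}q^{-2\sigma_1})/(1-q^{-2\sigma_1})\), yields the displayed closed form.

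Before any of this we verify Lemma \ref{lem:casselman_relation_almost_spherical} itself. It is a direct linear combination of the two lines of Theorem \ref{thm:casselman_relation}. For instance, for the long root,
\[
T(\phi_w-q^{-1}\phi_{w_1w})=(c-1-q^{-1})\phi_w+\bigl(q^{-1}-q^{-1}(c-q^{-1})\bigr)\phi_{w_1w},
\]
and the second coefficient equals \(-q^{-1}(c-1-q^{-1})\), as needed.

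The main obstacle is this bookkeeping in the long-root case: showing that "length increases under left multiplication by \(w_1\)" coincides with "one more \(-1\) in \(w_I\)", i.e. that the coefficients \((-q)^{-\ell_{long}}\) in Proposition \ref{prop:eigenspace_span_almost_spherical} really pair up as \(\phi_w-q^{-1}\phi_{w_1w}\). If the conventions turn out to flip the wrong coordinate, the fallback is not to track them at all. Instead, use that \(T_{w_\alpha}\) intertwines the \(\dC[I_r\backslash K_r/I_r]\)-actions, so it maps the one-dimensional \(\kappa_r^-\)-eigenspace of \(\rI^\sigma_{W_r}\) into that of \(\rI^{w_\alpha\sigma}_{W_r}\). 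This already forces \(T_{w_\alpha}\phi^-_{K_r,\sigma}=c\,\phi^-_{K_r,w_\alpha\sigma}\) for some scalar \(c\), and then \(c\) can be read off from the coefficient of \(\phi_{1}\) using Theorem \ref{thm:casselman_relation} with \(w=1\).
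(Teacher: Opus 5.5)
Your main line is the paper's proof. You use Casselman's cocycle induction to reduce to simple reflections, pair \(\phi_w\) with \(\phi_{w_\alpha w}\), and apply Lemma \ref{lem:casselman_relation_almost_spherical}. The step you ``grant'' in the long-root case is exactly the paper's Lemma \ref{lem:trick}, which the paper proves by citing Bj\"orner--Brenti's combinatorial description of length.

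You do not need to grant it, and no convention-tracking is involved. Your own identity \(\{\beta>0: w_1 w\beta<0\}=\{\beta>0:w\beta<0\}\sqcup\{w^{-1}(2\epsilon_1)\}\) adds a \emph{long} root to the inversion set. Moreover, \(\ell_{long}(w)\) is the number of positive long roots \(2\epsilon_i\) that \(w\) sends to negative roots, since \(w(2\epsilon_i)=2w_I(\epsilon_{\tau(i)})\) is negative iff \(\tau(i)\in I\). Hence \(\ell_{long}(w_1w)=\ell_{long}(w)+1\). Equivalently, \(\ell(w_1w)>\ell(w)\) iff \(w^{-1}(2\epsilon_1)>0\), which for \(w=w_Iw_\tau\) holds iff \(1\notin I\).

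Your fallback is also correct, and it is a genuinely different route to the rank-one identity. Since \(T_{w_\alpha}\) is \(G_r\)-equivariant, it carries the one-dimensional \(\kappa_r^-\)-eigenspace of Proposition \ref{prop:eigenspace_span_almost_spherical} for \(\sigma\) into the one for \(w_\alpha\sigma\). So \(T_{w_\alpha}\phi^-_{K_r,\sigma}\) is a scalar multiple of \(\phi^-_{K_r,w_\alpha\sigma}\), and the scalar is its \(\phi_1\)-coefficient. Because \(T_{w_\alpha}\) preserves each block \(\{w,w_\alpha w\}\), only the block \(\{1,w_\alpha\}\) contributes to that coefficient, giving \((c_\alpha(\sigma)-1)+(-q)^{-\ell_{long}(w_\alpha)}\). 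This equals \(c_\alpha(\sigma)-1-q^{-1}\) for \(\alpha=2\epsilon_1\) and \(c_\alpha(\sigma)\) for \(\alpha=\epsilon_{i+1}-\epsilon_i\).

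The two routes trade off as follows. The fallback bypasses both Lemma \ref{lem:trick} and Lemma \ref{lem:casselman_relation_almost_spherical}, at the cost of invoking multiplicity one of \(\kappa_r^-\) in \((\rI^\sigma_{W_r})^{I_r}\). The paper's block-by-block computation gives more information, namely the action on every two-dimensional block. It is also the template the paper reuses for the twisted bases in Theorem \ref{thm:casselman_relation_semi_standard}.
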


Before we prove this theorem, we prove a group-theoretic lemma which should be known to experts and is indeed crucial in the proof of Theorem \ref{thm:casselman_relation_almost_spherical} and those similar argument in the following sections.
\begin{lemma}\label{lem:trick}
    Let \(w=w_Iw_\tau\in\fW_r\). Assume that \(\ell(w_1 w)>\ell(w)\), then \(1\notin I\). In particular, \(\ell_{long}(w_1 w)>\ell_{long}(w)\).
\end{lemma}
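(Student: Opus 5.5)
The plan is to turn the length comparison into a statement about inversions, using the standard description of the length function on the hyperoctahedral group \(\fW_r=\{\pm1\}^r\rtimes S_r\). Throughout I use the convention of \ref{notion:roots_quasi_split}: \(\Delta=\{\epsilon_{i+1}-\epsilon_i\}\cup\{2\epsilon_1\}\), \(w_1=w_{2\epsilon_1}\), and
\[
  \ell(w)=\#\{\alpha\in\Phi^+ : w^{-1}\alpha<0\}.
\]

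The main step is the standard criterion for a simple reflection: for \(\alpha\in\Delta\) one has \(\ell(w_\alpha w)>\ell(w)\) if and only if \(w^{-1}\alpha>0\). This holds because \(w_\alpha\) permutes \(\Phi^+\setminus\{\alpha\}\), so the inversion set of \((w_\alpha w)^{-1}=w^{-1}w_\alpha\) differs from that of \(w^{-1}\) only in whether \(\alpha\) is included. Applying it with \(\alpha=2\epsilon_1\), the hypothesis \(\ell(w_1w)>\ell(w)\) becomes \(w^{-1}(2\epsilon_1)>0\).

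Next I compute \(w^{-1}(2\epsilon_1)\) for \(w=w_Iw_\tau\). The factor \(w_\tau\) permutes the \(\epsilon_j\), and \(w_I\) negates \(\epsilon_i\) exactly for \(i\in I\). Hence \(w^{-1}=w_\tau^{-1}w_I\), and
\[
  w^{-1}(2\epsilon_1)=
  \begin{cases}
    2\epsilon_{\tau^{-1}(1)}, & 1\notin I,\\
    -2\epsilon_{\tau^{-1}(1)}, & 1\in I.
  \end{cases}
\]
Positivity of this root forces \(1\notin I\).

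For the ``in particular'' statement, note that \(w_1w=(w_1w_I)w_\tau=w_{I\cup\{1\}}w_\tau\). Since \(1\notin I\), the set \(I\cup\{1\}\) has one more element than \(I\), so \(\ell_{long}(w_1w)=|I|+1>\ell_{long}(w)\).

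The only point that needs care is the convention: whether \(w_I\) acts before or after \(w_\tau\), and whether one should test \(w\alpha\) or \(w^{-1}\alpha\). I will fix the convention as in \ref{notion:weyl_group_decomposition}, where \(I\) is the set of coordinates with \(-1\) in the factor \(w_I\) that is multiplied on the left by \(w_1\). With that convention, left multiplication by \(w_1\) toggles membership of \(1\) in \(I\). As a sanity check, for \(r=1\): if \(w=1\), then \(I=\emptyset\) and \(\ell(w_1)=1>0\); if \(w=w_1\), then \(\ell(w_1w)=0<1\). Both agree with the claim.
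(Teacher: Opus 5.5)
Your proof is correct, but it takes a different route from the paper. The paper's proof uses the explicit length formula for signed permutations from Bj\"orner--Brenti (Proposition 8.1.1, \(\ell=\mathrm{inv}+\mathrm{neg}+\mathrm{nsp}\), rewritten as their (8.2), \(\ell(v)=\mathrm{inv}(v)-\sum_{j\in\mathrm{Neg}(v)}v(j)\)). It then tracks what left multiplication by \(w_1\) does to this count: \(w_1\) flips the sign of the value \(\pm1\), which leaves the ordinary inversion count unchanged and moves the negative-sum term by exactly \(\pm1\). So the length goes up precisely when that value was positive, i.e.\ when \(1\notin I\).

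You instead use the abstract criterion \(\ell(w_\alpha w)>\ell(w)\iff w^{-1}\alpha>0\) for simple \(\alpha\). This reduces everything to the sign of the single root \(w^{-1}(2\epsilon_1)\), which is \(\pm 2\epsilon_k\) with the sign fixed by whether \(1\in I\).

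The two arguments are the same count in different clothing: the three terms in the Bj\"orner--Brenti formula count inverted positive roots of the shapes \(\epsilon_j-\epsilon_i\), \(\epsilon_i+\epsilon_j\) and \(2\epsilon_i\).

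Your version has two advantages.
\begin{enumerate}
\item It needs no translation between Bj\"orner--Brenti's window-notation conventions (left versus right multiplication, values versus positions) and the paper's decomposition \(w=w_Iw_\tau\). The paper's terse proof leaves that translation implicit.
\item It gives part (2)(iii) of Lemma~\ref{lem:casselman_relation_semi_standard} in one line. If \(i\notin I\) and \(i+1\in I\), then \(w^{-1}(\epsilon_{i+1}-\epsilon_i)=-(\epsilon_a+\epsilon_b)\) is negative, so \(\ell(w_{i,i+1}w)<\ell(w)\).
\end{enumerate}
The paper's route, in exchange, stays within the explicit inversion-counting picture of signed permutations, which is the same combinatorial viewpoint used in the appendix's Poincar\'e-polynomial identities.
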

\begin{proof}
    The length function \(\ell\) has a combinatoric interpretation in \cite{bjorner2005combinatorics}*{Proposition 8.1.1}.
    Note that in the notation of the citation,
    \begin{equation}
        \begin{aligned}
            &|\{(i, j) \in[n] \times[n] : i \le j, v(-i) > v(j)\}|\\
            =&|\{(i, j) \in[n] \times[n] : i <j, 0 >v(i)+ v(j)\}|+|\{i \in[n] : 0 > v(i)\}|
        \end{aligned}
    \end{equation}
    The lemma follows by \cite{bjorner2005combinatorics}*{(8.2)}
\end{proof}
Now we turn to the proof of Theorem \ref{thm:casselman_relation_almost_spherical}.
\begin{proof}
    By using the induction argument in the end of the proof of \cite{casselman1980unramified}*{Theorem 3.1}, it suffices to prove that
    \[T_{w_\alpha}\phi_{K_r}^-=c_\alpha''(\sigma)\phi_{K_r}^-,\quad \alpha\in\Delta\]

    For \(\alpha=\epsilon_{i+1}-\epsilon_i\) this is easy as the left multiplication by \(w_{i,i+1}\) preserves \(\ell_{long}\).

    For \(\alpha=2\epsilon_1\), by Lemma \ref{lem:trick} we know that \(\ell(w_1w)>\ell(w)\Longrightarrow \ell_{long}(w_1w)=1+\ell_{long}(w)\). One can thus split the sum in the proof of Proposition \ref{prop:eigenspace_span_almost_spherical} into pairs of \((w_1 w_Iw_\tau,w_Iw_\tau)\) with \(1\notin I\) and \(\ell(w_1 w_Iw_\tau)>\ell(w_Iw_\tau)\). Hence we can apply Lemma \ref{lem:casselman_relation_almost_spherical} to get the result.
\end{proof}
\begin{remark}
    In fact, this theorem was used silently in the calculation of ``\(C_{\bw'_r}^-(s)\)'' in \cite{liu2022theta}*{(5.5)}.
\end{remark}
\subsection{Spherical Representations with Respect to the Semi-standard Lattice}
Recall that from \ref{unimodular_lattice_split}, \(L_r\) is the stabilizer of the semi-standard lattice
\[
\Lambda_r'=(\bigoplus_{i=1}^r\cO_Ee_i)\oplus(\bigoplus_{i=r+1}^{2r}\varpi_E\cO_Ee_i).
\]
It is \emph{not} a parahoric subgroup of \(G_r\) by \cite{pappas2008twisted}*{Page 133}. However, we will prove that it satisfies some good properties like Iwasawa decomposition.
\begin{definition}\label{def:spherical_representations_semi_standard}
    An irreducible representation \((\pi,\cV)\) of \(G_r\) is called spherical with respect to \(L_r\), or spherical with respect to the semi-standard lattice, if \(\cV^{L_r}\) is not zero.
\end{definition}

\begin{remark}
    We should warn that, if one consider the corresponding parabolic of \(I_r^0\) in \(\rO(2n,\dF_q)\), it becomes lower triangular! So if one consider the Iwahori \(I_r\), it also corresponds to a lower triangular Borel. To avoid this confusion, we will always use the basis \(\{e_1,\cdots,e_{2r}\}\) and the standard notations for the skew-Hermitian spaces for this section for consistency with previous subsections.
\end{remark}
\begin{proposition}\label{prop:iwahori_in_L}
    We have \(I_r^0\subset L_r\).
\end{proposition}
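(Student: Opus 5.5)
The plan is to describe $I_r^0$ as the stabilizer of a lattice chain, and then to check directly that the middle lattice of that chain is the semi-standard lattice $\Lambda_r'$. Since $I_r^0\subset K_r$ stabilizes $\Lambda_r$, it acts on $\Lambda_r/\varpi_E\Lambda_r\cong k_E^{2r}$ through reduction modulo $\varpi_E$. By definition (Bruhat--Tits), the Siegel parahoric $I_r^0$ is the preimage in $K_r$ of the Siegel parabolic of the reductive quotient of $K_r$ acting on $k_E^{2r}$. Equivalently, $I_r^0$ is the subgroup of $K_r$ preserving a fixed maximal isotropic subspace $\bar U\subset k_E^{2r}$ (isotropic for the reduction of the appropriately rescaled form). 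The first step is to identify $\bar U$ concretely. I expect it to be the image of $\bigoplus_{i=1}^r\cO_Ee_i$, that is, the image of $\Lambda_r'$ in $\Lambda_r/\varpi_E\Lambda_r$.

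This identification is the main obstacle, and it is the source of the warning in the preceding remark. With the row convention $x\mapsto xg$, the Siegel parabolic $P_r^0$ stabilizes $\langle e_{r+1},\dots,e_{2r}\rangle$. Passing to the reductive quotient (an orthogonal group over $k_E$, since $\varpi_E^{-1}\langle\cdot,\cdot\rangle_r$ reduces to a symmetric form) and taking the parahoric that is compatible with $P_r$ flips which block must vanish modulo $\varpi_E$. I would first make this precise with the root-subgroup description of $I_r^0$. It is generated by $T(\cO_F)$, the integral points of the root subgroups $U_\alpha$ for $\alpha\in\Phi^+\cup\{\pm(\epsilon_j-\epsilon_i)\}$ coming from the Levi, and the $\varpi_E$-divisible points of the remaining negative root subgroups. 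In block form this says that $g=\begin{pmatrix}a&b\\ c&d\end{pmatrix}\in K_r$ lies in $I_r^0$ exactly when the off-diagonal block that moves $\langle e_1,\dots,e_r\rangle$ into $\langle e_{r+1},\dots,e_{2r}\rangle$ is $\equiv0\pmod{\varpi_E}$. Under the conventions of the footnote and remark, this block is $b$.

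Granting this, the inclusion is a direct check. Take $g\in I_r^0$ and $x=(u,\varpi_Ev)\in\Lambda_r'$ with $u,v\in\cO_E^r$. Then
\[
xg=(ua+\varpi_Evc,\ ub+\varpi_Evd).
\]
The first component is integral because $a,c$ are integral. The second component lies in $\varpi_E\cO_E^r$ because $b\equiv0\pmod{\varpi_E}$ and $d$ is integral. Hence $\Lambda_r'g\subseteq\Lambda_r'$. Applying the same argument to $g^{-1}\in I_r^0$ gives equality, so $g\in L_r$.

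As a sanity check I would verify that the case $r=1$ agrees with the explicit representative $\omega(w_1)$. This element should not lie in $L_1$, since it swaps $e_1$ and $e_2$ up to sign and so does not preserve $\cO_Ee_1\oplus\varpi_E\cO_Ee_2$. It should also not lie in $I_1^0$. If this check instead shows that $I_r^0$ is the preimage of the opposite Siegel parabolic, then, as the footnote to the semi-standard lattice notation permits, I would replace $\Lambda_r'$ by its $G_r$-conjugate $\bigl(\bigoplus_{i\le r}\varpi_E\cO_Ee_i\bigr)\oplus\bigl(\bigoplus_{i>r}\cO_Ee_i\bigr)$. The same computation, now using $c\equiv0$, then gives the inclusion.
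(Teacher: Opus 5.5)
Your strategy (check directly that every $g\in I_r^0$ preserves $\Lambda_r'$) is the paper's, which simply calls this check obvious. But your execution puts the congruence on the wrong block, and your fallback changes the statement.

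The parahoric attached to $P_r^0$ contains $P_r^0\cap K_r$. So for $g=\begin{pmatrix}a&b\\ c&d\end{pmatrix}\in I_r^0$ the upper-right block $b$ is an \emph{arbitrary} integral matrix, and it is $c$ that is $\equiv0\pmod{\varpi_E}$. This is exactly what your own root-subgroup description gives, since $2\epsilon_i$ and $\epsilon_i+\epsilon_j$ lie in $\Phi^+$ and occupy the $b$-block. No convention for vectors changes this. Consequently, with the row action $x\mapsto xg$ in which you compute, the inclusion is false. For $\beta=\diag(1,0,\dots,0)$ one has $n(\beta)\in P_r^0\cap K_r\subset I_r^0$, but $e_1\,n(\beta)=e_1+e_{r+1}\notin\Lambda_r'$.

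Replacing $\Lambda_r'$ by $\varpi_E\cO_E^r\oplus\cO_E^r$ is not allowed. $L_r$ is by definition the stabilizer of $\Lambda_r'$, and $I_r^0$ stabilizes exactly one of the two lattices. They are exchanged by $\omega(w_\ell)$, which conjugates $I_r^0$ to the opposite Siegel parahoric, so the fallback proves a different inclusion. Your $r=1$ test cannot detect any of this: $\omega(w_1)$ lies in neither $I_1^0$ nor its opposite and stabilizes neither lattice. The informative test element is $n(1)$.

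What is needed is to fix the convention the paper uses in this section: $G_r$ acts on column vectors from the left. That is what $g\Lambda_r'$ in the paper's proof means. It also matches the displayed shape of $I_r$ in the Hecke-algebra subsection: lower-left block in $\fp_E$, upper-right block in $\cO_E$, and the matrix in the basis $e_1,\dots,e_r,\varpi_E e_{r+1},\dots,\varpi_E e_{2r}$ of $\Lambda_r'$ again integral. It matches the warning that the image of $I_r^0$ in $\rO(2r,\dF_q)$ is lower triangular as well. With this convention, for $g\in I_r^0$ and $u,v\in\cO_E^r$,
\[
g\begin{pmatrix}u\\ \varpi_E v\end{pmatrix}=\begin{pmatrix}au+\varpi_E bv\\ cu+\varpi_E dv\end{pmatrix}\in\Lambda_r'
\]
precisely because $c\equiv0\pmod{\varpi_E}$. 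Applying this to $g^{-1}$ gives $g\Lambda_r'=\Lambda_r'$.

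Incidentally, the reductive quotient of $K_r$ is $\mathrm{Sp}_{2r}(k_E)$, not an orthogonal group. The form $\langle\cdot,\cdot\rangle_r$ is $\cO_E$-valued on $\Lambda_r$ and reduces to a nondegenerate alternating form. The orthogonal group is the reductive quotient of $L_r$. The ``flip'' in the paper's warning concerns how $I_r^0$ looks inside that orthogonal group, not which block of $g$ is divisible by $\varpi_E$.
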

\begin{proof}
    It suffices to prove that for any element \(g\in I_r^0\), we have \(g\Lambda_r'\subset \Lambda_r'\). This is obvious by our choice of \(\Lambda_r'\).
\end{proof}
Now we prove that \(L_r\) satisfies the Iwasawa decomposition:
\begin{theorem}\label{thm:iwasawa_semi_standard}
    For each \(w\in \fW_r\), there exists a representative \(\Omega(w)\in L_r\) and
    \[
        G_r=\bigsqcup_{w\in\fW_r}P_r\Omega(w)I_r=P_rL_r
    \]
    and we have \(P_r\cap L_r=(T\cap L_r)(N_r\cap L_r)\).
\end{theorem}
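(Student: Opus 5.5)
The plan is to build the representatives $\Omega(w)$ by correcting the standard representatives $\omega(w)$ of Notation \ref{notion:representative_standard} by suitable torus elements. Then we transport the refined Iwasawa decomposition $G_r=\bigsqcup_{w}P_r\omega(w)I_r$ of Theorem \ref{thm:iwasawa_decomposition}.

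\textbf{Constructing $\Omega(w)$.} The permutation representatives $\omega(w_{i,i+1})$ permute $e_1,\dots,e_r$ and $e_{r+1},\dots,e_{2r}$ separately, so they already stabilize $\Lambda_r'$. The problem is $\omega(w_1)$, which sends $e_1\mapsto -e_{r+1}$ and $e_{r+1}\mapsto e_1$, and hence does not preserve $\Lambda_r'$. We replace it by
\[
\Omega(w_1):=\omega(w_1)\,t_1,\qquad t_1=\diag(\varpi_E^{-1},1,\dots,1,\varpi_E^{\tc},1,\dots,1)\in T,
\]
where the nontrivial entries sit in positions $1$ and $r+1$. Up to signs, this is the element swapping $e_1\leftrightarrow\varpi_E e_{r+1}$, and it lies in $G_r$ because $\varpi_E^{\tc}\varpi_E^{-1}\cdot\varpi_E^{-\tc}\cdots$ balances the skew-Hermitian form. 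One checks directly on the basis $e_1,\varpi_E e_{r+1}$ of the relevant rank-two piece that it stabilizes $\Lambda_r'$.

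For a general $w=w_Iw_\tau$ we set $\Omega(w):=\omega(w_\tau)\prod_{i\in I}\Omega(w_i)$, with $\Omega(w_i)$ defined analogously in coordinates $i$ and $r+i$. Each factor lies in $L_r$, and $\Omega(w)\in\omega(w)T$, so $\Omega(w)=\omega(w)t_w$ with $t_w\in T$.

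\textbf{Transporting the decomposition.} The double coset $P_r\omega(w)I_r$ contains $P_r\omega(w)t_wI_r$ only if $t_w$ normalizes $I_r$ modulo $T\cap I_r$. This fails, since $t_w$ is not integral. So instead we use $\omega(w)t_w=(\omega(w)t_w\omega(w)^{-1})\omega(w)$ with $\omega(w)t_w\omega(w)^{-1}\in T\subset P_r$. This gives $P_r\Omega(w)=P_r\omega(w)$, and therefore
\[
P_r\Omega(w)I_r=P_r\omega(w)I_r.
\]
The disjoint decomposition $G_r=\bigsqcup_w P_r\Omega(w)I_r$ then follows immediately from Theorem \ref{thm:iwasawa_decomposition}.

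Next, Proposition \ref{prop:iwahori_in_L} gives $I_r\subset I_r^0\subset L_r$, and $\Omega(w)\in L_r$, so $G_r=P_rL_r$.

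\textbf{The intersection $P_r\cap L_r$.} This is the step I expect to need the most care. Let $p=\begin{pmatrix}a&b\\0&{}^ta^{\tc,-1}\end{pmatrix}\in L_r$ with $a$ lower triangular. I would show that the diagonal of $a$ consists of units, using that $p$ preserves $\Lambda_r'$ and its $\sharp$-dual. Concretely:
\begin{itemize}
\item $a$ preserves $\bigoplus_{i\le r}\cO_Ee_i$ modulo the second block, so $a\in M_r(\cO_E)$;
\item ${}^ta^{\tc,-1}$ preserves $\bigoplus_{i>r}\varpi_E\cO_Ee_i$, so $a^{-1}$ is integral as well;
\item hence $a\in\GL_r(\cO_E)$ and its diagonal entries are units.
\end{itemize}
The torus part $t=\diag(a_{ii})$ then lies in $T\cap L_r$. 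The remaining factor $t^{-1}p$ lies in $N_r\cap L_r$, because it equals $p'\in N_r$ and $L_r$ is a group containing $t$. This gives $P_r\cap L_r=(T\cap L_r)(N_r\cap L_r)$.

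The main obstacle is the explicit verification that $\Omega(w_1)$ is unitary and preserves $\Lambda_r'$. The sign conventions for $\varpi_E^\tc=-\varpi_E$ must be handled carefully there; I would do this as a direct $2\times2$ block computation.
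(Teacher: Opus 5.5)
Your strategy is the paper's: correct $\omega(w)$ by a torus element so that it lands in $L_r$, note that $P_r\Omega(w)=P_r\omega(w)$ because the correction lies in $T\subset P_r$, use $I_r\subset I_r^0\subset L_r$ to get $G_r=P_rL_r$, and read off $P_r\cap L_r$ from integrality of the Levi block. Your argument that $a\in\GL_r(\cO_E)$, and hence has unit diagonal, usefully fills in what the paper only asserts. However, the one verification you deferred actually fails for the element you chose.

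\textbf{The convention clash.} The inclusion $I_r^0\subset L_r$ of Proposition \ref{prop:iwahori_in_L}, which you need for $G_r=P_rL_r$, holds only when matrices act on column vectors from the left. For $g=\begin{pmatrix}a&b\\c&d\end{pmatrix}$ in $K_r$, the column action preserves $\Lambda_r'$ exactly when $c$ has entries in $\fp_E$, which is the defining condition of $I_r^0$. The right action on row vectors would instead require $b\in M_r(\fp_E)$, and this fails for $I_r$. In the column convention, your $\Omega(w_1)=\omega(w_1)t_1$ acts on the $\{e_1,e_{r+1}\}$-block by $\begin{pmatrix}0&-\varpi_E\\-\varpi_E^{-1}&0\end{pmatrix}$. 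It therefore sends $e_1\mapsto-\varpi_E^{-1}e_{r+1}\notin\Lambda_r'$, so it is not in $L_r$. Your description ``swapping $e_1\leftrightarrow\varpi_E e_{r+1}$'' is correct only for the row action, and there the step $I_r\subset L_r$ breaks instead. So, as written, your two ingredients cannot both hold under one fixed convention.

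\textbf{The repair.} Put the torus factor on the other side: $\Omega(w_1):=t_1\,\omega(w_1)$. On that block this is $\begin{pmatrix}0&\varpi_E^{-1}\\\varpi_E&0\end{pmatrix}$, which genuinely swaps $e_1\leftrightarrow\varpi_E e_{r+1}$ under the column action and is exactly the paper's $\Omega(w_1)$. Since $t_1\omega(w_1)\in T\omega(w_1)$, your transport argument goes through unchanged.

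\textbf{A smaller bookkeeping point.} The element $\omega(w_\tau)\prod_{i\in I}\Omega(w_i)$ lies over $w_\tau w_I$, not $w_Iw_\tau$. If you want $\Omega(w)\in T\omega(w)$ literally, define $\Omega(w_Iw_\tau):=\prod_{i\in I}\Omega(w_i)\,\omega(w_\tau)$; the factors $\Omega(w_i)$ commute because they live on disjoint blocks. For the disjoint decomposition itself this relabeling is harmless.
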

\begin{proof}
    Consider \(\Omega(w_{i,i+1}):=\omega(w_{i,i+1})\) and
    \begin{equation}
    \begin{aligned}
    \Omega(w_{1}):=&
    \begin{pmatrix}
        0&0&\varpi_E^{-1}&0\\
        0&1_{r-1}&0&0\\
        \varpi_E&0&0&0\\
        0&0&0&1_{r-1}
    \end{pmatrix}\\
    =&
    \begin{pmatrix}
        \varpi_E^{-1}&0&0&0\\
        0&1_{r-1}&0&0\\
        0&0&-\varpi_E&0\\
        0&0&0&1_{r-1}
    \end{pmatrix}
    \begin{pmatrix}
        0&0&1&0\\
        0&1_{r-1}&0&0\\
        -1&0&0&0\\
        0&0&0&1_{r-1}
    \end{pmatrix}\\
    =&
    \begin{pmatrix}
        \varpi_E^{-1}&0&0&0\\
        0&1_{r-1}&0&0\\
        0&0&-\varpi_E&0\\
        0&0&0&1_{r-1}
    \end{pmatrix}
    \omega(w_1)\\
    =&
    \begin{pmatrix}
        0&0&1&0\\
        0&1_{r-1}&0&0\\
        -1&0&0&0\\
        0&0&0&1_{r-1}
    \end{pmatrix}
    \begin{pmatrix}
        -\varpi_E&0&0&0\\
        0&1_{r-1}&0&0\\
        0&0&\varpi_E^{-1}&0\\
        0&0&0&1_{r-1}
    \end{pmatrix}\\
    =&
    \omega(w_1)
    \begin{pmatrix}
        -\varpi_E&0&0&0\\
        0&1_{r-1}&0&0\\
        0&0&\varpi_E^{-1}&0\\
        0&0&0&1_{r-1}
    \end{pmatrix}
    \end{aligned}
    \end{equation}
    Because \(\Omega(w_1),\Omega(w_{i,i+1})\) satisfies the same generating relations (note that the conjugacy action of \(\Omega(w_{i,i+1})\) on \(\Omega(w_1)(\omega(w_1))^{-1}\) is trivial.) as \(\omega(w_1),\omega(w_{i,i+1})\), \(\Omega\) realize \(\fW_r\) as a subgroup of \(L_r\) by conjugacy action.

    It is easy to prove by induction that for any \(w\in\fW_r\)
    \[
        \Omega(w)(\omega(w))^{-1}\in T\subset P_r
    \]
    \[
        (\omega(w))^{-1}\Omega(w)\in T\subset P_r
    \]
    Then
    \[
        P_r\Omega(w)I_r=P_r\Omega(w)(\omega(w))^{-1}\omega(w)I_r=P_r\omega(w)I_r
    \]
    and it follows by the Iwasawa decomposition (Theorem \ref{thm:iwasawa_decomposition}) and Proposition \ref{prop:iwahori_in_L}.

It suffices to prove that \(P_r\cap L_r\subset(T\cap L_r)(N_r\cap L_r)\).

    Given
    \[
    g=\begin{pmatrix}
        a&b\\
        0&^ta^{\tc,-1}
    \end{pmatrix}\in L_r
    \]
    we know that \(a\in \Res_{E/F}\GL_r(\cO_F)\), thus we can take the diagonal entries of \(g\) to get \(t\in T\cap K_r=T\cap L_r\) such that \(t^{-1}g\in N_r\cap L_r\).
\end{proof}

Now it is natural to introduce some modified version of Proposition \ref{prop:iwahori_in_L}.

Note that if \(\sigma\) is fixed (it is allowed to be irregular)
\begin{equation}
    \begin{aligned}
        \phi_w(\Omega(w_I))=&\phi_{w}(\Omega(w_I)\omega(w_I)\omega(w_I))\\
        =&\chi_r^\sigma(\Omega(w_I)\omega(w_I))\\
        =&|\varpi_E^{-1}|^{\sum_{i\in I} (\sigma_i+i-\frac{1}{2})}_E\\
        =&q^{\sum_{i\in I} (\sigma_i+i-\frac{1}{2})}
    \end{aligned}
\end{equation}
here we used the fact that for any \(I\subset \{1,\cdots,r\},w_I^2=w_\emptyset=1\). This motivates the following definition
\begin{definition}\label{def:modified_basis}
    Given \(\sigma\), for each \(w\in\fW_r\), consider the function
    \[
        \phi'_{w,\sigma}:=(\phi_{w,\sigma}(\Omega(w)))^{-1}\phi_{w,\sigma}
    \]
    by Theorem \ref{thm:iwasawa_semi_standard} and the proof of Theorem \ref{thm:iwasawa_semi_standard}, they form a basis of \((\rI_{W_r}^\sigma)^{I_r}\).
\end{definition}
The following propositions are direct from the definition
\begin{proposition}\label{prop:basis_relation}
    \begin{enumerate}[(1)]
        \item For any \(w=w_Iw_\tau\in \fW_r,\sigma\),
        \[
            \phi_{w,\sigma}=\phi_{w,\sigma}(\Omega(w))\phi_{w',\sigma}=\chi^\sigma_r(\Omega(w)(\omega(w))^{-1})\phi_{w',\sigma}
        \]
        \item For \(w=w_Iw_\tau\)
        \begin{equation}
            \begin{aligned}
                \phi_{w,\sigma}(\Omega(w))=\phi_{1,\sigma}(\Omega(w)(\omega(w))^{-1})=&\phi_{1,\sigma}(\Omega(w_I)\Omega(w_\tau)(\omega(w_\tau))^{-1}(\omega(w_I))^{-1})\\
                =&\phi_{1,\sigma}(\Omega(w_I)(\omega(w_I))^{-1})\\
                =&q^{\sum_{i\in I} (\sigma_i+i-\frac{1}{2})}
            \end{aligned}
        \end{equation}
        so
        \[
            \phi'_{w,\sigma}=q^{-\sum_{i\in I} (\sigma_i+i-\frac{1}{2})}\phi_{w,\sigma},\qquad \phi_{w,\sigma}=q^{\sum_{i\in I} (\sigma_i+i-\frac{1}{2})}\phi'_{w,\sigma}
        \]
    \end{enumerate}
\end{proposition}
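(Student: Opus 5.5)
Both parts follow from the definitions, the multiplicativity of the chosen representatives, and one computation of $\delta_{P_r}^{1/2}\chi_r^\sigma$ on an explicit diagonal element. I read the first equality in (1) as $\phi_{w,\sigma}=\phi_{w,\sigma}(\Omega(w))\,\phi'_{w,\sigma}$; the prime on $w$ in the statement should be on $\phi$. I also read the $\chi_r^\sigma$ there as the full inducing character $\delta_{P_r}^{1/2}\chi_r^\sigma$. Both readings agree with the computation displayed just before Definition \ref{def:modified_basis}.

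\textbf{Step 1: part (1).} The first equality is Definition \ref{def:modified_basis} rearranged. For the second, put $t_w:=\Omega(w)(\omega(w))^{-1}$. The proof of Theorem \ref{thm:iwasawa_semi_standard} shows $t_w\in T\subset P_r$. Since $\Omega(w)=t_w\,\omega(w)\cdot 1$ with $1\in I_r$, Proposition \ref{prop:casselman_basis} gives
\[
\phi_{w,\sigma}(\Omega(w))=\delta_{P_r}^{1/2}\chi_r^\sigma(t_w).
\]
The same reasoning with $w'=1$ gives $\phi_{1,\sigma}(t_w)=\delta_{P_r}^{1/2}\chi_r^\sigma(t_w)$, which is the first equality in (2).

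\textbf{Step 2: reduce $t_w$ to $t_{w_I}$.} Write $w=w_Iw_\tau$. Both $\Omega$ and $\omega$ are extended multiplicatively from generators, and $\Omega(w_{i,i+1})=\omega(w_{i,i+1})$, so $\Omega(w_\tau)=\omega(w_\tau)$. Hence
\[
t_w=\Omega(w_I)\,\Omega(w_\tau)\,\omega(w_\tau)^{-1}\,\omega(w_I)^{-1}=\Omega(w_I)\,\omega(w_I)^{-1}.
\]

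\textbf{Step 3: identify $t_{w_I}$.} For $i\in I$, the reflection $w_i$ is the conjugate of $w_1$ by a permutation in $S_r$. By the factorization of $\Omega(w_1)$ in the proof of Theorem \ref{thm:iwasawa_semi_standard}, $\Omega(w_i)\omega(w_i)^{-1}$ is the diagonal element with $\varpi_E^{-1}$ at $e_i$, $-\varpi_E$ at $e_{r+i}$, and $1$ elsewhere. These elements for distinct $i$ lie in different coordinates, so they commute with one another and with the $\omega(w_j)$ for $j\neq i$. By induction on $|I|$, $t_{w_I}$ is the diagonal element with $\varpi_E^{-1}$ in the $e_i$-coordinate for every $i\in I$.

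\textbf{Step 4: evaluate the character.} The factor $\chi_r^\sigma$ gives $\prod_{i\in I}|\varpi_E^{-1}|_E^{\sigma_i}=q^{\sum_{i\in I}\sigma_i}$. For $\delta_{P_r}^{1/2}$, count the $F$-dimensions of the root spaces in $N_r$, measured in units of $|\cdot|_E$:
\begin{itemize}
\item each root $\epsilon_j\pm\epsilon_i$ has a root space that is one-dimensional over $E$;
\item each root $2\epsilon_i$ has a root space that is one-dimensional over $F$ (a Hermitian $1\times 1$ entry), so it contributes half the weight.
\end{itemize}
The exponent of $|a_i|_E$ in $\delta_{P_r}$ is then
\[
(i-1)-(r-i)+(r-1)+1=2i-1,
\]
where the sign convention is fixed by $P_r$ having lower-triangular Levi part. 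This gives $\delta_{P_r}^{1/2}(t_{w_I})=q^{\sum_{i\in I}(i-\frac12)}$, and the displayed formulas for $\phi'_{w,\sigma}$ and $\phi_{w,\sigma}$ in (2) follow.

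The main obstacle is Step 4. The signs there depend on the lower-triangular choice of Borel and on the half-weight contribution of the long root $2\epsilon_i$ in the ramified case. I would settle both by computing $\mathrm{Ad}(t)$ directly on $N_r$ for $r=1$ and $r=2$, and checking the result against the displayed computation before Definition \ref{def:modified_basis}.
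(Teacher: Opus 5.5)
Your proposal is correct and follows the paper's own route. You rearrange Definition \ref{def:modified_basis}, use $\Omega(w_\tau)=\omega(w_\tau)$ to reduce $\Omega(w)\omega(w)^{-1}$ to the diagonal element $\Omega(w_I)\omega(w_I)^{-1}$, and evaluate the inducing character there, exactly as in the display preceding that definition. Your count giving $\delta_{P_r}^{1/2}(t)=\prod_i|t_i|_E^{i-1/2}$ is also right: $(i-1)-(r-i)$ comes from the lower-triangular Levi, and weight $1$ comes from the Hermitian diagonal entry scaled by $N_{E/F}(t_i)$. This supplies the exponent the paper only asserts, so the $r=1,2$ check you propose is not needed.
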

\begin{proposition}\label{def:one}
    Given \(\sigma=(\sigma_1,\cdots,\sigma_r)\), the function
    \begin{equation}
        \begin{aligned}
            \phi_{L_r}:=\phi_{L_r,\sigma}:=&\sum_{w\in\fW_r}\phi'_w=\sum_{w\in\fW_r}\phi_w(\Omega(w))^{-1}\phi_w\\
            =&\sum_{I\subset\{1,\cdots,r\}}q^{-\sum_{i\in I} (\sigma_i+i-\frac{1}{2})}\sum_{\tau\in \fS_r}\phi_{w_Iw_\tau}
        \end{aligned}
    \end{equation}
    is invariant under the action of \(L_r\) and spans \((\rI_{W_r}^\sigma)^{L_r}\).
\end{proposition}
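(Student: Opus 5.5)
The function \(\phi_{L_r}\) should be identified with the function determined by its values on \(L_r\) through the Iwasawa decomposition \(G_r=P_rL_r\) of Theorem \ref{thm:iwasawa_semi_standard}. Concretely, let \(\Phi\) be the function on \(G_r\) defined by
\[
\Phi(p\ell):=\delta_{P_r}^{\frac12}(p)\chi_r^\sigma(p),\qquad p\in P_r,\ \ell\in L_r .
\]

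\textbf{Step 1: \(\Phi\) is well defined.} If \(p\ell=p'\ell'\), then \(p^{-1}p'\in P_r\cap L_r\). By Theorem \ref{thm:iwasawa_semi_standard}, \(P_r\cap L_r=(T\cap L_r)(N_r\cap L_r)\). The proof there also shows that \(T\cap L_r=T\cap K_r\) has entries in \(\cO_E^\times\). Hence \(\delta_{P_r}^{1/2}\chi_r^\sigma\) is trivial on \(P_r\cap L_r\).

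\textbf{Step 2: \(\Phi\) spans the invariants.} By construction \(\Phi\) is smooth, lies in \(\rI_{W_r}^\sigma\), and is right \(L_r\)-invariant. Conversely, any \(f\in(\rI_{W_r}^\sigma)^{L_r}\) is determined by its value at \(1\), since \(G_r=P_rL_r\). Therefore \((\rI_{W_r}^\sigma)^{L_r}=\dC\,\Phi\) is one-dimensional. This argument needs no regularity of \(\sigma\).

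\textbf{Step 3: \(\Phi=\phi_{L_r}\).} Both functions lie in \((\rI_{W_r}^\sigma)^{I_r}\), because \(I_r\subset I_r^0\subset L_r\) by Proposition \ref{prop:iwahori_in_L}. By Proposition \ref{prop:casselman_basis} and the decomposition \(G_r=\bigsqcup_w P_r\Omega(w)I_r\), an \(I_r\)-invariant function is determined by its values at the points \(\Omega(w)\), \(w\in\fW_r\).

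On one side, \(\Omega(w)\in L_r\), so \(\Phi(\Omega(w))=1\). On the other side, \(\Omega(w)\in P_r\omega(w)I_r\), which follows from \(\Omega(w)\omega(w)^{-1}\in T\) as in the proof of Theorem \ref{thm:iwasawa_semi_standard}. Hence \(\phi_{w'}(\Omega(w))=0\) for \(w'\neq w\). It follows that \(\phi_{L_r}(\Omega(w))=\phi'_w(\Omega(w))=1\) by Definition \ref{def:modified_basis}.

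So \(\Phi\) and \(\phi_{L_r}\) agree on all the \(\Omega(w)\), hence \(\phi_{L_r}=\Phi\). This gives both the \(L_r\)-invariance and the spanning statement.

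\textbf{Step 4: the explicit expansion.} The second expression for \(\phi_{L_r}\) follows by substituting Proposition \ref{prop:basis_relation}(2):
\[
\phi_w(\Omega(w))=q^{\sum_{i\in I}(\sigma_i+i-\frac12)}\qquad\text{for } w=w_Iw_\tau .
\]

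\textbf{Main obstacle.} The delicate point is the membership \(\Omega(w)\in P_r\omega(w)I_r\) together with the exact value of \(\phi_w(\Omega(w))\). This requires \(\Omega(w)\omega(w)^{-1}\) to lie in \(T\), with the \(\Omega\)'s composing compatibly. That is asserted by induction in the proof of Theorem \ref{thm:iwasawa_semi_standard}. I would recheck it carefully for products \(w_Iw_\tau\), where the conjugation by \(\Omega(w_\tau)\) permutes the diagonal torus entries. I would also confirm that \(\Omega(w)\) genuinely stabilizes \(\Lambda_r'\). For \(\Omega(w_1)\) this holds: it sends \(e_1\mapsto\varpi_E e_{r+1}\) and \(\varpi_E e_{r+1}\mapsto \varpi_E\varpi_E^{-1}e_1=e_1\), up to the sign and row conventions.
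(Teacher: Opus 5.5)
Your proposal is correct and follows the same route as the paper. The paper likewise deduces both the \(L_r\)-invariance and the one-dimensionality of \((\rI_{W_r}^\sigma)^{L_r}\) from the decomposition \(G_r=\bigsqcup_w P_r\Omega(w)I_r=P_rL_r\), together with \(P_r\cap L_r=(T\cap L_r)(N_r\cap L_r)\), both from Theorem \ref{thm:iwasawa_semi_standard}. What you add is the detail the paper leaves implicit: that \(p\ell\mapsto\delta_{P_r}^{1/2}\chi_r^\sigma(p)\) is well defined, and that it equals \(\phi_{L_r}\) because both are \(I_r\)-invariant and take the value \(1\) at every \(\Omega(w)\).
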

\begin{proof}
    It is invariant under the action of \(L_r\) by Theorem \ref{thm:iwasawa_semi_standard}. \((\rI_{W_r}^\sigma)^{L_r}\) is also at most one dimensional as it is determined by its restriction on \(L_r\) by Theorem \ref{thm:iwasawa_semi_standard}.
\end{proof}
\begin{remark}\label{def:another_one}
    There is another function 
    \begin{equation}
        \begin{aligned}
            \phi^-_{L_r}:=\phi^-_{L_r,\sigma}:=&\sum_{w\in\fW_r}(-1)^{\ell_{long}(w)}\phi'_w=\sum_{w\in\fW_r}(-1)^{\ell_{long}(w)}\phi_w(\Omega(w))^{-1}\phi_w\\
            =&\sum_{I\subset\{1,\cdots,r\}}(-1)^{|I|}q^{-\sum_{i\in I} (\sigma_i+i-\frac{1}{2})}\sum_{\tau\in \fS_r}\phi_{w_Iw_\tau}
        \end{aligned}
    \end{equation}
\end{remark}
\begin{corollary}\label{cor:spherical_one_dim}
    Let \((\pi,\cV)\) be a spherical representation with respect to \(L_r\), then \(\cV^{L_r}\) is one dimensional.
\end{corollary}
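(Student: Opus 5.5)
The plan is to reduce to the principal series, where Proposition \ref{def:one} already does the work. Let \((\pi,\cV)\) be irreducible with \(\cV^{L_r}\neq0\). By Proposition \ref{prop:iwahori_in_L} we have \(I_r\subset I_r^0\subset L_r\), so \(\cV^{I_r}\supset\cV^{L_r}\neq0\). Theorem \ref{thm:borel_iwahori} then gives an unramified character \(\chi_r^\sigma\) and an embedding \(\iota\colon\pi\hookrightarrow\rI_{W_r}^\sigma\).

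The functor of \(L_r\)-invariants is left exact on smooth representations, since \(L_r\) is compact open. Hence \(\iota\) restricts to an injection \(\cV^{L_r}\hookrightarrow(\rI_{W_r}^\sigma)^{L_r}\). By Proposition \ref{def:one}, the right-hand side is spanned by \(\phi_{L_r,\sigma}\), so it has dimension at most one. Therefore \(\dim\cV^{L_r}\le1\), and since \(\cV^{L_r}\neq0\) by assumption, \(\dim\cV^{L_r}=1\).

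The step that deserves care is the dimension bound in Proposition \ref{def:one}, which holds for every \(\sigma\), including irregular \(\sigma\). It rests on Theorem \ref{thm:iwasawa_semi_standard}. Since \(G_r=P_rL_r\), an \(L_r\)-invariant vector \(f\) in the induced space satisfies \(f(pk)=\delta_{P_r}^{1/2}\chi_r^\sigma(p)f(1)\) for \(p\in P_r\) and \(k\in L_r\). So \(f\) is determined by the single value \(f(1)\), and the space of such \(f\) has dimension at most one.

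This argument does not need the space to be nonzero, so the only input is the Iwasawa decomposition \(G_r=P_rL_r\). Proposition \ref{def:one} records the well-definedness of \(\phi_{L_r,\sigma}\), which requires \(\delta_{P_r}^{1/2}\chi_r^\sigma\) to be trivial on \(P_r\cap L_r=(T\cap L_r)(N_r\cap L_r)\). That fact is needed only for existence, not for the bound we use here. No Hecke-algebra commutativity argument is required; this is essential, since \(L_r\) is not special and a Gelfand-pair argument is not immediately available.
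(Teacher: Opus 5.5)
Your proof is correct and is essentially the paper's argument: embed \(\pi\) into some \(\rI_{W_r}^\sigma\) via Theorem \ref{thm:borel_iwahori} (using \(I_r\subset I_r^0\subset L_r\)). Then note that \((\rI_{W_r}^\sigma)^{L_r}\) has dimension at most one because \(G_r=P_rL_r\) (Theorem \ref{thm:iwasawa_semi_standard}); this is exactly how the paper proves Proposition \ref{def:one} and deduces the corollary from it.
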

\begin{proof}
    By Theorem \ref{thm:borel_iwahori} and Proposition \ref{prop:iwahori_in_L}, we know that \(\pi\) can be embedded into some \(\rI_{W_r}^\sigma\) and it hence follows by the proposition above.
\end{proof}
\begin{corollary}\label{cor:hecke_algebra_comm}
    Give \(L_r\) Haar measure 1, then the Hecke algebra with respect to \(L_r\), which is defined by \(\cH_{L_r}:=C_c^\infty(G_r//L_r)\), is commutative. 
\end{corollary}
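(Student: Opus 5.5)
The plan is to prove commutativity through a faithful family of characters, as in the classical Satake argument, rather than through a Gelfand involution. That trick needs an anti-automorphism of \(G_r\) preserving every double coset \(L_rgL_r\), and this is awkward because \(L_r\) is not parahoric.

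\textbf{Step 1: a family of characters.} For each \(\sigma\), Proposition \ref{def:one} shows that \((\rI_{W_r}^\sigma)^{L_r}\) is one-dimensional and spanned by \(\phi_{L_r,\sigma}\). The algebra \(\cH_{L_r}\) acts on this line by convolution \(f\mapsto \int_{G_r} f(g)\rho(g)\,\rd g\), so it acts through a character \(\lambda_\sigma\colon\cH_{L_r}\to\dC\). Any element of the form \(f_1*f_2-f_2*f_1\) lies in \(\ker\lambda_\sigma\) for every \(\sigma\).

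\textbf{Step 2: computing the character.} Evaluating at the identity gives
\[
\lambda_\sigma(f)=\phi_{L_r,\sigma}(1)^{-1}\int_{G_r}f(g)\phi_{L_r,\sigma}(g)\,\rd g .
\]
Here \(\phi_{L_r,\sigma}(1)=1\), since \(\phi'_{1,\sigma}(1)=1\) and \(1\in P_r\Omega(1)I_r\). By the decomposition \(G_r=P_rL_r\) of Theorem \ref{thm:iwasawa_semi_standard}, together with \(P_r\cap L_r=(T\cap L_r)(N_r\cap L_r)\), the quotient \(P_r/(P_r\cap L_r)\) fibres over \(T/(T\cap K_r)\cong\dZ^r\) with \(N_r\)-fibres. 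Writing \(g=tnk\) and integrating over \(N_r\) and \(L_r\) yields
\[
\lambda_\sigma(f)=\sum_{t\in T/(T\cap K_r)}\chi_r^\sigma(t)\,\delta_{P_r}^{1/2}(t)\int_{N_r}f(tn)\,\rd n=:\sum_t \chi_r^\sigma(t)\,Sf(t).
\]
So \(\lambda_\sigma\) is the evaluation at \(\chi_r^\sigma\) of the "Satake transform" \(Sf\), a finitely supported function on the lattice \(\dZ^r\). As \(\sigma\) varies over \((\dC/\tfrac{2\pi}{\log q}\dZ)^r\), these evaluations separate Laurent polynomials in \(q^{\pm\sigma_i}\). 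Hence \(f\) lies in \(\ker\lambda_\sigma\) for all \(\sigma\) if and only if \(Sf=0\).

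\textbf{Step 3: multiplicativity and injectivity of \(S\).} The map \(S\colon\cH_{L_r}\to\dC[\dZ^r]\) is an algebra homomorphism, because each \(\lambda_\sigma\) is a character and the \(\lambda_\sigma\) jointly separate points of \(\dC[\dZ^r]\). The target \(\dC[\dZ^r]\) is commutative, so it remains to prove that \(S\) is injective. I would argue with a triangularity statement.

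Use the Cartan-type decomposition \(G_r=\bigsqcup_\mu L_r t_\mu L_r\) over dominant \(\mu\), where \(t_\mu\in T\). This should follow from the Cartan decomposition for \(K_r\) together with the fact that \(L_r\) and \(K_r\) are both generated by \(T\cap K_r\) and the Weyl representatives up to \(I_r^0\); the representatives \(\Omega(w)\) differ from \(\omega(w)\) by elements of \(T\). Then show that \(S(\mathbbm 1_{L_rt_\mu L_r})\) is supported on \(\{\nu\le\mu\}\) with nonzero coefficient at \(\mu\). The leading coefficient equals \(\delta^{1/2}(t_\mu)\,\vol(N_r\cap t_\mu^{-1}L_rt_\mu L_r)\), which is positive.

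Given this triangularity, \(S\) is injective, so \(\cH_{L_r}\) embeds in a commutative ring and is therefore commutative.

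\textbf{Expected obstacle.} The hard step is the Cartan decomposition and triangularity for the non-parahoric group \(L_r\). If it proves troublesome, I would instead show injectivity via the Iwahori–Hecke algebra. The algebra \(\cH_{L_r}\) is isomorphic to \(e_{L_r}\cH(G_r//I_r)e_{L_r}\), where \(e_{L_r}\) is the idempotent of \(L_r\); this uses \(I_r\subset I_r^0\subset L_r\) from Proposition \ref{prop:iwahori_in_L}. The Bernstein presentation then gives a faithful action of this algebra on the principal series. Since \(\phi_{L_r,\sigma}\) spans \((\rI^\sigma_{W_r})^{L_r}\), any element acting by zero for all \(\sigma\) kills \(e_{L_r}\) of a faithful module, and so is zero. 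Either route reduces the claim to the one-dimensionality of Proposition \ref{def:one} plus a faithfulness statement.
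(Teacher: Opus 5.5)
Your proof is correct, but it is organized differently from the paper's. The paper's proof is one line. By Gross's criterion \cite{gross1991applications}*{(4.1)}, commutativity of $\cH_{L_r}$ follows once $\dim\cV^{L_r}\le 1$ for every irreducible $(\pi,\cV)$. That bound is Corollary~\ref{cor:spherical_one_dim}, which rests on Borel's Theorem~\ref{thm:borel_iwahori}, $I_r\subset L_r$, and Proposition~\ref{def:one}. The real input, that irreducible representations separate points of the Hecke algebra, is hidden inside Gross's criterion.

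You instead work with the lines $(\rI^\sigma_{W_r})^{L_r}$ of the full, possibly reducible, principal series. So you need neither Borel's theorem nor any irreducibility, and you make the separation statement explicit.

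\textbf{Fallback route.} You use $\cH_{L_r}\cong e_{L_r}\cH(G_r/\!/I_r)e_{L_r}$ together with faithfulness of the family of principal series modules. Faithfulness follows from the Bernstein presentation: writing $h=\sum_w T_w\theta(p_w)$ with $p_w\in\dC[\dZ^r]$, $h$ sends the generator of the module at $\chi$ to $\sum_w p_w(\chi)T_w$, so $h$ acting by zero for all $\chi$ forces every $p_w=0$. This is a complete argument; it is essentially Gross's criterion made concrete at Iwahori level.

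\textbf{Primary route.} This proves more, namely an injective Satake homomorphism, but its key step is only asserted.
\begin{itemize}
\item The existence of $G_r=\bigcup_\mu L_rt_\mu L_r$ is best obtained from the Iwahori--Bruhat decomposition $G_r=I_rN_{G_r}(T)I_r$ together with $\Omega(w)\in T\,\omega(w)$, rather than from the Cartan decomposition for $K_r$.
\item Neither this nor the generation statement you invoke yields the support bound $\operatorname{supp}S(\CF_{L_rt_\mu L_r})\subseteq\{\nu\le\mu\}$.
\item The classical argument for that bound is written for special parahoric subgroups and does not apply verbatim to the non-parahoric $L_r$.
\end{itemize}
The paper supplies exactly this later, in Theorems~\ref{thm:cartan_decomposition_quasi_split} and~\ref{thm:satake_isomorphism_quasi_split}. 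It embeds $(G_r,L_r)$ into $(\GL_{2r}(E),\GL_{2r}(\cO_E))$ and pulls back the classical $\GL_{2r}$ inequality. With that input your primary route also goes through.
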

\begin{proof}
    By \cite{gross1991applications}*{(4.1)}, it suffices to prove that any spherical representation \((\pi,\cV)\) with respect to \(L_r\) , \(\dim \cV^{L_r}=1\) which is the statement above.
\end{proof}

\begin{lemma}\label{lem:casselman_relation_semi_standard}
    Assume that \(\sigma\) is regular, \(w=w_Iw_\tau\), \(\alpha\in\Delta\) and \(\ell(w_\alpha w)>\ell(w)\).
    \begin{enumerate}[(1)]
        \item 
        For \(\alpha=2\epsilon_1\), we have
        \begin{equation}
            \begin{aligned}
                T_{w_1}\phi'_{w,\sigma}=&(c_\alpha(\sigma)-1)\phi'_{w,w_1\sigma}+q^{-\sigma_1-\frac{1}{2}}\phi_{w_1w,w_1\sigma}'\\
                T_{w_1}\phi'_{w_1w,\sigma}=&q^{-\sigma_1-\frac{1}{2}}\phi'_{w,w_1\sigma}+(c_\alpha(\sigma)-1)\phi_{w_1w,w_1\sigma}'
            \end{aligned}
        \end{equation}
        \item
        For \(\alpha=\epsilon_{i+1}-\epsilon_i\),
        \begin{enumerate}[(i)]
        \item
        If (\(i\in I\) and \((i+1)\in I\)) or ((\(i\notin I\)) and \((i+1)\notin I\)) then
        \begin{equation}
            \begin{aligned}
                T_{w_{1,2}}\phi'_{w,\sigma}=&(c_\alpha(\sigma)-1)\phi'_{w,w_{1,2}\sigma}+q^{-1}\phi_{w_{1,2}w,w_{1,2}\sigma}'\\
                T_{w_{1,2}}\phi'_{w_{1,2}w,\sigma}=&\phi'_{w,w_{1,2}\sigma}+(c_\alpha(\sigma)-q^{-1})\phi_{w_{1,2}w,w_{1,2}\sigma}'
            \end{aligned}
        \end{equation}
        \item
        if \(i\in I,(i+1)\notin I\) then
        \begin{equation}
            \begin{aligned}
                T_{w_{1,2}}\phi'_{w,\sigma}=&(c_\alpha(\sigma)-q^{-1})\phi'_{w,w_{1,2}\sigma}+\phi_{w_{1,2}w,w_{1,2}\sigma}'\\
                T_{w_{1,2}}\phi'_{w_{1,2}w,\sigma}=&q^{-1}\phi'_{w,w_{1,2}\sigma}+(c_\alpha(\sigma)-1)\phi_{w_{1,2}w,w_{1,2}\sigma}'
            \end{aligned}
        \end{equation}
        \item It is not the case that \(i\notin I, (i+1)\in I\) provided \(\ell(w_{i,i+1}w)>\ell(w)\). 
        \end{enumerate}
    \end{enumerate}
\end{lemma}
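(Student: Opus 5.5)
The plan is to derive everything from Casselman's relations (Theorem \ref{thm:casselman_relation}) by rescaling. By Proposition \ref{prop:basis_relation}, \(\phi_{w,\sigma}=q^{s_I(\sigma)}\phi'_{w,\sigma}\), where \(s_I(\sigma):=\sum_{i\in I}(\sigma_i+i-\frac12)\) and \(w=w_Iw_\tau\). Fix \(w\) and \(\alpha\in\Delta\) with \(\ell(w_\alpha w)>\ell(w)\). Substituting into Theorem \ref{thm:casselman_relation} expresses \(T_{w_\alpha}\phi'_{w,\sigma}\) and \(T_{w_\alpha}\phi'_{w_\alpha w,\sigma}\) in the basis \(\phi'_{\cdot,w_\alpha\sigma}\). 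Each coefficient gets multiplied by a ratio \(q^{s_{I''}(w_\alpha\sigma)-s_{I'}(\sigma)}\), where \(I'\) is the index set of the source element and \(I''\) that of the target. The remaining work is to compute the subsets attached to \(w_\alpha w\) and the effect of \(w_\alpha\) on \(\sigma\).

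For \(\alpha=2\epsilon_1\), Lemma \ref{lem:trick} gives \(1\notin I\), and the sign set of \(w_1w\) is \(I\cup\{1\}\). Also \(w_1\sigma=(-\sigma_1,\sigma_2,\dots)\), so \(s_I(w_1\sigma)=s_I(\sigma)\) because \(1\notin I\). This gives:
\[s_{I\cup\{1\}}(w_1\sigma)=s_I(\sigma)-\sigma_1+\tfrac12,\qquad s_{I\cup\{1\}}(\sigma)=s_I(\sigma)+\sigma_1+\tfrac12.\]
The diagonal coefficients are therefore unchanged, equal to \(c_\alpha-1\) and \(c_\alpha-q^{-1}\). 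The off-diagonal ones become \(q^{-1}\cdot q^{-\sigma_1+1/2}\) and \(1\cdot q^{-\sigma_1-1/2}\). I will need to check these against the claimed \(q^{-\sigma_1-\frac12}\) with the paper's sign convention for \(w_1\sigma\) (the action is on the induced character). I may need to swap \(\sigma\) and \(w_1\sigma\) in the exponent bookkeeping, and possibly the diagonal term \(c_\alpha-q^{-1}\) versus \(c_\alpha-1\), until the formula in (1) comes out. This normalization consistency is the main obstacle. I would settle it by evaluating both sides at \(\Omega(w)\) and \(\Omega(w_1w)\), using \(\chi_r^\sigma\) on the torus elements \(\Omega(w)\omega(w)^{-1}\).

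For \(\alpha=\epsilon_{i+1}-\epsilon_i\), I need to see how left multiplication by \(w_{i,i+1}\) acts on the sign set \(I\) in \(w=w_Iw_\tau\). Writing \(w_{i,i+1}w_I=w_{I'}w_{i,i+1}\), \(I'\) is \(I\) with \(i\) and \(i+1\) swapped. The exponent \(s\) depends on \(\sigma_j+j\), and \(w_{i,i+1}\sigma\) swaps \(\sigma_i\) and \(\sigma_{i+1}\). The cases then go as follows:
\begin{itemize}
\item[(i)] If \(i,i+1\) are both in \(I\) or both outside it, the swaps cancel, \(s\) is unchanged, and Casselman's formula is reproduced verbatim.
\item[(ii)] If \(i\in I\) and \(i+1\notin I\), the index shift contributes a factor \(q^{\pm1}\). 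This interchanges the roles of \(1\) and \(q^{-1}\), giving the stated formulas.
\item[(iii)] This is a pure combinatorial check via \cite{bjorner2005combinatorics}*{Proposition 8.1.1}, as in Lemma \ref{lem:trick}. If \(i\notin I\) and \(i+1\in I\), the signed permutation satisfies \(w^{-1}(i)>0>w^{-1}(i+1)\) in the appropriate convention. Then \(w_{i,i+1}w\) has one fewer inversion, contradicting \(\ell(w_{i,i+1}w)>\ell(w)\).
\end{itemize}
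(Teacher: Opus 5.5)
Your overall strategy is the paper's: rescale Theorem \ref{thm:casselman_relation} using $\phi_{w,\sigma}=q^{s_I(\sigma)}\phi'_{w,\sigma}$ from Proposition \ref{prop:basis_relation}. Your treatment of (2)(i) and (2)(iii) is fine. However, the diagonal coefficients are computed incorrectly, and the ``swap conventions until the formula comes out'' step you propose in their place is not a proof. Nor is it needed.

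The diagonal coefficients are not all unchanged. For the diagonal entry of $T_{w_1}\phi'_{w_1w,\sigma}$, source and target both have sign set $I\cup\{1\}$, but the parameter changes from $\sigma$ to $w_1\sigma$. By your own two displayed formulas the rescaling factor is $q^{s_{I\cup\{1\}}(w_1\sigma)-s_{I\cup\{1\}}(\sigma)}=q^{-2\sigma_1}$, not $1$. The coefficient is therefore $q^{-2\sigma_1}(c_\alpha(\sigma)-q^{-1})$.

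The missing step is an elementary identity of $c$-functions. Put $x=q^{-2\sigma_1}$, so that $c_{2\epsilon_1}(\sigma)=\frac{1-q^{-1}x}{1-x}$. Then
\[
c_{2\epsilon_1}(\sigma)-1=\frac{(1-q^{-1})x}{1-x},\qquad c_{2\epsilon_1}(\sigma)-q^{-1}=\frac{1-q^{-1}}{1-x},
\]
hence $x\,(c_{2\epsilon_1}(\sigma)-q^{-1})=c_{2\epsilon_1}(\sigma)-1$, which is exactly the stated diagonal term. Your off-diagonal values, $q^{-1}\cdot q^{-\sigma_1+\frac{1}{2}}$ and $1\cdot q^{-\sigma_1-\frac{1}{2}}$, both already equal the claimed $q^{-\sigma_1-\frac{1}{2}}$. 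So no exchange of $\sigma$ and $w_1\sigma$ is required, and (1) follows once the identity is used. The paper's proof does precisely this in its last line.

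The same omission affects (2)(ii). The index shift $I\mapsto(I\setminus\{i\})\cup\{i+1\}$ only produces the off-diagonal factors $q$ and $q^{-1}$, turning $q^{-1}$ into $1$ and $1$ into $q^{-1}$. The interchange of $c_\alpha-1$ and $c_\alpha-q^{-1}$ on the diagonal has a different source: $\sigma\mapsto w_{i,i+1}\sigma$, which multiplies the two diagonal entries by $q^{\sigma_{i+1}-\sigma_i}$ and $q^{\sigma_i-\sigma_{i+1}}$ respectively. With $a=q^{\sigma_i-\sigma_{i+1}}$ and $c_\alpha(\sigma)=\frac{1-q^{-1}a}{1-a}$, the same computation gives $a\,(c_\alpha(\sigma)-q^{-1})=c_\alpha(\sigma)-1$, equivalently $a^{-1}(c_\alpha(\sigma)-1)=c_\alpha(\sigma)-q^{-1}$. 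This yields the stated diagonal terms.

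Without this identity, which is the only input beyond Theorem \ref{thm:casselman_relation} and Proposition \ref{prop:basis_relation}, neither (1) nor (2)(ii) is established.
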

\begin{proof}
    \begin{enumerate}[(1)]
        \item By Theorem \ref{thm:casselman_relation}, we have
        \[
            T_{w_1}\phi_{w,\sigma}=(c_{2\epsilon_1}(\sigma)-1)\phi_{w,w_1\sigma}+q^{-1}\phi_{w_1w,w_1\sigma}
        \]
        and
        \[
            T_{w_1}\phi_{w_1w,\sigma}=\phi_{w,w_1\sigma}+(c_{2\epsilon_1}(\sigma)-q^{-1})\phi_{w_1w,w_1\sigma}
        \]
        proceed with Proposition \ref{prop:basis_relation} and the definition of \(\phi_w'\), we have
        \begin{equation}
            \begin{aligned}
                T_{w_1}\phi'_{w,\sigma}=&\phi_{w,\sigma}(\Omega(w))^{-1}T_{w_1}\phi_{w,\sigma}\\
                =&\phi_{w,\sigma}(\Omega(w))^{-1}((c_{2\epsilon_1}(\sigma)-1)\phi_{w,w_1\sigma}+q^{-1}\phi_{w_1w,w_1\sigma})\\
                =&\phi_{w,\sigma}(\Omega(w))^{-1}((c_{2\epsilon_1}(\sigma)-1)(\phi_{w,w_1\sigma}(\Omega(w)))\phi'_{w,w_1\sigma}\\
                +&q^{-1}(\phi_{w_1w,w_1\sigma}(\Omega(w_1w)))\phi'_{w_1w,w_1\sigma})\\
                =&(\phi_{w,\sigma}(\Omega(w_I))^{-1}\phi_{w,w_1\sigma}(\Omega(w_I)))(c_{2\epsilon_1}(\sigma)-1)\phi'_{w,w_1\sigma}\\
                +&(\phi_{w,\sigma}(\Omega(w_I))^{-1}\phi_{w_1w,w_1\sigma}(\Omega(w_1w_I)))q^{-1}\phi'_{w_1w,w_1\sigma}
            \end{aligned}
        \end{equation}
By Lemma \ref{lem:trick}, we know that \(1\notin I\), then we have
        \begin{equation}
            \begin{aligned}
                T_{w_1}\phi'_{w,\sigma}=&(c_{2\epsilon_1}(\sigma)-1)\phi'_{w,w_1\sigma}+(\phi_{w,\sigma}(\Omega(w_I))^{-1}\phi_{w_1w,w_1\sigma}(\Omega(w_1w_I)))q^{-1}\phi_{w_1w,w_1\sigma}'\\
                =&(c_{2\epsilon_1}(\sigma)-1)\phi'_{w,w_1\sigma}+q^{-1+(w_1\sigma)_1+\frac{1}{2}}\phi_{w_1w,w_1\sigma}'\\
                =&(c_\alpha(\sigma)-1)\phi'_{w,w_1\sigma}+q^{-\sigma_1-\frac{1}{2}}\phi_{w_1w,w_1\sigma}'
            \end{aligned}
        \end{equation}
        Similarly, we have
        \begin{equation}
            \begin{aligned}
                T_{w_1}\phi'_{w_1w,\sigma}=&\phi_{w_1w,\sigma}(\Omega(w_1w))^{-1}T_{w_1}\phi_{w_1w,\sigma}\\
                =&\phi_{w_1w,\sigma}(\Omega(w_1w))^{-1}(\phi_{w,w_1\sigma}+(c_{2\epsilon_1}(\sigma)-q^{-1})\phi_{w_1w,w_1\sigma})\\
                =&\phi_{w_1w,\sigma}(\Omega(w_1w))^{-1}(\phi_{w,w_1\sigma}(\Omega(w))\phi'_{w,w_1\sigma}\\
                +&\phi_{w_1w,w_1\sigma}(\Omega(w_1w))(c_{2\epsilon_1}(\sigma)-q^{-1})\phi'_{w_1w,w_1\sigma})\\
                =&\phi_{w_1w,\sigma}(\Omega(w_1w))^{-1}\phi_{w,w_1\sigma}(\Omega(w))\phi'_{w,w_1\sigma}\\
                +&\phi_{w_1w,\sigma}(\Omega(w_1w))^{-1}\phi_{w_1w,w_1\sigma}(\Omega(w_1w))(c_{2\epsilon_1}(\sigma)-q^{-1})\phi'_{w_1w,w_1\sigma}\\
                =&q^{-\frac{1}{2}-\sigma_1}\phi'_{w,w_1\sigma}+q^{-2\sigma_1}(c_{2\epsilon_1}(\sigma)-q^{-1})\phi_{w_1w,w_1\sigma}'\\
                =&q^{-\sigma_1-\frac{1}{2}}\phi'_{w,w_1\sigma}+(c_{2\epsilon_1}(\sigma)-1)\phi_{w_1w,w_1\sigma}'
            \end{aligned}
        \end{equation}
        \item 
        \begin{enumerate}[(i)]
            \item Note that the action of \(w_{i,i+1}\) on $w_I$ is trivial in this case. Combine this fact with Proposition \ref{prop:basis_relation} (2), we have
            \begin{equation}
                \begin{aligned}
                    &\begin{pmatrix}
                        \phi_{w,\sigma}(\Omega(w))^{-1}\phi_{w,w_{i,i+1}\sigma}(\Omega(w))\\\phi_{w,\sigma}(\Omega(w))^{-1}\phi_{w_{i,i+1}w,w_{i,i+1}\sigma}(\Omega(w_{i,i+1}w))\\
                        \phi_{w_{i,i+1}w,\sigma}(\Omega(w_{i,i+1}w))^{-1}\phi_{w,w_{i,i+1}\sigma}(\Omega(w))\\\phi_{w_{i,i+1}w,\sigma}(\Omega(w_{i,i+1}w))^{-1}\phi_{w_{i,i+1}w,w_{i,i+1}\sigma}(\Omega(w_{i,i+1}w))
                    \end{pmatrix}=
                    \begin{pmatrix}
                        1\\1\\
                        1\\1
                    \end{pmatrix}
                \end{aligned}
            \end{equation}
            The following is a direct calculation using the above equation and Theorem \ref{thm:casselman_relation}.
            \item Note that the action of \(w_{i,i+1}\) on \(w_I\) is non-trivial in this case, it replaces \(I\) by \(I\cup\{i+1\}-\{i\}\). Combine this fact with Proposition \ref{prop:basis_relation} (2), we have
            \begin{equation}
                \begin{aligned}
                    &\begin{pmatrix}
                        \phi_{w,\sigma}(\Omega(w))^{-1}\phi_{w,w_{i,i+1}\sigma}(\Omega(w))\\\phi_{w,\sigma}(\Omega(w))^{-1}\phi_{w_{i,i+1}w,w_{i,i+1}\sigma}(\Omega(w_{i,i+1}w))\\
                        \phi_{w_{i,i+1}w,\sigma}(\Omega(w_{i,i+1}w))^{-1}\phi_{w,w_{i,i+1}\sigma}(\Omega(w))\\\phi_{w_{i,i+1}w,\sigma}(\Omega(w_{i,i+1}w))^{-1}\phi_{w_{i,i+1}w,w_{i,i+1}\sigma}(\Omega(w_{i,i+1}w))
                    \end{pmatrix}=
                    \begin{pmatrix}
                        q^{\sigma_{i+1}-\sigma_i}\\q\\
                        q^{-1}\\q^{\sigma_i-\sigma_{i+1}}
                    \end{pmatrix}
                \end{aligned}
            \end{equation}
            \item This follows from the proof of Lemma \ref{lem:trick} and \cite{bjorner2005combinatorics}*{Proposition 8.1.1}.
        \end{enumerate}
    \end{enumerate}
\end{proof}
\begin{theorem}\label{thm:casselman_relation_semi_standard}
    Assume that \(\sigma\) is regular, then
    
    we have
    \[
        T_w(\phi_{L_r})=\prod_{\alpha>0,w\alpha<0}c_\alpha'(\sigma)\phi_{L_r}
    \]
    and
    \[
        T_w(\phi_{L_r}^-)=\prod_{\alpha>0,w\alpha<0}c_{\alpha}'''(\sigma)\phi_{L_r}^-
    \]
    where
    \begin{equation}
        \begin{aligned}
            c'_{2\epsilon_i}(\sigma):=&c_{2\epsilon_i}(\sigma)-1+\phi_{w_i}(\Omega(w_i))^{-1}\\
            =&c_{2\epsilon_i}(\sigma)-1+q^{-\sigma_i-\frac{1}{2}}\\
            =&\frac{q^{-\frac{1}{2}-\sigma_i}(1-q^{-\frac{1}{2}-\sigma_i})(1+q^{\frac{1}{2}-\sigma_i})}{1-q^{-2\sigma_i}},
        \end{aligned}
    \end{equation}
    \begin{equation}
        \begin{aligned}
            c'''_{2\epsilon_i}(\sigma):=&c_{2\epsilon_i}(\sigma)-1-\phi_{w_i}(\Omega(w_i))^{-1}\\
            =&c_{2\epsilon_i}(\sigma)-1-q^{-\sigma_i-\frac{1}{2}}\\
            =&\frac{q^{-2\sigma_i}(1-q^{-\frac{1}{2}+\sigma_i})(1+q^{-\frac{1}{2}-\sigma_i})}{1-q^{-2\sigma_i}},
        \end{aligned}
    \end{equation}
    and
    \[
        c'_{\alpha}(\sigma)=c'''_{\alpha}(\sigma)=c_{\alpha}(\sigma)
    \]
    for \(\alpha=\epsilon_{j}\pm\epsilon_{i},i<j\).
\end{theorem}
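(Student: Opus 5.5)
The plan mirrors the proof of Theorem \ref{thm:casselman_relation_almost_spherical}. First reduce to simple reflections. Both $\phi_{L_r}$ and $\phi^-_{L_r}$ are defined uniformly in $\sigma$. The intertwining operators satisfy $T_{ww'}=T_w\circ T_{w'}$ whenever $\ell(ww')=\ell(w)+\ell(w')$, with $T_{w'}$ acting from $\rI^{\sigma}$ to $\rI^{w'\sigma}$. Moreover, the set $\{\alpha>0: w\alpha<0\}$ decomposes along a reduced word exactly as in the induction at the end of the proof of \cite{casselman1980unramified}*{Theorem 3.1}. Hence it suffices to prove
\[
T_{w_\alpha}\phi_{L_r,\sigma}=c'_\alpha(\sigma)\phi_{L_r,w_\alpha\sigma},\qquad T_{w_\alpha}\phi^-_{L_r,\sigma}=c'''_\alpha(\sigma)\phi^-_{L_r,w_\alpha\sigma}
\]
for $\alpha\in\Delta$. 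For a non-simple root of the form $2\epsilon_i$ the relevant factor $c'_{2\epsilon_i}$ then arises by transporting $c'_{2\epsilon_1}$ along the conjugation, which is compatible with how $\sigma$ is permuted.

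For $\alpha=2\epsilon_1$, pair up the terms of $\phi_{L_r}=\sum_w\phi'_w$ as $(w,w_1w)$ with $\ell(w_1w)>\ell(w)$; this is a partition of $\fW_r$. By Lemma \ref{lem:trick}, $1\notin I$ for the shorter element. Apply Lemma \ref{lem:casselman_relation_semi_standard}(1) to $\phi'_w+\phi'_{w_1w}$. Its image is
\[
\bigl(c_{2\epsilon_1}(\sigma)-1+q^{-\sigma_1-\frac12}\bigr)\bigl(\phi'_{w,w_1\sigma}+\phi'_{w_1w,w_1\sigma}\bigr).
\]
Summing over the pairs gives $c'_{2\epsilon_1}(\sigma)\phi_{L_r,w_1\sigma}$. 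For $\phi^-_{L_r}$, the sign $(-1)^{\ell_{long}}$ gives the pair $\phi'_w-\phi'_{w_1w}$, because $\ell_{long}(w_1w)=\ell_{long}(w)+1$ by Lemma \ref{lem:trick}. Lemma \ref{lem:casselman_relation_semi_standard}(1) then yields the eigenvalue $c_{2\epsilon_1}(\sigma)-1-q^{-\sigma_1-\frac12}=c'''_{2\epsilon_1}(\sigma)$. The closed-form factorizations are routine algebra; I would only check that $q^{-\sigma-\frac12}(1-q^{-\frac12-\sigma})(1+q^{\frac12-\sigma})$ expands to $q^{-2\sigma}(1-q^{-1})+q^{-\sigma-\frac12}(1-q^{-2\sigma})$.

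For $\alpha=\epsilon_{i+1}-\epsilon_i$, the main obstacle is the bookkeeping, since the pairing now mixes the coefficients. Pair $w$ with $w_{i,i+1}w$ where $\ell(w_{i,i+1}w)>\ell(w)$. By Lemma \ref{lem:casselman_relation_semi_standard}(2)(iii) only cases (i) and (ii) occur. Left multiplication by $w_{i,i+1}$ preserves $\ell_{long}$, so the signs in $\phi^-_{L_r}$ agree within each pair, and both functions reduce to computing $T_{w_\alpha}(\phi'_w+\phi'_{w_\alpha w})$.

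In case (i), Lemma \ref{lem:casselman_relation_semi_standard}(2)(i) gives
\[
(c_\alpha-1+1)\phi'_{w}+(q^{-1}+c_\alpha-q^{-1})\phi'_{w_\alpha w}=c_\alpha(\phi'_w+\phi'_{w_\alpha w}).
\]
In case (ii), Lemma \ref{lem:casselman_relation_semi_standard}(2)(ii) gives
\[
(c_\alpha-q^{-1}+q^{-1})\phi'_{w}+(1+c_\alpha-1)\phi'_{w_\alpha w}=c_\alpha(\phi'_w+\phi'_{w_\alpha w}).
\]
All of these are evaluated at the target parameter $w_\alpha\sigma$. This gives eigenvalue $c_\alpha(\sigma)$ in both cases. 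Here I must double-check that the index $w_{1,2}$ in Lemma \ref{lem:casselman_relation_semi_standard} really means $w_{i,i+1}$, and that $\phi'$ at the new parameter $w_\alpha\sigma$ is normalized consistently with Proposition \ref{prop:basis_relation}. That normalization is where an error could hide.

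Finally, note that $c'_\alpha=c'''_\alpha=c_\alpha$ for the roots $\epsilon_j\pm\epsilon_i$. This identifies the full product over $\{\alpha>0:w\alpha<0\}$ and completes the induction.
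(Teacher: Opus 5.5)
Your proposal is correct and is essentially the paper's proof: the paper likewise writes $\phi_{L_r}=\sum_w\phi'_w$ and $\phi^-_{L_r}=\sum_w(-1)^{\ell_{long}(w)}\phi'_w$, pairs $w$ with $w_\alpha w$ via Lemma \ref{lem:trick} (using that $w_{i,i+1}$ preserves $\ell_{long}$), applies Lemma \ref{lem:casselman_relation_semi_standard}, and reduces to simple reflections by Casselman's induction. You have simply written out in detail the one-line argument the paper gives. The pair sums, the two closed forms, and the Weyl-equivariance $c'_\alpha(w'\sigma)=c'_{w'^{-1}\alpha}(\sigma)$ used in the induction all check out.
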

\begin{proof}
    Because \(\phi_{L_r}=\sum_{w\in\fW_r}\phi'_w\) and \(\phi^-_{L_r}=\sum_{w\in\fW_r}(-1)^{\ell_{long}(w)}\phi'_w\), the result follows from the argument in Theorem \ref{thm:casselman_relation_almost_spherical} and Lemma \ref{lem:casselman_relation_semi_standard}.
\end{proof}

Our next goal in this section is to prove that if \(\pi\) is spherical with respect to \(L_r\) and the Satake parameters do not contain \(- q^{\pm\frac{1}{2}}\), then \(\pi\) is spherical with respect to \(K_r\) as well. Before that, we need the following important result, which states that spherical representations with respect to \(L_r\) are not square-integrable.

\begin{theorem}\label{thm:spherical_L_is_spherical_K}
    Given \(\sigma\) and assume that the Satake parameters do not contain \(- q^{\pm\frac{1}{2}}\), let \(\pi_L\) be the unique component of \(\rI_{W_r}^\sigma\) which is spherical with respect to \(L_r\) and \(\pi_K\) be the unique component of \(\rI_{W_r}^\sigma\) which is spherical with respect to \(K_r\), then \(\pi_L=\pi_K\).
\end{theorem}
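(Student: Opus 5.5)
We prove the equality using the explicit intertwining-operator formulas of Theorems \ref{thm:casselman_spherical_vector} and \ref{thm:casselman_relation_semi_standard}, together with the Langlands-quotient description of Theorem \ref{thm:Langlands_classification_spherical}.

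\emph{Reduction to a dominant parameter.} Both $\pi_L$ and $\pi_K$ are determined by the $\fW_r$-orbit of $\sigma$. This holds because $\rI_{W_r}^\sigma$ and $\rI_{W_r}^{w\sigma}$ have the same Jordan--H\"older constituents, and by Corollary \ref{cor:spherical_one_dim} and the analogous statement for $K_r$ each of $\pi_L,\pi_K$ occurs with multiplicity one. So we may assume $|q^{-\sigma_r}|\le\cdots\le|q^{-\sigma_1}|\le1$. First take $\sigma$ regular in this chamber; the irregular case is handled at the end.

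\emph{Main step.} Let $w_0$ be the longest element. By Theorem \ref{thm:Langlands_classification_spherical}, $\pi_K$ is a quotient of $\rI^\sigma_{W_r}$. It is then the image of the (suitably normalized) long intertwining operator $T_{w_0}\colon\rI^\sigma_{W_r}\to\rI^{w_0\sigma}_{W_r}$, since that image is the Langlands quotient. Hence an $I_r$-fixed vector $f\in\rI^\sigma_{W_r}$ has nonzero image in the Langlands quotient exactly when $T_{w_0}f\neq0$. Take $f=\phi_{L_r,\sigma}$. By Theorem \ref{thm:casselman_relation_semi_standard},
\[
T_{w_0}\phi_{L_r}=\prod_{i}c'_{2\epsilon_i}(\sigma)\prod_{i<j}c_{\epsilon_j+\epsilon_i}(\sigma)c_{\epsilon_j-\epsilon_i}(\sigma)\,\phi_{L_r}.
\]
For $\sigma$ in the closed dominant chamber, the numerators $1-q^{-1}q^{\pm\sigma_i-\sigma_j}$ of the long-root-free factors vanish exactly when the standard module is reducible along those roots. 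We compare this with $T_{w_0}\phi_{K_r}=c_{w_0}(\sigma)\phi_{K_r}$. The only difference lies in the factors at $2\epsilon_i$:
\begin{itemize}
\item $c_{2\epsilon_i}$ has numerator $1-q^{-1-2\sigma_i}$;
\item $c'_{2\epsilon_i}$ has numerator $(1-q^{-1/2-\sigma_i})(1+q^{1/2-\sigma_i})$, which divides $1-q^{-1-2\sigma_i}$ up to replacing the factor $(1+q^{-1/2-\sigma_i})$ by $(1+q^{1/2-\sigma_i})$.
\end{itemize}
The factor $1+q^{1/2-\sigma_i}$ vanishes only when $q^{\sigma_i}=-q^{1/2}$, which the hypothesis excludes. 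Therefore the scalar for $\phi_{L_r}$ is nonzero whenever the scalar for $\phi_{K_r}$ is. The latter is nonzero on the Langlands quotient because $\pi_K$ is that quotient, after normalizing $T_{w_0}$ so that the common zeros cancel. It follows that $\phi_{L_r}$ has nonzero image in $\pi_K$, so $\pi_K^{L_r}\neq0$. By uniqueness of the $L_r$-spherical constituent, $\pi_L=\pi_K$.

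\emph{Main obstacle and the irregular case.} Making the ratio argument rigorous is the delicate point. The operators $T_w$ have poles, and the vanishing of $c_{w_0}$ must be matched with that of the $c'$-product as one deforms $\sigma$. The first approach is to normalize by $\prod_\alpha d_\alpha(\sigma)$ and compare the two scalars as meromorphic functions of $\sigma$. Their ratio is $\prod_i\frac{1+q^{1/2-\sigma_i}}{1+q^{-1/2-\sigma_i}}$. In the dominant chamber $|q^{-\sigma_i}|\le1$, so the denominator $1+q^{-1/2-\sigma_i}$ never vanishes, and the numerator vanishes only at the excluded parameter. Hence the ratio is holomorphic and nonvanishing there. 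For irregular $\sigma$, argue by holomorphic continuation: the vectors $\phi_{L_r,\sigma}$ and $\phi_{K_r,\sigma}$ vary holomorphically in $\sigma$ in the fixed model on $I_r$-invariants. As $\sigma$ moves, the Langlands-quotient map defined via the normalized operator specializes, and nonvanishing of the image of $\phi_{L_r}$ is preserved by the ratio identity.
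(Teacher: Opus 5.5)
\textbf{Verdict.} Your route is genuinely different from the paper's: you realize \(\pi_K\) as the image of the long intertwining operator and compute the scalar by which \(T_{w_0}\) acts on \(\phi_{L_r}\). As written, though, it does not close, and the gap sits exactly at the step you flag as delicate.

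\textbf{The gap.} The equivalence ``\(f\) has nonzero image in \(\pi_K\) iff \(T_{w_0}f\neq0\)'' needs \(\rI^\sigma_{W_r}/\ker T_{w_0}\cong\pi_K\). That is, \(\rI^\sigma_{W_r}\) must have a \emph{unique} irreducible quotient, and \(T_{w_0}\) must realize it. Theorem~\ref{thm:Langlands_classification_spherical} only says that \(\pi_K\) is \emph{a} quotient. On the walls of the closed chamber you additionally need \(\rI^\sigma_{W_r}\) to be a standard module, which uses irreducibility of the unitary principal series of the Levi (Theorem~\ref{thm:irreducibility_unitary_principal_series} together with the \(\GL\) case).

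For irregular \(\sigma_0\) the problem is worse. Take the normalized operator \(\tilde T(\sigma)=\prod_{\alpha>0}d_\alpha(\sigma)\,T_{w_0}\). Knowing \(\tilde T(\sigma_0)\phi_{L_r}\neq0\) says nothing about the image of \(\phi_{L_r}\) in \(\pi_K\) unless the image of \(\tilde T(\sigma_0)\) is exactly \(\pi_K\). That property does not ``specialize'': the image of a specialized holomorphic family of operators is not the limit of the generic images, and a priori it can be larger than the Langlands quotient. Your sentence that the Langlands-quotient map specializes is precisely the claim that needs proof.

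\textbf{How to repair it.}
\begin{enumerate}
\item \(\tilde T(\sigma_0)\phi_{K_r}=e(\sigma_0)\phi_{K_r}\), and \(e(\sigma_0)=\prod_{\alpha>0}e_\alpha(\sigma_0)\neq0\) in the closed chamber, so \(\tilde T(\sigma_0)\neq0\).
\item \(\pi_K\) is the unique irreducible quotient of \(\rI^{\sigma_0}_{W_r}\) and the unique irreducible subrepresentation of \(\rI^{w_0\sigma_0}_{W_r}\), and it occurs with multiplicity one. Hence every nonzero intertwiner between these two representations has image exactly \(\pi_K\).
\item With this in hand the ratio comparison is superfluous. In the closed chamber every \(e_\alpha\) with \(\alpha\neq 2\epsilon_i\) is nonzero. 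Moreover \(d_{2\epsilon_i}c'_{2\epsilon_i}=q^{-\frac12-\sigma_i}(1-q^{-\frac12-\sigma_i})(1+q^{\frac12-\sigma_i})\), which vanishes only at the excluded parameter.
\end{enumerate}
A minor point: the numerators \(1-q^{-1}q^{\pm\sigma_i-\sigma_j}\) never vanish in the closed chamber. So your remark that they vanish ``exactly when the standard module is reducible'' is inaccurate, though harmless.

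\textbf{Comparison with the paper.} The paper avoids all of this. It conjugates so that \(\pi_L\) is an irreducible subrepresentation of \(\rI^\sigma_{W_r}\), hence generated by \(\phi_{L_r}\). It then shows \(1_{K_r}.\phi_{L_r}\neq0\) via Proposition~\ref{prop:L-to-K-quasi-split}:
\((1_{K_r}.\phi_{L_r})(1)=[r]_q!\prod_{i=1}^r(1+q^{-\sigma_i+\frac12})\).
This follows from \([I_r\omega(w)I_r:I_r]=q^{\ell(w)}\) and the Poincar\'e-type identity of Proposition~\ref{prop:Toad_identity}. It produces a nonzero \(K_r\)-fixed vector inside \(\pi_L\) uniformly in \(\sigma\), with no regularity assumption, no intertwining operators and no Langlands classification.

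Your approach instead shows \(\pi_K^{L_r}\neq0\) rather than \(\pi_L^{K_r}\neq0\). Once repaired, it has the modest merit of explaining the excluded parameter as the single extra zero of \(c'_{2\epsilon_i}\) relative to \(c_{2\epsilon_i}\). But it needs considerably more structural input than the paper's short computation.
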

\begin{proof}
    Up to a conjugate by an element in the Weyl group, we may assume that \(\pi_L\) is an irreducible subrepresentation of \(\rI_{W_r}^\sigma\) (hence generated by \(\phi_{L_r}\)). It suffices to prove that \(1_{K_r}.\phi_{L_r}\neq 0\), this is obvious by Proposition \ref{prop:L-to-K-quasi-split} and our assumption on the Satake parameters.

\end{proof}


\subsection{Irreducibility of Unitary Principal Series Representations}\label{subsec:irreducibility_unitary_principal_series}
In this subsection, we will prove that for \(E/F\) ramified, any unitary principal series representation is irreducible.
We need the following theorem from \cite{kato1981irreducibility}*{Theorem 2.2}.
\begin{theorem}[Kato's Criterion on Irreducibility]\label{thm:kato_criterion}
    The principal series representation \(\rI^\sigma_{W_r}\) is irreducible iff
    \begin{enumerate}[(i)]
        \item \(e(\sigma)e(-\sigma)\neq0\).
        \item \(W_\sigma=W_{(\sigma)}\).
    \end{enumerate}
    where \(e(\sigma)=\prod_{\alpha>0}e_\alpha(\sigma)\), \(W_\sigma:=\left\{w|w.\sigma=\sigma\right\}\) and \(W_{(\sigma)}\) is the subgroup of \(W_\sigma\) generated by \(\{w_\alpha|d_\alpha(\sigma)=0\}\).
\end{theorem}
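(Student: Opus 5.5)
This is Kato's criterion as stated in \cite{kato1981irreducibility}*{Theorem 2.2}, so in the paper we will simply invoke it. If a self-contained argument is wanted, we would reconstruct it in our notation as follows. The first step is to reduce to a statement about modules over the Iwahori--Hecke algebra \(\cH(G_r//I_r)\). By Theorem \ref{thm:borel_iwahori} and Borel's theorem, the functor \(\cV\mapsto\cV^{I_r}\) is exact on the category of smooth representations generated by their Iwahori-fixed vectors. It is also fully faithful there, and every subquotient of \(\rI^\sigma_{W_r}\) has nonzero \(I_r\)-fixed vectors. Hence \(\rI^\sigma_{W_r}\) is irreducible if and only if the \(|\fW_r|\)-dimensional module \((\rI^\sigma_{W_r})^{I_r}\) is simple over \(\cH(G_r//I_r)\). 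We will work with the basis \(\{\phi_{w,\sigma}\}\) of Proposition \ref{prop:casselman_basis}.

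Next comes the regular case. Suppose \(\sigma\) is regular, so \(W_\sigma=W_{(\sigma)}=1\) and (ii) holds automatically. The Jacquet module of \(\rI^\sigma_{W_r}\) is then a direct sum of the distinct characters \(w\sigma\), \(w\in\fW_r\). By Frobenius reciprocity, every nonzero subrepresentation maps onto some \(\rI^{w\sigma}_{W_r}\) compatibly with the intertwining operators \(T_w\). So irreducibility is equivalent to all \(T_w\colon\rI^\sigma_{W_r}\to\rI^{w\sigma}_{W_r}\) being isomorphisms. Writing \(w\) as a reduced word, it suffices to treat the simple reflections \(w_\alpha\), \(\alpha\in\Delta\). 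On each two-dimensional block \(\{\phi_w,\phi_{w_\alpha w}\}\), Theorem \ref{thm:casselman_relation} gives \(T_{w_\alpha}\) as an explicit \(2\times2\) matrix with determinant
\[
(c_\alpha(\sigma)-1)(c_\alpha(\sigma)-q^{-1})-q^{-1}.
\]
Using \(c_\alpha=e_\alpha/d_\alpha\), this equals, up to a unit, \(e_\alpha(\sigma)e_\alpha(-\sigma)/(d_\alpha(\sigma)d_\alpha(-\sigma))\). We then normalize \(T_{w_\alpha}\) by \(d_\alpha(\sigma)\) and check that the normalized operator is holomorphic. Chaining the positive roots then shows that the normalized \(T_w\) are all bijective exactly when \(e(\sigma)e(-\sigma)\neq0\), which gives (i).

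The remaining step, and the one we expect to be the main obstacle, is irregular \(\sigma\). There the \(\phi_{w,\sigma}\) lie in generalized rather than genuine eigenspaces of the Jacquet module, and the unnormalized \(T_w\) degenerate. Our plan is to deform \(\sigma\) in a regular direction and use holomorphy of the \(d\)-normalized operators to define self-intertwiners \(A_w\) of \(\rI^\sigma_{W_r}\) for \(w\in W_\sigma\). One then checks two things. For \(w_\alpha\) with \(d_\alpha(\sigma)=0\), the operator \(A_{w_\alpha}\) is a scalar, because the \(2\times2\) matrix above has a pole cancelled exactly by the normalization. For \(w\in W_\sigma\setminus W_{(\sigma)}\), the operators \(A_w\) are linearly independent non-scalar endomorphisms, which forces reducibility by Schur's lemma. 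Conversely, if \(W_\sigma=W_{(\sigma)}\) and (i) holds, the commutant is one-dimensional and the cyclicity argument from the regular case goes through on the generalized eigenspaces. In our application we only need the ``if'' direction for unitary \(\sigma\), where \(e(\sigma)e(-\sigma)\neq0\) is immediate from Definition \ref{def:casselman_c}.
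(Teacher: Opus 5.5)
Your primary route, quoting \cite{kato1981irreducibility}*{Theorem 2.2}, is exactly what the paper does; the paper gives no argument of its own for this statement. The optional self-contained reconstruction, however, has a genuine gap, and the gap sits exactly where the paper applies the theorem.

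\textbf{Regular case.} This part is essentially fine, apart from one slip: by Frobenius reciprocity a subrepresentation maps nontrivially \emph{into} some \(\rI^{w\sigma}_{W_r}\), not onto it. The block determinant \((c_\alpha-1)(c_\alpha-q^{-1})-q^{-1}\) equals \(c_\alpha(\sigma)\bigl(c_\alpha(\sigma)-1-q^{-1}\bigr)=-c_\alpha(\sigma)c_\alpha(-\sigma)\). The \(d_\alpha\)-normalized block is never zero and has determinant equal to a unit times \(e_\alpha(\sigma)e_\alpha(-\sigma)\). So this computation even gives necessity of (i) for arbitrary \(\sigma\), after moving the offending root to a simple one within the \(\fW_r\)-orbit.

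\textbf{Irregular case, ``if'' direction.} Here the key claims are asserted rather than proved. Your reduction can be made to work. Once (i) gives \(\rI^{w\sigma}_{W_r}\cong\rI^\sigma_{W_r}\) for all \(w\), every nonzero subrepresentation receives a nonzero map from some \(\rI^{w\sigma}_{W_r}\). Composing with that isomorphism gives a nonzero endomorphism whose image lies inside the subrepresentation, so \(\mathrm{End}(\rI^\sigma_{W_r})=\dC\) would suffice.

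What is missing is any reason why \(W_\sigma=W_{(\sigma)}\) forces \(\mathrm{End}(\rI^\sigma_{W_r})=\dC\). Knowing that \(A_w\) is scalar for \(w\in W_{(\sigma)}\) only helps if you already know that the \(A_w\), \(w\in W_\sigma\), span the commutant, and that is the same statement.

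By Frobenius reciprocity, the claim amounts to the following: the \(|W_\sigma|\)-dimensional \(\chi_r^\sigma\)-generalized eigenspace of the normalized Jacquet module is cyclic over the torus. Equivalently, its largest quotient on which \(T\) acts through \(\chi_r^\sigma\) is one-dimensional. Proving this requires an honest analysis of that generalized weight space, and it is the real content of the irregular case.

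\textbf{Irregular case, ``only if'' direction.} The family \(\{A_w\}_{w\in W_\sigma\setminus W_{(\sigma)}}\) is not linearly independent in general, because \(A_{wu}\) is a multiple of \(A_w\) for \(u\in W_{(\sigma)}\). You would have to restrict to a complement \(R_\sigma=\{w\in W_\sigma: w\Phi^+_{(\sigma)}=\Phi^+_{(\sigma)}\}\), where \(\Phi^+_{(\sigma)}\) is the positive system of the reflection group \(W_{(\sigma)}\). You would then still need to prove that \(A_w\) is nonzero and non-scalar for \(1\neq w\in R_\sigma\).

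\textbf{Why this matters for the paper.} Theorem \ref{thm:irreducibility_unitary_principal_series} applies the ``if'' direction precisely to irregular unitary \(\sigma\), for example \(q^{\sigma_i}=\pm1\) or \(\sigma_i=\pm\sigma_j\). So the paper genuinely relies on Kato's result at that point, and your regular-case argument would not cover it.
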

The following is our main theorem of this subsection:
\begin{theorem}\label{thm:irreducibility_unitary_principal_series}
    Let \(\sigma\) be unitary, then the principal series representation \(\rI^\sigma_{W_r}\) is irreducible.
\end{theorem}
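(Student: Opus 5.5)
The plan is to verify the two conditions of Kato's criterion (Theorem \ref{thm:kato_criterion}) directly. Here ``unitary'' means $\RE\sigma_i=0$ for all $i$, i.e.\ $|q^{\sigma_i}|=1$. It is convenient to work with the point $z=(z_1,\dots,z_r)\in(\dC^\times)^r$, where $z_i:=q^{-\sigma_i}$. On $z$, $\fW_r$ acts by signed permutations: permutations of coordinates, and the sign changes $z_i\mapsto z_i^{-1}$.

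First I will read off the numerators and denominators from Definition \ref{def:casselman_c}, extended to all positive roots as in the remark following it:
\begin{align*}
e_{2\epsilon_i}(\sigma)&=1-q^{-1}z_i^{2}, & d_{2\epsilon_i}(\sigma)&=1-z_i^{2},\\
e_{\epsilon_j\mp\epsilon_i}(\sigma)&=1-q^{-1}(z_jz_i^{\mp1})^{\pm1}, & d_{\epsilon_j\mp\epsilon_i}(\sigma)&=1-z_jz_i^{\mp1},
\end{align*}
where in the second line I only use that each $e_\alpha$ has the shape $1-q^{-1}\cdot(\text{a monomial in the }z_i)$. Condition (i) is then immediate. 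Since every $|z_i|=1$, each monomial $u$ that appears has $|u|=1$, so $|q^{-1}u|=q^{-1}<1$ and $e_\alpha(\pm\sigma)\neq 0$ for every $\alpha>0$. Hence $e(\sigma)e(-\sigma)\neq0$. This is precisely where the ramified normalization $q_{\alpha/2}=1$ matters: there is no extra factor $1+q^{-1/2}\cdots$ in $e_{2\epsilon_i}$ that could vanish on the unitary axis. This is the reason the unramified-case counterexamples disappear.

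For condition (ii), I observe that $d_\alpha(\sigma)=0$ holds exactly when the reflection $w_\alpha$ fixes $z$. For instance, $z_i^2=1$ is equivalent to $z_i=z_i^{-1}$, and $z_j=z_i^{\pm1}$ is equivalent to $w_{\epsilon_j\mp\epsilon_i}$ fixing $z$. So $W_{(\sigma)}$ is the subgroup generated by all reflections in $W_\sigma$, and it remains to show that the stabilizer of $z$ in the hyperoctahedral group is generated by reflections. I will prove this combinatorially. Partition $\{1,\dots,r\}$ according to the unordered pair $\{z_i,z_i^{-1}\}$. Any $w\in W_\sigma$ must preserve each block.

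On a block with $z_i\notin\{\pm1\}$, I first compose $w$ with sign changes. Each such sign change is a product of the reflections $w_{\epsilon_j}$ composed with transpositions, or more directly one uses $w_{\epsilon_j+\epsilon_i}$ when $z_j=z_i^{-1}$, which fixes $z$. This normalizes all coordinates of the block to a common value $z_0$. After that, $w$ acts by a permutation of coordinates with equal values, and such a permutation is generated by the transpositions $w_{\epsilon_j-\epsilon_i}$, which fix $z$.

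On the block $\{i: z_i=1\}$, and separately on the block $\{i:z_i=-1\}$, the full hyperoctahedral group is generated by the sign changes $w_{2\epsilon_i}$ (these have $d=0$ since $z_i^2=1$) together with the transpositions. No element of $W_\sigma$ can mix the $+1$ block with the $-1$ block, because neither a permutation nor an inversion sends $1$ to $-1$.

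The main obstacle is this stabilizer computation, specifically bookkeeping the sign changes inside a block with $z_i\neq\pm1$, where an individual sign change does \emph{not} fix $z$. I will handle it by conjugating $w$ within the block so that its sign pattern matches the pattern of $z$ versus $z^{-1}$ entries, and then checking that the remaining element is a product of $w_{\epsilon_j\pm\epsilon_i}$ with $z_j=z_i^{\pm1}$. Once this is established, Theorem \ref{thm:kato_criterion} gives the irreducibility of $\rI^\sigma_{W_r}$.
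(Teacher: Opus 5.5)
Your proposal is correct and follows essentially the paper's route. Like the paper, you use Kato's criterion, identify $d_\alpha(\sigma)=0$ with $w_\alpha$ fixing $\sigma$, and split the indices into classes $\{z_i,z_i^{-1}\}$. On the $z_i=\pm1$ classes you take the full signed permutation group, and otherwise you use $w_{\epsilon_i+\epsilon_j}$ to pair $z_0$-entries with $z_0^{-1}$-entries. You additionally verify condition (i), which the paper leaves implicit.

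One correction to your side remark. The unramified counterexample in the paper's remark satisfies condition (i) and fails condition (ii): $q^{-2\sigma}=-1$ is fixed by $w_1$ although $d_{2\epsilon}(\sigma)=2$. So what distinguishes the ramified case is not $q_{\alpha/2}=1$. It is the fact you yourself use in (ii): there, $\sigma_i\equiv-\sigma_i$ is equivalent to $d_{2\epsilon_i}(\sigma)=0$.
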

\begin{proof}
    By the \(\sigma\) is unitary, we know that it suffices to prove that \(W_\sigma=W_{(\sigma)}\).

    Assume that \(\chi^\sigma_r\) is given by \(\sigma=(\sigma_1,\cdots,\sigma_s)\in (i\dR/\frac{i2\pi}{\log q})^s\).
    Then 
    \[
    d_\alpha(\sigma)=
    \begin{cases}
    1-q^{-\sigma_i-\sigma_j}&\alpha=\epsilon_i+\epsilon_j\\
    1-q^{-2\sigma_i}&\alpha=2\epsilon_i\\
    1-(q^{-\sigma_i})^2&\alpha=\epsilon_i
    \end{cases}
    \]
    The last case only occurs when the group is non-quasi-split, and \(q_{\frac{\alpha}{2}}\neq 1\) only occurs when \(\alpha=\epsilon_i\). In any case, we know that \(d_\alpha(\sigma)=0\) means \(\sigma_i+\sigma_j=0\in i\dR/\frac{i2\pi}{\log q}\), where \(i,j\) are the indices occurred in \(\alpha\) and \(i=j\) in the latter two cases.

    We divide the multiset \(\{\sigma_1,\cdots,\sigma_s\}\) into equivalence classes, where the equivalence relation is given by \(\sigma_i\sim\sigma_j\) if and only if \(\sigma_i=\pm\sigma_j\). Then if \(\sigma\) is invariant under some element of the Weyl group \(\{\pm1\}^s\rtimes \fS_s\), it must stabilize each class. 
    
    So we are allowed to assume that there exists only one equivalence class. If all elements of the class are equivalent to one of \(0\) or \(i\pi/\log q\) (i.e., \(q^{\sigma_i}=1\) for all \(i\) or \(q^{\sigma_i}=-1\) for all \(i\)), then \(W_\sigma\) is the whole Weyl group and \(d_\alpha(\sigma)\) is always zero, then \(W_{(\sigma)}\) contains all reflections and hence \(W_\sigma=W_{(\sigma)}\). 
    
    Now we assume that all elements are either \(t\) or \(-t\) and \(2t\neq 0\). If the number of \(t\) is \(x\), then it is obvious that \(\fS_{x}\times \fS_{s-x}\hookrightarrow W_{(\sigma)}\) as all reflections in the same class will make \(d_\alpha(\sigma)\) vanish. If an element \(w\in\{\pm1\}^s\rtimes \fS_s\) fixes \(\sigma\), its action can be factorized as: \(w\) sends some of \(t\)'s to \(-t\), it must send the same quantity of \(-t\)'s to \(t\), and then arrange all elements. This is equivalent to say that if we write \(w=w_p\cdot w_r\) where \(w_p\in \fS_s,w_r\in\{\pm1\}^2\), then \(w_r=w_2w_1\) where \(w_1,w_2\in \{\pm1\}^s\) and have the same \((-1)\)'s and \(w_p\in \fS_s\) send all \(t\)'s back to the original position of \(t\)'s and so for \(-t\)'s.
    
    Because permutations of \(t\)'s and permutations of \(-t\)'s are contained in \(W_{(\sigma)}\), we may assume that \(w\) is given by a perfect pairing of some \(t\) and \(-t\). 
    
    If \(\sigma_i=t\) and it is sent to \(\sigma_j=-t\) by \(w\), again as permutations are contained in \(W_{(\sigma)}\), we are allowed to assume that the \(\sigma_j\) is sent to \(\sigma_i\). Then \(w\) is a product of \(w_{i,j}w_iw_j\) where \((i,j)\) ranges over the pairs. It suffices to show that if \(\sigma_i+\sigma_j=0\) then \(w_{i,j}w_iw_j\in W_{(\sigma)}\), but \(w_{i,j}w_iw_j=w_{\epsilon_i+\epsilon_j}\), and \(d_{\epsilon_i+\epsilon_j}(\sigma)=0\), then \(w\in W_{(\sigma)}\). 
\end{proof}

\begin{remark}
    We should warn that the above theorem is \emph{not} true for unramified unitary groups, i.e. \(W_\sigma\neq W_{(\sigma)}\) may happen for some unitary \(\sigma\). For example, we consider the unramified unitary group \(\rU(2)\) defined over a \(p\)-adic field \(F\) with \(E/F\) unramified, the cardinality of residue field of \(F\) is \(q\). Let \(\sigma\in \frac{\dC}{i2\log q\dZ}\), we have \(c_{2\epsilon}(\sigma)=\frac{1-q^{-1-2\sigma}}{1-q^{-2\sigma}}\). When \(\sigma\) is unitary, the first condition in Theorem \ref{thm:kato_criterion} is always satisfied. Now if \(q^{-2\sigma}=-1\), then \(W_\sigma=\fW_1\), but \(W_{(\sigma)}=\{\id\}\) as \(d_{2\epsilon}(\sigma)=1-(-1)=2\neq0\) hence \(W_\sigma\neq W_{(\sigma)}\) and \(\rI_{W_1}^\sigma\) is reducible.
\end{remark}

\subsection{A Satake isomorphism}\label{subsec:Satake_isomorphism_quasi-split}
In this subsection, we will prove a Satake isomorphism for the Hecke algebra \(\cH(G_r//L_r),\) where \(E/F\) is ramified. The idea is that we can embed the pair \((G_r,L_r)\) into \((\GL_{2r}(E),\GL_{2r}(\cO_E))\) and then apply the classical theory of Bruhat--Tits theory. The notations in this subsection may differ from the previous ones for convenience, as the simple roots should be chosen to be compatible with the classical ones.

    \begin{notation}
        We use the following notations which are standard but slightly different from the previous ones:
        \begin{itemize}
            \item 
            Let \(V\) be a \(2r\) dimensional vector space over \(E\) with a basis \(\{e_1,\ldots,e_{2r}\}\). We equip \(V\) with a non-degenerate Hermitian form \((\cdot,\cdot)\) such that \((e_i,e_{2r-i})=1\) for all \(i\le r\) and \((e_i,e_j)=0\) for all other pairs.
            \item 
            Let \(G_r\) be the group of unitary similitudes of \(V\) and \(L_r\) be the stabilizer of the lattice spanned by \(\{e_1,\ldots,e_{2r}\}\). As stated in the last subsections, this is not a parahoric subgroup as if we consider the image modulo the uniformizer, it will be an even orthogonal group which is not connected.
            \item 
            Let \(T_r\) be the maximal diagonal torus of \(G_r\), we have \(T_r\cong (\Res_{E/F}\dG_m)^r(F)\). Let \(B_r\) be the Borel subgroup of upper triangular matrices containing \(T_r\). Equivalently, the simple roots are given by \(\alpha_i=\epsilon_i-\epsilon_{i+1}\) for \(i<r\) and \(\alpha_r=2\epsilon_r\) where \(\epsilon_i\) is the character of \(T_r\) sending a diagonal matrix to its \(i\)-th entry. The corresponding Iwahori subgroup is denoted by \(I_r\).
            \item We denote the finite Weyl group by \(\fW_r\). As used in the previous sections, we have a set \(\{\Omega(w)|w\in\fW_r\}\) representatives contained in \(L_r\).
        \end{itemize}
    \end{notation}
\begin{remark}
    The reason for this modification is that we can naturally embed \(G_r\) into \(\GL_{2r}(E)\) and \(L_r\) into \(\GL_{2r}(\cO_E)\) and the Borel subgroup \(B_r\) is also embedded into upper triangular matrices of \(\GL_{2r}(E)\).
\end{remark}

\begin{definition}
    Let \(\Lambda=T_r/(T_r\cap L_r)\cong\dZ^r\) which can be identified with the lattice of cocharacters. Let \(\Lambda^+\) be the set of dominant cocharacters with respect to \(B_r\). For any \(\lambda\in\Lambda\), we denote by \(\varpi^\lambda\) the standard representative of the image of \(\lambda\) in \(T_r\) evaluated at a uniformizer \(\varpi_E\) of \(E\).
    
    Explicitly, the identification is given by
\begin{center}
\begin{tikzcd}[column sep=large,row sep=small]
\dZ^r \arrow[r] & \Lambda \\
\lambda=(\lambda_1,\ldots,\lambda_r) \arrow[r, mapsto] & 
\begin{psmallmatrix}
\varpi_E^{\lambda_1} & & & & \\
& \ddots & & & \\
& & \varpi_E^{\lambda_r} & & \\
& & & \bar{\varpi}_E^{-\lambda_r} & \\
& & & & \ddots \\
& & & & & \bar{\varpi}_E^{-\lambda_1}
\end{psmallmatrix}
= \varpi^\lambda
\end{tikzcd}
\end{center}
    
    Moreover, \(\lambda\) is dominant if and only if \(\lambda_1\ge\lambda_2\ge\cdots\ge\lambda_r\ge0\).

\end{definition}

First, we prove the Cartan decomposition for the pair \((G_r,L_r)\).
\begin{theorem}\label{thm:cartan_decomposition_quasi_split}
    We have a disjoint decomposition
    \[
    G_r = \bigsqcup_{\lambda \in \Lambda^+} L_r \varpi^\lambda L_r.
    \]
\begin{proof}
    We first prove that every element of \(G_r\) can be written as an element in \(L_r\varpi^\lambda L_r\) for some \(\lambda\in\Lambda\). The proof of \cite{kaletha2023bruhat}*{Theorem 5.2.1 (1)} still applies as it only depends on the surjectivity of \((N_{G_r}(T_r)\cap L_r)/(T_r\cap L_r)\rightarrow\fW_r\) and the Bruhat decomposition. We get
    \[
    G_r=\bigcup_{\lambda\in\Lambda}L_r\varpi^\lambda L_r.
    \]

    Now we prove that if \(\lambda\) and \(\mu\) are two different dominant cocharacters, then \(L_r\varpi^\lambda L_r\cap L_r\varpi^\mu L_r=\varnothing\). It suffices to prove that 
    \[
        \GL_{2r}(\cO_E)\varpi^\lambda \GL_{2r}(\cO_E)\cap \GL_{2r}(\cO_E)\varpi^\mu \GL_{2r}(\cO_E)=\varnothing
    \]

    Note that for \(\lambda=(\lambda_1,\cdots,\lambda_r)\), \(\varpi^\lambda\) corresponds to the cocharacter \((\lambda_1,\cdots,\lambda_r,-\lambda_r,\cdots,-\lambda_1)\) of the diagonal torus of \(\GL_{2r}(E)\), which is still dominant. So the result follows from the classical Cartan decomposition for \((\GL_{2r}(E),\GL_{2r}(\cO_E))\).
\end{proof}
\end{theorem}

Now we define the Satake transform. We denote the unique maximal compact subgroup of \(T_r\) by \(T_r^o\).
\begin{definition}\label{def:satake_transform}
    The Satake transform is the algebra homomorphism
    \begin{center}
        \begin{tikzcd}
            \cS:&\cH(G_r,L_r)\arrow[r]&\cH(T_r,T_r^o)\\
            &f\arrow[r,mapsto]&\left(t\mapsto \delta^{\frac{1}{2}}_{B_r}(t)\int_{N_r}f(tn)\rd n\right)
        \end{tikzcd}
    \end{center}
    where \(N_r\) is the unipotent radical of \(B_r\) and \(\delta_{B_r}\) is the modular character of \(B_r\) and the Haar measure on \(N_r\) is normalized such that \(N_r\cap L_r\) has volume 1.
\end{definition}
\begin{remark}
    Note that the choice of the Haar measure on \(N_r\) is \emph{not} compatible with the classical one so that \(N_r\cap K_r\) where \(K_r\) is the special maximal parahoric subgroup of \(G_r\) has volume 1.
\end{remark}
Following the classical argument (see, for example, \cite{cartier1979representations}*{Theorem 4.1 (A) and (B)}), we have the following proposition:
\begin{proposition}\label{prop:satake_transform_properties}
    \begin{itemize}
        \item The Satake transform is an algebra homomorphism.
        \item The image of the Satake transform is contained in \(\cH(T_r,T_r^o)^{\fW_r}\).
    \end{itemize}
\end{proposition}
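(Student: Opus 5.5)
We follow the classical argument of \cite{cartier1979representations}*{Theorem 4.1}, checking at each step that it uses only facts already established for the pair \((G_r,L_r)\). These facts are the Iwasawa decomposition \(G_r=B_rL_r\) with \(B_r\cap L_r=(T_r\cap L_r)(N_r\cap L_r)\) (Theorem \ref{thm:iwasawa_semi_standard}, transported to the present choice of Borel) and the existence of Weyl representatives \(\Omega(w)\in L_r\). Specialness of \(L_r\) is never used.

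\textbf{Homomorphism property.} We interpret \(\cS\) via induced representations. Identify \(\cH(T_r,T_r^o)\) with functions on \(\Lambda\). For an unramified character \(\chi\) of \(T_r\), the space \(\Ind_{B_r}^{G_r}(\delta_{B_r}^{1/2}\chi)^{L_r}\) is spanned by a vector \(\phi_{L,\chi}\) with \(\phi_{L,\chi}(1)=1\); this follows from the Iwasawa decomposition, exactly as in Proposition \ref{def:one}. For \(f\in\cH(G_r,L_r)\), we unfold
\[
(f*\phi_{L,\chi})(1)=\int_{G_r}f(g)\phi_{L,\chi}(g)\,\rd g=\int_{T_r}\int_{N_r}f(tn)\,\delta_{B_r}^{1/2}(t)\chi(t)\,\rd n\,\rd t
\]
using \(\rd g=\rd b\,\rd k\) on \(B_r\times L_r\), with the normalizations \(\operatorname{vol}(L_r)=\operatorname{vol}(N_r\cap L_r)=1\). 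The right-hand side equals \(\chi(\cS f)\), evaluating \(\cS f\) at \(\chi\). Since \(f\mapsto (f*\phi_{L,\chi})(1)\) is the eigencharacter of \(\cH(G_r,L_r)\) on the line spanned by \(\phi_{L,\chi}\), it is multiplicative. The characters \(\chi\) separate \(\cH(T_r,T_r^o)\cong\dC[\Lambda]\), so \(\cS\) is multiplicative. Linearity and the fact that \(\cS\) preserves units are immediate; for the unit, \(\cS(1_{L_r})(t)=\delta^{1/2}(t)\operatorname{vol}(tN_r\cap L_r)\), which is \(1_{T_r^o}\) by the decomposition of \(B_r\cap L_r\).

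\textbf{\(\fW_r\)-invariance.} It suffices to show \(\chi(\cS f)=(w\chi)(\cS f)\) for all \(w\in\fW_r\) and all \(\chi\) in a Zariski-dense set, which we take to be regular \(\chi\). The intertwining operator \(T_w\colon\Ind(\chi)\to\Ind(w\chi)\) is \(G_r\)-equivariant, so it maps the \(L_r\)-fixed line to the \(L_r\)-fixed line. Theorem \ref{thm:casselman_relation_semi_standard} gives \(T_w\phi_{L_r,\sigma}=c'_w(\sigma)\phi_{L_r,w\sigma}\), and \(c'_w(\sigma)\) is a nonzero meromorphic function. Hence for generic \(\chi\) the Hecke eigencharacters on the two lines coincide, i.e. \(\chi(\cS f)=(w\chi)(\cS f)\).

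\textbf{Main obstacle.} The delicate step is the unfolding identity. It requires the measure normalizations to be compatible and that \(\phi_{L,\chi}\) is exactly \(\delta^{1/2}\chi\) on \(B_r\) and \(1\) on \(L_r\). This is precisely where the decomposition \(B_r\cap L_r=(T_r\cap L_r)(N_r\cap L_r)\) enters. A second technical point is the genericity argument: we need \(c'_w(\sigma)\) not identically zero. This is visible from the explicit formulas for \(c'_{2\epsilon_i}\) and \(c_\alpha\) in Theorem \ref{thm:casselman_relation_semi_standard}. As an alternative, the \(L_r\)-fixed line in \(\Ind(\chi)\) and in \(\Ind(w\chi)\) gives the same \(L_r\)-spherical subquotient when the induced representation is irreducible, which holds for generic \(\chi\) by Theorem \ref{thm:kato_criterion}.
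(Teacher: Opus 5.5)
Your proposal is correct and is the classical argument of Cartier's Theorem 4.1 (A) and (B), which the paper invokes without giving details. You also make explicit what the paper leaves implicit: only $G_r=B_rL_r$ with $B_r\cap L_r=(T_r\cap L_r)(N_r\cap L_r)$ (which makes the normalization $\operatorname{vol}(N_r\cap L_r)=1$ compatible with the unfolding) and the generic nonvanishing of $T_w$ on the $L_r$-fixed line are needed, so these two parts do not use specialness of $L_r$.
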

Next, we can prove the Satake isomorphism
\begin{theorem}[Satake isomorphism]\label{thm:satake_isomorphism_quasi_split}
    The Satake transform induces an isomorphism of algebras
    \[
    \cS:\cH(G_r,L_r)\xlongrightarrow{\sim}\cH(T_r,T_r^o)^{\fW_r}\cong\dC[\Lambda]^{\fW_r}\cong \dC[T_1^{\pm1},\cdots,T_r^{\pm1}]^{\fW_r}.
    \]
\begin{proof}
    It suffices to prove the analogue of \cite{cartier1979representations}*{Theorem 4.1 (C)}. The only non-trivial is to prove that \(L_r\varpi^{\lambda'}L_r\cap N_r\varpi^\lambda L_r\neq\varnothing\Longrightarrow \lambda' \leq \lambda\) in the dominance order.
     
    Assuming that \(L_r\varpi^{\lambda'}L_r\cap N_r\varpi^\lambda L_r\neq\varnothing\), then we have
    \[
    \GL_{2r}(\cO_E)\varpi^{\lambda'}\GL_{2r}(\cO_E)\cap N_{GL_{2r}}\varpi^\lambda \GL_{2r}(\cO_E)\neq\varnothing.
    \]
    where \(N_{GL_{2r}}\) is the unipotent radical of upper triangular matrices of \(\GL_{2r}(E)\). By the classical argument, we know that 
    \[(\lambda'_1,\cdots,\lambda'_r,-\lambda'_r,\cdots,-\lambda'_1)\le(\lambda_1,\cdots,\lambda_r,-\lambda_r,\cdots,-\lambda_1)\]
    in the dominance order of \(\GL_{2r}(E)\) which is equivalent to \(\lambda_i-\lambda_i' \ge 0\) for all \(i\le r\) and \((\lambda_{i}-\lambda'_i)-(\lambda_{i+1}-\lambda'_{i+1})\geq 0\) for \(i<r\). This is exactly the dominance order of \(G_r\) so \(\lambda'\le\lambda\).
\end{proof}
\end{theorem}

\section{Spherical Representations of Non-quasi-split Unitary groups}\label{sec:non-quasi-split}
Most of the results in this section are parallel to the results in the previous section. We will only state the results and give the references. The proofs are similar to the proofs in the previous section. The main difference is that the results in this section are for the non-quasi-split unitary groups. 

We should warn that, the essential use of this section is to prove some representation theoretic results for the non-quasi-split unitary groups. We will not use them in the computation of doubling method as the doubled space is always split.
\subsection{Classical Results of Unramified Principal Series Representations}
The results in this subsection are parallel to the results in Subsection 3.1. We will only state the results and give the references. The proofs are similar to the proofs in Subsection 3.1. The main difference is that the results in this subsection are for the non-quasi-split unitary groups associated to a non-split Hermitian space.

We recall the following notation:
\begin{itemize}
    \item 
    Let \(V=V_r^-\) be the standard non-split Hermitian space of dimension \(2r+2\) as in \ref{notion:standard_non_split_hermitian}. The Hermitian form is given by the matrix
    \[
        \begin{pmatrix}
            0&\varpi_E^{-1}1_r&0&0\\
            -\varpi_E^{-1}1_r&0&0&0\\
            0&0&1&0\\
            0&0&0&-\ts
        \end{pmatrix}.
    \]
    \item Let \(H_r\) be its associated unitary group which is non-quasi-split. 
    \item Let \(P^-_r\) be the minimal parabolic subgroup of \(H_r\) as in \ref{notion:minimal_parabolic}, \(M^-_r\cong(\Res_{E/F}\GL_{1})^r\times H_0\) be the Levi subgroup of \(P^-_r\), \(T\) be the standard diagonal torus of \(M^-_r\) and \(N^-_r\) be the unipotent radical of \(P^-_r\).
    \item Let \(P_r^0\) be the Siegel parabolic subgroup of \(H_r\) as in \ref{notion:siegel_parabolic}, \(M_r^0\cong(\Res_{E/F}\GL_{r})\times H_0\) be the Levi subgroup of \(P_r^0\) and \(N_r^0\) be the unipotent radical of \(P_r^0\).
    \item 
    Let \(\Delta\) be as in \ref{notion:roots_non_quasi_split}, but here we denote \(\Phi,\Phi^+\) the reduced and reduced positive roots. 
    \item 
    The lattice \(\Lambda_{V_r^-}:=\bigoplus_{i=1}^{2r+2}\cO_Ev_i\) is called the standard almost \(\pi\)-modular lattice in \(V_r^-\) \ref{almost_self_dual_lattice_non_split}\footnote{It is also called almost \(\varpi\)-modular.}. Its stabilizer in \(H_r\) is denoted by \(K_r^-\).
    \item 
    The lattice \(\Lambda'_{V_r^-}:=(\bigoplus_{i=1}^{r}\varpi_E\cO_Ev_i)\oplus(\bigoplus_{i=r+1}^{2r}\cO_Ev_i)\oplus(\cO_Ev_{2r+1}\oplus\cO_Ev_{2r+2})\) is called the standard unimodular lattice in \(V_r^-\)\ref{unimodular_lattice_non_split}\footnote{In some reference, it may be called self-dual.}. Its stabilizer in \(H_r\) is denoted by \(L_r^-\), we should emphasize that \(L_r^-\) has a subgroup \(L_r^{-\circ}\) of index 2 which is a special maximal compact subgroup of \(H_r\) \cite{tits1979reductive}.
    \item 
    The Iwahori subgroup corresponding to \(P_r^-\) is denoted by \(I_r^-\).
\end{itemize}
\begin{remark}
    The reason that we denote the Iwahori subgroup by \(I_r^-\) instead of \(I_r\) is to distinguish it from the Iwahori subgroup of the quasi-split unitary group defined before.
\end{remark}

For an element \(\sigma=(\sigma_1,\cdots,\sigma_r)\in\left(\dC/(\frac{2\pi}{\log q}\dZ)\right)^r\), we define a character 
\[\chi_r^\sigma:T\rightarrow\dC^\times\quad t\mapsto \prod_{i=1}^r|b_i|_E^{\sigma_i}\]
in which \(b_i\) is the eigenvalue of \(t\) acts on \(v_i\) for \(1\le i\le r\). Such characters are called unramified characters of \(T\) and obviously every unramified character of \(T\) is uniquely written as \(\chi_r^\sigma\).

This gives a character of the parabolic subgroup \(P_r^-\) by \(T\times H_0\cong P^-_r/N^-_r\).

Set the normalized unramified principal series representation of \(H_r(F)\) with parameter \(\sigma\) by
\[\rI_{V}^\sigma:=\Ind_{P^-_r}^{H_r}(\delta_{P^-_r}^{\frac{1}{2}}\chi_r^\sigma)=\{f\in C^\infty(H_r(F))|f(pg)=\delta_{P^-_r}^{\frac{1}{2}}(p)\chi_r^\sigma(p)f(g)\}\]
which is a representation of \(H_r(F)\) via the right translation.

\begin{remark}
    A very important difference here is that the restriction of \(\delta^{\frac{1}{2}}_{P_r^-}\) on the diagonal torus is sending \(t\mapsto |b_1|_E^{\frac{3}{2}}\cdots|b_r|_E^{r+\frac{1}{2}}\), which is different from the quasi-split case.
\end{remark}

Throughout this article, our objects are representations of \(H_r(F)\) with Iwahori-fixed vectors. The following is a famous theorem of Borel (in our notations, and this is generally true for classical groups), which illustrates the importance of unramified principal series representations in the study of such representations:
\begin{theorem}\cite{borel1976admissible}\label{thm:borel_iwahori_non_split}
    If \(\pi\) is an irreducible representation of \(H_r\) and \(\pi^{I_r^-}\neq 0\), then there exists an unramified character \(\chi^\sigma_r\) such that \(\pi\) is a subrepresentation of \(\rI^\sigma_V\).
\end{theorem}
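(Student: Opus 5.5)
The statement to prove is Borel's theorem for the non-quasi-split group \(H_r=\rU(V_r^-)\): an irreducible \(\pi\) with \(\pi^{I_r^-}\neq0\) embeds into some \(\rI_V^\sigma\). I will follow Borel's original argument, specialised to the minimal parabolic \(P_r^-=M_r^-N_r^-\). The one feature absent from the quasi-split case is that \(M_r^-\cong(\Res_{E/F}\dG_m)^r\times H_0\) has the anisotropic factor \(H_0=\rU(V_0^-)\). This factor is compact, so it will have to be absorbed into the inducing character.

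\textbf{Step 1: Jacquet module.} Let \(\pi_N\) be the Jacquet module of \(\pi\) with respect to \(N_r^-\). The Iwahori \(I_r^-\) has an Iwahori factorization \(I_r^-=(I_r^-\cap \bar N_r^-)(I_r^-\cap M_r^-)(I_r^-\cap N_r^-)\), and \(I_r^-\cap M_r^-\) is the maximal compact subgroup \(M_r^{-,\circ}=(\cO_E^\times)^r\times H_0\) of \(M_r^-\). By Jacquet's lemma (Borel's Lemma~4.7), the projection \(\pi^{I_r^-}\to(\pi_N)^{M_r^{-,\circ}}\) is surjective. A Hecke-algebra argument shows it is in fact injective. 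Hence \((\pi_N)^{M_r^{-,\circ}}\neq0\).

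\textbf{Step 2: extract an unramified character.} The quotient \(M_r^-/M_r^{-,\circ}\cong\dZ^r\) is generated by the \(\varpi_E\)-powers in the torus coordinates. The space \((\pi_N)^{M_r^{-,\circ}}\) is finite-dimensional, since \(\pi_N\) is admissible, and it is stable under the commutative group \(\dZ^r\). It therefore contains a simultaneous eigenvector, which gives a character \(\chi\) of \(M_r^-\) trivial on \(M_r^{-,\circ}\) and in particular trivial on \(H_0\). Such a \(\chi\) is exactly \(\delta_{P_r^-}^{1/2}\chi_r^\sigma\) for some \(\sigma\), where the \(\delta^{1/2}\)-twist is the one recorded in the preceding remark, with exponents \(i+\tfrac12\). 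This yields a nonzero \(M_r^-\)-map \(\pi_N\to\delta_{P_r^-}^{1/2}\chi_r^\sigma\).

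\textbf{Step 3: Frobenius reciprocity.} From \(\Hom_{M_r^-}(\pi_N,\delta_{P_r^-}^{1/2}\chi_r^\sigma)\neq0\), Frobenius reciprocity gives a nonzero map \(\pi\to\rI_V^\sigma\), with the normalization fixed as in the definition of \(\rI_V^\sigma\). Because \(\pi\) is irreducible, this map is injective, so \(\pi\) is a subrepresentation of \(\rI_V^\sigma\).

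\textbf{Main obstacle.} The delicate step is Step 1, namely checking that the Iwahori factorization and Borel's surjectivity/injectivity lemma hold for \(I_r^-\) in this non-split ramified group. I would first invoke Bruhat--Tits theory as in \cite{borel1976admissible} and \cite{tits1979reductive}: \(I_r^-\) is the Iwahori attached to the chamber of \(P_r^-\), and Borel's results are stated for arbitrary reductive groups with Iwahori subgroups. The key point to verify is \(I_r^-\cap M_r^-=M_r^{-,\circ}\). This reduces to the observation that \(H_0\) is compact and stabilizes \(\cO_Ev_{2r+1}\oplus\cO_Ev_{2r+2}\).
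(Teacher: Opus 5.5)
Your argument is Borel's proof specialised to $P_r^-$. The paper gives nothing beyond the citation to \cite{borel1976admissible}, so the route is the same, and Steps 2--3 are fine. What needs fixing is the identification $I_r^-\cap M_r^-=(\cO_E^\times)^r\times H_0$ and the justification you offer for it.

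If $I_r^-$ is the Bruhat--Tits Iwahori (a parahoric), as your last paragraph proposes, then it lies in the kernel of the Kottwitz map of $H_r$. Here $\pi_1(H_r)_I=\dZ/2$, and that map is $\nu(h)=\det h \bmod \fp_E\in\{\pm1\}$; it takes values in $\{\pm1\}$ because for $E/F$ ramified every norm-one element is $\equiv\pm1$ modulo $\fp_E$. The character $\nu$ is nontrivial on $H_0$: take the element acting by $-1$ on $v_{2r+1}$ and trivially on the other basis vectors. Hence $I_r^-\cap M_r^-$ is the parahoric $(\cO_E^\times)^r\times\ker(\nu|_{H_0})$, which has index $2$ in your $M_r^{-,\circ}$. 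The eigencharacter of Step 2 can then be nontrivial on $H_0$, and with this reading the statement itself is false. Indeed, $\nu$ is an irreducible representation fixed by the connected Iwahori. But $\Hom_{M_r^-}(\nu|_{M_r^-},\delta_{P_r^-}^{1/2}\chi_r^\sigma)=0$ because $\nu|_{H_0}\neq1$, so $\nu$ embeds in no $\rI_V^\sigma$. The fact that $H_0$ stabilizes $\cO_Ev_{2r+1}\oplus\cO_Ev_{2r+2}$ only shows that $H_0$ lies in the full stabilizer of the lattice chain, not in the parahoric.

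The theorem is correct for the group the paper actually uses. Its $I_r^-$ has to be the full (non-connected) stabilizer. The paper asserts $K_r^-=\bigcup_{w\in\fW_r}I_r^-\omega(w)I_r^-$, yet every $\omega(w)$ has determinant $1$ while $K_r^-$ contains elements of $H_0$ of determinant $-1$. For that $I_r^-$, your lattice argument does give $I_r^-\cap M_r^-=(\cO_E^\times)^r\times H_0$.

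However, Borel's injectivity statement is proved for the connected Iwahori $I^\circ$, so you need to transfer it explicitly. Since $I^\circ\subseteq I_r^-$, we have $\pi^{I_r^-}\subseteq\pi^{I^\circ}$, and this injects into $\pi_N$ by Borel. By $M_r^-$-equivariance of the projection, the image lies in $(\pi_N)^{I_r^-\cap M_r^-}$. That nonvanishing is all Step 2 needs; surjectivity is not used. The rest of your proof then goes through unchanged.
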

\begin{notation}
    As in \ref{notion:roots_quasi_split}, we should specify a special maximal compact subgroup of \(H_r\) to study spherical representations. We choose \(L_r^{-\circ}\), which is a special maximal parahoric subgroup of \(H_r\) \cite{tits1979reductive}. 
    \begin{itemize}
        \item for \(\alpha=\epsilon_i-\epsilon_j\) or \(\alpha=\epsilon_i+\epsilon_j\) with \(i\neq j\), \(q_\alpha=q\) and \(q_{\alpha/2}=1\);
        \item for a multipliable short root \(\alpha=\epsilon_i\), \(q_\alpha=q\) and \(q_{\alpha/2}=q\).
    \end{itemize}
\end{notation}

Now we choose representatives for elements in \(\fW_r\) so that \(\fW_r\) will be considered as a subgroup of \(K^-_r\), this is from \cite{liu2022theta}*{Section 2}.
\begin{notation}\label{notion:representative_almost_self_dual_nonsplit}
    We choose representative \(\omega(w_1)\) of \(w_1\) as
    \[
        \omega(w_1):=\begin{pmatrix}
            0&0&1&0&&\\
            0&1_{r-1}&0&0&&\\
            -1&0&0&0&&\\
            0&0&0&1_{r-1}&&\\
            &&&&1&\\
            &&&&&1
        \end{pmatrix}
    \]
    and \(\omega(w_{i,i+1})\) is the element in \(K^-_r\) that permutes \(\{v_1,\cdots,v_r\}\) by swap \(v_i,v_{i+1}\) and stabilizes \(v_{2r+1}\) and \(v_{2r+2}\), then the definition is extended to all elements in \(\fW_r\).

    Then we choose \(\Omega(w)\) for each \(w\in \fW_r\) similarly as last section, but notice that, in this case we can choose \(\Omega\) all inside \(L_r^{-\circ}\), i.e. the determinant is 1 modulo \(\varpi_E\):

    We choose 
    \[
    \Omega(w_1):=\begin{pmatrix}
        0&0&\varpi_E^{-1}&0&&\\
        0&1_{r-1}&0&0&&\\
        \varpi_E&0&0&0&&\\
        0&0&0&1_{r-1}&&\\
        &&&&-1&\\
        &&&&&1
    \end{pmatrix}
    \]
    and consider the elements \(\Omega(w_{i,i+1}):=\omega(w_{i,i+1})t\)
    where \(t=\diag\{1_{2r-2},-1,1\}\). Then we extend the definition to all elements in \(\fW_r\) as before thus all \(\Omega(w)\in L_r^{-\circ}\).

\end{notation}

We have a good basis of \((\rI_{V}^\sigma)^{I_r^-}\) by \cite{cartier1979representations}*{(28)} or \cite{casselman1980unramified}*{Proposition 2.1}
\begin{proposition}\label{prop:casselman_basis_non_split}
    Let \(\phi_{w,\sigma}\) be the unique element of \(\rI_{V}^\sigma\) such that for \(p\in P_r^-,w'\in\fW_r\) and \(i\in I^-_r\),
    \[
        \phi_{w,\sigma}(p\Omega(w')i)=
        \begin{cases}
        \delta_{P_r^-}^{\frac{1}{2}}\chi_r^\sigma(p)&w'=w\\
        0&w'\neq w   
        \end{cases}
    \]
    then the functions \(\left\{\phi_{w,\sigma}\right\}_{w\in\fW_r}\) form a basis of \(\rI_{V}^\sigma\).
\end{proposition}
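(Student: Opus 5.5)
The plan is to deduce the statement from the refined Iwasawa (Bruhat) decomposition for \(H_r\), namely \(H_r=\bigsqcup_{w\in\fW_r}P_r^-wI_r^-\) from Theorem~\ref{thm:iwasawa_decomposition}, together with the fact that each representative \(\Omega(w)\) of Notation~\ref{notion:representative_almost_self_dual_nonsplit} may be used in place of \(\omega(w)\).

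First, as in the proof of Theorem~\ref{thm:iwasawa_semi_standard}, I will check by induction on a reduced word that \(\Omega(w)\omega(w)^{-1}\) lies in the diagonal torus times \(H_0\). This holds for the generators by inspection. The factor \(\diag(\varpi_E^{-1},1,-\varpi_E,1)\) appears in \(\Omega(w_1)\), while the signs \(\pm1\) on \(v_{2r+1}\) are elements of \(\rU(V_0^-)=H_0\). The conjugation action of the generators preserves this subgroup, so the claim holds in general. Consequently this element lies in the Levi of \(P_r^-\), and \(P_r^-\Omega(w)I_r^-=P_r^-\omega(w)I_r^-\), so
\[
H_r=\bigsqcup_{w\in\fW_r}P_r^-\Omega(w)I_r^-.
\]

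Next, for each \(w\) I will define \(\phi_{w,\sigma}\) on the double coset \(P_r^-\Omega(w)I_r^-\) by the displayed formula, and set it to zero elsewhere. The key check is well-definedness: if \(p\Omega(w)i=p'\Omega(w)i'\), then \(p^{-1}p'=\Omega(w)i\,i'^{-1}\Omega(w)^{-1}\in P_r^-\cap\Omega(w)I_r^-\Omega(w)^{-1}\), and I need \(\delta_{P_r^-}^{1/2}\chi_r^\sigma\) to be trivial on this group. The group is compact, hence its projection to the Levi \(T\times H_0\) is compact. Both \(\delta_{P_r^-}\) and the unramified character \(\chi_r^\sigma\) (extended trivially on \(H_0\), which is compact anisotropic) are trivial on compact subgroups of the Levi, since they factor through \(|b_i|_E\). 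This is the main point to verify. It is routine, but it is where the non-quasi-split anisotropic factor \(H_0\) must be handled. Each \(\phi_{w,\sigma}\) is then locally constant, satisfies the induction equivariance, and is right \(I_r^-\)-invariant, so it lies in \((\rI_V^\sigma)^{I_r^-}\).

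Finally, any \(f\in(\rI_V^\sigma)^{I_r^-}\) is determined by its values \(f(\Omega(w))\), by the equivariance and the decomposition above. Hence \(f=\sum_w f(\Omega(w))\phi_{w,\sigma}\). Linear independence follows from \(\phi_{w,\sigma}(\Omega(w'))=\delta_{ww'}\), which uses the disjointness of the cosets. Thus \(\{\phi_{w,\sigma}\}\) is a basis of the Iwahori-fixed space. This is what the statement means; the literal ``basis of \(\rI_V^\sigma\)'' should be read as a basis of \((\rI_V^\sigma)^{I_r^-}\), exactly as in the cited \cite{casselman1980unramified}*{Proposition 2.1}.
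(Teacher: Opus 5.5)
Your proposal is correct and follows the paper's route: the paper gives no argument beyond citing \cite{casselman1980unramified}*{Proposition 2.1}, which is exactly the construction via \(H_r=\bigsqcup_{w}P_r^-\omega(w)I_r^-\) that you reproduce, and your check that \(\Omega(w)\omega(w)^{-1}\in M_r^-\) matches the paper's own remark that justifies using the representatives \(\Omega(w)\). You are also right that the conclusion must be read as a basis of \((\rI_V^\sigma)^{I_r^-}\), not of \(\rI_V^\sigma\).
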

\begin{remark}
    Our \(\phi_{w,\sigma}\) is \(\phi_{w,\chi}\) in Casselman's notation where \(\chi=\chi_r^\sigma\).

    We will sometimes omit the index \(\sigma\).
\end{remark}

Assuming for a while that \(\sigma\) is regular, i.e. for \(w\in\fW_r\), \((w.\sigma=\sigma)\Longrightarrow w=1\)
\begin{definition}[Intertwining Operator \cite{casselman1980unramified}*{Section 3, (1)}]
    For each \(x\in L_r^{-\circ}\) representing \(w\in \fW_r\), we denote the intertwining operator in loc. cit. also by \(T_w\).
\end{definition}
\begin{definition}
    For each \(1\le i\le r\), we define 
    \[
        c_{\epsilon_i}(\sigma):=c_{\epsilon_i}(\chi_r^\sigma)=\frac{(1-q^{-\frac{3}{2}-\sigma_i})(1+q^{-\frac{1}{2}-\sigma_i})}{1-q^{-2\sigma_i}}
    \]
    Note that this is different with that in \ref{def:casselman_c}.

    and for \(1\le i<j\le r\), we define
    \[
        c_{\epsilon_j-\epsilon_i}(\sigma):=c_{\epsilon_j-\epsilon_i}(\chi_r^\sigma)=\frac{1-q^{-1}q^{\sigma_i-\sigma_j}}{1-q^{\sigma_i-\sigma_j}}.
    \]

\end{definition}
\begin{remark}
    Compared to the quasi-split case (Definition \ref{def:casselman_c}), the new feature is the presence of multipliable roots \(\alpha=\epsilon_i\) and hence \(q_{\alpha/2}>1\). For our special choice \(L_r^{-\circ}\), we have \(q_\alpha=q\) and \(q_{\alpha/2}=q\), so \(q_\alpha^{-1}q_{\alpha/2}^{-1}=q^{-2}\) appears in Casselman's relations.

    Use the notation from \cite{kato1981irreducibility}*{1.13}, we know that there are polynomials \(e_\alpha(\sigma),d_\alpha(\sigma)\) in \(\dC[q^{\sigma_i},q^{-\sigma_i}]\) such that \(c_\alpha(\sigma)=\frac{e_\alpha(\sigma)}{d_\alpha(\sigma)}\).
\end{remark}
\begin{remark}
    Also note that our definitions of \(T_w\) and \(c_\alpha(\sigma)\) are not correct for \emph{irregular} \(\sigma\), but most results will be extended to irregular ones by holomorphicity.
\end{remark}
\begin{definition}
    For \(w\in\fW_r\), we define
    \[
        c_w(\sigma):=c_w(\chi_r^\sigma)=\prod_{\alpha\in\Phi^+,w\alpha<0} c_\alpha(\sigma)
    \]
\end{definition}
The action of \(T_w\) on \((\rI^\sigma_V)^{I_r^-}\) can be described explicitly by the following theorem of Casselman:
\begin{theorem}\label{thm:casselman_relation_non_split}\cite{casselman1980unramified}*{Theorem 3.4}
    If \(\alpha\in\Delta\) and \(\ell(w_\alpha w)>\ell(w)\), then
    \begin{equation}
        \begin{aligned}
            T_{w_\alpha}(\phi_{w,\sigma})=&(c_\alpha(\sigma)-1)\phi_{w,w_\alpha\sigma}+q_\alpha^{-1}q_{\alpha/2}^{-1}\phi_{w_\alpha w,w_\alpha\sigma}\\
            T_{w_\alpha}(\phi_{w_\alpha w,\sigma})=&\phi_{w,w_\alpha\sigma}+(c_\alpha(\sigma)-q_\alpha^{-1}q_{\alpha/2}^{-1})\phi_{w_\alpha w,w_\alpha\sigma}
        \end{aligned}
    \end{equation}
\end{theorem}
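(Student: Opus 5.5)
Theorem \ref{thm:casselman_relation_non_split} is Casselman's \cite{casselman1980unramified}*{Theorem 3.4}, whose proof applies to any quasi-split or non-quasi-split group once a special good maximal compact subgroup and its Iwahori subgroup are fixed. My plan is to check that our data satisfy Casselman's hypotheses and then redo his rank-one reduction, keeping careful track of $q_\alpha$ and $q_{\alpha/2}$.

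\emph{Step 1: set-up.} Check the following.
\begin{itemize}
\item $L_r^{-\circ}$ is special (\cite{tits1979reductive}).
\item $I_r^-\subset L_r^{-\circ}$ has an Iwahori factorization with respect to $P_r^-$.
\item The chosen $\Omega(w)$ lie in $N_{H_r}(T)\cap L_r^{-\circ}$ and give the refined Bruhat decomposition of Theorem \ref{thm:iwasawa_decomposition}. This makes Proposition \ref{prop:casselman_basis_non_split} valid.
\end{itemize}
Since $T_{w_\alpha}$ is $H_r$-equivariant, it maps $(\rI_V^\sigma)^{I_r^-}$ into $(\rI_V^{w_\alpha\sigma})^{I_r^-}$. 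So $T_{w_\alpha}\phi_{w,\sigma}$ is determined by its values at the points $\Omega(w')$, $w'\in\fW_r$.

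\emph{Step 2: support.} Write $T_{w_\alpha}f(g)=\int_{N_{\alpha}}f(\Omega(w_\alpha)^{-1}ng)\,\rd n$, where $N_\alpha$ is the root subgroup of $\alpha$; for $\alpha=\epsilon_1$ it contains the $2\epsilon_1$ part. Use the Iwahori factorization and the hypothesis $\ell(w_\alpha w)>\ell(w)$, exactly as in Casselman. This shows that $T_{w_\alpha}\phi_w$ and $T_{w_\alpha}\phi_{w_\alpha w}$ are supported on $P_r^-\Omega(w)I_r^-\cup P_r^-\Omega(w_\alpha w)I_r^-$. The computation therefore collapses to the Levi subgroup $M_\alpha$ attached to $\alpha$.

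\emph{Step 3: rank one.} Here I distinguish two cases.
\begin{itemize}
\item For $\alpha=\epsilon_{i+1}-\epsilon_i$, the derived group of $M_\alpha$ is $\Res_{E/F}\SL_2$. The computation is identical to the quasi-split case, giving $q_\alpha=q$ and $q_{\alpha/2}=1$.
\item For $\alpha=\epsilon_1$, $M_\alpha$ is $(\Res_{E/F}\dG_m)^{r-1}\times\rU(V_1^-)$. The factor $\rU(V_1^-)$ is the rank-one non-quasi-split unitary group of a $4$-dimensional hermitian space, with relative root system of type $BC_1$.
\end{itemize}
In both cases the two $2\times2$ coefficients are obtained from two facts. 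First, apply $T_{w_\alpha}$ to the $L^{-\circ}$-spherical vector $\phi_w+\phi_{w_\alpha w}$ of the rank-one group; it acts by the rank-one Gindikin--Karpelevich factor $c_\alpha(\sigma)$. Second, compute a single value by a direct volume computation, namely $[I\Omega(w_\alpha)I:I]=q_\alpha q_{\alpha/2}$. This second fact produces the coefficient $q_\alpha^{-1}q_{\alpha/2}^{-1}$; together with the first it yields the remaining entries $c_\alpha-1$, $1$ and $c_\alpha-q_\alpha^{-1}q_{\alpha/2}^{-1}$.

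\emph{Main obstacle.} I expect the $BC_1$ case to be the hardest step. There one must compute explicitly, with the unusual representative $\Omega(w_1)$ (whose entries are $\varpi_E^{\pm1}$ and $-1$), both the filtration of the root groups $U_{\epsilon_1}\supset U_{2\epsilon_1}$ relative to $L_r^{-\circ}$ and the index $q^2$ of the Iwahori double coset. Then one must check that the rank-one Macdonald formula, with $q_\alpha=q_{\alpha/2}=q$ and the shifted modulus character $\delta_{P_r^-}^{1/2}$ of the remark above, gives exactly
\[
c_{\epsilon_i}(\sigma)=\frac{(1-q^{-\frac{3}{2}-\sigma_i})(1+q^{-\frac{1}{2}-\sigma_i})}{1-q^{-2\sigma_i}} .
\]
I would first carry out this check by hand for $r=1$, by parametrizing $N_{\epsilon_1}(F)$ as pairs $(x,y)$ with $y+y^\tc$ determined by $x$, and splitting the integral according to the valuation of $(x,y)$.
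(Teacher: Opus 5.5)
Your overall route matches the paper's. The paper gives no argument beyond quoting Casselman's Theorem~3.4 for $(H_r,L_r^{-\circ},I_r^-,P_r^-)$ and noting that $q_\alpha^{-1}q_{\alpha/2}^{-1}=q^{-2}$ for $\alpha=\epsilon_i$; your Step~1 spells out the hypotheses that citation relies on. However, the rank-one bookkeeping in Step~3 does not close. Write
\[
T_{w_\alpha}\phi_{w,\sigma}=A\,\phi_{w,w_\alpha\sigma}+C\,\phi_{w_\alpha w,w_\alpha\sigma},\qquad
T_{w_\alpha}\phi_{w_\alpha w,\sigma}=B\,\phi_{w,w_\alpha\sigma}+D\,\phi_{w_\alpha w,w_\alpha\sigma}.
\]
The eigen-relation for $\phi_{w,\sigma}+\phi_{w_\alpha w,\sigma}$ is a single vector identity. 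It yields only $A+B=c_\alpha(\sigma)$ and $C+D=c_\alpha(\sigma)$. The index computation then gives $C=q_\alpha^{-1}q_{\alpha/2}^{-1}$, and hence $D$. Nothing in your two inputs separates $A$ from $B$, so the entries $c_\alpha(\sigma)-1$ and $1$ do not follow.

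You need a second direct evaluation, which is what Casselman does:
\[
B=(T_{w_\alpha}\phi_{w_\alpha w,\sigma})(\Omega(w))=\int_{N_\alpha}\phi_{w_\alpha w,\sigma}\bigl(\Omega(w_\alpha)^{-1}n\,\Omega(w)\bigr)\,\rd n .
\]
The hypothesis $\ell(w_\alpha w)>\ell(w)$ means $w^{-1}\alpha>0$. Split the integral according to whether $n$ lies in $L_r^{-\circ}$.
\begin{enumerate}
\item If $n\in N_\alpha\cap L_r^{-\circ}$, then $\Omega(w)^{-1}n\,\Omega(w)\in N_r^-\cap L_r^{-\circ}$. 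Provided $N_r^-\cap L_r^{-\circ}\subset I_r^-$, the argument lies in $(M_r^-\cap L_r^{-\circ})\Omega(w_\alpha w)I_r^-$, where $\phi_{w_\alpha w,\sigma}$ equals $1$.
\item If $n\notin L_r^{-\circ}$, the rank-one factorization $\Omega(w_\alpha)^{-1}n\in P_r^-(\bar N_\alpha\cap I_r^-)$, again using $w^{-1}\alpha>0$, puts the argument in $P_r^-\Omega(w)I_r^-$, where $\phi_{w_\alpha w,\sigma}$ vanishes.
\end{enumerate}
Hence $B=\vol(N_\alpha\cap L_r^{-\circ})=1$ by Casselman's normalization of $\rd n$, and then $A=c_\alpha(\sigma)-1$.

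The condition used here, $N_r^-\cap L_r^{-\circ}\subset I_r^-$, is stronger than the mere existence of an Iwahori factorization, which is all your Step~1 checks. If $I_r^-$ lies in both $K_r^-$ and $L_r^{-\circ}$, then for fixed $P_r^-$ the analogous inclusion can hold for at most one of them. The reason is that an alcove lies in the positive chamber of at most one of its vertices. Since the paper introduces $I_r^-$ as the Iwahori of $K_r^-$, you must verify the inclusion for the chosen lattices, or else replace $P_r^-$ by its opposite. The coefficients $1$ and $q_\alpha^{-1}q_{\alpha/2}^{-1}$ trade places in the wrong position.

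Finally, for $\alpha=\epsilon_1$ you can obtain $C$ without parametrizing $N_{\epsilon_1}$. Consider the unimodular lattice $\varpi_E\cO_Ev_1\oplus\cO_Ev_{r+1}\oplus\cO_Ev_{2r+1}\oplus\cO_Ev_{2r+2}$ of the rank-one Levi. Modulo $\varpi_E$ it becomes a hyperbolic plane plus the anisotropic plane $\langle 1,-\ts\rangle$ over the residue field. So the reductive quotient is $\SO^-(4)$, and the Iwahori is the stabilizer of an isotropic line. The quadric has $q^2+1$ points, which gives $[I_r^-\Omega(w_1)I_r^-:I_r^-]=q^2$.
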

\begin{remark}
    We restate the theorem here because for \(\alpha=\epsilon_i\), we have \(q_\alpha=q_{\alpha/2}=q\), hence \(q_\alpha^{-1}q_{\alpha/2}^{-1}=q^{-2}\).
\end{remark}

\subsection{Spherical Representations with Respect to the Unimodular Lattice}
In this subsection we summarize the basic properties of irreducible admissible representations of \(H_r\) with \(L_r^{-\circ}\)-fixed vector (the special maximal compact subgroup chosen above).

Because the Hecke algebra \(\cH(H_r//L_r^{-\circ})\) is commutative (where we endow \(L_r^{-\circ}\) with Haar measure 1), we know that, if \((\pi,\cV)\) is an irreducible representation of \(H_r\) such that \(\cV^{L_r^{-\circ}}\neq 0\), then \(\dim_\dC \cV^{L_r^{-\circ}}=1\). Furthermore, if we consider \(\rI_{V}^\sigma\) by Theorem \ref{thm:borel_iwahori_non_split}, we will have \((\rI_{V}^\sigma)^{L_r^{-\circ}}\) is spanned by the function
\[
    \phi_{L_r^{-\circ},\sigma}(p\ell):=\delta_{P_r^-}^{\frac{1}{2}}(p)\chi^\sigma_r(p)\qquad p\in P_r^-,\ell\in L_r^{-\circ}
\]
and
\[
    \phi_{L_r^{-\circ}}=\sum_{w\in\fW_r}\phi_w.
\]
\begin{remark}
    Even if \(\sigma\) is not regular, \(\phi_w\)'s and \(\phi_{L_r^{-\circ}}\) are well-defined.
\end{remark}
\begin{theorem}\label{thm:casselman_spherical_vector_non_split}\cite{casselman1980unramified}*{Theorem 3.1}
    For regular \(\sigma\),
    \[T_w(\phi_{L_r^{-\circ},\sigma})=c_w(\sigma)\phi_{L_r^{-\circ},w\sigma}.\]
\end{theorem}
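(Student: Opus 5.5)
Following the quasi-split case (Theorem \ref{thm:casselman_spherical_vector}), I would deduce the statement from Theorem \ref{thm:casselman_relation_non_split} by the induction on length used at the end of the proof of \cite{casselman1980unramified}*{Theorem 3.1}.

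\emph{Reduction to simple reflections.} For regular \(\sigma\), if \(w=w_\alpha w'\) with \(\ell(w)=\ell(w')+1\), then \(T_w=T_{w_\alpha}\circ T_{w'}\). This composition is taken between \(\rI_V^{w'\sigma}\) and \(\rI_V^{w\sigma}\), and holds because the representatives \(\Omega(w)\) lie in \(L_r^{-\circ}\) and satisfy the braid relations. Moreover, the set \(\{\beta>0:w\beta<0\}\) equals \(\{\beta>0:w'\beta<0\}\sqcup\{w'^{-1}\alpha\}\). Hence \(c_w(\sigma)=c_{w'}(\sigma)\,c_\alpha(w'\sigma)\). It therefore suffices to prove
\[
T_{w_\alpha}\phi_{L_r^{-\circ},\sigma}=c_\alpha(\sigma)\,\phi_{L_r^{-\circ},w_\alpha\sigma}\qquad\text{for }\alpha\in\Delta.
\]

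\emph{Rank-one check.} Write \(\phi_{L_r^{-\circ}}=\sum_{w}\phi_w\) and group the terms into pairs \((\phi_w,\phi_{w_\alpha w})\) with \(\ell(w_\alpha w)>\ell(w)\). Set \(Q=q_\alpha^{-1}q_{\alpha/2}^{-1}\). Adding the two lines of Theorem \ref{thm:casselman_relation_non_split} gives
\[
T_{w_\alpha}(\phi_w+\phi_{w_\alpha w})=(c_\alpha-1+1)\phi_{w,w_\alpha\sigma}+(Q+c_\alpha-Q)\phi_{w_\alpha w,w_\alpha\sigma}=c_\alpha(\sigma)(\phi_{w}+\phi_{w_\alpha w}).
\]
Summing over all pairs yields the claim. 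This works for the long-root simple reflections and equally for the multipliable root \(\alpha=\epsilon_1\), where \(Q=q^{-2}\).

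\emph{Main obstacle.} The delicate point is to make sure that the basis \(\phi_{w,\sigma}\) of Proposition \ref{prop:casselman_basis_non_split} is normalized by the \(\Omega(w)\), and that Casselman's Theorem 3.4 applies with these \(c_\alpha\). In particular, the formula for \(c_{\epsilon_i}\) must match Casselman's \(c_\alpha\) for the non-reduced root system. That formula involves \(q_\alpha\), \(q_{\alpha/2}\) and the factor \((1+q^{-1/2}\cdots)\), and it depends on \(L_r^{-\circ}\) being special and on \(\Omega(w)\in L_r^{-\circ}\).

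I would therefore check in rank one (\(r=1\), \(H_1\) with \(L_1^{-\circ}\)) that \(T_{w_1}\phi_{L^{-\circ}}\) evaluated at \(1\) equals \(\int_{N^-}\phi_{L^{-\circ}}(\Omega(w_1)n)\,dn\). I would compute this by splitting \(N^-\) into the pieces \(N^-\cap L^{-\circ}\) and the shells outside it. This reproduces Macdonald's formula \(c_{\epsilon_1}(\sigma)\), and so confirms the definition. Everything else is formal.
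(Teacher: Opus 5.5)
Your proposal is correct and is essentially the paper's route. The paper simply invokes Casselman's Theorem 3.1 (cocycle reduction to simple reflections, then a rank-one computation), and your step of summing over pairs using Theorem \ref{thm:casselman_relation_non_split} is exactly the argument the paper uses for its own variants, such as Theorem \ref{thm:casselman_relation_almost_spherical}. Keep in mind that the pair identity \(T_{w_\alpha}(\phi_w+\phi_{w_\alpha w})=c_\alpha(\sigma)(\phi_w+\phi_{w_\alpha w})\) is the rank-one case of the statement in disguise, so the genuine content is the rank-one integral over \(N_\alpha\) that you single out at the end.
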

\begin{remark}
    This is the same as Theorem \ref{thm:casselman_spherical_vector}.
\end{remark}
Now we define spherical representations with respect to the special maximal compact subgroup \(L_r^{-\circ}\).

\begin{definition}\label{def:spherical_representation_standard_non_split}
    A spherical representation (for \(H_r\)) is an irreducible representation \((\pi,\cV)\) of \(H_r\) such that
    \(\cV^{L_r^{-}}\neq 0\).
\end{definition}
\begin{remark}
    This is the analogue of the usual notion of an unramified/spherical representation for a special maximal compact subgroup. In this case, it is equivalent to \(\cV^{L_r^{-\circ}}\neq 0\) because we can embed such representations into an unramified principal series representation, and the dimension of the space of \(L_r^{-\circ}\)-invariants is 1 (which is different from the quasi-split case).
\end{remark}
As an easy consequence of the Satake isomorphism, the isomorphism classes of spherical representations with respect to \(L_r^{-\circ}\) are in one-to-one correspondence with the set of \(\sigma\) modulo the action of \(\fW_r\).


\subsection{Spherical Representations with Respect to the almost \texorpdfstring{\(\pi\)}{π}-modular Lattice}
Recall that from \ref{notion:stabilizer_almost_self_dual}, \(K_r^-\) is the stabilizer of the semi-standard lattice
\[
\Lambda_{V_r^-}'=(\bigoplus_{i=1}^{r}\cO_Ev_i)\oplus(\bigoplus_{i=r+1}^{2r}\cO_Ev_i)\oplus(\cO_Ev_{2r+1}\oplus\cO_Ev_{2r+2}).
\]
It is a special compact subgroup of \(H_r\) and we have \(K_r^-=\bigcup_{w\in\fW_r}I_r^-\omega(w)I_r^-\).
    
    It is easy to prove by induction that for any \(w\in\fW_r\)
    \[
        \Omega(w)(\omega(w))^{-1}\in M_r^-
    \]
    \[
        (\omega(w))^{-1}\Omega(w)\in M_r^-.
    \]
    In fact, the above elements lie in the intersection of \(L_r^{-\circ}\) and the product of the diagonal torus with \(\langle t\rangle\) where \(t=\diag\{1_{2r},1,-1\}\).

Now we consider another choice of basis of \((\rI_V^\sigma)^{I_r^-}\):
\begin{definition}
    For \(w \in\fW_r\), we define \(\phi'_{w,\sigma}=\phi_{w,\sigma}(\omega(w))^{-1}\phi_{w,\sigma}\), then \(\phi'_{w,\sigma}(\omega(w))=1\).
\end{definition}
\begin{proposition}\label{prop:basis_relation_non_quasi_split}
    For \(w=w_Iw_\tau\)
        \begin{equation}
            \begin{aligned}
                \phi_{w,\sigma}(\omega(w))
                =&\phi_{w,\sigma}(\omega(w)(\Omega(w))^{-1}\Omega(w))\\
                =&\phi_w(\omega(w_I)\omega(w_\tau)(\Omega(w_\tau))^{-1}(\Omega(w_I))^{-1}\Omega(w))\\
                =&\chi_r^\sigma\delta^{\frac{1}{2}}_{P_r^-}(\omega(w_I)(\Omega(w_I))^{-1})\\
                =&q^{\sum_{i\in I} (\sigma_i+i+\frac{1}{2})}
            \end{aligned}
        \end{equation}
    so
    \[
    \phi'_{w,\sigma}=q^{-\sum_{i\in I} (\sigma_i+i+\frac{1}{2})}\phi_{w,\sigma}
    \]
    and 
    \[
    \phi_{w,\sigma}=q^{\sum_{i\in I} (\sigma_i+i+\frac{1}{2})}\phi_{w,\sigma}'
    \]
\end{proposition}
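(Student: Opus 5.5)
The plan is to reduce everything to the single number \(\phi_{1,\sigma}\) evaluated on the torus-and-\(\langle t\rangle\) element \(\omega(w_I)(\Omega(w_I))^{-1}\), in the same way as Proposition \ref{prop:basis_relation} (2) in the quasi-split case.

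\emph{Step 1: strip off the permutation part.} Write \(w=w_Iw_\tau\). Then \(\omega(w)=\omega(w_I)\omega(w_\tau)\) and \(\Omega(w)=\Omega(w_I)\Omega(w_\tau)\). Insert \((\Omega(w))^{-1}\Omega(w)\) to get
\[
\omega(w)=\omega(w_I)\,\omega(w_\tau)(\Omega(w_\tau))^{-1}\,(\Omega(w_I))^{-1}\,\Omega(w).
\]
By Notation \ref{notion:representative_almost_self_dual_nonsplit}, \(\Omega(w_{i,i+1})=\omega(w_{i,i+1})t\) with \(t=\diag\{1_{2r-2},-1,1\}\), and \(t\) acts only on \(v_{2r+1},v_{2r+2}\), so it commutes with all \(\omega(w_{j,j+1})\). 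Hence \(\omega(w_\tau)(\Omega(w_\tau))^{-1}\) is a power of \(t\) (here I read the paper's \(t\) as acting on the anisotropic block). Conjugating by \(\Omega(w_I)\) moves it to the left of \((\Omega(w_I))^{-1}\); since \(\Omega(w_1)\) acts on \(v_{2r+1}\) by \(-1\), this element commutes with \(t\). We obtain \(\omega(w)=m\,\Omega(w)\) with \(m\in M_r^-\), and \(m\) agrees with \(\omega(w_I)(\Omega(w_I))^{-1}\) up to an element of \(\rU(V_0^-)\cap L_r^{-\circ}\).

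\emph{Step 2: evaluate.} By the defining property in Proposition \ref{prop:casselman_basis_non_split}, \(\phi_{w,\sigma}(m\Omega(w))=\delta_{P_r^-}^{1/2}\chi_r^\sigma(m)\). The \(H_0\)-component is compact, and the character is trivial on it, since \(\chi_r^\sigma\) only sees \(b_1,\dots,b_r\) and \(\delta\) is trivial on the anisotropic factor. So the value is \(\delta_{P_r^-}^{1/2}\chi_r^\sigma(\omega(w_I)(\Omega(w_I))^{-1})\).

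\emph{Step 3: compute the torus element.} The matrix computation of Theorem \ref{thm:iwasawa_semi_standard}, adapted, gives the following. For each \(i\in I\), \(\omega(w_i)(\Omega(w_i))^{-1}\) is diagonal with entry \(\varpi_E\) (up to sign) on \(v_i\), entry \(\varpi_E^{-1}\) on \(v_{r+i}\), and a sign on \(v_{2r+1}\). Applying the quasi-split sign conventions, one must check which of \(\varpi_E^{\pm1}\) lands on \(v_i\), so that \(|b_i|_E=q^{-1}\) and the exponent comes out positive. The contribution is then
\[
\chi_r^\sigma\delta_{P_r^-}^{1/2}=\prod_{i\in I}|b_i|_E^{\sigma_i+i+\frac12},
\]
using the Remark that \(\delta^{1/2}_{P_r^-}\) restricts to \(|b_1|^{3/2}\cdots|b_r|^{r+1/2}\). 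This yields \(q^{\sum_{i\in I}(\sigma_i+i+\frac12)}\). The \(w_i\) for different \(i\) commute and their torus parts multiply, which is why the formula is additive over \(I\). The two displayed identities relating \(\phi'\) and \(\phi\) are then immediate from the definition of \(\phi'_{w,\sigma}\).

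The main obstacle is the sign and orientation bookkeeping in Step 3. One has to confirm that the inverse is taken in the right order, so that the torus element has \(|b_i|_E=q^{-1}\) rather than \(q\). One also has to confirm that the extra \(-1\) entries on \(v_{2r+1}\) from \(\Omega(w_1)\) and from \(t\) do not interfere, i.e. that they lie in the \(H_0\) factor, where the inducing character is trivial. I would check this first for \(r=1\) by a direct \(4\times4\) matrix computation and then extend by the commutation of the \(w_i\).
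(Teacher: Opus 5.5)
Your route is the paper's route: its proof is exactly the displayed chain. You insert $(\Omega(w))^{-1}\Omega(w)$, discard $\omega(w_\tau)(\Omega(w_\tau))^{-1}$, and evaluate $\chi_r^\sigma\delta^{1/2}_{P_r^-}$ on $\omega(w_I)(\Omega(w_I))^{-1}$. Your Steps~1--2 are fine, and more careful than the paper about the $\pm1$ entries on $v_{2r+1}$.

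\textbf{The gap is Step~3, and as written it proves the reciprocal of the claim.} From the matrices of Notation~\ref{notion:representative_almost_self_dual_nonsplit} you read off, correctly, that $\omega(w_i)(\Omega(w_i))^{-1}$ has $\varpi_E$ on $v_i$; for instance, $\omega(w_1)(\Omega(w_1))^{-1}$ restricts to $\diag(\varpi_E,-\varpi_E^{-1})$ on $\langle v_1,v_{r+1}\rangle$. So $|b_i|_E=q^{-1}$, and your own formula then gives $\prod_{i\in I}|b_i|_E^{\sigma_i+i+\frac12}=q^{-\sum_{i\in I}(\sigma_i+i+\frac12)}$, not $q^{+\sum}$. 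The phrase ``so that $|b_i|_E=q^{-1}$ and the exponent comes out positive'' is self-contradictory: a positive exponent needs $|b_i|_E=q$, i.e.\ $\varpi_E^{-1}$ on $v_i$.

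``Applying the quasi-split conventions'' does not help. In Proposition~\ref{prop:basis_relation} the basis is normalized at $\omega(w)$ and evaluated at $\Omega(w)$, the opposite of here, so the same matrices produce the inverse torus element. Your proposed $4\times4$ check for $r=1$ would therefore expose the discrepancy rather than resolve it.

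\textbf{What is missing is a correct orientation.} The displayed non-split $\Omega(w_1)$ is the quasi-split matrix copied over. Acting on column vectors (the convention under which it stabilizes $\Lambda_r'$ in Section~\ref{sec:quasi-split}), it stabilizes $\cO_Ev_1\oplus\varpi_E\cO_Ev_{r+1}$, not $\Lambda'_{V_r^-}$, which has $\varpi_E$ on $v_1,\dots,v_r$. The representative that does stabilize $\Lambda'_{V_r^-}$ has $\varpi_E$ and $\varpi_E^{-1}$ interchanged in the hyperbolic block. For it, $\omega(w_1)(\Omega(w_1))^{-1}=\diag(\varpi_E^{-1},-\varpi_E)$ on $\langle v_1,v_{r+1}\rangle$, giving $|b_1|_E=q$ and the value $q^{\sigma_1+\frac32}$. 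Conjugating by permutations and multiplying over $I$ then gives the statement.

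You can also certify the sign independently of matrix conventions. The vector $\sum_w\phi_{w,\sigma}(\omega(w))^{-1}\phi_{w,\sigma}$ is the $K_r^-$-fixed vector, so $T_{w_1}$ must send it to a multiple of the $K_r^-$-fixed vector of $\rI_V^{w_1\sigma}$. For $r=1$, write $y_\sigma=\phi_{w_1,\sigma}(\omega(w_1))^{-1}$ and $c=c_{\epsilon_1}(\sigma)$. Theorem~\ref{thm:casselman_relation_non_split} turns the eigenvector condition into $(c-1+y_\sigma)\,y_{-\sigma}=q^{-2}+y_\sigma(c-q^{-2})$. This holds for $y_\sigma=q^{-\sigma-\frac32}$ and fails for $y_\sigma=q^{\sigma+\frac32}$. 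That is the sign the proposition asserts and that Lemma~\ref{lem:casselman_relation_self_dual_non_split} uses; your argument needs to derive it rather than assume it.
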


\begin{remark}
    By our choice of the unimodular lattice, this coincides with Proposition \ref{prop:basis_relation}.
\end{remark}

\begin{lemma}\label{lem:casselman_relation_self_dual_non_split}
    Assume that \(\sigma\) is regular, \(w=w_Iw_\tau\), \(\alpha\in\Delta\) and \(\ell(w_\alpha w)>\ell(w)\).
    \begin{enumerate}[(1)]
        \item 
        For \(\alpha=\epsilon_1\), we have
        \begin{equation}
            \begin{aligned}
                T_{w_1}\phi'_{w,\sigma}=&(c_\alpha(\sigma)-1)\phi'_{w,w_1\sigma}+q^{-\sigma_1-\frac{1}{2}}\phi_{w_1w,w_1\sigma}'\\
                T_{w_1}\phi'_{w_1w,\sigma}=&q^{-\sigma_1-\frac{3}{2}}\phi'_{w,w_1\sigma}+q^{-2\sigma_1}(c_{\epsilon_1}(\sigma)-q^{-2})\phi_{w_1w,w_1\sigma}'
            \end{aligned}
        \end{equation}
        \item
        For \(\alpha=\epsilon_{i+1}-\epsilon_i\),
        \begin{enumerate}[(i)]
        \item
        If (\(i\in I\) and \((i+1)\in I\)) or ((\(i\notin I\)) and \((i+1)\notin I\)) then
        \begin{equation}
            \begin{aligned}
                T_{w_{1,2}}\phi'_{w,\sigma}=&(c_\alpha(\sigma)-1)\phi'_{w,w_{1,2}\sigma}+q^{-1}\phi_{w_{1,2}w,w_{1,2}\sigma}'\\
                T_{w_{1,2}}\phi'_{w_{1,2}w,\sigma}=&\phi'_{w,w_{1,2}\sigma}+(c_\alpha(\sigma)-q^{-1})\phi_{w_{1,2}w,w_{1,2}\sigma}'
            \end{aligned}
        \end{equation}
        \item
        if \(i\in I,(i+1)\notin I\) then
        \begin{equation}
            \begin{aligned}
                T_{w_{1,2}}\phi'_{w,\sigma}=&(c_\alpha(\sigma)-q^{-1})\phi'_{w,w_{1,2}\sigma}+\phi_{w_{1,2}w,w_{1,2}\sigma}'\\
                T_{w_{1,2}}\phi'_{w_{1,2}w,\sigma}=&q^{-1}\phi'_{w,w_{1,2}\sigma}+(c_\alpha(\sigma)-1)\phi_{w_{1,2}w,w_{1,2}\sigma}'
            \end{aligned}
        \end{equation}
        \item It is not the case that \(i\notin I, (i+1)\in I\) provided \(\ell(w_{i,i+1}w)>\ell(w)\). 
        \end{enumerate}
    \end{enumerate}
\end{lemma}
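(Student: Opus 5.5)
The plan mirrors the proof of Lemma \ref{lem:casselman_relation_semi_standard}. I would start from Casselman's relations in Theorem \ref{thm:casselman_relation_non_split}, written in the basis \(\{\phi_{w,\sigma}\}\), and rescale each function using Proposition \ref{prop:basis_relation_non_quasi_split}. That proposition gives \(\phi_{w,\sigma}=q^{\sum_{i\in I}(\sigma_i+i+\frac12)}\phi'_{w,\sigma}\), and the same formula with \(\sigma\) replaced by \(w_\alpha\sigma\) on the target side. Since \(T_{w_\alpha}\) is linear, every coefficient in the primed basis is the corresponding Casselman coefficient multiplied by a ratio of these scalars. Concretely, the coefficient of \(\phi'_{y,w_\alpha\sigma}\) in \(T_{w_\alpha}\phi'_{x,\sigma}\) equals the Casselman coefficient times
\[
\frac{\phi_{y,w_\alpha\sigma}(\omega(y))}{\phi_{x,\sigma}(\omega(x))}.
\]
So the whole proof reduces to computing this ratio in each case.

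For \(\alpha=\epsilon_1\), Lemma \ref{lem:trick} applies verbatim: the Weyl group and length function are the same abstract \(\fW_r\), and \(\Delta\) only changes \(2\epsilon_1\) to \(\epsilon_1\). Hence \(\ell(w_1w)>\ell(w)\) forces \(1\notin I\), and \(w_1w\) has sign set \(I\cup\{1\}\).

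The exponents for \(i\neq 1\) agree on both sides. This is because \(w_1\) only changes \(\sigma_1\) to \(-\sigma_1\), and index \(1\) enters the sum only when \(1\in I\). So the ratio for \(x=y=w\) is \(1\), which gives the coefficient \(c_{\epsilon_1}(\sigma)-1\).

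For \(x=w\) and \(y=w_1w\), the ratio is \(q^{(w_1\sigma)_1+\frac32}=q^{-\sigma_1+\frac32}\). Multiplying by \(q_\alpha^{-1}q_{\alpha/2}^{-1}=q^{-2}\) should give \(q^{-\sigma_1-\frac12}\).

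For \(x=w_1w\) and \(y=w\), the ratio is \(q^{-(\sigma_1+\frac32)}\), which gives \(q^{-\sigma_1-\frac32}\).

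For \(x=y=w_1w\), the ratio is \(q^{-\sigma_1+\frac32}/q^{\sigma_1+\frac32}=q^{-2\sigma_1}\). This gives \(q^{-2\sigma_1}(c_{\epsilon_1}(\sigma)-q^{-2})\), matching the statement.

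The main thing to watch here is the shift in \(\delta_{P_r^-}^{1/2}\), which carries the extra \(+1\) compared with the quasi-split case. I would carefully recheck the evaluation of \(\chi_r^\sigma\delta^{1/2}_{P_r^-}(\omega(w_I)\Omega(w_I)^{-1})\) at index \(1\), since this is where errors of \(q^{\pm1}\) appear. This is also where the asymmetry between \(q^{-\sigma_1-\frac12}\) and \(q^{-\sigma_1-\frac32}\) comes from.

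For \(\alpha=\epsilon_{i+1}-\epsilon_i\), the Casselman constants are \(q_\alpha^{-1}q_{\alpha/2}^{-1}=q^{-1}\), the same as in the quasi-split case, and the scalars have the same form up to the constant \(\frac12\) versus \(-\frac12\). In case (i), where \(i,i+1\) are both in \(I\) or both outside it, the set \(I\) is unchanged under left multiplication by \(w_{i,i+1}\). The permutation swaps \(\sigma_i\) and \(\sigma_{i+1}\) along with the positions, so all four ratios are \(1\), and the formulas are Casselman's unchanged.

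In case (ii), where \(i\in I\) and \(i+1\notin I\), the set \(I\) becomes \(I\cup\{i+1\}\setminus\{i\}\). The four ratios are then
\[
\begin{pmatrix} q^{\sigma_{i+1}-\sigma_i}\\ q\\ q^{-1}\\ q^{\sigma_i-\sigma_{i+1}}\end{pmatrix},
\]
exactly as in Lemma \ref{lem:casselman_relation_semi_standard}, because the constant \(\pm\frac12\) cancels in every ratio.

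Multiplying these ratios into Casselman's coefficients gives the stated formulas. One point needs a small identity: the diagonal coefficients are \((c_\alpha-1)q^{\sigma_{i+1}-\sigma_i}\) and \((c_\alpha-q^{-1})q^{\sigma_i-\sigma_{i+1}}\). To rewrite these as \(c_\alpha-q^{-1}\) and \(c_\alpha-1\), I would use the explicit formula \(c_\alpha=\frac{1-q^{-1}X}{1-X}\) with \(X=q^{\sigma_i-\sigma_{i+1}}\), which gives \((c_\alpha-1)X^{-1}=c_\alpha-q^{-1}\).

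Case (iii), that \(i\notin I\) and \(i+1\in I\) cannot occur when \(\ell(w_{i,i+1}w)>\ell(w)\), is purely combinatorial in \(\fW_r\). It follows from the description of the length function in \cite{bjorner2005combinatorics}*{Proposition 8.1.1}, exactly as before.
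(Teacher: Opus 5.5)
Your proposal is correct and follows the paper's proof essentially step for step. Both start from Casselman's relations (Theorem \ref{thm:casselman_relation_non_split}) and rescale via Proposition \ref{prop:basis_relation_non_quasi_split}, with Lemma \ref{lem:trick} forcing \(1\notin I\); both then use the same four ratio vectors in cases (2)(i) and (2)(ii), and Bj\"orner--Brenti for (iii). Your explicit identity \((c_\alpha-1)X^{-1}=c_\alpha-q^{-1}\) simply fills in the ``direct calculation'' that the paper leaves implicit.
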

\begin{proof}
    \begin{enumerate}[(1)]
        \item By Theorem \ref{thm:casselman_relation_non_split}, we have
        \[
            T_{w_1}\phi_{w,\sigma}=(c_{\epsilon_1}(\sigma)-1)\phi_{w,w_1\sigma}+q^{-2}\phi_{w_1w,w_1\sigma}
        \]
        and
        \[
            T_{w_1}\phi_{w_1w,\sigma}=\phi_{w,w_1\sigma}+(c_{\epsilon_1}(\sigma)-q^{-2})\phi_{w_1w,w_1\sigma}
        \]
        proceed with Proposition \ref{prop:basis_relation_non_quasi_split} and the definition of \(\phi_w'\), we have
        \begin{equation}
            \begin{aligned}
                T_{w_1}\phi'_{w,\sigma}=&\phi_{w,\sigma}(\omega(w))^{-1}T_{w_1}\phi_{w,\sigma}\\
                =&\phi_{w,\sigma}(\omega(w))^{-1}((c_{\epsilon_1}(\sigma)-1)\phi_{w,w_1\sigma}+q^{-2}\phi_{w_1w,w_1\sigma})\\
                =&\phi_{w,\sigma}(\omega(w))^{-1}((c_{\epsilon_1}(\sigma)-1)(\phi_{w,w_1\sigma}(\omega(w)))\phi'_{w,w_1\sigma}\\
                +&q^{-2}(\phi_{w_1w,w_1\sigma}(\omega(w_1w)))\phi'_{w_1w,w_1\sigma})\\
                =&(\phi_{w,\sigma}(\omega(w_I))^{-1}\phi_{w,w_1\sigma}(\omega(w_I)))(c_{\epsilon_1}(\sigma)-1)\phi'_{w,w_1\sigma}\\
                +&(\phi_{w,\sigma}(\omega(w_I))^{-1}\phi_{w_1w,w_1\sigma}(\omega(w_1w_I)))q^{-2}\phi'_{w_1w,w_1\sigma}
            \end{aligned}
        \end{equation}
        By Lemma \ref{lem:trick}, we know that \(1\notin I\), then we have 
        \begin{equation}
            \begin{aligned}
                T_{w_1}\phi'_{w,\sigma}=&(c_{\epsilon_1}(\sigma)-1)\phi'_{w,w_1\sigma}+(\phi_{w,\sigma}(\omega(w_I))^{-1}\phi_{w_1w,w_1\sigma}(\omega(w_1w_I)))q^{-2}\phi_{w_1w,w_1\sigma}'\\
                =&(c_{\epsilon_1}(\sigma)-1)\phi'_{w,w_1\sigma}+q^{-2+(w_1\sigma)_1+\frac{3}{2}}\phi_{w_1w,w_1\sigma}'\\
                =&(c_\alpha(\sigma)-1)\phi'_{w,w_1\sigma}+q^{-\sigma_1-\frac{1}{2}}\phi_{w_1w,w_1\sigma}'
            \end{aligned}
        \end{equation}
        Similarly, we have
        \begin{equation}
            \begin{aligned}
                T_{w_1}\phi'_{w_1w,\sigma}=&\phi_{w_1w,\sigma}(\omega(w_1w))^{-1}T_{w_1}\phi_{w_1w,\sigma}\\
                =&\phi_{w_1w,\sigma}(\omega(w_1w))^{-1}(\phi_{w,w_1\sigma}+(c_{\epsilon_1}(\sigma)-q^{-2})\phi_{w_1w,w_1\sigma})\\
                =&\phi_{w_1w,\sigma}(\omega(w_1w))^{-1}(\phi_{w,w_1\sigma}(\omega(w))\phi'_{w,w_1\sigma}\\
                +&\phi_{w_1w,w_1\sigma}(\omega(w_1w))(c_{\epsilon_1}(\sigma)-q^{-2})\phi'_{w_1w,w_1\sigma})\\
                =&\phi_{w_1w,\sigma}(\omega(w_1w))^{-1}\phi_{w,w_1\sigma}(\omega(w))\phi'_{w,w_1\sigma}\\
                +&\phi_{w_1w,\sigma}(\omega(w_1w))^{-1}\phi_{w_1w,w_1\sigma}(\omega(w_1w))(c_{\epsilon_1}(\sigma)-q^{-2})\phi'_{w_1w,w_1\sigma}\\
                =&q^{-\frac{3}{2}-\sigma_1}\phi'_{w,w_1\sigma}+q^{-2\sigma_1}(c_{\epsilon_1}(\sigma)-q^{-2})\phi_{w_1w,w_1\sigma}'
            \end{aligned}
        \end{equation}
        \item 
        \begin{enumerate}[(i)]
            \item Note that the action of \(w_{i,i+1}\) on $w_I$ is trivial in this case. Combine this fact with Proposition \ref{prop:basis_relation_non_quasi_split}, we have
            \begin{equation}
                \begin{aligned}
                    &\begin{pmatrix}
                        \phi_{w,\sigma}(\omega(w))^{-1}\phi_{w,w_{i,i+1}\sigma}(\omega(w))\\\phi_{w,\sigma}(\omega(w))^{-1}\phi_{w_{i,i+1}w,w_{i,i+1}\sigma}(\omega(w_{i,i+1}w))\\
                        \phi_{w_{i,i+1}w,\sigma}(\omega(w_{i,i+1}w))^{-1}\phi_{w,w_{i,i+1}\sigma}(\omega(w))\\\phi_{w_{i,i+1}w,\sigma}(\omega(w_{i,i+1}w))^{-1}\phi_{w_{i,i+1}w,w_{i,i+1}\sigma}(\omega(w_{i,i+1}w))
                    \end{pmatrix}=
                    \begin{pmatrix}
                        1\\1\\
                        1\\1
                    \end{pmatrix}
                \end{aligned}
            \end{equation}
            The following is a direct calculation using the above equation and Theorem \ref{thm:casselman_relation}.
            \item Note that the action of \(w_{i,i+1}\) on \(w_I\) is non-trivial in this case, it replaces \(I\) by \(I\cup\{i+1\}-\{i\}\). Combine this fact with Proposition \ref{prop:basis_relation_non_quasi_split}, we have
            \begin{equation}
                \begin{aligned}
                    &\begin{pmatrix}
                        \phi_{w,\sigma}(\omega(w))^{-1}\phi_{w,w_{i,i+1}\sigma}(\omega(w))\\\phi_{w,\sigma}(\omega(w))^{-1}\phi_{w_{i,i+1}w,w_{i,i+1}\sigma}(\omega(w_{i,i+1}w))\\
                        \phi_{w_{i,i+1}w,\sigma}(\omega(w_{i,i+1}w))^{-1}\phi_{w,w_{i,i+1}\sigma}(\omega(w))\\\phi_{w_{i,i+1}w,\sigma}(\omega(w_{i,i+1}w))^{-1}\phi_{w_{i,i+1}w,w_{i,i+1}\sigma}(\omega(w_{i,i+1}w))
                    \end{pmatrix}=
                    \begin{pmatrix}
                        q^{\sigma_{i+1}-\sigma_i}\\q\\
                        q^{-1}\\q^{\sigma_i-\sigma_{i+1}}
                    \end{pmatrix}
                \end{aligned}
            \end{equation}
            \item This follows from the proof of Lemma \ref{lem:trick} and \cite{bjorner2005combinatorics}*{Proposition 8.1.1}.
        \end{enumerate}
    \end{enumerate}
\end{proof}
\begin{remark}
    In Casselman's notation, it is easy to verify that 
    \begin{equation*}
        \begin{aligned}
            &\frac{(1-q_{\alpha/2}^{-\frac{1}{2}}q_{\alpha}^{-1}\chi(a_\alpha))(q_{\alpha/2}^{-\frac{1}{2}}+\chi(a_\alpha))\chi(a_\alpha)}{1-\chi(a_\alpha)^2}\\
            =&c_{\alpha}(\chi)-1+q_{\alpha/2}^{-\frac{1}{2}}q_{\alpha}^{-1}\chi(a_\alpha)\\
            =&q_{\alpha/2}^{-1}q_{\alpha}^{-1}\cdot q_\alpha q_{\alpha}^{\frac{1}{2}}\cdot\chi(a_\alpha)+\chi(a_\alpha)^2(c_{\alpha}(\chi)-q_{\alpha/2}^{-1}q_{\alpha}^{-1})
        \end{aligned}
    \end{equation*}
     which is expected but not obvious.
\end{remark}

The following is an analogue of Theorem \ref{thm:casselman_relation_semi_standard}
\begin{theorem}\label{thm:casselman_relation_semi_standard_non_split}
    Assume that \(\sigma\) is regular, then
    
    we have
    \[
        T_w(\phi_{K_r^-})=\prod_{\alpha\in\Phi^+,w\alpha<0,\alpha\neq 2\epsilon_i}c_\alpha'(\sigma)\phi_{K_r^-}
    \]
    where
    \begin{equation}
        \begin{aligned}
            c'_{\epsilon_i}(\sigma):=&c_{\epsilon_i}(\sigma)-1+\phi_{w_i}(\omega(w_i))^{-1}\\
            =&c_{\epsilon_i}(\sigma)-1+q^{-\sigma_i-\frac{3}{2}}\\
            =&\frac{q^{-\sigma_i}(1-q^{-\frac{3}{2}-\sigma_i})(q^{-\frac{1}{2}}+q^{-\sigma_i})}{1-q^{-2\sigma_i}}
        \end{aligned}
    \end{equation}
    and
    \[
        c'_{\alpha}(\sigma)=c_{\alpha}(\sigma)
    \]
    for \(\alpha=\epsilon_{j}\pm\epsilon_{i},i<j\).
\end{theorem}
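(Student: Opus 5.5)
The plan follows the quasi-split case (Theorem \ref{thm:casselman_relation_semi_standard}). First I identify the $K_r^-$-invariant vector in $(\rI_V^\sigma)^{I_r^-}$. Since $K_r^-=\bigcup_{w\in\fW_r}I_r^-\omega(w)I_r^-$ and, by Theorem \ref{thm:iwasawa_decomposition}, $\rU(V_r^-)=\bigsqcup_w P_r^-\omega(w)I_r^-$ (the cosets $P_r^-\Omega(w)I_r^-$ and $P_r^-\omega(w)I_r^-$ coincide because $\Omega(w)\omega(w)^{-1}\in M_r^-$), a $K_r^-$-fixed vector is determined by its values at the $\omega(w)$. These values must all be equal. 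So I will define
\[
\phi_{K_r^-}:=\sum_{w\in\fW_r}\phi'_{w},\qquad \phi'_{w}=\phi_w(\omega(w))^{-1}\phi_w=q^{-\sum_{i\in I}(\sigma_i+i+\frac12)}\phi_w ,
\]
using Proposition \ref{prop:basis_relation_non_quasi_split}. Then I check that this spans $(\rI_V^\sigma)^{K_r^-}$, exactly as in Proposition \ref{def:one}.

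Next, by Casselman's inductive argument (end of the proof of \cite{casselman1980unramified}*{Theorem 3.1}), it suffices to treat simple reflections. I need $T_{w_\alpha}\phi_{K_r^-,\sigma}=c'_\alpha(\sigma)\phi_{K_r^-,w_\alpha\sigma}$ for $\alpha\in\Delta$. The cocycle property $T_{w w'}=T_wT_{w'}$ for $\ell(ww')=\ell(w)+\ell(w')$ then gives the product over $\{\alpha\in\Phi^+ : w\alpha<0\}$. Here $\Phi^+$ is the reduced root system, so the roots $2\epsilon_i$ are excluded, and $c'_\alpha$ at $w'\sigma$ matches $c'_{w'^{-1}\alpha}(\sigma)$.

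\textbf{Case $\alpha=\epsilon_{i+1}-\epsilon_i$.} I pair $w$ with $w_{i,i+1}w$ where $\ell(w_{i,i+1}w)>\ell(w)$, and apply Lemma \ref{lem:casselman_relation_self_dual_non_split}(2). In case (i), the sum $\phi'_w+\phi'_{w_{i,i+1}w}$ maps to $(c_\alpha-1+1)\phi'_{w}+(q^{-1}+c_\alpha-q^{-1})\phi'_{w_{i,i+1}w}$, which is $c_\alpha$ times the sum. Case (ii) gives the same result by the same arithmetic, and case (iii) does not occur.

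\textbf{Case $\alpha=\epsilon_1$.} By Lemma \ref{lem:trick}, I pair $w$ (with $1\notin I$) with $w_1w$ and apply Lemma \ref{lem:casselman_relation_self_dual_non_split}(1). The coefficient of $\phi'_{w,w_1\sigma}$ is $c_{\epsilon_1}(\sigma)-1+q^{-\sigma_1-\frac32}=c'_{\epsilon_1}(\sigma)$. The coefficient of $\phi'_{w_1w,w_1\sigma}$ is $q^{-\sigma_1-\frac12}+q^{-2\sigma_1}(c_{\epsilon_1}(\sigma)-q^{-2})$.

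The main obstacle is checking that these two coefficients agree. This is exactly the identity in the remark after Lemma \ref{lem:casselman_relation_self_dual_non_split}, specialised to $q_\alpha=q_{\alpha/2}=q$ and $\chi(a_\alpha)=q^{-\sigma_1}$. I would verify it by clearing the denominator $1-q^{-2\sigma_1}$ and expanding with the explicit $c_{\epsilon_1}(\sigma)=\frac{(1-q^{-3/2-\sigma_1})(1+q^{-1/2-\sigma_1})}{1-q^{-2\sigma_1}}$. The same computation yields the closed form stated for $c'_{\epsilon_i}$.

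A secondary point is normalization. The $\phi'_w$ on the $w_1\sigma$ side must be normalized with parameter $w_1\sigma$. The $\sigma$-dependence of $\phi_w(\omega(w))$ is what produces the factor $q^{-2\sigma_1}$, and this is already built into the lemma.
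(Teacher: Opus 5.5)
Your proposal is correct and follows essentially the same route as the paper: identify \(\phi_{K_r^-}=\sum_{w}\phi'_w\), reduce to simple reflections by Casselman's induction, pair \(w\) with \(w_\alpha w\) (via Lemma \ref{lem:trick} for \(\alpha=\epsilon_1\)), and apply Lemma \ref{lem:casselman_relation_self_dual_non_split}. The paper leaves implicit that the two \(\epsilon_1\)-coefficients agree, which is the identity in the remark after that lemma; your explicit check is exactly what is needed and it holds, since both coefficients equal \(\frac{q^{-\sigma_1}(1-q^{-3/2-\sigma_1})(q^{-1/2}+q^{-\sigma_1})}{1-q^{-2\sigma_1}}\).
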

\begin{proof}
    Because \(\phi_{L^-_r}=\sum_{w\in\fW_r}\phi'_w\), the result follows from the argument in Theorem \ref{thm:casselman_relation_almost_spherical} and Lemma \ref{lem:casselman_relation_self_dual_non_split}.
\end{proof}

We also have the analogue of Theorem \ref{thm:spherical_L_is_spherical_K}:
\begin{theorem}\label{thm:spherical_L_is_spherical_K_non_split}
    Let \((\pi,\cV)\) be a representation of \(H_r\) which is spherical with respect to \(L_r^-\), and assume that the Satake parameter does not contain \(-q^{\pm\frac{1}{2}}\), then \(\pi\) is spherical with respect to \(K_r^-\).
\end{theorem}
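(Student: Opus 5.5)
The plan is to imitate the proof of Theorem \ref{thm:spherical_L_is_spherical_K}. By Theorem \ref{thm:borel_iwahori_non_split} (note \(I_r^-\subset L_r^{-\circ}\subset L_r^-\)), \(\pi\) embeds into some \(\rI_V^\sigma\). After conjugating \(\sigma\) by an element of \(\fW_r\), we may assume \(\pi\) is the irreducible subrepresentation generated by the \(L_r^-\)-invariant vector. By the remark after Definition \ref{def:spherical_representation_standard_non_split}, this vector is \(\phi_{L_r^{-\circ}}=\sum_w\phi_w\), up to scalar. Since \(\dim(\rI_V^\sigma)^{K_r^-}\le 1\), because \(K_r^-=\bigcup_w I_r^-\omega(w)I_r^-\) and the Iwasawa decomposition holds, it suffices to show that the projection \(1_{K_r^-}\cdot\phi_{L_r^{-\circ}}\) is nonzero. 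This projection lies in \(\pi\) and spans \((\rI_V^\sigma)^{K_r^-}\), which is the line through \(\phi_{K_r^-}=\sum_w\phi'_w\).

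First, compute \(1_{K_r^-}\cdot\phi_{L_r^{-\circ}}\) explicitly in the basis \(\{\phi_w\}\). Since \(\phi_{K_r^-}\) spans the invariants, the projection equals \(\lambda(\sigma)\,\phi_{K_r^-}\). The scalar \(\lambda(\sigma)\) is found by evaluating at the identity:
\[
\lambda(\sigma)=\frac{1}{\vol(K_r^-)}\int_{K_r^-}\phi_{L_r^{-\circ}}(k)\,\rd k=\sum_{w\in\fW_r}\frac{\vol(I_r^-\omega(w)I_r^-)}{\vol(K_r^-)}\,\phi_{L_r^{-\circ}}(\omega(w)).
\]
Here \(\phi_{L_r^{-\circ}}(\omega(w))=\phi_w(\omega(w))=q^{\sum_{i\in I}(\sigma_i+i+\frac12)}\) by Proposition \ref{prop:basis_relation_non_quasi_split}. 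The double coset volumes are \(q^{\ell(w)}\) times \(\vol(I_r^-)\), with the appropriate parameter for the reflection \(w_1\), which I expect to be \(q_{\alpha}q_{\alpha/2}=q^2\) or \(q^{\pm1}\) depending on how \(K_r^-\) meets the root subgroup.

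Second, factor the resulting Poincaré-type sum. Write \(w=w_Iw_\tau\), as in the proof of Theorem \ref{thm:casselman_relation_almost_spherical}, and use Lemma \ref{lem:trick} and the combinatorial description of \(\ell\). The sum over \(\tau\in\fS_r\) should give a \(\sigma\)-independent positive factor. The sum over \(I\) should factor as \(\prod_{i=1}^r(1+a_i q^{\sigma_i+\frac12}\cdots)\), that is, a product of linear factors in \(q^{\sigma_i}\). The alternative is to extract the same product from Theorem \ref{thm:casselman_relation_semi_standard_non_split}: the factors \(c'_{\epsilon_i}(\sigma)\) contain \((q^{-1/2}+q^{-\sigma_i})\), which vanishes exactly at \(q^{\sigma_i}=-q^{1/2}\). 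I expect \(\lambda(\sigma)\) to be, up to a nonzero constant, \(\prod_i(1+q^{\pm(\sigma_i+\frac12)})\) or its symmetrization. This vanishes only when some \(q^{\sigma_i}\in\{-q^{\pm1/2}\}\), and that case is excluded by the hypothesis on the Satake parameter. Hence \(\lambda(\sigma)\neq0\) and \(\pi\) has a nonzero \(K_r^-\)-fixed vector.

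The main obstacle is this second step: getting the coset volumes right, especially for the non-reduced root \(\epsilon_1\), where \(K_r^-\) versus \(L_r^{-\circ}\) changes the parameter, and then verifying the clean factorization of the sum. If the direct volume computation is messy, I would reduce to rank one. The sum factorizes over the \(\{\pm1\}^r\) part once the \(\fS_r\) part is summed, using the same decomposition as in Lemma \ref{lem:casselman_relation_self_dual_non_split}. The rank-one case is a \(2\times2\) computation in \(\rU(V_1^-)\), where one checks directly that the zero locus is \(q^{\sigma_1}=-q^{\pm1/2}\). A second subtlety is the reduction to \(\pi\) being a subrepresentation generated by \(\phi_{L_r^{-\circ}}\). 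If needed, this follows from the dual form of Theorem \ref{thm:Langlands_classification_spherical} adapted to \(H_r\), or simply from working with the \(\sigma\) for which the spherical vector is cyclic.
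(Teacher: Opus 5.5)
Your outline is the paper's argument. Theorem~\ref{thm:spherical_L_is_spherical_K_non_split} is proved like Theorem~\ref{thm:spherical_L_is_spherical_K}: embed \(\pi\) in \(\rI_V^\sigma\) and check that the projection of the unimodular spherical vector to \(K_r^-\)-invariants is nonzero. The nonvanishing is read off from the closed formula of Proposition~\ref{prop:L-to-K-non-quasi-split}, which comes from the modified Poincar\'e identities of the appendix. Your formal steps are fine: Borel's embedding, \(\dim(\rI_V^\sigma)^{K_r^-}\le1\), \(\lambda(\sigma)=\vol(K_r^-)^{-1}\int_{K_r^-}\phi_{L_r^{-\circ}}\), and constancy of \(\phi_{L_r^{-\circ}}\) on \(I_r^-\omega(w)I_r^-=(P_r^-\cap K_r^-)\omega(w)I_r^-\). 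The gap is that the only substantive step, \(\lambda(\sigma)\neq0\) under the hypothesis, is conjectured rather than proved. The inputs you leave open are exactly the ones that decide the answer.

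Three things must be supplied.

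(i) The parameter of \(w_1\) in \(K_r^-\) is \(q\), not \(q^2\). \(K_r^-\) acts on \(\Lambda_{V_r^-}/\Lambda_{V_r^-}^\sharp\) through \(\mathrm{Sp}_{2r}(k_E)\) and on the anisotropic plane \(\Lambda_{V_r^-}^\vee/\Lambda_{V_r^-}\) through \(\rO_2^-\). Hence \([I_r^-\omega(w)I_r^-:I_r^-]=q^{\ell(w)}\) for the ordinary type-\(C_r\) length. The value \(q^2=q_\alpha q_{\alpha/2}\) belongs to \(L_r^-\), whose quotient is the non-split \(\rO_{2r+2}\).

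(ii) The factorization does not follow from Lemma~\ref{lem:trick} or from a rank-one reduction. It is the identity \(\sum_{\tau\in\fS_r}q^{\ell(w_I\tau)}=q^{\sum_{i\in I}i}[r]_q!\) of Proposition~\ref{prop:Toad_identity}, and you need to supply it.

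(iii) The sign of the normalization matters. From the matrices, \(\omega(w_1)\Omega(w_1)^{-1}=\diag(\varpi_E,1_{r-1},-\varpi_E^{-1},1_{r-1},-1,1)\), so \(\phi_w(\omega(w))=q^{-\sum_{i\in I}(\sigma_i+i+\frac12)}\). With the \(+\) sign you quote from Proposition~\ref{prop:basis_relation_non_quasi_split}, the same sum would be \([r]_q!\prod_i(1+q^{\sigma_i+2i+\frac12})\), whose zeros are not at \(-q^{\pm\frac12}\).

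With the correct inputs,
\[
\int_{K_r^-}\phi_{L_r^{-\circ}}(k)\,\rd k=\vol(I_r^-)\,[r]_q!\prod_{i=1}^r\bigl(1+q^{-\sigma_i-\frac12}\bigr),
\]
which is the product in Proposition~\ref{prop:L-to-K-non-quasi-split}. It vanishes only if some \(q^{\sigma_i}=-q^{-\frac12}\), and that case is excluded by the hypothesis.

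Your fallback via the factors of \(c'_{\epsilon_i}\) in Theorem~\ref{thm:casselman_relation_semi_standard_non_split} does not replace this computation. Eigenvalues of the \(T_w\) on the \(K_r^-\)-spherical vector do not compute the projection scalar \(\lambda(\sigma)\); they only indicate where reducibility can occur.
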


\subsection{Irreducibility of Unitary Principal Series Representations}
In this subsection, we state the results about non-quasi-split unitary groups which are analogues of those in Subsection \ref{subsec:irreducibility_unitary_principal_series}.

We will prove in the next subsection that unitary principal series are in fact irreducible.
\begin{theorem}\label{thm:irreducibility_unitary_principal_series_non_split}
    Let \(\sigma\) be unitary, then the principal series representation \(\rI^\sigma_{V}\) is irreducible.
\end{theorem}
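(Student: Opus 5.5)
We follow the proof of Theorem \ref{thm:irreducibility_unitary_principal_series} and apply Kato's criterion (Theorem \ref{thm:kato_criterion}) to the non-quasi-split group \(H_r\), whose relative root system is of type \(BC_r\). The Weyl group is again \(\fW_r=\{\pm1\}^r\rtimes\fS_r\). For each reduced positive root we must write down the polynomials \(e_\alpha(\sigma)\) and \(d_\alpha(\sigma)\) in Kato's notation \cite{kato1981irreducibility}*{1.13}.

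For \(\alpha=\epsilon_j\pm\epsilon_i\) the factors are as in the quasi-split case: \(e_\alpha=1-q^{-1}q^{\mp\sigma_i-\sigma_j}\) (with the appropriate signs) and \(d_\alpha=1-q^{\mp\sigma_i-\sigma_j}\). For the multipliable root \(\epsilon_i\), the \(c\)-function is
\[
c_{\epsilon_i}(\sigma)=\frac{(1-q^{-\frac32-\sigma_i})(1+q^{-\frac12-\sigma_i})}{1-q^{-2\sigma_i}},
\]
so \(e_{\epsilon_i}=(1-q^{-\frac32-\sigma_i})(1+q^{-\frac12-\sigma_i})\) and \(d_{\epsilon_i}=1-q^{-2\sigma_i}\).

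\textbf{Condition (i).} Suppose \(\sigma\) is unitary, so \(|q^{\pm\sigma_i}|=1\) for all \(i\). Then every zero of an \(e_\alpha(\pm\sigma)\) would force a root of unity to have absolute value \(q^{1}\), \(q^{3/2}\) or \(q^{1/2}\). This is impossible, so \(e(\sigma)e(-\sigma)\neq0\).

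\textbf{Condition (ii).} We must show \(W_\sigma=W_{(\sigma)}\). The vanishing conditions have the same shape as in the quasi-split case:
\begin{itemize}
\item \(d_{\epsilon_i}(\sigma)=0\) if and only if \(q^{2\sigma_i}=1\), that is, \(2\sigma_i=0\) modulo the period, which is exactly when \(w_i\) fixes \(\sigma_i\);
\item \(d_{\epsilon_j\pm\epsilon_i}(\sigma)=0\) if and only if \(\sigma_j=\mp\sigma_i\).
\end{itemize}
Since the reflection \(w_{\epsilon_i}\) equals \(w_{2\epsilon_i}=w_i\) as an element of \(\fW_r\), the combinatorial argument from Theorem \ref{thm:irreducibility_unitary_principal_series} transfers verbatim. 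Concretely, we partition the \(\sigma_i\) into classes under \(\sigma_i\sim\pm\sigma_j\) and treat each class separately.
\begin{itemize}
\item If a class consists of entries with \(q^{\sigma_i}=\pm1\), then every relevant \(d_\alpha\) vanishes, so all reflections of that block lie in \(W_{(\sigma)}\).
\item Otherwise the class consists of entries \(t\) and \(-t\) with \(2t\neq0\). Then \(W_\sigma\) is generated by permutations among the \(t\)'s, permutations among the \(-t\)'s, and the elements \(w_{\epsilon_i+\epsilon_j}\) for pairs with \(\sigma_i=-\sigma_j\). All of these are reflections with vanishing \(d_\alpha\).
\end{itemize}

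\textbf{Main obstacle.} The step needing care is the case \(q^{\sigma_i}=-1\), i.e.\ \(q^{-2\sigma_i}=1\) but \(q^{-\sigma_i}=-1\). In the quasi-split case \(d_{2\epsilon_i}\) still vanishes here, and we must check the same for \(d_{\epsilon_i}\). Since \(d_{\epsilon_i}=1-q^{-2\sigma_i}\), which is what the \(c\)-function above gives, it does vanish and \(w_i\in W_{(\sigma)}\).

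We also need to confirm that Kato's normalization really gives denominator \(1-\chi(a_\alpha)^2\) for \(q_{\alpha/2}>1\), as opposed to a product like \((1-\chi(a_\alpha))(1+q_{\alpha/2}^{-1/2}\chi(a_\alpha))\). This is consistent with the remark after Lemma \ref{lem:casselman_relation_self_dual_non_split}, which has denominator \(1-\chi(a_\alpha)^2\); the \(-1\) case is exactly why Kato uses this form. Once this is settled, both conditions of Theorem \ref{thm:kato_criterion} hold, and \(\rI_V^\sigma\) is irreducible.
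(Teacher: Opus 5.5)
Your proposal is correct and is essentially the paper's argument. The paper says only that the proof is the same as in the quasi-split case; that proof already lists \(d_{\epsilon_i}(\sigma)=1-q^{-2\sigma_i}\) for the multipliable roots, applies Kato's criterion, and checks \(W_\sigma=W_{(\sigma)}\) class by class, exactly as you do. Your explicit check of condition (i) and of the case \(q^{\sigma_i}=-1\) simply fills in what the paper leaves implicit. One small nitpick: for unitary \(\sigma\), \(q^{\sigma_i}\) is a complex number of modulus one, not necessarily a root of unity.
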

\begin{proof}
    The proof is the same as that of Theorem \ref{thm:irreducibility_unitary_principal_series}.
\end{proof}
\begin{remark}
    We further know that unramified unitary principal series are exactly those spherical representations with respect to \(L_r^{-\circ}\) which are tempered, this is different from the quasi-split case as \(L_r\) there is not a special maximal compact subgroup.
\end{remark}
\section{Doubling Zeta Integrals and the Local Theta Correspondence}\label{sec:doubling}
In this section we compute some doubling zeta integrals and deduce some results on the local theta lifting.

The main results in this section can be found in Proposition \ref{prop:zeta_spherical_self_dual},\ref{prop:zeta_spherical_unimodular_split}, \ref{prop:zeta_almost_spherical_almost_self_dual} (this is used for theta correspondence), \ref{prop:zeta_almost_spherical_unimodular_non_split} (this is used for the computation of normalised zeta integrals of Theorem \ref{thm:aipf}) and Theorem \ref{thm:L_function}.

\subsection{Recollections on the Doubling Method}
Using the notations from \cite{liu2022theta}*{Section 4}, we consider the unitary group \(G^\square_r\cong G_{2r}\) associated to the skew-Hermitian space \(W^\square_{r}\cong W_{2r}\).
\begin{proposition}\label{prop:double_iwasawa}\label{prop:double_bruhat}
    Recall that we have a weaker version of Bruhat decomposition \cite{liu2022theta}*{Page 214, above Remark 5.1}:
    \[
        K_r^\square=\bigsqcup_{i=0}^{2r}\cB^\square_i=\bigsqcup_{i=0}^{2r}I^\square_{r}\omega(w_1\cdots w_i) I^\square_r
    \]
    and Iwasawa decomposition
    \[
        G_r^\square=\bigsqcup_{I\subset\{\pm1\}^{2r}}P_r^\square \omega(w_I)I_r^\square=\bigsqcup_{i=0}^{2r} P_r^\square \cB_i^\square
    \]
\end{proposition}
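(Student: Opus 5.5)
This is the standard Bruhat and Iwasawa decomposition transported to the doubled group, so I would specialize earlier results rather than build anything new. Identify \(G_r^\square\) with \(G_{2r}=\rU(W_{2r})\) via \(W_r^\square\cong W_{2r}\). Under this identification \(K_r^\square\), \(I_r^\square\) and \(P_r^\square\) become \(K_{2r}\), \(I_{2r}\) and the Borel \(P_{2r}\), after fixing a basis of \(W_r^\square\) adapted to the one used in \cite{liu2022theta}*{Section 4}. Everything then reduces to statements about \(G_{2r}\) with rank \(2r\) and Weyl group \(\fW_{2r}=\{\pm1\}^{2r}\rtimes S_{2r}\).

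\textbf{Step 1: the Iwasawa decomposition.} Theorem \ref{thm:iwasawa_decomposition} applied to \(G_{2r}\) gives
\[
G_r^\square=\bigsqcup_{w\in\fW_{2r}}P_r^\square\,\omega(w)\,I_r^\square .
\]
Write \(w=w_Iw_\tau\) with \(w_\tau\in S_{2r}\). The representative \(\omega(w_\tau)\) is a permutation matrix of the form \(m(a)\), with \(a\) a permutation matrix in \(\GL_{2r}(\cO_E)\).

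\textbf{Step 2: absorbing the permutation part.} I expect this to be the only real obstacle, because \(m(a)\) is generally not in the lower-triangular Borel. If \(P_r^\square\) is taken to be the Siegel-type parabolic, as in \cite{liu2022theta}, then \(m(a)\in P_r^\square\) and \(P_r^\square\omega(w)I_r^\square=P_r^\square\omega(w_I)I_r^\square\). This merges all cosets with the same sign pattern and yields the displayed union over \(I\).

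Disjointness over distinct \(I\) then follows from the Borel version. If \(P\omega(w_I)I=P\omega(w_{I'})I\), we can move \(P\) down to the Borel together with a Levi Weyl element \(w_\tau\). This forces \(w_I\in S_{2r}w_{I'}S_{2r}\) in a way that preserves the sign set, so \(I=I'\).

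\textbf{Step 3: the weak Bruhat decomposition of \(K_r^\square\).} Use the Bruhat decomposition \(K_{2r}=\bigsqcup_{w}I_{2r}\omega(w)I_{2r}\), which is implicit in Theorem \ref{thm:iwasawa_decomposition}. Group the double cosets by the Siegel parahoric, which is \(I_r^0\) for rank \(2r\). The double cosets \(I^0\backslash K/I^0\) are indexed by \(S_{2r}\backslash\fW_{2r}/S_{2r}\cong\{0,\dots,2r\}\), via the number of sign changes \(i=\ell_{long}\). The representative \(w_1\cdots w_i\) is the one already used in Lemma \ref{lem:eigenspace_span}. Setting \(\cB^\square_i:=I^\square\omega(w_1\cdots w_i)I^\square\), with \(I^\square\) the Siegel parahoric, gives the first display.

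\textbf{Step 4: the second form of the Iwasawa decomposition.} Combining Steps 2 and 3, \(P_r^\square\cB_i^\square\) is the union of the cells \(P_r^\square\omega(w_I)I_r^\square\) with \(|I|=i\). These are pairwise disjoint for distinct \(i\), which gives \(G_r^\square=\bigsqcup_iP_r^\square\cB_i^\square\).

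I would also remark that \cite{liu2022theta} proves this in the unramified setting, and the only input used here is the affine Weyl group structure. Since ramification changes only the index \([I\omega I:I]\) and not the combinatorics, the argument is unchanged.
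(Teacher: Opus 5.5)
The paper does not prove this statement. It only recalls it from \cite{liu2022theta}, where \(E/F\) is unramified, and uses it tacitly in the ramified setting. Your plan is a reasonable way to justify that transfer: take the Borel Iwasawa decomposition of \(G_{2r}\) from Theorem \ref{thm:iwasawa_decomposition} and coarsen it to the Siegel parabolic and the Siegel parahoric. However, what you wrote does not establish the disjointness in Step 2. From \(P_r^\square\omega(w_I)I_{2r}=P_r^\square\omega(w_{I'})I_{2r}\) you extract \(w_I\in S_{2r}w_{I'}S_{2r}\). A two-sided coset condition only gives \(|I|=|I'|\) (take \(I=\{1\}\), \(I'=\{2\}\)), and the phrase ``in a way that preserves the sign set'' is exactly what needs proof.

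What you need is a one-sided coset statement, and the clean source is reduction modulo \(\varpi_E\). In the ramified case the standard lattice of \(W_{2r}\) reduces to a \(4r\)-dimensional symplectic space over \(k_E\). Hence \(K_r^\square\cong K_{2r}\to\mathrm{Sp}_{4r}(k_E)\) is surjective. The Iwahori \(I_{2r}\) and the Siegel parahoric \(I_{2r}^0\) are the preimages of \(\bar B\subset\bar P\), the images of \(P_{2r}\cap K_{2r}\) and \(P_r^\square\cap K_{2r}\), and both contain the kernel. Together with \(G_r^\square=P_r^\square K_r^\square\), this gives bijections
\(P_r^\square\backslash G_r^\square/I_{2r}\cong\bar P\backslash\mathrm{Sp}_{4r}(k_E)/\bar B\cong S_{2r}\backslash\fW_{2r}\) and
\(I_{2r}^0\backslash K_r^\square/I_{2r}^0\cong\bar P\backslash\mathrm{Sp}_{4r}(k_E)/\bar P\cong S_{2r}\backslash\fW_{2r}/S_{2r}\). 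Everything then follows:
\begin{itemize}
\item Since \(S_{2r}w_I=\{w_{\tau(I)}w_\tau:\tau\in S_{2r}\}\) and the decomposition \(w=w_Jw_\sigma\) is unique, distinct \(I\) give distinct one-sided cosets. This is the disjointness.
\item The two-sided cosets are classified by \(|I|\), with representatives \(w_1\cdots w_i\). This is the first display.
\item \(P_r^\square\cB_i^\square\) corresponds to \(\bar Pw_1\cdots w_i\bar P=\bigsqcup_{|J|=i}\bar Pw_J\bar B\). This is your Step 4, which you currently only assert.
\end{itemize}
This also corrects your closing remark. Ramification does change the reductive quotient: it is a finite unitary group in \cite{liu2022theta} and \(\mathrm{Sp}_{4r}(k_E)\) here. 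But the relative Weyl group \(\fW_{2r}\) and the Siegel Weyl subgroup \(S_{2r}\) are the same, and that is why the statement transfers.

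Three smaller points.
\begin{itemize}
\item \(P_r^\square\) must be the Siegel parabolic throughout, since \(\rI_r^\square(s)\) is induced from \(|\cdot|_E^s\circ\Delta\) on its Levi \(\GL_{2r}\). With the Borel and the Iwahori, as in your Step 1, the union over \(I\) misses every cell \(P_{2r}\omega(w_Iw_\tau)I_{2r}\) with \(\tau\neq1\).
\item The statement's \(I^\square\) has to be the Siegel parahoric in the first display and the Iwahori in the second. With the Iwahori, \(K_r^\square\) has \(|\fW_{2r}|\) double cosets rather than \(2r+1\). With the Siegel parahoric, \(P_r^\square\omega(w_I)I^\square\) depends only on \(|I|\), so the union over \(I\) is not disjoint. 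You use exactly these readings, but you should say so.
\item In \(\omega(w_Iw_\tau)\), the factor \(\omega(w_\tau)\) sits to the right of \(\omega(w_I)\), so it is absorbed into \(P_r^\square\) only after conjugating it past \(\omega(w_I)\). The cell is \(P_r^\square\omega(w_{\tau^{-1}(I)})I_{2r}\) (up to convention), not \(P_r^\square\omega(w_I)I_{2r}\). This is harmless for the union.
\end{itemize}
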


We have a degenerate principal series representation:
\begin{definition}
    For each complex number \(s\), the degenerate principal series representation of \(G^\square_r\) is defined by
    \[
    \rI^\square_r(s):=\Ind^{G^\square_r}_{P^\square_r}(|\cdot|_E^s\circ\Delta)
    \]
\end{definition}
\begin{remark}\label{rem:degenerate_principal_series}
\begin{enumerate}[(i)]
    \item 
    Let \(\sigma_s^\square:=(s+r-\frac{1}{2},\cdots,s-r+\frac{1}{2})\in(\dC/\frac{2\pi i}{\log q}\dZ)^{2r}\), then we know that \(\rI^\square_r(s)\subset\rI^{\sigma_s^\square}_{2r}\).
    \item 
    For \(i=0,\cdots, 2r\), if we consider the function \(\phi_{i,s}\) given by
    \[
    \phi_{i,s}(pb)=\begin{cases}
        |\Delta(p)|^s_E&b\in \cB_i^\square\\
        0&b\notin\cB_i^\square
    \end{cases}
    \]
    for \(p\in P_r^\square \) and \(b\in K_r^\square\).
    
    We have
    \[
    \phi_{i,s}=\sum_{w,\ell_{long}(w)=i}\phi_{w,\sigma_s^\square}
    \]
    by evaluating at each \(\omega(w)\).
\end{enumerate}
\end{remark}

\begin{remark}\label{rem:recall_sections}    
    Furthermore, in \(\rI^{\sigma}_{2r}\) we have
    \begin{itemize}
        \item \[\phi_{K_{2r},\sigma}=\sum_{I\subset\{1,\cdots,2r\}}\sum_{\tau\in S_r}\phi_{w_Iw_\tau}\]
        \item \[\phi_{L_{2r},\sigma}=\sum_{I\subset\{1,\cdots,2r\}}q^{-\sum_{i\in I} (\sigma_i+i-\frac{1}{2})}\sum_{\tau\in S_r}\phi_{w_Iw_\tau}\]
        \item \[\phi^-_{K_{2r},\sigma}=\sum_{I\subset\{1,\cdots,2r\}}(-q)^{-|I|}\sum_{\tau\in S_r}\phi_{w_Iw_\tau}\]
        \item \[\phi^-_{L_{2r},\sigma}=\sum_{I\subset\{1,\cdots,2r\}}(-1)^{|I|}q^{-\sum_{i\in I} (\sigma_i+i-\frac{1}{2})}\sum_{\tau\in S_r}\phi_{w_Iw_\tau}\]
 \end{itemize}
    
\end{remark}
Here we recall the definition of doubling zeta integral from \cite{liu2022theta}.
\begin{definition}[Doubling zeta integral]\label{def:doubling_zeta_integral}

    Let \(\pi\) be an irreducible admissible representation of \(G_r(F)\). For every element \(\xi \in\pi^\vee \boxtimes\pi\), we denote by \(H_\xi \in C^\infty(G_r(F))\) the associated matrix coefficient. Then for every meromorphic section \(f^{(s)}\) of \(\rI^\square_r(s)\), we have the (doubling) zeta integral:
    \[Z(\xi, f^{(s)}) :=\int_{G_r(F)}H_\xi(g)f^{(s)}(\tw_r(g, 1_{2r})) \rd g,\]
    which is absolutely convergent for Re s large enough and has a meromorphic continuation.

    We let \(L(s, \pi)\) and \(\varepsilon(s, \pi, \psi_F )\) be the doubling \(L\)-factor and the doubling epsilon factor of \(\pi\), respectively, defined in \cite{yamana2014lfunctions}*{Theorem 5.2}.
    
\end{definition}
In the latter subsections, we are going to compute \(Z(\xi,f^{(s)})\) where \(\xi\) comes from a spherical representation with respect to \(K_r\) or an almost spherical representation with respect to \(K_r\) and \(f^{(s)}\) comes from certain Siegel--Weil section associated to a characteristic function of some unimodular lattices. 

We briefly recall the notions of Schr\"odinger model of the Weil representation here \cite{liu2022theta}:

Let \((V,(\cdot,\cdot)_V)\) be a Hermitian space of dimension \(2d\) and of local root number \(\epsilon\). For an element \(x=(x_1,\cdots,x_r)\in V^r\), we denote by
\[T(x) := ((x_i, x_j)_V )_{1\leq i,j\leq r} \in \mathrm{Herm}_r(F)\]
the moment matrix of \(x\). Put \(\Sigma_r(V) := \left\{x \in V^r \mid T(x) = 0_r\right\}\). We have the Fourier transform \(C^\infty_c(V^r) \rightarrow C^\infty_c(V^r)\) sending \(\phi\) to \(\hat\phi\) defined by the formula
    \[
    \hat{\phi} (x) :=\int_{V^r}\phi(y)\psi_E\left(\sum_{i=1}^r(x_i, y_i)_V\right)\rd y,
    \]
where \(\rd y\) is the self-dual Haar measure on \(V^r\) with respect to \(\psi_E\).

Let \((\omega_{W_r,V} , \cV_{W_r,V} )\) be the Weil representation of \(G_r(F) \times \rU(V)(F)\) (with respect to the additive character \(\psi_F\) and the trivial splitting character). We recall the action under
the Schr\"odinger model \(\cV_{W_r,V}\cong C^\infty_c(V^r)\) as follows:
\begin{itemize}
    \item 
    for \(a \in \GL_r(E)\) and \(\phi \in C^\infty_c(V^r)\), we have
    \[\omega_{W_r,V} (m(a))\phi(x) = |\det a|_E^d \cdot \phi(xa);\]
    \item 
    for \(b \in \mathrm{Herm}_r(F)\) and \(\phi \in C^\infty_c(V^r)\), we have
    \[\omega_{W_r,V} (m(b))\phi(x) = \psi_F (\tr bT(x)) \cdot \phi(x);\]
    \item 
    for \(\phi \in C^\infty_c(V^r)\), we have
    \[\omega_{W_r,V}\left(
    \begin{pmatrix}
        &1_r\\
        -1_r&
    \end{pmatrix}\right)
    \phi(x) = (\epsilon 1)^r \cdot \hat{\phi}(x);\]
    \item 
    for \(h \in \rU(V)(F)\) and \(\phi \in C^\infty_c(V^r)\), we have
    \[\omega_{W_r,V}(h)\phi(x) = \phi(h^{-1}x).\]
\end{itemize}
\begin{definition}[The Standard Section]\label{def:standard_section}
    Let \(s_0=d-r\), in our application, we will have \(d=r\) so \(s_0=0\).

    We have a \(G_r^\square\)-intertwining morphism
    \[
        C_c^\infty(V^{2r})\longrightarrow\rI^\square_r(s_0),\quad \Phi\mapsto f_\Phi:=(g\mapsto(\omega_{W_{2r},V}(g)\Phi)(0)).
    \]
    Let \(f^{(s)}_\Phi\) be the unique section of \(\rI_r^\square(s)\) whose value at \(s=s_0\) is \(f_\Phi\) and \(f^{(s)}_\Phi|_{K_r^\square}\) is independent of \(s\), it is called the standard section associated to \(\Phi\).
\end{definition}

We prove a lemma which will be used in the computation
\begin{lemma}\label{lem:volume_computation}
    \[
    \vol(\cB_j)=\frac{q^{\binom{j+1}{2}}\qbinom{r}{j}_q}{\prod_{i=1}^r(q^i+1)}
    \]
    Moreover, as a direct consequence, we have
    \[
        [\cB_i:\cB_0]=q^{\binom{i+1}{2}}\qbinom{r}{i}_q,
    \]
    \[
    C:=\sum_{i=0}^{r}q^{-2i}\vol(\cB_i)=\frac{2(q+1)}{q(q^r+1)(q^{r-1}+1)},
    \]
    \[
    C_+(s):=\sum_{i=0}^{r}q^{si}\vol(\cB_i)=\prod_{i=1}^r\frac{1+q^{i+s}}{1+q^i},
    \]
    \[
    C_+'(s):=\sum_{i=0}^{r}q^{-i(s+r)}\vol(\cB_i)=\prod_{i=1}^{r}\frac{1+q^{-s-i+1}}{1+q^i},
    \]
    \[
    C_-(s):=\sum_{i=0}^{r}q^{-i+si}\vol(\cB_i)=\prod_{i=1}^r\frac{1+q^{i-1+s}}{1+q^i}
    \]
    and
    \[
    C_-'(s):=\sum_{i=0}^{r}q^{-i-i(s+r)}\vol(\cB_i)=\prod_{i=1}^{r}\frac{1+q^{-s-i}}{1+q^i}
    \]
\end{lemma}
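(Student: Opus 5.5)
The plan is to compute everything inside the finite reductive quotient of \(K_r\) and then apply the \(q\)-binomial theorem. Throughout, \(\cB_j=I_r^0\omega(w_1\cdots w_j)I_r^0\subset K_r\) is normalized so that \(\vol(K_r)=1\).

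\emph{Reduction to a finite group.} The standard lattice \(\Lambda_r\) satisfies \(\Lambda_r^\sharp=\Lambda_r\). Reduction modulo \(\varpi_E\) gives a form on \(\Lambda_r/\varpi_E\Lambda_r\cong k^{2r}\). Since \(\tc\) acts trivially on the residue field and \(\langle y,x\rangle=-\langle x,y\rangle^\tc\), this form is a nondegenerate alternating form. Hence \(K_r\) surjects onto \(\mathrm{Sp}_{2r}(k)\), and the Siegel parahoric \(I_r^0\) is the full preimage of the Siegel parabolic \(Q\), the stabilizer of the Lagrangian \(\ell_0\) spanned by the images of \(e_{r+1},\dots,e_{2r}\). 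Therefore \(K_r/I_r^0\) is identified with the set of Lagrangians in \(k^{2r}\), and \(\cB_j/I_r^0\) is identified with the \(Q\)-orbit of \(\omega(w_1\cdots w_j)\ell_0\). The element \(\omega(w_1\cdots w_j)\) swaps \(e_i\) and \(e_{r+i}\) (up to sign) for \(i\le j\), so this orbit is the set of Lagrangians \(\ell\) with \(\dim(\ell\cap\ell_0)=r-j\).

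\emph{Counting the cosets.} To count Lagrangians \(\ell\) with \(\dim(\ell\cap\ell_0)=r-j\), first choose \(U=\ell\cap\ell_0\); there are \(\qbinom{r}{j}_q\) choices. Such \(\ell\) lie in \(U^\perp\), and \(\ell/U\) is a Lagrangian in the \(2j\)-dimensional symplectic space \(U^\perp/U\) transverse to \(\ell_0/U\). Lagrangians transverse to a fixed one are graphs of symmetric \(j\times j\) matrices, so there are \(q^{j(j+1)/2}\) of them. This gives
\[
[\cB_j:\cB_0]=q^{\binom{j+1}{2}}\qbinom{r}{j}_q .
\]
The number of all Lagrangians is \(\prod_{i=1}^r(q^i+1)\), which is also consistent with summing the previous formula over \(j\) via the \(q\)-binomial theorem. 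Hence \(\vol(\cB_0)=\vol(I_r^0)=\prod_{i=1}^r(q^i+1)^{-1}\), and the formula for \(\vol(\cB_j)\) follows.

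\emph{The sums.} Each sum follows from the \(q\)-binomial theorem
\[
\sum_{j=0}^r q^{\binom{j}{2}}\qbinom{r}{j}_q t^j=\prod_{i=0}^{r-1}(1+q^it),
\]
applied with \(q^{\binom{j+1}{2}}=q^{\binom j2}q^j\). The specializations are:
\begin{itemize}
\item \(t=q^{1+s}\) for \(C_+(s)\);
\item \(t=q^{s}\) for \(C_-(s)\);
\item \(t=q^{1-s-r}\) for \(C_+'(s)\);
\item \(t=q^{-s-r}\) for \(C_-'(s)\).
\end{itemize}
After reindexing the product, each case gives the stated expression. Finally, \(C=C_-(-1)\). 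The product \(\prod_{i=1}^r(1+q^{i-2})/(1+q^i)\) telescopes to \((1+q^{-1})\cdot 2/((1+q^{r-1})(1+q^r))\), which equals \(\frac{2(q+1)}{q(q^r+1)(q^{r-1}+1)}\).

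The main point requiring care is the first step. One must check that the reduction of \(K_r\) really is the full symplectic group, or at least acts transitively on Lagrangians with the stated stabilizer \(Q\). One must also check that the double cosets \(\cB_j\) of Proposition \ref{prop:double_bruhat} (in the \(G_r\) version) correspond exactly to the relative positions \(\dim(\ell\cap\ell_0)=r-j\). Both checks are routine from the explicit matrices of \(\omega(w_1)\) and the form on \(\Lambda_r\).
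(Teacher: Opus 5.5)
Your proposal is correct and takes the same route as the paper. The paper also counts the \(I_r^0\)-cosets in \(I_r^0\omega(w_1\cdots w_j)I_r^0\) as maximal isotropic subspaces meeting a fixed one in dimension \(r-j\). Because the residue field has odd characteristic, transverse ones are parametrized by symmetric matrices (\({}^tA=A\)), which gives \(q^{\binom{j+1}{2}}\qbinom{r}{j}_q\), and the sums then follow by specializing the \(q\)-binomial theorem. Your write-up simply makes explicit the reduction to \(\mathrm{Sp}_{2r}(k)\) and the substitutions that the paper leaves as ``similar computations.''
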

\begin{proof}
    This is similar to \cite{liu2022beilinson}*{Lemma B.4.5}\footnote{But the total space here is a skew-symplectic space. Moreover, In \cite{liu2022beilinson}, the extension is unramified so the parahoric subgroup for skew-Hermitian space and Hermitian space are the same, but now they are not the same.}.
    
    For \(w\in\mathfrak{M}_r\)
    \[
        I_r^0\omega(w) I_r^0=\coprod_{i\in I_r^0/(I_r^0\cap \omega(w)I_r^0w^{-1})}i\omega(w)I_r^0
    \]
    So we need to enumerate \(I_r^0/(I_r^0\cap \omega(w)I_r^0\omega(w^{-1}))\) for \(w=w_1\cdots w_i,0\leq i\leq r\).
    
    Note that \(\mathrm{char} (\cO_E/\fp_E)\neq 2\), replace the identity \(^tA^c+A=0\) in \cite{liu2022beilinson}*{Lemma B.4.5} by \(^tA=A\), then we know that for a given maximal isotropic subspace, the number of maximal isotropic subspaces whose overlap has dimension \((r-i)\) is \(q^{\binom{i+1}{2}}\qbinom{r}{i}_q\).

    Then we can prove the following by the property of Gaussian \(q\)-binomial coefficient:
    \[
        \sum_{i=0}^r q^{\binom{i}{2}}\qbinom{r}{i}_qx^i=\prod_{i=0}^{r-1}(1+q^ix)
    \]
    In particular,
    \[
    \sum_{i=0}^r q^{\binom{i+1}{2}}\qbinom{r}{i}_q=\prod_{i=1}^r\frac{q^{2i}-1}{q^i-1}=\prod_{i=0}^{r-1}(q^{i+1}+1),
    \]
    \[
    \sum_{i=0}^r q^{\binom{i+1}{2}-2i}\qbinom{r}{i}_q=\frac{q+1}{q}\prod_{i=0}^{r-2}(q^i+1)
    \]
    and
    \[
    \sum_{i=0}^r q^{\binom{i+1}{2}-(r+1)i}\qbinom{r}{i}_q=\prod_{i=1}^{r-1}(q^{i-1-r}+1)
    \]
    Then 
    \[
    \vol(\cB_j)=\frac{q^{\binom{j+1}{2}}\qbinom{r}{j}_q}{\prod_{i=1}^r(q^i+1)},
    \]
    \[
    \sum_{j=0}^{r}q^{-2j}\vol(\cB_j)=\frac{\frac{q+1}{q}\prod_{i=0}^{r-2}(q^i+1)}{\prod_{i=1}^r(q^i+1)}=\frac{2(q+1)}{q(q^r+1)(q^{r-1}+1)}
    \]
    The remaining are similar computations.
\end{proof}

\subsection{Doubling Zeta Integrals for Spherical Representations}
In this subsection, we compute the doubling zeta integral \(Z(\xi,f_{\Phi}^{(s)})\) where \(\xi\) is the matrix coefficient associated to certain spherical representation with respect to \(K_r\) and \(\Phi=1_{(\Lambda'_{V_r^+})^{2r}}\).

Firstly, we analyze the property of the standard section \(f_{\Phi}^{(s)}\).
    
Consider the action of \(G_r \times \rU(V_d^+)\cong G_r\times G_d\) on \(\Phi=1_{(\Lambda'_{V_d^+})^r}=1_{(\Lambda'_{V_d^+})\otimes_{\cO_E} (\cO_E)^r}\) under the Schr\"odinger model of the Weil representation \(\omega_{W_r,V_d^+}\). We have the following proposition:
\begin{proposition}\label{prop:split_sigel_weil_section}
    \begin{enumerate}[(1)]
        \item \(\Phi\) is invariant under the action of \(L_d\subset G_d\).
        \item \(\Phi\) is invariant under the action of \(I_r^0\subset G_r\).
        \item If \(d=r\), then 
        \[
            f_\Phi=\sum_{i=0}^{2r} q^{-ri}\phi_{i,0}.
        \]
    \end{enumerate}
\end{proposition}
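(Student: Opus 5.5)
Parts (1) and (2) follow directly from the explicit Schrödinger model formulas, and part (3) reduces to evaluating \(f_\Phi\) at the representatives \(\omega(w_1\cdots w_i)\).

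\emph{Part (1).} The group \(\rU(V_d^+)\) acts by \(\phi(h^{-1}x)\). The lattice \(\Lambda'_{V_d^+}\otimes_{\cO_E}\cO_E^r\) is stable under its stabilizer \(L_d\), so \(\Phi\) is \(L_d\)-invariant.

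\emph{Part (2).} I would use that \(I_r^0\) is generated by three kinds of elements:
\begin{itemize}
\item \(m(a)\) with \(a\in\GL_r(\cO_E)\);
\item \(n(b)\) with \(b\in\Herm_r(\cO_F)\);
\item lower unipotents \(\begin{pmatrix}1&0\\ c&1\end{pmatrix}\) with \(c\in\Herm_r\) having entries in \(\fp_E\), of the shape permitted by \(I_r^0\).
\end{itemize}
Each is handled as follows.
\begin{itemize}
\item For \(m(a)\): \(|\det a|_E=1\) and \(xa\) preserves the lattice.
\item For \(n(b)\): if \(x\in(\Lambda')^r\), then \(T(x)\) has entries in \(\cO_E\), since \(\Lambda'\) is unimodular (\((\Lambda')^\sharp=\Lambda'\)). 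Its diagonal entries are Hermitian, hence lie in \(\cO_F\). So \(\tr(bT(x))\in\cO_F\), and \(\psi_F\), which has conductor \(\cO_F\), is trivial on it. I need to double-check that the off-diagonal contributions combine as \(\Tr_{E/F}\) of elements of \(\cO_E\), which lands in \(\cO_F\).
\item For the lower unipotents: write them as \(\tw_r^{-1}n(-c)\tw_r\). The Fourier transform of \(1_{(\Lambda')^r}\) with respect to \(\psi_E=\psi_F\circ\Tr\) is a volume constant times \(1_{((\Lambda')^\vee)^r}=1_{(\varpi_E^{-1}\Lambda')^r}\). On \((\varpi_E^{-1}\Lambda')^r\) the moment matrix has entries in \(\varpi_E^{-2}\cO_E=\varpi_F^{-1}\cO_E\), so \(n(-c)\) with \(c\equiv 0\bmod \fp_F\) acts trivially. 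The remaining Weil constants cancel because \(\tw_r\) appears together with its inverse.
\end{itemize}
The main obstacle in part (2) is matching the exact congruence condition defining \(I_r^0\) (the lower-left block lying in \(\fp_E\) versus \(\fp_F\), given our lower-triangular convention) with the level \(\varpi_F^{-1}\) produced by the dual lattice. I would check this via the description \(I_r^0=\{g\in K_r: g \bmod \varpi_E\in P^0\}\), then reduce to a generator-by-generator check.

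\emph{Part (3).} Take \(d=r\) and work with the doubled group. By Proposition~\ref{prop:double_bruhat}, \(G_r^\square=\bigsqcup_i P_r^\square\cB_i^\square\). By part (2), applied to the doubled group \(G_{2r}\) and its Siegel parahoric, \(f_\Phi\) is right \(I^\square\)-invariant and lies in \(\rI_r^\square(0)\). Hence \(f_\Phi=\sum_i f_\Phi(\omega(w_1\cdots w_i))\,\phi_{i,0}\), where \(\phi_{i,0}\) is as in Remark~\ref{rem:degenerate_principal_series}, and it suffices to compute the value at \(\omega(w_1\cdots w_i)\).

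This element acts as a partial Fourier transform in the first \(i\) variables. Evaluating at \(0\) gives
\[
\gamma^i\int_{(\Lambda')^i}\rd y=\gamma^i\vol\bigl(\Lambda'\bigr)^i,
\]
with respect to the self-dual measure, where \(\gamma\) is the Weil constant. Since \([(\Lambda')^\vee:\Lambda']=[\varpi_E^{-1}\Lambda':\Lambda']=q^{2r}\), self-duality gives \(\vol(\Lambda')=q^{-r}\). The value is therefore \(\gamma^i q^{-ri}\).

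The expected remaining subtlety in part (3) is showing \(\gamma=1\) for the split space \(V_r^+\) with our normalizations, so that no sign \((\epsilon\cdot 1)^i\) survives. Since \(V_r^+\) is split, \(\epsilon=1\), and I would cite the formula \(\omega(\tw)\phi=(\epsilon 1)^r\hat\phi\) recalled above.
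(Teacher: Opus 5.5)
Your proposal is correct and is essentially the paper's argument. Part (1) comes from the lattice stabilizer. Part (2) handles the parabolic part directly and conjugates the lower unipotents by the Weyl element, so the Fourier transform turns \(1_{(\Lambda')^r}\) into a multiple of \(1_{(\varpi_E^{-1}\Lambda')^r}\); you use the Siegel factorization of \(I_r^0\), whereas the paper passes through the Iwahori \(I_r\). Part (3) evaluates at \(\omega(w_1\cdots w_i)\) to get \(\vol(\Lambda')^i=q^{-ri}\).

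The \(\fp_E\)-versus-\(\fp_F\) point you flag in part (2) resolves by the same trace argument you used for \(n(b)\). For Hermitian \(c\) with entries in \(\fp_E\), the diagonal entries lie in \(\fp_E\cap F=\fp_F\), and each off-diagonal pair contributes \(\Tr_{E/F}(c_{ij}T_{ji})\). Since \(c_{ij}T_{ji}\in\varpi_E^{-1}\cO_E\), this trace lies in \(\cO_F\).
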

\begin{proof}
    \begin{enumerate}[(1)]
        \item This is because \(L_d\) is the stabilizer of the lattice \(\Lambda'_{V_d^+}\).
        \item Because \(I_r^0=\bigcup_{w\in S_r} I_r\omega(w)I_r\), and \(\omega(S_r)\subset m(GL_r(\cO_E))\), it suffices to prove that \(\Phi\) is invariant under the action of \(I_r\).
        
        As \(\Phi\) is the characteristic function of \((\Lambda'_{V_d^+})^r\), the action of \( P_r^0\cap K_r\) is trivial. Because we have the Iwahori decomposition \cite{kaletha2023bruhat}*{Axiom 4.1.16}, we know that 
        \begin{equation}
        \begin{aligned}
            I_r=(N_r^-\cap I_r)(T\cap K_r)(N_r\cap I_r)&\subset \omega(w_\ell) (N_r\cap I_r)\omega(w_\ell)(T\cap K_r)(N_r\cap I_r)\\
            &\subset\omega(w_\ell) (P_r^0\cap I_r)\omega(w_\ell)(P_r^0\cap K_r)
        \end{aligned}    
        \end{equation}
        where \(w_\ell\) is the longest element of the Weyl group. Note that although the action of \(\omega(w_\ell)\) is the Fourier transform (the local root number is \(1\) here) which is not trivial, the Fourier transformation of \(1_{\Lambda_{V_d^+}^r}\) is \((\vol(\Lambda_{V_d^+}))^r1_{(\Lambda^\vee_{V_d^+})^r}\) which is still invariant under the action of \(K_r\cap P_r^0\), and its Fourier transformation again gives \(1_{\Lambda_{V_d^+}^r}\).
        \item It suffices to evaluate \(f_\Phi\) at \(\omega(w)\) for each \(w\in \fW_{2r}\) as \(f_\Phi\in \rI^{\sigma_r^\square}_{2r}\). For \(0\le i\le 2r\), consider the evaluation at \(\omega(w_1\cdots w_i)\), we have 
        \begin{equation}
            \begin{aligned}
                f_\Phi(\omega(w_1\cdots w_i))=&\omega_{W_{2r},V_r^+}(\omega(w_1\cdots w_i))\Phi(0)\\
                =&\vol(\Lambda_{V_r^+})^i=q^{-ri}
            \end{aligned}
        \end{equation}
       
    \end{enumerate}
\end{proof}
\begin{remark}
        If \(s_0=0\), we know that \(\sigma_0^\square=(r-\frac{1}{2},\dots,-r+\frac{1}{2})\), hence \(f_{\Phi}^{(0)}=\phi_{L_{2r},\sigma_0^\square}\) by \((3)\). 
        Recall that our aim is to compute the doubling zeta integral \(Z(\xi,f_{\Phi}^{(0)})\) where \(\xi\) is the matrix coefficient associated to a spherical representation with respect to \(K_r\), it suffices to compute \(Z(\xi,\phi_{L_{2r},\sigma^\square_s})\).
\end{remark}

The following fact is from \cite{li2022chow}*{Proposition 3.6}
\begin{proposition}\label{prop:zeta_spherical_self_dual}
    For \(\sigma\in\left(\dC/\frac{2\pi i}{\log q}\dZ\right)^r\), and \(\xi^\sigma\) be the matrix coefficient associated to the spherical component \(\pi^\sigma\) with respect to \(K_r\) in \(\rI^\sigma_r\), and \(f_{\Phi}^{(s)}\) be the standard section associated to \(1_{(\Lambda_{V_r^+})^r}\) we have
    \[
        Z(\xi^\sigma,f_{\Phi}^{(s)})=\frac{L^\sigma(s+\frac{1}{2})}{b_{2r}(s)}.
    \]
    where \(L^\sigma(s)=\prod_{i=1}^{r}\frac{1}{(1-q^{\sigma_i-s})(1-q^{-\sigma_i-s})}\) and \(b_{2r}(s)=\prod_{i=1}^r\frac{1}{1-q^{-2s-2i}}\).
\end{proposition}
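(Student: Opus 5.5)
The approach is the standard Piatetski-Shapiro--Rallis unramified computation, adapted to the ramified setting where \(K_r\) is special.

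\emph{Reduction to a section.} First reduce to \(\xi^\sigma=\phi_{K_r}^\vee\otimes\phi_{K_r}\), the pair of \(K_r\)-spherical vectors normalized so that \(H_\xi(1)=1\). Since \(f_\Phi^{(s)}\) is invariant under the diagonal action of \(K_r\times K_r\) (because \(1_{(\Lambda_{V_r^+})^{2r}}\) is fixed by \(K^\square_r\)), the integral depends only on the spherical function. By Proposition~\ref{prop:split_sigel_weil_section}-type reasoning for the \(\pi\)-modular lattice, \(\Phi\) is \(K_{2r}\)-invariant. Hence \(f_\Phi^{(s)}=\phi_{K_{2r},\sigma_s^\square}\) is the normalized spherical section of \(\rI_r^\square(s)\), with value \(1\) on \(K_r^\square\) since \(\Phi(0)=1\).

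\emph{Unfolding.} Use the standard identity, valid for \(\RE s\gg0\),
\[
Z(\xi,f^{(s)})=\int_{K_r}\int_{G_r}\ldots=\langle \phi_{K_r}^\vee,\ \textstyle\int_{G_r}f^{(s)}(\tw_r(g,1))\pi(g)\phi_{K_r}\,\rd g\rangle .
\]
Realize \(\pi\) inside \(\rI^\sigma_{W_r}\), which may be done for generic \(\sigma\) and then extended by meromorphy. The doubling integral then factors through the \(P_r\times P_r\)-orbit analysis of \(P^\square_r\backslash G^\square_r\). Concretely, the open orbit gives
\[
Z=\int_{G_r}\phi_{K_{2r},\sigma_s^\square}(\tw_r(g,1))\,\phi_{K_r,\sigma}(g)\,\rd g,
\]
after replacing the matrix coefficient by the Macdonald formula, or equivalently by integrating the induced vector against the section.

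\emph{Computation.} Restrict \(g\in P_rK_r\) via the Iwasawa decomposition (Theorem~\ref{thm:iwasawa_decomposition}). The integral becomes a Tate-type integral over the torus \(T\) and the unipotent radical \(N_r\), against the function \(g\mapsto f^{(s)}(\tw_r(g,1))\). That function is computed from the Cartan decomposition, in terms of the elementary divisors of \(g\) relative to \(\Lambda_r\). One gets \(f^{(s)}(\tw_r(\varpi^\lambda,1))=\prod_i q^{-|\lambda_i|(s+\frac12)\cdot(\text{appropriate})}\)-type weights. Summing over \(\lambda\in\Lambda^+\), weighted by Macdonald's formula for the \(K_r\)-spherical function, which holds since \(K_r\) is special and uses the \(c\)-function of Definition~\ref{def:casselman_c}, gives a generating series. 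Evaluating it yields \(\prod_i\bigl((1-q^{\sigma_i-s-1/2})(1-q^{-\sigma_i-s-1/2})\bigr)^{-1}\) times the normalizing factor \(b_{2r}(s)^{-1}\). The shape of \(b_{2r}\) in the ramified case, \(\prod_i(1-q^{-2s-2i})^{-1}\) with only even shifts, arises because \(\eta\) is ramified, so the odd-index factors \(L(2s+i,\eta)\) are trivial.

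\emph{Main obstacle.} The delicate step is the bookkeeping of the generating-series identity, namely matching the Macdonald \(c\)-function contributions \(c_{2\epsilon_i}\) (with \(q_\alpha=q\), \(q_{\alpha/2}=1\)) and \(c_{\epsilon_j\pm\epsilon_i}\) against the section values to produce exactly \(L^\sigma(s+\frac12)/b_{2r}(s)\). To handle it, I would avoid the direct sum and instead use the intertwining-operator approach. Write \(Z\) as a linear functional on \((\rI^\sigma_{W_r})^{K_r}\). Use Theorem~\ref{thm:casselman_spherical_vector} to express the zeta integral of \(\phi_{K_r}\) via its values at the basis \(\phi_w\), and compute it on \(\phi_{w}\) with \(w=1\), supported on the big cell \(P_r I_r\), where the integral is elementary. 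Finally, sum over \(w\) with the coefficients \(c_w\); this is the classical Casselman/Li argument, as in \cite{li2022chow}*{Proposition 3.6}, which I would follow verbatim.
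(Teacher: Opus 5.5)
Your proposal follows the paper's route: the paper gives no argument of its own here and simply imports \cite{li2022chow}*{Proposition 3.6}. Your reductions are exactly what is needed to invoke that result: \(1_{(\Lambda_{V_r^+})^{2r}}\) is \(K_r^\square\)-fixed (it is Fourier self-dual since \(\Lambda_{V_r^+}^\vee=\Lambda_{V_r^+}\), so \(f_\Phi\) takes the value \(1\) at every \(\omega(w_1\cdots w_i)\)), hence \(f_\Phi^{(s)}\) is the normalized spherical section; and bi-\(K_r\)-invariance of \(g\mapsto f_\Phi^{(s)}(\tw_r(g,1_{2r}))\) lets you replace \(H_{\xi^\sigma}\) by \(\phi_{K_r,\sigma}\).

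Be aware, though, that your paraphrase of the cited computation is off: the coefficients of \(\phi_{K_r}\) in the basis \(\phi_w\) are all \(1\), the \(c_w\) enter only through the operators \(T_w\), and the zeta functional on \(\phi_1\) is not elementary, since it needs the Iwasawa decomposition of \(\tw_r(p,1_{2r})\) for \(p\in P_r\). The argument you would actually be following is mirrored in this paper's Propositions \ref{prop:zeta_spherical_unimodular_split} and \ref{prop:zeta_almost_spherical_almost_self_dual}: it unfolds along the Siegel parabolic \(P_r^0\), applies \(T_{\tw'_r}\) to the section to extract \(C^+_{\tw'_r}(s)=\prod_{i=1}^r\zeta_E(2s+2i-1)/\zeta_E(2s+r+i)\), and finishes with a \(\GL_r(E)\) Godement--Jacquet-type integral.
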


The main result in this subsection is the following:
\begin{proposition}\label{prop:zeta_spherical_unimodular_split}
    For \(\sigma\in\left(\dC/\frac{2\pi i}{\log q}\dZ\right)^r\), and \(\xi^\sigma\) be the matrix coefficient associated to the spherical component \(\pi^\sigma\) with respect to \(K_r\) in \(\rI^\sigma_r\), we have
    \[
        Z(\xi^\sigma,\phi_{L_{2r},\sigma^\square_s})=(\prod_{i=1}^r\frac{(1+q^{-s-i})(1+q^{-s-i+1})}{(1+q^i)(1+q^i)})\frac{1+q^{-s}}{1+q^{-s-r}}\frac{L^\sigma(s+\frac{1}{2})}{b_{2r}(s)}.
    \]
    where \(L^\sigma(s)=\prod_{i=1}^{r}\frac{1}{(1-q^{\sigma_i-s})(1-q^{-\sigma_i-s})}\) and \(b_{2r}(s)=\prod_{i=1}^r\frac{1}{1-q^{-2s-2i}}\). 
    
    In particular,
    \[
        Z(\xi^\sigma,f_{1_{(\Lambda'_{V_r^+})^{2r}}}^{(0)})=\frac{4}{q^{r^2-r}(1+q^r)^2}\cdot\frac{L^\sigma(\frac{1}{2})}{b_{2r}(0)}.
    \]
\end{proposition}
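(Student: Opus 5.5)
Following Li--Liu, the plan is to reduce the zeta integral to a pairing between a section of the degenerate principal series and the spherical vector. The spherical vector is then handled by Casselman's intertwining theory, exactly as in the proof of Proposition \ref{prop:zeta_spherical_self_dual}. The new input is that the section \(\phi_{L_{2r},\sigma_s^\square}\) is not the \(K^\square\)-spherical section. By Remark \ref{rem:recall_sections}, it is a weighted sum of the Iwahori basis vectors \(\phi_{w,\sigma_s^\square}\), with weights \(q^{-\sum_{i\in I}(\sigma_{s,i}^\square+i-\frac12)}\) depending only on \(I\). Using Remark \ref{rem:degenerate_principal_series}(ii), I first regroup this into the double-coset sections \(\phi_{i,s}\). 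On \(\rI^\square_r(s)\) the exponent \(\sigma^\square_{s,i}+i-\frac12=s+r\) is independent of \(i\). Hence \(\phi_{L_{2r},\sigma_s^\square}=\sum_{i=0}^{2r}q^{-i(s+r)}\phi_{i,s}\), which specializes to Proposition \ref{prop:split_sigel_weil_section}(3) at \(s=0\).

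Next I unfold the zeta integral over \(G_r\). I write \(\tw_r(g,1)\) via the Iwasawa decomposition of \(G_r^\square\). Only the \(K_r\)-invariant part of the section matters, because \(\xi^\sigma\) is \(K_r\times K_r\)-finite and spherical. So I average \(\phi_{L_{2r}}\) over \(K_r\times K_r\subset G_r^\square\) (the two diagonal copies). I expect this projection to be a scalar multiple of the \(K_{2r}\)-spherical section \(\phi_{K_{2r},\sigma_s^\square}\), plus a term in the \(\kappa^-\)-type directions that pairs to zero against the spherical \(\xi^\sigma\).

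The way I would get the scalar is to factor the double structure: \(G^\square_r\)-orbits of \(P^\square\backslash G^\square/(G_r\times G_r)\) reduce to the open orbit. The restriction of \(\phi_{i,s}\) to the doubled \(K_r\times K_r\) splits according to how the Lagrangian \(\tw_r(k_1,k_2)\) meets the standard one. This gives a product of a contribution from each copy of \(W_r\). Each copy contributes a sum \(\sum_j q^{-j(s+r)}\vol(\cB_j)\) or \(\sum_j q^{-j(s+r)+j}\vol(\cB_j)\)-type quantity. By Lemma \ref{lem:volume_computation} these are \(C_+'(s)\) and a shifted variant, yielding \(\prod_i\frac{(1+q^{-s-i})(1+q^{-s-i+1})}{(1+q^i)^2}\). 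The correction \(\frac{1+q^{-s}}{1+q^{-s-r}}\) arises from the mismatch between the \(i=0\) and \(i=r\) boundary terms of the two products. Multiplying by the value from Proposition \ref{prop:zeta_spherical_self_dual} gives the formula.

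The specialization at \(s=0\) is then direct: the product telescopes to \(\frac{2}{(1+q^r)}\prod\frac{1+q^{-i}}{1+q^i}\cdot\frac{2}{1+q^{-r}}\), which is \(\frac{4}{q^{r^2-r}(1+q^r)^2}\) after using \(\frac{1+q^{-i}}{1+q^i}=q^{-i}\). The main obstacle is the middle step, namely showing rigorously that the \(K_r\times K_r\)-average of \(\phi_{i,s}\) separates into a product of per-copy volume sums. To do this I would first try to compute \(\int_{K_r}\phi_{L_{2r}}(\tw_r(k,1))\,dk\) directly. I would decompose \(K_r=\bigsqcup\cB_j\) and track the \(I\)-set of \(\tw_r(k,1)\) via the action on isotropic subspaces mod \(\varpi_E\), as in the counting proof of Lemma \ref{lem:volume_computation}. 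I would then compare with the analogous computation for \(\phi_{K_{2r}}\) to extract the ratio.
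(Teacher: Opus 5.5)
Your regrouping $\phi_{L_{2r},\sigma^\square_s}=\sum_{i=0}^{2r}q^{-i(s+r)}\phi_{i,s}$ is correct, but the step that carries all the content is only asserted. You expect the $K_r\times K_r$-average of this section to be $c(s)\,\phi_{K_{2r},\sigma^\square_s}$ modulo the kernel of $Z(\xi^\sigma,\cdot)$, with $c(s)$ independent of $\sigma$. You then propose to read off $c(s)$ from $\int_{K_r}\phi_{L_{2r}}(\tw_r(k,1))\,\rd k$. Nothing in your plan forces this proportionality, and there is no multiplicity-one shortcut. The $K_r\times K_r$-invariants of $\rI^\square_r(s)$ are infinite-dimensional: sections supported on the open orbit $P^\square_r\tw_r(G_r\times 1)$ already give every compactly supported bi-$K_r$-invariant function on $G_r$. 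Moreover, the value at the single point $\tw_r$ does not determine $\int_{G_r}H_{\xi^\sigma}(g)f(\tw_r(g,1))\,\rd g$, which sees the averaged section on all of $K_r\backslash G_r/K_r$. (The remark about ``$\kappa^-$-type directions'' also does not fit: a $K_r\times K_r$-average is already $K_r\times K_r$-invariant.)

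Your proposed per-copy factorization fails as stated. Since $\tw_r(k,k)\tw_r^{-1}\in P^\square_r$ with $|\Delta(\tw_r(k,k)\tw_r^{-1})|_E=1$, every $f\in\rI^\square_r(s)$ satisfies $f(\tw_r(k_1,k_2))=f(\tw_r(k_2^{-1}k_1,1))$. So on $\tw_r(K_r\times K_r)$ there is a single relative-position variable, not two independent volume sums. The bookkeeping is also off: $\sum_j q^{-j(s+r)+j}\vol(\cB_j)=\prod_{i=1}^r\frac{1+q^{-s-i+2}}{1+q^i}$, not $\prod_i\frac{1+q^{-s-i}}{1+q^i}$. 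And the factor $\frac{1+q^{-s}}{1+q^{-s-r}}$ is attributed to a ``boundary mismatch'' without any derivation.

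The missing idea is the intertwining-operator step. The paper unfolds as in Liu's Proposition 5.6 through $\Pi=\rI^{G_r}_{P^0_r}(\tau)$. This reduces everything to $F^{(s)}(g)=\int_{N^0_r}f^{(s)}(\tw_r(n,1)g)\,\rd n=T_{\tw'_r}(f^{(s)})(\tw''_r g)$. Theorem~\ref{thm:casselman_relation_semi_standard} then gives $T_{\tw'_r}\phi_{L_{2r},\sigma^\square_s}=C^{+'}_{\tw'_r}(s)\,\phi_{L_{2r},\tw'_r\sigma^\square_s}$. The ratios $c'_{2\epsilon_i}/c_{2\epsilon_i}$ telescope to $C^{+'}_{\tw'_r}(s)/C^{+}_{\tw'_r}(s)=q^{-rs-\frac{r(r+1)}{2}}\frac{1+q^{-s}}{1+q^{-s-r}}$, and this is where the boundary factor comes from.

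Only after intertwining do the two copies decouple. By \eqref{eq:twist_sigma}, $(\tw'_r\sigma^\square_s)_i+i-\frac12$ equals $-s$ for $i\le r$ and $s+r$ for $i>r$. Hence $\phi_{L_{2r},\tw'_r\sigma^\square_s}(\tw''_r(m(a)k',k))$ is constant on each $\cB_i\times\cB_j$, equal to $q^{si-j(s+r)}\phi_{K_{2r},\tw'_r\sigma^\square_s}(\tw''_r(m(a),1_{2r}))$. Via Lemma~\ref{lem:volume_computation} this produces $C_+(s)C'_+(s)$, and finally $q^{-rs-\frac{r(r+1)}{2}}C_+(s)=\prod_i\frac{1+q^{-s-i}}{1+q^i}$. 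The $\sigma$-independence of the ratio to Proposition~\ref{prop:zeta_spherical_self_dual} is therefore an output of this computation, not an input.

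A minor slip: your intermediate expression at $s=0$ is wrong. It should be $\frac{4}{(1+q^{-r})^2}\prod_{i=1}^r\frac{(1+q^{-i})^2}{(1+q^i)^2}$, although the final value you state is correct.
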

\begin{proof}
    The proof follows the steps in \cite{liu2022theta}*{Proposition 5.6}.

    For convenience, let \(f^{(s)}=\phi_{L_{2r},\sigma^\square_s}\).

    Recall the isomorphism \(m:\Res_{E/F}\GL_r\overset{\sim}{\longrightarrow}M_r^0\). Let \(\tau\) be the unramified constituent of the normalized induction of \(\boxtimes_{i=1}^r|\cdot|_E^{\sigma_i}\). We fix vectors \(v_0\in\tau\) and \(v_0^\vee\) in \(\tau^\vee\) fixed by \(M_r^0(F)\cap K_r=m(\GL_r(\cO_E))\) such that \(\langle v_0^\vee,v_0\rangle_\tau=1\). Put \(\Pi:=\rI_{P_r^0}^{G_r}(\tau)\) and identify \(\Pi^\vee\) with \(\rI^{G_r}_{P_r^0}(\tau^\vee)\) via the pairing   
    \[
        \langle\varphi^\vee,\varphi\rangle_{\Pi}:=\int_{K_r}\langle\varphi^\vee(k),\varphi(k)\rangle_\tau\rd k.
    \]
    
    Let \(\varphi_0\in\rI_{P_r^0}^{G_r}\tau\) and \(\varphi_0^\vee\in\rI_{P_r^0}^{G_r}(\tau^\vee)\) be the unique spherical vectors with respect to \(K_r\) such that \(\varphi_0(k)=\tau_0\) and \(\varphi_0^\vee(k)=\tau_0^\vee\) for \(k\in K_r\). 

    We know that
    \begin{equation}\label{eq:zeta_spherical_split_unimodular_1}
        \begin{aligned}
            Z(\xi^\sigma, f^{(s)}) :=&\int_{G_r(F)}H_{\xi^\sigma}(g)f^{(s)}(\tw_r(g, 1_{2r})) \rd g\\
            =&\int_{G_r(F)}f^{(s)}(\tw_r(g, 1_{2r}))\langle \Pi^\vee(g)\varphi_0^\vee,\varphi_0\rangle_\Pi \rd g
        \end{aligned}
    \end{equation}
    As in \cite{liu2022theta}*{Page 216}, for every \(k\in K_r\), we have \(\tw_r(k,k)\tw_r^{-1}\in I_r^\square\), hence \[|\Delta(\tw_r(k,k)\tw_r^{-1})|_E=1.\]
    
    We equip \(M_r^0\) and \(N_r^0(F)\) with the Haar measure given there, respectively.
    \begin{equation}\label{eq:zeta_spherical_split_unimodular_2}
        \begin{aligned}
            (\ref{eq:zeta_spherical_split_unimodular_1})=& \int_{G_r(F)} f^{(s)}(\tw_r(g, 1_{2r})) \int_{K_r} \langle \varphi_0^\vee(kg), \varphi_0(k) \rangle_\tau \rd k \rd g \\
        =& \int_{G_r(F)} f^{(s)}(\tw_r(g, 1_{2r}))\sum_{i=0}^{r}\int_{\cB_i}\langle \varphi_0^\vee(kg), \varphi_0(1) \rangle_\tau\rd k \rd g \\
        =&\int_{G_r(F)}\sum_{i=0}^{r}\int_{\cB_i}f^{(s)}(\tw_r(k^{-1}g, 1_{2r}))\langle \varphi_0^\vee(g), \varphi_0(1) \rangle_\tau\rd k \rd g \\
        =&\sum_{i=0}^{r}\int_{G_r(F)}\int_{\cB_i}f^{(s)}(\tw_r(g, k))\langle \varphi_0^\vee(g), \varphi_0(1) \rangle_\tau\rd k \rd g \\
        =&\int_{M_r^0(F)}\int_{N_r^0(F)}\int_{K_r}\sum_{i=0}^r(\int_{\cB_i}f^{(s)}(\tw_r(mnk', k))\langle \varphi_0^\vee(mnk'), \varphi_0(1) \rangle_\tau\rd k)\rd k'\rd n \rd m \\
        =&\int_{M_r^0(F)}\int_{N_r^0(F)}\int_{K_r}\sum_{i=0}^r(\int_{\cB_i}f^{(s)}(\tw_r(mnk', k))\rd k)\langle \varphi_0^\vee(mk'), \varphi_0(1) \rangle_\tau\rd k'\rd n \rd m \\
        =&\int_{M_r^0(F)}\sum_{j=0}^{r}\sum_{i=0}^{r}\int_{\cB_i}\int_{\cB_j}(\int_{N_r^0(F)}f^{(s)}(\tw_r(mnk', k))\rd n)\rd k\rd k'\langle \varphi_0^\vee(m), \varphi_0(1) \rangle_\tau\rd m \\
        \end{aligned}
    \end{equation}
    Set 
    \[
        F^{(s)}(g):=\int_{N_r^0(F)}f^{(s)}(\tw_r(n, 1_{2r})g)\rd n
    \]
    we have
    \begin{equation*}
        \begin{aligned}
            F^{(s)}(m(a)k',k)=&\int_{N_r^0(F)}f^{(s)}(\tw_r(n, 1_{2r})(m(a)k',k))\rd n\\
            =&\int_{N_r^0(F)}f^{(s)}(\tw_r(n, 1_{2r})(m(a),k'))\rd n\\
            =&\int_{N_r^0(F)}f^{(s)}(\tw_r(m(a)nk, k'))\rd n
        \end{aligned}
    \end{equation*}
    where the third equality is because \(N_r^0(F)\) is normal in \(P_r^0(F)\).
    so
    \begin{equation}\label{eq:zeta_spherical_split_unimodular_3}
        \begin{aligned}
            (\ref{eq:zeta_spherical_split_unimodular_2})=&\int_{GL_r(E)}\sum_{j=0}^{r}\sum_{i=0}^{r}\int_{\cB_i}\int_{\cB_j}F^{(s)}(m(a)k',k)\rd k\rd k'\langle \varphi_0^\vee(m(a)), \varphi_0(1) \rangle_\tau\rd a \\
            =&\int_{GL_r(E)}\sum_{j=0}^{r}\sum_{i=0}^{r}\int_{\cB_i}\int_{\cB_j}F^{(s)}(m(a)k',k)\rd k\rd k'|\det a|_E^{-\frac{r}{2}}\langle \tau_0^\vee(a)v_0^
            \vee, \tau_0 \rangle_\tau\rd a
        \end{aligned}
    \end{equation}
    Use the notations in \cite{liu2022theta}*{Page 217, between (5.3) and (5.4)}, we have
    \[
        F^{(s)}(g)=T_{\tw'_r}(\phi_{L_{2r},\sigma^\square_s})(\tw''_rg),
    \]
    and we have
    \[
        T_{\tw'_r}(\phi_{L_{2r},\sigma^\square_s})=C^{+'}_{\tw'_r}(s)\cdot\phi_{L_{2r},\tw'_r\sigma^\square_s}
    \]
    where \[C^{+'}_{\tw'_r}(s)=\prod_{\alpha>0,\tw'_r\alpha<0}c_\alpha'(\sigma^\square_s)\] by Theorem \ref{thm:casselman_relation_semi_standard} and
    \[
    \tw'_r\sigma^\square_s=(-s-\frac{1}{2},\cdots,-s-r+\frac{1}{2},s-\frac{1}{2},\cdots,s-r+\frac{1}{2}),\]
    and
    \begin{equation}\label{eq:twist_sigma}
        (\tw'_r \sigma^\square_s)_i+i-\frac{1}{2}=\begin{cases}
            -s&1\le i\le r\\
            s+r&r+1\le i\le 2r
        \end{cases}
    \end{equation}
    \begin{remark}\label{rmk:twist_sigma}
        This is crucial in the proof of Proposition \ref{prop:zeta_almost_spherical_unimodular_non_split}.
    \end{remark}
    Explicitly, note that \(\tw_r'\) sends \(\epsilon_i\) to \(-\epsilon_{r+1-i}\) and \(\epsilon_{r+i}\) are invariant for \(1\le i\le r\), we have
    \begin{equation}\label{eq:c+'}
        \begin{aligned}
            C^{+'}_{\tw'_r}(s)=&\prod_{\alpha>0,\tw'_r\alpha<0}c_\alpha'(\sigma^\square_s)\\
            =&\prod_{1\le i<j\le r}c'_{\epsilon_i+\epsilon_j}(\sigma^\square_s)\prod_{i=1}^r c'_{2\epsilon_i}(\sigma^\square_s)\\
            =&\prod_{1\le i<j\le r}\frac{1-(q^{-1})^{2s+2r+2-i-j}}{1-(q^{-1})^{2s+2r+1-i-j}}\prod_{i=1}^r\frac{q^{-s-r-1+i}(1-(q^{-1})^{s+r+1-i})(1+(q^{-1})^{s+r-i})}{1-(q^{-1})^{2s+2r+1-2i}}\\
            =&\prod_{1\le i<j\le r}\frac{\zeta_E(2s+2r+1-i-j)}{\zeta_E(2s+2r+2-i-j)}\prod_{i=1}^rq^{-s-r-1+i}\frac{\zeta_E(2s+2i-1)(1+(q^{-1})^{s+r-i})}{\zeta_E(s+i)}\\
            =&q^{-rs-\frac{r(r+1)}{2}}\prod_{1\le i<j\le r}\frac{\zeta_E(2s+2r+1-i-j)}{\zeta_E(2s+2r+2-i-j)}\prod_{i=1}^r\frac{\zeta_E(2s+2i-1)(1+(q^{-1})^{s+r-i})}{\zeta_E(s+i)}\\
            =&q^{-rs-\frac{r(r+1)}{2}}\prod_{i=1}^r\frac{\zeta_E(2s+2i)}{\zeta_E(2s+r+i)}\prod_{i=1}^r\frac{\zeta_E(2s+2i-1)(1+(q^{-1})^{s+r-i})}{\zeta_E(s+i)}\\
            =&q^{-rs-\frac{r(r+1)}{2}}\prod_{i=1}^r\frac{\zeta_E(2s+2i)}{\zeta_E(s+i)}\frac{(1+(q^{-1})^{s+i-1})\zeta_E(2s+2i-1)}{\zeta_E(2s+r+i)}\\
            =&q^{-rs-\frac{r(r+1)}{2}}\prod_{i=1}^r\frac{(1+(q^{-1})^{s+i-1})}{(1+(q^{-1})^{s+i})}\frac{\zeta_E(2s+2i-1)}{\zeta_E(2s+r+i)}\\
            =&q^{-rs-\frac{r(r+1)}{2}}\frac{1+q^{-s}}{1+q^{-s-r}}\prod_{i=1}^r\frac{\zeta_E(2s+2i-1)}{\zeta_E(2s+r+i)}
        \end{aligned}
    \end{equation}
    From \cite{li2022chow}*{Proposition 3.6}, we define
    \[
        C_{\tw'_r}^{+}(s):=\prod_{i=1}^{r}\frac{\zeta_E(2s+2i-1)}{\zeta_E(2s+r+i)}
    \]
    so
    \[
        \frac{C_{\tw'_r}^{+'}(s)}{C_{\tw'_r}^{+}(s)}=q^{-rs-\frac{r(r+1)}{2}}\frac{1+q^{-s}}{1+q^{-s-r}}
    \]
    and we have
    \begin{equation}\label{eq:zeta_spherical_split_unimodular_4}
        \begin{aligned}
            (\ref{eq:zeta_spherical_split_unimodular_3})=&\sum_{i=0}^r\sum_{j=0}^rq^{si}q^{-j(s+r)}\vol(\cB_i)\vol(\cB_j)C^{+'}_{\tw_r}(s)\cdot\\
            &\int_{GL_r(E)}\phi_{L_{2r},\tw'_r\sigma^\square_s}(\tw''_r(m(a),1_{2r}))|\det a|_E^{-\frac{r}{2}}\langle\tau^\vee(a)v_0^\vee,v_0\rangle_\tau\rd a\\
            =&C_+(s)C_+'(s)C^{+'}_{\tw_r}(s)\int_{GL_r(E)}\phi_{L_{2r},\tw'_r\sigma^\square_s}(\tw''_r(m(a),1_{2r}))|\det a|_E^{-\frac{r}{2}}\langle\tau^\vee(a)v_0^\vee,v_0\rangle_\tau\rd a\\
            =&C_+(s)C_+'(s)C^{+'}_{\tw_r}(s)\int_{GL_r(E)}\phi_{K_{2r},\tw'_r\sigma^\square_s}(\tw''_r(m(a),1_{2r}))|\det a|_E^{-\frac{r}{2}}\langle\tau^\vee(a)v_0^\vee,v_0\rangle_\tau\rd a\\
            =&(\prod_{i=1}^r\frac{(1+q^{i+s})(1+q^{-s-i+1})}{(1+q^i)(1+q^i)})q^{-rs-\frac{r(r+1)}{2}}\frac{1+q^{-s}}{1+q^{-s-r}}\frac{L^\sigma(s+\frac{1}{2})}{b_{2r}(s)}\\
            =&(\prod_{i=1}^r\frac{(1+q^{-s-i})(1+q^{-s-i+1})}{(1+q^i)(1+q^i)})\frac{1+q^{-s}}{1+q^{-s-r}}\frac{L^\sigma(s+\frac{1}{2})}{b_{2r}(s)}\\
        \end{aligned}
    \end{equation}
    where the first equality follows from a modified version of the argument in \cite{liu2022theta}*{Page 218, below (5.7)}: though the functions are not constant on \(\cB_{i+j}^\square\), they are constant when restricted on \(\cB_i\times\cB_j\) by (\ref{eq:twist_sigma}).The forth identity is by \cite{li2022chow}*{Proposition 3.6} and Lemma \ref{lem:volume_computation}.
\end{proof}
\subsection{Doubling Zeta Integrals for Almost Spherical Representations}
In this subsection we are going to compute (the special value of) the doubling zeta integral \(Z(\xi,f_{\Phi}^{(s)})\) where \(\xi\) is the matrix coefficient associated to certain almost spherical representation with respect to \(K_r\) and \(K_r^\square\) and \(\Phi=1_{(\Lambda_{V_{r-1}^-})^{2r}}\) or \(\Phi'=1_{(\Lambda'_{V_{r-1}^-})^{2r}}\).

Firstly, we analyze the property of the standard section \(f_{\Phi}^{(s)}\).
    
Consider the action of \(G_r \times \rU(V_d^-)\cong G_r\times H_{d}\)\footnote{Recall that \(V_d^-\) has dimension \(2d+2\).} on \(\Phi=1_{(\Lambda_{V_d^-})^r}=1_{(\Lambda_{V_d^-})\otimes_{\cO_E} (\cO_E)^r}\) and \(\Phi'=1_{(\Lambda'_{V_d^-})^r}=1_{(\Lambda'_{V_d^-})\otimes_{\cO_E} (\cO_E)^r}\) under the Schr\"odinger model of the Weil representation \(\omega_{W_r,V_d^+}\). We have the following proposition:
\begin{proposition}\label{prop:non_split_sigel_weil_section}
    \begin{enumerate}[(1)]
        \item \(\Phi\) is invariant under the action of \(K^-_d\subset H_d\).
        \item \(\Phi'\) is invariant under the action of \(L_d^-\subset H_d\).
        \item \(\Phi\) and \(\Phi'\) are invariant under the action of \(I_r^0\subset G_r\).
        \item If \(d=r-1\), then 
        \[
            f_\Phi=\sum_{i=0}^{2r} (-q)^{-i}\phi_{i,0}.
        \]
        \item If \(d=r-1\), then
        \[
            f_{\Phi'}=\sum_{i=0}^{2r} (-q)^{-ri}\phi_{i,0}.
        \]
    \end{enumerate}
\end{proposition}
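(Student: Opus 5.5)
The plan follows the proof of Proposition \ref{prop:split_sigel_weil_section} almost verbatim. The only genuinely new inputs are the Weil constant $\epsilon=-1$ for the non-split space $V_d^-$ and the $\vee$-duals of $\Lambda_{V_d^-}$ and $\Lambda'_{V_d^-}$.

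Parts (1) and (2) are immediate, because $\rU(V_d^-)$ acts on $C_c^\infty((V_d^-)^r)$ by $\phi(x)\mapsto\phi(h^{-1}x)$. By definition $K_d^-$ and $L_d^-$ are the stabilizers of $\Lambda_{V_d^-}$ and $\Lambda'_{V_d^-}$, so $h^{-1}\Lambda^r=\Lambda^r$ for such $h$ and the characteristic functions are fixed.

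For (3), I reduce to $I_r$ exactly as before: write $I_r^0=\bigcup_{w\in S_r}I_r\omega(w)I_r$ and note $\omega(S_r)\subset m(\GL_r(\cO_E))$. Elements of $P_r^0\cap K_r$ fix both functions. The factor $m(a)$ with $a\in\GL_r(\cO_E)$ preserves $\Lambda^r$, and $|\det a|_E=1$. The factor $n(b)$ with $b\in\Herm_r(\cO_F)$ multiplies by $\psi_F(\tr bT(x))$, and this is $1$ because $\Lambda\subset\Lambda^\vee$ in both cases, so $\Tr_{E/F}(x_i,x_j)\in\cO_F$. Next I use the Iwahori factorization $I_r\subset\omega(w_\ell)(P_r^0\cap I_r)\omega(w_\ell)(P_r^0\cap K_r)$. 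Since $\omega(w_\ell)$ acts by $(\epsilon 1)^r$ times the Fourier transform, the transform of $1_{\Lambda^r}$ is $\vol(\Lambda)^r1_{(\Lambda^\vee)^r}$. So it suffices to check that $1_{(\Lambda^\vee)^r}$ is invariant under $P_r^0\cap I_r$. This holds because the lower-left block of $I_r$ lies in $\varpi_E$ times integral matrices, and invariance then follows from the explicit formula for $\Lambda^\vee$ (LH\ref{almost_self_dual_lattice_non_split}, LH\ref{unimodular_lattice_non_split}). Applying $\omega(w_\ell)$ a second time returns $1_{\Lambda^r}$ up to a scalar. I expect this verification to be harmless, but the unipotent part for the almost $\pi$-modular $\Lambda^\vee$ needs care. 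There the $v_{2r+1},v_{2r+2}$ coordinates have values in $\varpi_E^{-1}\cO_E$, and I must check that $\psi_F(\tr bT(x))=1$ for $b\in\Herm_r(\cO_F)\cap$ (the Iwahori condition) and $x\in(\Lambda^\vee)^r$. I would first try pushing $b$ into $\varpi_F\Herm_r(\cO_F)$ using the lower-triangular shape.

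For (4) and (5), with $d=r-1$ we have $\dim V=2r$, so $s_0=0$ and $f_\Phi\in\rI_r^\square(0)$. By (3), applied in the doubled setting to $I^\square_r$ (on $V^{2r}$), $f_\Phi$ is $I_r^\square$-invariant. Its restriction to $K^\square_r$ is therefore constant on each $\cB_i^\square$, so $f_\Phi=\sum_i f_\Phi(\omega(w_1\cdots w_i))\phi_{i,0}$. The representative $\omega(w_1\cdots w_i)$ acts by a partial Fourier transform in $i$ of the $2r$ variables, with Weil constant $\epsilon=-1$ contributing $(-1)^i$. Evaluating at $0$ gives $(-1)^i\vol(\Lambda)^i$, with the self-dual measure for $\psi_E$. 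The \textbf{main obstacle} is the volume normalisation. The claims require $\vol(\Lambda_{V_{r-1}^-})=q^{-1}$ and $\vol(\Lambda'_{V_{r-1}^-})=q^{-r}$, both computed as $[\Lambda^\vee:\Lambda]^{-1/2}$. In $E$-dimension $2r$ we have $\dim_{k_E}\Lambda^\vee/\Lambda=2$ in the almost $\pi$-modular case, giving $q^{-1}$. In the unimodular case $\Lambda^\vee=\varpi_E^{-1}\Lambda$ gives index $q^{2r}$, hence $q^{-r}$. This yields the coefficients $(-q)^{-i}$ and $(-q)^{-ri}$, where I read the latter as $(-1)^iq^{-ri}$. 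I would double-check this sign convention against the definition of $\phi^-_{L_{2r}}$ in Remark \ref{rem:recall_sections}.
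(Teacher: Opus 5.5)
Your proposal is correct and follows the paper's proof. Parts (1) and (2) come from $K_d^-$ and $L_d^-$ being the lattice stabilizers, (3) is the same Iwahori-factorization and Fourier-transform argument as in Proposition~\ref{prop:split_sigel_weil_section}, and (4), (5) come from evaluating at $\omega(w_1\cdots w_i)$ to get $\epsilon^i\vol(\Lambda)^i$ with $\epsilon=-1$, $\vol(\Lambda_{V_{r-1}^-})=q^{-1}$ and $\vol(\Lambda'_{V_{r-1}^-})=q^{-r}$; you keep the sign $(-1)^i$, which the paper's displayed computation drops.

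The check you flag in (3) does not require pushing $b$ into $\varpi_F\Herm_r$, which is not possible in general: a hermitian $b$ with entries in $\fp_E$ already gives $\psi_F(\tr bT(x))=1$ for $x\in(\Lambda^\vee)^r$, because $\varpi_E\Lambda^\vee=\Lambda^\sharp\subset\Lambda$ for both lattices. Your reading of (5) as $(-1)^iq^{-ri}$ is exactly what the paper uses when it identifies $f^{(0)}_{\Phi'}$ with $\phi^-_{L_{2r},\sigma^\square_0}$.
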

\begin{proof}
    \begin{enumerate}[(1)]
        \item This is because \(K_d^-\) is the stabilizer of the lattice \(\Lambda_{V_d^-}\).
        \item This is because \(L_d^-\) is the stabilizer of the lattice \(\Lambda'_{V_d^-}\).
        \item This is the same as that in Proposition \ref{prop:split_sigel_weil_section}.
        \item It suffices to evaluate \(f_\Phi\) at \(\omega(w)\) for each \(w\in \fW_{2r}\) as \(f_\Phi\in \rI^{\sigma_r^\square}_{2r}\). For \(0\le i\le 2r\), consider the evaluation at \(\omega(w_1\cdots w_i)\), we have 
        \begin{equation}
            \begin{aligned}
                f_\Phi(\omega(w_1\cdots w_i))=&\omega_{W_{2r},V_r^+}(\omega(w_1\cdots w_i))\Phi(0)\\
                =&\vol(\Lambda_{V_r^-})^i=q^{-i}
            \end{aligned}
        \end{equation}
        \item It suffices to evaluate \(f_\Phi\) at \(\omega(w)\) for each \(w\in \fW_{2r}\) as \(f_\Phi\in \rI^{\sigma_r^\square}_{2r}\). For \(0\le i\le 2r\), consider the evaluation at \(\omega(w_1\cdots w_i)\), we have 
        \begin{equation}
            \begin{aligned}
                f_\Phi(\omega(w_1\cdots w_i))=&\omega_{W_{2r},V_r^+}(\omega(w_1\cdots w_i))\Phi'(0)\\
                =&\vol(\Lambda'_{V_r^-})^i=q^{-ri}
            \end{aligned}
        \end{equation}
    \end{enumerate}
\end{proof}
\begin{remark}
        It is easy that \(f_{\Phi}^{(s)}=\phi^-_{K_{2r},\sigma_s^\square}\) by (4). Moreover, if \(s=0\), we also have
        \(\sigma_0^\square=(r-\frac{1}{2},\dots,-r+\frac{1}{2})\), \(f_{\Phi'}^{(0)}=\phi^-_{L_{2r},\sigma_0^\square}\) by \((5)\), Remark \ref{def:another_one}. 

    \end{remark}

Recall that our aim is to compute the doubling zeta integral \(Z(\xi,f_{\Phi}^{(0)})\) and \(Z(\xi,f_{\Phi'}^{(0)})\) where \(\xi\) is the matrix coefficient associated to an almost spherical representation with respect to \(K_r\), it suffices to compute \(Z(\xi,\phi^-_{K_{2r},\sigma^\square_s})\) and \(Z(\xi,\phi^-_{L_{2r},\sigma^\square_s})\).

The first main result in this subsection is the following:
\begin{proposition}\label{prop:zeta_almost_spherical_almost_self_dual}
    For \(\sigma\in\left(\dC/\frac{2\pi i}{\log q}\dZ\right)^r\), and \(\xi^\sigma\) be the matrix coefficient associated to the almost spherical component \(\pi_{W_r,-}^\sigma\) with respect to \(K_r\) in \(\rI^\sigma_r\), we have
    \[
        Z(\xi^\sigma,\phi^-_{K_{2r},\sigma^\square_s})=\frac{2(q+1)}{(-q)^{r}q(q^r+1)(q^{r-1}+1)}\frac{1+q^{-s}}{1-q^{-2s-2r}}\frac{L_-^\sigma(s+\frac{1}{2})}{b_{2r}(s)}
    \]
    where \(L_-^\sigma(s)=(1-q^{-s+\frac{1}{2}})\prod_{i=1}^{r}\frac{1}{(1-q^{\sigma_i-s})(1-q^{-\sigma_i-s})}\) and \(b_{2r}(s)=\prod_{i=1}^r\frac{1}{1-q^{-2s-2i}}\). 
    
    In particular, if \(\sigma\) contains \(-\frac{1}{2}\) or \(\frac{1}{2}\)
    \[
        Z(\xi^\sigma,f_{1_{(\Lambda_{V_{r-1}^-})^{2r}}}^{(0)})=\frac{2(q+1)}{(-q)^{r}q(q^r+1)(q^{r-1}+1)}\frac{2}{1-q^{-2r}}\frac{L_-^\sigma(\frac{1}{2})}{b_{2r}(0)}.
    \]
\end{proposition}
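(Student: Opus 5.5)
We follow the same unfolding scheme as in the proof of Proposition \ref{prop:zeta_spherical_unimodular_split}, which in turn follows \cite{liu2022theta}*{Proposition 5.6}. We now take \(f^{(s)}=\phi^-_{K_{2r},\sigma^\square_s}\) and let \(\xi^\sigma\) be the matrix coefficient of the almost spherical vector. Realize \(\pi^\sigma_{W_r,-}\) inside \(\Pi=\rI^{G_r}_{P_r^0}(\tau)\), where \(\tau\) is the unramified constituent of \(\boxtimes|\cdot|_E^{\sigma_i}\). Take \(\varphi_0,\varphi_0^\vee\) to be the vectors in \(\Pi^{I_r^0}[\kappa_r^-]\) and its dual, normalized by \(\varphi_0(1)=v_0\). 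By Lemma \ref{lem:eigenspace_span} they are supported on \(P_r^0\cB_i\) with values \((-q)^{-i}v_0\) there.

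Unfolding the integral over \(K_r=\bigsqcup_i\cB_i\) exactly as in (\ref{eq:zeta_spherical_split_unimodular_2}) gives the integral over \(M_r^0\) of \(F^{(s)}(m(a)k',k)\). This is weighted by \((-q)^{-i}\) on \(\cB_i\) and by \((-q)^{-j}\) on \(\cB_j\), the latter coming from the almost spherical vector. Here \(F^{(s)}(g)=T_{\tw'_r}(f^{(s)})(\tw''_rg)\). By Theorem \ref{thm:casselman_relation_almost_spherical} applied on \(G_{2r}\),
\[
T_{\tw'_r}(\phi^-_{K_{2r},\sigma^\square_s})=\prod_{\alpha>0,\tw'_r\alpha<0}c''_\alpha(\sigma^\square_s)\,\phi^-_{K_{2r},\tw'_r\sigma^\square_s}.
\]
The roots involved are \(\epsilon_i+\epsilon_j\) (\(i<j\le r\)) and \(2\epsilon_i\) (\(i\le r\)). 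The long root factors \(c''_{2\epsilon_i}=(q^{-2\sigma_i+1}-1)/(q(1-q^{-2\sigma_i}))\) telescope against the \(\epsilon_i+\epsilon_j\) factors, just as in (\ref{eq:c+'}). Comparing with \(C^+_{\tw'_r}(s)\) from \cite{li2022chow}*{Proposition 3.6}, we should obtain a ratio of the form \((\text{power of }-q)\cdot\frac{1+q^{-s}}{1-q^{-2s-2r}}\). Computing this ratio precisely is the first concrete step.

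Next, evaluate \(\phi^-_{K_{2r},\tw'_r\sigma^\square_s}\) on \(\tw''_r(m(a)k',k)\) for \(k\in\cB_i\), \(k'\in\cB_j\). The signs \((-q)^{-\ell_{long}}\) of the section combine with the weights of the almost spherical vectors. Using Lemma \ref{lem:volume_computation}, the double sum over \(\cB_i\times\cB_j\) should collapse to the constant \(C=\sum_i q^{-2i}\vol(\cB_i)=\frac{2(q+1)}{q(q^r+1)(q^{r-1}+1)}\), times \((-q)^{-r}\). This is where the factor \(\frac{2(q+1)}{(-q)^rq(q^r+1)(q^{r-1}+1)}\) arises.

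The remaining \(\GL_r(E)\)-integral is a Godement--Jacquet type integral. The key step is to relate it to the \(\phi_{K_{2r}}\) case by restricting to the \(\GL_r\)-part, which is legitimate since \(m(\GL_r(\cO_E))\) preserves \(\ell_{long}\). It then equals the spherical computation of \cite{li2022chow}*{Proposition 3.6}, giving \(L^\sigma(s+\frac12)/b_{2r}(s)\).

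The main obstacle is that this naive evaluation produces \(L^\sigma\), not \(L^\sigma_-\), so the factor \((1-q^{-s})\) must come from somewhere. I expect it to come from the interaction of the \(2\epsilon\)-factors \(c''\) with the signed Iwahori weights: the almost spherical condition forces \(\sigma\) to contain \(\pm\frac12\), and the corresponding factor \((1-q^{\mp\sigma_i-s-\frac12})^{-1}\) of \(L^\sigma\) is exactly cancelled. I would first check this bookkeeping in rank \(r=1\), where it is an explicit \(2\times2\) computation with Theorem \ref{thm:casselman_relation}, and then induct along the parabolic as in Theorem \ref{thm:casselman_relation_almost_spherical}. The special value at \(s=0\) then follows from Proposition \ref{prop:non_split_sigel_weil_section}(4), which gives \(f^{(0)}_\Phi=\phi^-_{K_{2r},\sigma^\square_0}\), together with \(\frac{1+q^{0}}{1-q^{-2r}}=\frac{2}{1-q^{-2r}}\).
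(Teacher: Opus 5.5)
\textbf{There is a genuine gap: your plan mis-states the intertwining constant, and that is what sends you looking for the factor \((1-q^{-s})\) in the wrong place.}

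Your skeleton is the paper's: unfolding over \(K_r=\bigsqcup_i\cB_i\) with the \(\kappa_r^-\)-eigenvectors, Theorem \ref{thm:casselman_relation_almost_spherical} for \(T_{\tw'_r}\), Lemma \ref{lem:volume_computation} for the \(\cB_i\times\cB_j\) sum, and \cite{li2022chow}*{Proposition 3.6} for the \(\GL_r(E)\)-integral. But the one computation that distinguishes this case from the spherical one is left uncomputed, and the shape you guess for it is wrong. The ratio is not of the form \((\text{power of }-q)\cdot\frac{1+q^{-s}}{1-q^{-2s-2r}}\).

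Write \(c''_{2\epsilon_i}(\sigma)=-q^{-1}\frac{1-q^{1-2\sigma_i}}{1-q^{-2\sigma_i}}\) and insert \((\sigma^\square_s)_i=s+r+\frac12-i\). After reindexing, \(\prod_{i=1}^r c''_{2\epsilon_i}(\sigma^\square_s)=(-q)^{-r}\prod_{i=1}^r\frac{\zeta_E(2s+2i-1)}{\zeta_E(2s+2i-2)}\). The \(\epsilon_i+\epsilon_j\) factors give \(\prod_{i=1}^r\frac{\zeta_E(2s+2i)}{\zeta_E(2s+r+i)}\). Telescoping then yields
\[
C^-_{\tw'_r}(s)=(-q)^{-r}\,\frac{1-q^{-2s}}{1-q^{-2s-2r}}\,C^+_{\tw'_r}(s).
\]
Since \(1-q^{-2s}=(1-q^{-s})(1+q^{-s})\) and \((1-q^{-s})L^\sigma(s+\frac12)=L^\sigma_-(s+\frac12)\) by definition, the factor you were missing is already here. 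It is the zero of \(c''_{2\epsilon_r}\) at the coordinate \((\sigma^\square_s)_r=s+\frac12\) of the degenerate principal series.

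So your instinct that it comes from the \(c''_{2\epsilon}\) factors is right, but it has nothing to do with the Satake parameter \(\sigma\) of \(\pi\). The computation produces it uniformly in \(\sigma\), and \(L^\sigma_-\) is defined as \((1-q^{-s+\frac12})L^\sigma\) for every \(\sigma\). There is nothing to be cancelled by specializing \(\sigma\) to contain \(\pm\frac12\). That hypothesis is used only in the final line, to make \(L^\sigma_-(\frac12)\) finite at \(s=0\). An \(r=1\) check followed by induction along the parabolic is not the missing ingredient; the explicit product of the \(c''\) factors is.

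Your constants are also misattributed. The factor \((-q)^{-r}\) comes from the \(c''_{2\epsilon_i}\), not from the \(\cB_i\times\cB_j\) sum. By the argument of \cite{liu2022theta} used in the paper, \(\phi^-_{K_{2r},\tw'_r\sigma^\square_s}(\tw''_r(m(a)k',k))\) with \(k\in\cB_i\), \(k'\in\cB_j\) equals \((-q)^{-(i+j)}\) times its value at \(\tw''_r(m(a),1_{2r})\). Combined with your weights \((-q)^{-i}(-q)^{-j}\), the double sum is \(\sum_{i,j}q^{-2i-2j}\vol(\cB_i)\vol(\cB_j)=C^2\), with no sign.

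One power of \(C\) must then be removed, because \(\xi^\sigma\) is the normalized matrix coefficient. With your choice \(\varphi_0(1)=v_0\) one has \(\langle\varphi_0^\vee,\varphi_0\rangle_\Pi=\sum_i q^{-2i}\vol(\cB_i)=C\), and your plan omits this normalization. Putting together \(C^{-1}\cdot C^2\), the displayed ratio, and \cite{li2022chow}*{Proposition 3.6} gives
\(C\,(-q)^{-r}\frac{1-q^{-2s}}{1-q^{-2s-2r}}\frac{L^\sigma(s+\frac12)}{b_{2r}(s)}\), which is the claimed formula. As your plan stands, \((-q)^{-r}\) would be counted twice (once in the ratio, once in the double sum), and the factor \((1-q^{-s})\) would be missing.
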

\begin{proof}
    The proof follows the steps in \cite{liu2022theta}*{Proposition 5.6}.

    For convenience, let \(f^{(s)}=\phi^-_{K_{2r},\sigma^\square_s}\).

    Recall the isomorphism \(m:\Res_{E/F}\GL_r\overset{\sim}{\longrightarrow}M_r^0\). Let \(\tau\) be the unramified constituent of the normalized induction of \(\boxtimes_{i=1}^r|\cdot|_E^{\sigma_i}\). We fix vectors \(v_0\in\tau\) and \(v_0^\vee\) in \(\tau^\vee\) fixed by \(M_r^0(F)\cap K_r=m(\GL_r(\cO_E))\) such that \(\langle v_0^\vee,v_0\rangle_\tau=1\). Put \(\Pi:=\rI_{P_r^0}^{G_r}(\tau)\) and identify \(\Pi^\vee\) with \(\rI^{G_r}_{P_r^0}(\tau^\vee)\) via the pairing   
    \[
        \langle\varphi^\vee,\varphi\rangle_{\Pi}:=\int_{K_r}\langle\varphi^\vee(k),\varphi(k)\rangle_\tau\rd k.
    \]
    
    Let \(\varphi_0\in\rI_{P_r^0}^{G_r}\tau\) and \(\varphi_0^\vee\in\rI_{P_r^0}^{G_r}(\tau^\vee)\) be the unique vectors such that \(\varphi_0(k)=(-q)^{-i}\tau_0\) and \(\varphi_0^\vee(k)=(-q)^{-i}\tau_0^\vee\) for \(k\in \cB_i\). 

    By definition,
    \begin{equation}\label{eq:zeta_almost_self_dual_1}
        \begin{aligned}
            Z(\xi^\sigma, f^{(s)}) :=&\int_{G_r(F)}H_{\xi^\sigma}(g)f^{(s)}(\tw_r(g, 1_{2r})) \rd g\\
            =&C^{-1}\int_{G_r(F)}f^{(s)}(\tw_r(g, 1_{2r}))\langle \Pi^\vee(g)\varphi_0^\vee,\varphi_0\rangle_\Pi \rd g
        \end{aligned}
    \end{equation}
    where 
    \[
        C=\frac{2(q+1)}{q(q^r+1)(q^{r-1}+1)}
    \]
    by Lemma \ref{lem:volume_computation}.

    As in \cite{liu2022theta}*{Page 216}, for every \(k\in K_r\), we have \(\tw_r(k,k)\tw_r^{-1}\in I_r^\square\), hence \[|\Delta(\tw_r(k,k)\tw_r^{-1})|_E=1.\]
    
    We equip \(M_r^0\) and \(N_r^0(F)\) with the Haar measure given there, respectively.
    \begin{equation}\label{eq:zeta_almost_self_dual_2}
        \begin{aligned}
            (\ref{eq:zeta_almost_self_dual_1})=&C^{-1}\int_{G_r(F)} f^{(s)}(\tw_r(g, 1_{2r})) \int_{K_r} \langle \varphi_0^\vee(kg), \varphi_0(k) \rangle_\tau \rd k \rd g \\
        =& C^{-1}\int_{G_r(F)} f^{(s)}(\tw_r(g, 1_{2r}))\sum_{i=0}^{r}\int_{\cB_i}(-q)^{-i}\langle \varphi_0^\vee(kg), \varphi_0(1) \rangle_\tau\rd k \rd g \\
        =&C^{-1}\int_{G_r(F)}\sum_{i=0}^{r}(-q)^{-i}\int_{\cB_i}f^{(s)}(\tw_r(k^{-1}g, 1_{2r}))\langle \varphi_0^\vee(g), \varphi_0(1) \rangle_\tau\rd k \rd g \\
        =&C^{-1}\sum_{i=0}^{r}(-q)^{-i}\int_{G_r(F)}\int_{\cB_i}f^{(s)}(\tw_r(g, k))\langle \varphi_0^\vee(g), \varphi_0(1) \rangle_\tau\rd k \rd g \\
        =&C^{-1}\int_{M_r^0(F)}\int_{N_r^0(F)}\int_{K_r}\sum_{i=0}^r(-q)^{-i} \\
        &\quad\cdot\biggl(\int_{\cB_i}f^{(s)}(\tw_r(mnk', k))\langle \varphi_0^\vee(mnk'), \varphi_0(1) \rangle_\tau\rd k\biggr)\rd k'\rd n \rd m \\
        =&C^{-1}\int_{M_r^0(F)}\int_{N_r^0(F)}\int_{K_r}\sum_{i=0}^r(-q)^{-i} \\
        &\quad\cdot\biggl(\int_{\cB_i}f^{(s)}(\tw_r(mnk', k))\rd k\biggr)\langle \varphi_0^\vee(mk'), \varphi_0(1) \rangle_\tau\rd k'\rd n \rd m \\
        =&C^{-1}\int_{M_r^0(F)}\sum_{j=0}^{r}\sum_{i=0}^{r}(-q)^{-i-j}\int_{\cB_i}\int_{\cB_j} \\
        &\quad\cdot\biggl(\int_{N_r^0(F)}f^{(s)}(\tw_r(mnk', k))\rd n\biggr)\rd k\rd k'\langle \varphi_0^\vee(m), \varphi_0(1) \rangle_\tau\rd m \\
        \end{aligned}
    \end{equation}
    Set 
    \[
        F^{(s)}(g):=\int_{N_r^0(F)}f^{(s)}(\tw_r(n, 1_{2r})g)\rd n
    \]
    we have
    \begin{equation*}
        \begin{aligned}
            F^{(s)}(m(a)k',k)=&\int_{N_r^0(F)}f^{(s)}(\tw_r(n, 1_{2r})(m(a)k',k))\rd n\\
            =&\int_{N_r^0(F)}f^{(s)}(\tw_r(n, 1_{2r})(m(a),k'))\rd n\\
            =&\int_{N_r^0(F)}f^{(s)}(\tw_r(m(a)nk, k'))\rd n
        \end{aligned}
    \end{equation*}
    where the third equality is because \(N_r^0(F)\) is normal in \(P_r^0(F)\).
    so
    \begin{equation}\label{eq:zeta_almost_self_dual_3}
        \begin{aligned}
            (\ref{eq:zeta_almost_self_dual_2})=&C^{-1}\int_{GL_r(E)}\sum_{j=0}^{r}\sum_{i=0}^{r}(-q)^{-i-j}\int_{\cB_i}\int_{\cB_j}F^{(s)}(m(a)k',k)\rd k\rd k'\langle \varphi_0^\vee(m(a)), \varphi_0(1) \rangle_\tau\rd a \\
            =&C^{-1}\int_{GL_r(E)}\sum_{j=0}^{r}\sum_{i=0}^{r}(-q)^{-i-j}\int_{\cB_i}\int_{\cB_j}F^{(s)}(m(a)k',k)\rd k\rd k'|\det a|_E^{-\frac{r}{2}}\langle \tau_0^\vee(a)v_0^
            \vee, \tau_0 \rangle_\tau\rd a
        \end{aligned}
    \end{equation}
    Use the notations in \cite{liu2022theta}*{Page 217, between (5.3) and (5.4)}, we have
    \[
        F^{(s)}(g)=T_{\tw'_r}(\phi^-_{K_{2r},\sigma^\square_s})(\tw''_rg),
    \]
    and we have
    \[
        T_{\tw'_r}(\phi^-_{K_{2r},\sigma^\square_s})=C^{-}_{\tw'_r}(s)\cdot\phi^-_{K_{2r},\tw'_r\sigma^\square_s}
    \]
    where \[C^{-}_{\tw'_r}(s)=\prod_{\alpha>0,\tw'_r\alpha<0}c_\alpha''(\sigma^\square_s)\] by Theorem \ref{thm:casselman_relation_almost_spherical}.

    Explicitly, note that \(\tw_r'\) sends \(\epsilon_i\) to \(-\epsilon_{r+1-i}\) and \(\epsilon_{r+i}\) are invariant for \(1\le i\le r\), we have
    \begin{equation}\label{eq:c-}
        \begin{aligned}
            C^{-}_{\tw'_r}(s)=&\prod_{\alpha>0,\tw'_r\alpha<0}c_\alpha''(\sigma^\square_s)\\
            =&\prod_{1\le i<j\le r}c''_{\epsilon_i+\epsilon_j}(\sigma^\square_s)\prod_{i=1}^r c''_{2\epsilon_i}(\sigma^\square_s)\\
            =&\prod_{1\le i<j\le r}\frac{1-(q^{-1})^{2s+2r+2-i-j}}{1-(q^{-1})^{2s+2r+1-i-j}}\prod_{i=1}^r\frac{(q^{-1})^{2s+2r-2i}-1}{q(1-(q^{-1})^{2s+2r+1-2i})}\\
            =&(-q)^{-r}\prod_{1\le i<j\le r}\frac{\zeta_E(2s+2r+1-i-j)}{\zeta_E(2s+2r+2-i-j)}\prod_{i=1}^r\frac{\zeta_E(2s+2i-1)}{\zeta_E(2s+2i-2)}\\
            =&(-q)^{-r}\prod_{i=1}^r\frac{\zeta_E(2s+2i)}{\zeta_E(2s+2i-2)}\prod_{i=1}^r\frac{\zeta_E(2s+2i-1)}{\zeta_E(2s+r+i)}\\
            =&(-q)^{-r}\frac{1-q^{-2s}}{1-q^{-2s-2r}}\prod_{i=1}^r\frac{\zeta_E(2s+2i-1)}{\zeta_E(2s+r+i)},
        \end{aligned}
    \end{equation}
    Then
    \[
        \frac{C_{\tw'_r}^{-}(s)}{C_{\tw'_r}^{+}(s)}=(-q)^{-r}\frac{1-q^{-2s}}{1-q^{-2s-2r}}
    \]
    and we have
    \begin{equation}\label{eq:zeta_almost_self_dual_4}
        \begin{aligned}
            (\ref{eq:zeta_almost_self_dual_3})=&C^{-1}C^{-}_{\tw_r}(s)\int_{GL_r(E)}\sum_{j=0}^{r}\sum_{i=0}^{r}(-q)^{-i-j}\cdot\\
            &(\int_{\cB_i}\int_{\cB_j}\phi^-_{K_{2r},\tw'_r\sigma^\square_s}(\tw''_r(m(a)k',k))\rd k\rd k'|\det a|_E^{-\frac{r}{2}}\langle \tau_0^\vee(a)v_0^\vee, \tau_0 \rangle_\tau\rd a)\\
            =&C^{-1+2}C^{-}_{\tw_r}(s)\int_{GL_r(E)}\phi_{K_{2r},\tw'_r\sigma^\square_s}(\tw''_r(m(a),1_{2r}))|\det a|_E^{-\frac{r}{2}}\langle\tau^\vee(a)v_0^\vee,v_0\rangle_\tau\rd a\\
            =&\frac{2(q+1)}{(-q)^{r}q(q^r+1)(q^{r-1}+1)}\frac{1-q^{-2s}}{1-q^{-2s-2r}}\frac{L^\sigma(s+\frac{1}{2})}{b_{2r}(s)}\\
            =&\frac{2(q+1)}{(-q)^{r}q(q^r+1)(q^{r-1}+1)}\frac{1+q^{-s}}{1-q^{-2s-2r}}\frac{L_-^\sigma(s+\frac{1}{2})}{b_{2r}(s)}
        \end{aligned}
    \end{equation}
    where the second equality follows from the argument in \cite{liu2022theta}*{Page 218, below (5.7)} and the third identity is \cite{li2022chow}*{Proposition 3.6}.
\end{proof}

Similarly, we have the following proposition
\begin{proposition}\label{prop:zeta_almost_spherical_unimodular_non_split}
    For \(\sigma\in\left(\dC/\frac{2\pi i}{\log q}\dZ\right)^r\), and \(\xi^\sigma\) be the matrix coefficient associated to the almost spherical component \(\pi^\sigma\) with respect to \(K_r\) in \(\rI^\sigma_r\), we have
    \begin{equation}
    \begin{aligned}
        Z(\xi^\sigma,\phi^-_{L_{2r},\sigma^\square_s})=&(-1)^{r}(\prod_{i=1}^r\frac{(1+q^{-s+1-i})(1+q^{-s-i})}{(1+q^i)^2})\frac{q(q^r+1)(q^{r-1}+1)}{2q^{r}(q+1)}\frac{1}{1-q^{-s-r}}\cdot\\
        &\frac{L_-^\sigma(s+\frac{1}{2})}{b_{2r}(s)}
    \end{aligned}
    \end{equation}
    
    where \(L_-^\sigma(s)=(1-q^{-s+\frac{1}{2}})\prod_{i=1}^{r}\frac{1}{(1-q^{\sigma_i-s})(1-q^{-\sigma_i-s})}\) and \(b_{2r}(s)=\prod_{i=1}^r\frac{1}{1-q^{-2s-2i}}\). 
    
    In particular, if \(\sigma\) contains \(\frac{1}{2}\) or \(-\frac{1}{2}\)
    \begin{equation}
        \begin{aligned}
            Z(\xi^\sigma,f_{1_{(\Lambda'_{V_r^-})^{2r}}}^{(0)})=&(-1)^{r}(\frac{2}{q^{r^2}(1+q^r)})\frac{q(q^r+1)(q^{r-1}+1)}{2q^{r}(q+1)}\frac{1}{1-q^{-r}}\cdot\frac{L_-^\sigma(\frac{1}{2})}{b_{2r}(0)}\\
            =&(-1)^{r}\frac{q(q^{r-1}+1)}{q^{r^2}(q+1)}\frac{1}{q^r-1}\cdot\frac{L_-^\sigma(\frac{1}{2})}{b_{2r}(0)}
        \end{aligned}
        \end{equation}
\end{proposition}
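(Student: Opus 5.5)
The plan is to combine the two previous computations. From the proof of Proposition \ref{prop:zeta_almost_spherical_almost_self_dual} we take the almost spherical test vectors \(\varphi_0,\varphi_0^\vee\), weighted by \((-q)^{-i}\) on \(\cB_i\), together with the normalising constant \(C^{-1}\). From the proof of Proposition \ref{prop:zeta_spherical_unimodular_split} we take the semi-standard section, whose values vary between the cells. We put \(f^{(s)}=\phi^-_{L_{2r},\sigma^\square_s}\). The first step is to run the unfolding chain exactly as in (\ref{eq:zeta_almost_self_dual_1})--(\ref{eq:zeta_almost_self_dual_3}). This reduces \(Z(\xi^\sigma,f^{(s)})\) to \(C^{-1}\) times
\[
\sum_{i,j=0}^r(-q)^{-i-j}\int_{\cB_i}\int_{\cB_j}F^{(s)}(m(a)k',k)\,\rd k\,\rd k'
\]
paired against the \(\GL_r(E)\) matrix coefficient. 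Here \(F^{(s)}(g)=T_{\tw'_r}(f^{(s)})(\tw''_rg)\).

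The second step applies the second half of Theorem \ref{thm:casselman_relation_semi_standard}. This gives \(T_{\tw'_r}\phi^-_{L_{2r},\sigma^\square_s}=C^{-'}_{\tw'_r}(s)\,\phi^-_{L_{2r},\tw'_r\sigma^\square_s}\), where
\[
C^{-'}_{\tw'_r}(s)=\prod_{1\le i<j\le r}c_{\epsilon_i+\epsilon_j}(\sigma^\square_s)\prod_{i=1}^r c'''_{2\epsilon_i}(\sigma^\square_s).
\]
We evaluate this exactly as in (\ref{eq:c+'}). The \(\epsilon_i+\epsilon_j\) factors telescope to \(\prod_i\zeta_E(2s+2i)/\zeta_E(2s+r+i)\). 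The factors \(c'''_{2\epsilon_i}\) have the shape \(q^{-2(s+r-i)-1}(1-q^{s+r-i})(1+q^{-s-r+i-1})/(1-q^{-2s-2r+2i-1})\). The factor \((1-q^{s+r-i})\) produces the sign \((-1)^r\) and a power of \(q\), and the \((1+q^{-\cdots})\) factors telescope. We then compare the result with \(C^+_{\tw'_r}(s)\).

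The third step evaluates the double cell sum using (\ref{eq:twist_sigma}) and Remark \ref{rmk:twist_sigma}. By these, \(\phi^-_{L_{2r},\tw'_r\sigma^\square_s}\) restricted to \(\cB_i\times\cB_j\) is the spherical section times \((-1)^{i+j}q^{si}q^{-j(s+r)}\). Combined with the weights \((-q)^{-i-j}\), the signs cancel and the sum factors as
\[
\Big(\sum_i q^{-i+si}\vol(\cB_i)\Big)\Big(\sum_j q^{-j-j(s+r)}\vol(\cB_j)\Big)=C_-(s)\,C_-'(s),
\]
using Lemma \ref{lem:volume_computation}. The remaining \(\GL_r(E)\) integral against \(\phi_{K_{2r},\tw'_r\sigma^\square_s}\) is \(L^\sigma(s+\frac12)/(b_{2r}(s)C^+_{\tw'_r}(s))\), by \cite{li2022chow}*{Proposition 3.6}. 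Multiplying these pieces gives \(C^{-1}C_-(s)C_-'(s)C^{-'}_{\tw'_r}(s)\,L^\sigma(s+\frac12)/(b_{2r}(s)C^+_{\tw'_r}(s))\).

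The final step rewrites \(L^\sigma\) as \(L^\sigma_-/(1-q^{-s})\). Most powers of \(q\) should cancel against \(C_-(s)\) in the same way as in (\ref{eq:zeta_spherical_split_unimodular_4}), leaving the displayed factor \(1/(1-q^{-s-r})\). Specialising at \(s=0\) then gives the second formula, via \(\prod_i(1+q^{1-i})(1+q^{-i})/(1+q^i)^2=2/(q^{r^2}(1+q^r))\).

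The main obstacle is the bookkeeping of the \(q\)-powers and signs. One issue is the exact normalisation of \(\varphi_0\) and hence of \(C\). Another is whether the weights \((-q)^{-i-j}\) really multiply the restricted values of \(\phi^-_{L_{2r}}\) on \(\cB_i\times\cB_j\) rather than cancel them. I would check this first by comparing with the \(r=1\) case by direct computation.
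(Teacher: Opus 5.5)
Your proposal is correct and follows the paper's proof essentially step for step. The paper uses the same unfolding with the \((-q)^{-i}\)-weighted almost spherical vectors and normalising constant \(C^{-1}\), and the same eigenvalue \(C^{-'}_{\tw'_r}(s)\) from the second half of Theorem~\ref{thm:casselman_relation_semi_standard}, compared with \(C^{+}_{\tw'_r}(s)\). It likewise uses the cellwise factor \((-1)^{i+j}q^{si-j(s+r)}\) from (\ref{eq:twist_sigma}) to turn the double sum into \(C_-(s)C_-'(s)\), and \cite{li2022chow}*{Proposition 3.6} for the remaining \(\GL_r(E)\) integral. For the specialisation at \(s=0\) you should also cite Proposition~\ref{prop:non_split_sigel_weil_section}(5), which identifies \(f^{(0)}_{1_{(\Lambda'_{V_r^-})^{2r}}}\) with \(\phi^-_{L_{2r},\sigma^\square_0}\), as the paper does in the remark just before the statement.
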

\begin{proof}
    The proof follows the steps in \cite{liu2022theta}*{Proposition 5.6}.

    For convenience, let \(f^{(s)}=\phi^-_{L_{2r},\sigma^\square_s}\).

    Recall the isomorphism \(m:\Res_{E/F}\GL_r\overset{\sim}{\longrightarrow}M_r^0\). Let \(\tau\) be the unramified constituent of the normalized induction of \(\boxtimes_{i=1}^r|\cdot|_E^{\sigma_i}\). We fix vectors \(v_0\in\tau\) and \(v_0^\vee\) in \(\tau^\vee\) fixed by \(M_r^0(F)\cap K_r=m(\GL_r(\cO_E))\) such that \(\langle v_0^\vee,v_0\rangle_\tau=1\). Put \(\Pi:=\rI_{P_r^0}^{G_r}(\tau)\) and identify \(\Pi^\vee\) with \(\rI^{G_r}_{P_r^0}(\tau^\vee)\) via the pairing   
    \[
        \langle\varphi^\vee,\varphi\rangle_{\Pi}:=\int_{K_r}\langle\varphi^\vee(k),\varphi(k)\rangle_\tau\rd k.
    \]
    
    Let \(\varphi_0\in\rI_{P_r^0}^{G_r}\tau\) and \(\varphi_0^\vee\in\rI_{P_r^0}^{G_r}(\tau^\vee)\) be the unique vectors with respect to \(K_r\) such that \(\varphi_0(k)=(-q)^{-i}\tau_0\) and \(\varphi_0^\vee(k)=(-q)^{-i}\tau_0^\vee\) for \(k\in \cB_i\). 

    By definition,
    \begin{equation}\label{eq:zeta_unimodular_non_split_1}
        \begin{aligned}
            Z(\xi^\sigma, f^{(s)}) :=&\int_{G_r(F)}H_{\xi^\sigma}(g)f^{(s)}(\tw_r(g, 1_{2r})) \rd g\\
            =&C^{-1}\int_{G_r(F)}f^{(s)}(\tw_r(g, 1_{2r}))\langle \Pi^\vee(g)\varphi_0^\vee,\varphi_0\rangle_\Pi \rd g
        \end{aligned}
    \end{equation}
    where 
    \[
        C=\frac{2(q+1)}{q(q^r+1)(q^{r-1}+1)}
    \]
    by Lemma \ref{lem:volume_computation}.

    As in \cite{liu2022theta}*{Page 216}, for every \(k\in K_r\), we have \(\tw_r(k,k)\tw_r^{-1}\in I_r^\square\), hence \[|\Delta(\tw_r(k,k)\tw_r^{-1})|_E=1.\]
    
    We equip \(M_r^0\) and \(N_r^0(F)\) with the Haar measure given there, respectively.
    \begin{equation}\label{eq:zeta_unimodular_non_split_2}
        \begin{aligned}
            (\ref{eq:zeta_unimodular_non_split_1})=&C^{-1}\int_{G_r(F)} f^{(s)}(\tw_r(g, 1_{2r})) \int_{K_r} \langle \varphi_0^\vee(kg), \varphi_0(k) \rangle_\tau \rd k \rd g \\
        =& C^{-1}\int_{G_r(F)} f^{(s)}(\tw_r(g, 1_{2r}))\sum_{i=0}^{r}\int_{\cB_i}(-q)^{-i}\langle \varphi_0^\vee(kg), \varphi_0(1) \rangle_\tau\rd k \rd g \\
        =&C^{-1}\int_{G_r(F)}\sum_{i=0}^{r}(-q)^{-i}\int_{\cB_i}f^{(s)}(\tw_r(k^{-1}g, 1_{2r}))\langle \varphi_0^\vee(g), \varphi_0(1) \rangle_\tau\rd k \rd g \\
        =&C^{-1}\sum_{i=0}^{r}(-q)^{-i}\int_{G_r(F)}\int_{\cB_i}f^{(s)}(\tw_r(g, k))\langle \varphi_0^\vee(g), \varphi_0(1) \rangle_\tau\rd k \rd g \\
        =&C^{-1}\int_{M_r^0(F)}\int_{N_r^0(F)}\int_{K_r}\sum_{i=0}^r(-q)^{-i} \\
        &\quad\cdot\biggl(\int_{\cB_i}f^{(s)}(\tw_r(mnk', k))\langle \varphi_0^\vee(mnk'), \varphi_0(1) \rangle_\tau\rd k\biggr)\rd k'\rd n \rd m \\
        =&C^{-1}\int_{M_r^0(F)}\int_{N_r^0(F)}\int_{K_r}\sum_{i=0}^r(-q)^{-i} \\
        &\quad\cdot\biggl(\int_{\cB_i}f^{(s)}(\tw_r(mnk', k))\rd k\biggr)\langle \varphi_0^\vee(mnk'), \varphi_0(1) \rangle_\tau\rd k'\rd n \rd m \\
        =&C^{-1}\int_{M_r^0(F)}\sum_{j=0}^{r}\sum_{i=0}^{r}(-q)^{-i-j}\int_{\cB_i}\int_{\cB_j} \\
        &\quad\cdot\biggl(\int_{N_r^0(F)}f^{(s)}(\tw_r(mnk', k))\rd n\biggr)\rd k\rd k'\langle \varphi_0^\vee(m), \varphi_0(1) \rangle_\tau\rd m \\
        \end{aligned}
    \end{equation}
    Set 
    \[
        F^{(s)}(g):=\int_{N_r^0(F)}f^{(s)}(\tw_r(n, 1_{2r})g)\rd n
    \]
    we have
    \begin{equation*}
        \begin{aligned}
            F^{(s)}(m(a)k',k)=&\int_{N_r^0(F)}f^{(s)}(\tw_r(n, 1_{2r})(m(a)k',k))\rd n\\
            =&\int_{N_r^0(F)}f^{(s)}(\tw_r(n, 1_{2r})(m(a),k'))\rd n\\
            =&\int_{N_r^0(F)}f^{(s)}(\tw_r(m(a)nk, k'))\rd n
        \end{aligned}
    \end{equation*}
    where the third equality is because \(N_r^0(F)\) is normal in \(P_r^0(F)\).
    so
    \begin{equation}\label{eq:zeta_unimodular_non_split_3}
        \begin{aligned}
            (\ref{eq:zeta_unimodular_non_split_2})=&C^{-1}\int_{GL_r(E)}\sum_{j=0}^{r}\sum_{i=0}^{r}(-q)^{-i-j}\int_{\cB_i}\int_{\cB_j}F^{(s)}(m(a)k',k)\rd k\rd k'\langle \varphi_0^\vee(m(a)), \varphi_0(1) \rangle_\tau\rd a \\
            =&C^{-1}\int_{GL_r(E)}\sum_{j=0}^{r}\sum_{i=0}^{r}(-q)^{-i-j}\int_{\cB_i}\int_{\cB_j}F^{(s)}(m(a)k',k)\rd k\rd k'|\det a|_E^{-\frac{r}{2}}\langle \tau_0^\vee(a)v_0^
            \vee, \tau_0 \rangle_\tau\rd a
        \end{aligned}
    \end{equation}
    Use the notations in \cite{liu2022theta}*{Page 217, between (5.3) and (5.4)}, we have
    \[
        F^{(s)}(g)=T_{\tw'_r}(\phi^-_{L_{2r},\sigma^\square_s})(\tw''_rg),
    \]
    and we have
    \[
        T_{\tw'_r}(\phi^-_{L_{2r},\sigma^\square_s})=C^{-'}_{\tw'_r}(s)\cdot\phi^-_{L_{2r},\tw'_r\sigma^\square_s}
    \]
    where \[C^{-'}_{\tw'_r}(s)=\prod_{\alpha>0,\tw'_r\alpha<0}c_\alpha'''(\sigma^\square_s)\] by Theorem \ref{thm:casselman_relation_semi_standard}.

    Explicitly, note that \(\tw_r'\) sends \(\epsilon_i\) to \(-\epsilon_{r+1-i}\) and \(\epsilon_{r+i}\) are invariant for \(1\le i\le r\), we have
    \begin{equation}\label{eq:c-'}
        \begin{aligned}
            C^{-}_{\tw'_r}(s)=&\prod_{\alpha>0,\tw'_r\alpha<0}c_\alpha'''(\sigma^\square_s)\\
            =&\prod_{1\le i<j\le r}c'''_{\epsilon_i+\epsilon_j}(\sigma^\square_s)\prod_{i=1}^r c'''_{2\epsilon_i}(\sigma^\square_s)\\
            =&\prod_{1\le i<j\le r}\frac{1-(q^{-1})^{2s+2r+2-i-j}}{1-(q^{-1})^{2s+2r+1-i-j}}\prod_{i=1}^rq^{-2s-2r-1+2i}\frac{(1-q^{s+r-i})(1+q^{-s-r+i-1})}{(1-(q^{-1})^{2s+2r+1-2i})}\\
            =&\prod_{1\le i<j\le r}\frac{\zeta_E(2s+i+j-1)}{\zeta_E(2s+i+j)}\prod_{i=1}^r(-q^{-s-r-1+i})\frac{(1-q^{-s-r+i})(1+q^{-s-r+i-1})}{(1-(q^{-1})^{2s+2r+1-2i})}\\
            =&(-1)^{r}q^{-rs-\frac{(r+1)r}{2}}\prod_{i=1}^r\frac{\zeta_E(2s+2i)}{\zeta_E(2s+r+i)}
            \prod_{i=1}^r\frac{\zeta_E(2s+2i-1)(1+(q^{-1})^{s+i})}{\zeta_E(s+i-1)}\\
            =&(-1)^{r}q^{-rs-\frac{(r+1)r}{2}}\prod_{i=1}^r\frac{\zeta_E(s+i)}{\zeta_E(2s+r+i)}
            \prod_{i=1}^r\frac{\zeta_E(2s+2i-1)}{\zeta_E(s+i-1)}\\
            =&(-1)^{r}q^{-rs-\frac{(r+1)r}{2}}\frac{\zeta_E(s+r)}{\zeta_E(s)}
            \prod_{i=1}^r\frac{\zeta_E(2s+2i-1)}{\zeta_E(s+r+i)}\\
            =&(-1)^{r}q^{-rs-\frac{(r+1)r}{2}}\frac{1-q^{-s}}{1-q^{-s-r}}
            \prod_{i=1}^r\frac{\zeta_E(2s+2i-1)}{\zeta_E(s+r+i)},\\
        \end{aligned}
    \end{equation}
    Then
    \[
        \frac{C_{\tw'_r}^{-'}(s)}{C_{\tw'_r}^{+}(s)}=(-1)^{r}q^{-rs-\frac{(r+1)r}{2}}\frac{1-q^{-s}}{1-q^{-s-r}}
    \]
    and we have
    \begin{equation}\label{eq:zeta_unimodular_non_split_4}
        \begin{aligned}
            (\ref{eq:zeta_unimodular_non_split_3})
            =&C^{-1}C^{-}_{\tw_r}(s)\int_{GL_r(E)}\sum_{j=0}^{r}\sum_{i=0}^{r}(-q)^{-i-j}\cdot\\
            &(\int_{\cB_i}\int_{\cB_j}\phi^-_{L_{2r},\tw'_r\sigma^\square_s}(\tw''_r(m(a)k',k))\rd k\rd k'|\det a|_E^{-\frac{r}{2}}\langle \tau_0^\vee(a)v_0^\vee, \tau_0 \rangle_\tau\rd a)\\
            =&C^{-1}C^{-}_{\tw_r}(s)\int_{GL_r(E)}\sum_{j=0}^{r}\sum_{i=0}^{r}(-q)^{-i-j}\cdot\\
            &(\int_{\cB_i}\int_{\cB_j}(-1)^{i+j}(q)^{si-j(s+r)}\phi^-_{L_{2r},\tw'_r\sigma^\square_s}(\tw''_r(m(a),1_r))\rd k\rd k'|\det a|_E^{-\frac{r}{2}}\langle \tau_0^\vee(a)v_0^\vee, \tau_0 \rangle_\tau\rd a)\\
            =&C^{-1}C_-(s)C_{-}'(s)C^{-'}_{\tw_r}(s)\cdot\\
            &\int_{GL_r(E)}\phi_{K_{2r},\tw'_r\sigma^\square_s}(\tw''_r(m(a),1_{2r}))|\det a|_E^{-\frac{r}{2}}\langle\tau^\vee(a)v_0^\vee,v_0\rangle_\tau\rd a\\
            =&C^{-1}C_-(s)C_{-}'(s)C^{-'}_{\tw_r}(s)\frac{L^\sigma(s+\frac{1}{2})}{b_{2r}(s)}\\
            =&(\prod_{i=1}^r\frac{(1+q^{s-1+i})(1+q^{-s-i})}{(1+q^i)^2})\frac{q(q^r+1)(q^{r-1}+1)}{2(q+1)}(-1)^{r}q^{-rs-\frac{(r+1)r}{2}}\frac{1}{1-q^{-s-r}}\cdot\\
            &\frac{L_-^\sigma(s+\frac{1}{2})}{b_{2r}(s)}\\
            =&(\prod_{i=1}^r\frac{(1+q^{-s+1-i})(1+q^{-s-i})}{(1+q^i)^2})\frac{q(q^r+1)(q^{r-1}+1)}{2q^{r}(q+1)}(-1)^{r}\frac{1}{1-q^{-s-r}}\cdot\\
            &\frac{L_-^\sigma(s+\frac{1}{2})}{b_{2r}(s)}
        \end{aligned}
    \end{equation}
    where the second equality follows from a modified version of the argument in \cite{liu2022theta}*{Page 218, below (5.7)}: though the functions are not constant on \(\cB_{i+j}^\square\), they are constant when restricted on \(\cB_i\times\cB_j\) by (\ref{eq:twist_sigma}). The forth identity is \cite{li2022chow}*{Proposition 3.6} and the fifth is by Lemma \ref{lem:volume_computation} .
\end{proof}

To end this subsection, we give some consequences of Proposition \ref{prop:zeta_almost_spherical_almost_self_dual} which is parallel to \cite{liu2022theta}*{Corollary 5.7} and \cite{liu2022theta}*{Corollary 5.8}.

\begin{notation}
    We fix \(f^{(s)}=f^{(s)}_\Phi\) where \(\Phi=1_{(\Lambda_{V_{r-1}^-})^{2r}}\)

    We already have
    \[
    b_{2r}(s)=\prod_{i=1}^{r}\frac{1}{1-q^{-2s-2i}}=\prod_{i=1}^r\zeta_E(2s+2i)
    \]
    \[
    c_-^r(s)=\frac{2(1+q)(1+q^{-s})}{(-q)^rq(1+q^{r-1})(1+q^r)(1-q^{-2s-2r})}=\frac{2(1+q)(1+q^{-s})}{(-q)^rq(1+q^{r-1})(1+q^r)}\zeta_E(2s+2r)
    \]
    Now we set
    \[
    a_{2r}(s)=\prod\frac{1}{1-q^{-2s-1+2i}}=\prod_{i=1}^r\zeta_E(2s-2i+1)
    \]
\end{notation}
\begin{lemma}\label{lem:M_intertwine}
    Let 
    \[
        M_{\psi_F}^\dagger(s):=\frac{b_{2r}(-s)}{a_{2r}(s)}M(s)
    \]
    where \(M(s)\) is defined in \cite{yamana2014lfunctions}*{Section 3.5} and we are following the notation in \cite{liu2022theta}*{Lemma 5.7}. We have
    \[
    \frac{b_{2r}(s)}{c_-^r(s)}M^\dagger_{\psi_F}(s)f^{(s)}=-q^{-s}\frac{b_{2r}(-s)}{c_-^r(-s)}f^{(-s)}
    \]
\end{lemma}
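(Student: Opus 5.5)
The plan follows \cite{liu2022theta}*{Lemma 5.7}: since \(f^{(s)}=\phi^-_{K_{2r},\sigma^\square_s}\) (remark after Proposition \ref{prop:non_split_sigel_weil_section}), it is the unique element, up to scalar, of the \(\kappa^-_{2r}\)-eigenspace of \(\rI^\square_r(s)^{I^\square_r}\) that restricts to a fixed function on \(K^\square_r\) independent of \(s\). The operator \(M(s)\) intertwines \(\rI^\square_r(s)\) with \(\rI^\square_r(-s)\) and commutes with the right action of \(K^\square_r\), so it preserves this eigenspace. By Proposition \ref{prop:eigenspace_span_almost_spherical} applied to \(G_{2r}\), that eigenspace is one-dimensional in each principal series containing the degenerate one. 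Hence \(M(s)f^{(s)}=\mu(s)f^{(-s)}\) for a meromorphic scalar \(\mu(s)\), and only \(\mu(s)\) needs to be identified.

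To compute \(\mu(s)\), write \(M(s)\) (on \(\rI^\square_r(s)\subset \rI^{\sigma^\square_s}_{2r}\)) as the Casselman intertwining operator \(T_{w}\) for the Weyl element \(w\) carrying \(\sigma^\square_s\) to \(\sigma^\square_{-s}\). Up to the measure normalization of \cite{yamana2014lfunctions}*{Section 3.5}, this is the element sending \(\epsilon_i\mapsto-\epsilon_{2r+1-i}\). Theorem \ref{thm:casselman_relation_almost_spherical} then gives
\[
\mu(s)=\prod_{\alpha>0,\,w\alpha<0}c''_\alpha(\sigma^\square_s),
\]
where the roots inverted are all \(\epsilon_i+\epsilon_j\) with \(i<j\) and all \(2\epsilon_i\). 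I will evaluate this product exactly as in the computation \eqref{eq:c-}, with \(r\) replaced by \(2r\) and the shift \(\sigma^\square_s\).

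For the long roots, \(c''_{2\epsilon_i}=c_{2\epsilon_i}-q^{-1}-1\) has the closed form in Theorem \ref{thm:casselman_relation_almost_spherical}; each factor contributes a sign \(-q^{-1}\) times a ratio of \(\zeta_E\) values. For the short roots \(\epsilon_i+\epsilon_j\), the factors \(c_\alpha\) telescope into ratios of \(\zeta_E(2s+k)\). Collecting terms, I expect
\[
\mu(s)=(-q)^{-r}\cdot\frac{a_{2r}(s)}{b_{2r}(s)}\cdot R(s),
\]
where \(R(s)\) is a small rational correction such as \(\frac{1-q^{-2s}}{1-q^{-2s-2r}}\), mirroring \(C^-_{\tw'_r}/C^+_{\tw'_r}\).

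Multiplying by \(b_{2r}(-s)/a_{2r}(s)\) gives \(M^\dagger(s)f^{(s)}\). The claimed identity is then equivalent to
\[
\frac{b_{2r}(s)}{c^r_-(s)}\cdot\frac{b_{2r}(-s)}{a_{2r}(s)}\,\mu(s)=-q^{-s}\frac{b_{2r}(-s)}{c^r_-(-s)}.
\]
This reduces to checking a one-variable identity in \(q^{-s}\) involving \(\frac{1+q^{-s}}{1+q^{s}}=q^{-s}\) and \(\zeta_E(2s+2r)/\zeta_E(-2s+2r)\). Verifying it is routine bookkeeping.

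The main obstacle is the normalization of \(M(s)\) relative to \(T_w\). This includes the Haar measure on the unipotent radical (self-dual with respect to \(\psi_F\), which in the ramified case differs from the measure giving \(N\cap K\) volume one), and any Weil constant or sign from the representative of \(w\). These affect exactly the constant \(-q^{-s}\) and the power of \(q\). I would first calibrate on the rank-one case \(r=1\) (group \(G_2\)) by a direct integral, fixing the measure constant, and then run the general product.
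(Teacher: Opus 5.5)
Your proposal follows the paper's proof. There too, \(M(s)f^{(s)}\) is computed by running the argument of Theorem \ref{thm:casselman_relation_almost_spherical} for \(G_{2r}\) at \(\sigma^\square_s\), with only the roots \(\epsilon_i+\epsilon_j\) (\(i<j\)) and \(2\epsilon_i\) contributing; the \(\epsilon_j-\epsilon_i\) factors are absent because one integrates only over the Siegel unipotent radical. The paper simply identifies \(M(s)\) with this Casselman operator, as in \cite{liu2022theta}, without the separate measure calibration you plan.

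One correction to the shape you predicted for \(\mu(s)\): with \(r\) replaced by \(2r\) there are \(2r\) long-root factors, giving \((-q^{-1})^{2r}=q^{-2r}\) rather than \((-q)^{-r}\). The product then telescopes to \(q^{-2r}\frac{\zeta_E(2s+2r)}{\zeta_E(2s-2r)}\frac{a_{2r}(s)}{b_{2r}(s)}=-q^{-2s}\frac{\zeta_E(2s+2r)}{\zeta_E(2r-2s)}\frac{a_{2r}(s)}{b_{2r}(s)}\). Combined with \(c^r_-(s)/c^r_-(-s)=q^{-s}\zeta_E(2s+2r)/\zeta_E(2r-2s)\), this yields the stated constant \(-q^{-s}\).
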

\begin{proof}
    The proof is almost the same as that in \cite{liu2022theta}. We use the same argument there and apply the step of proof of Theorem \ref{thm:casselman_relation_almost_spherical} to compute the coefficients.\footnote{ Note that although this formula shares the same form as Theorem \ref{thm:casselman_relation_almost_spherical} does, the coefficients are different because the unipotent subgroup in the definition of the intertwining operator here is the radical of the Sigel parabolic subgroup, so some coefficients are degenerated and equal to 1.} We have

    \begin{equation*}
        \begin{aligned}
            M_{\psi_F}^\dagger(s)f^{(s)}=&q^{-2r}\prod_{1\leq i<j\leq 2r}\frac{\zeta_E(2s+2r-i-j+1)}{\zeta_E(2s+2r-i-j+2)}\prod_{i=1}^{2r}\frac{\zeta_E(2s+2r-2i+1)}{\zeta_E(2s+2r-2i)}\cdot f^{(-s)}\\
            =&q^{-2r}\cdot\prod_{j=1}^{2r}\frac{\zeta_E(2s+2r-2j+2)}{\zeta_E(2s+2r-j+1)}\prod_{i=1}^{2r}\frac{\zeta_E(2s+2r-2i+1)}{\zeta_E(2s+2r-2i)}\cdot f^{(-s)}\\
            =&q^{-2r}\frac{\zeta_E(2s+2r)}{\zeta_E(2s-2r)}\prod_{i=1}^{2r}\frac{\zeta_E(2s+2r-2i+1)}{\zeta_E(2s+2r-i+1)}\cdot f^{(-s)}\\
            =&q^{-2r}\frac{\zeta_E(2s+2r)}{\zeta_E(2s-2r)}\prod_{i=1}^{r}\frac{\zeta_E(2s-2i+1)}{\zeta_E(2s+2i)}\cdot f^{(-s)}\\
            =&-q^{-2s}\frac{\zeta_E(2s+2r)}{\zeta_E(-2s+2r)}\frac{a_{2r}(s)}{b_{2r}(s)}\cdot f^{(-s)}
        \end{aligned}
    \end{equation*}
    By the identity
    \[
    \frac{c_-^{r}(s)}{c_-^r(-s)}=\frac{1+q^{-s}}{1+q^s}\cdot\frac{\zeta_E(2s+2r)}{\zeta_E(-2s+2r)}=q^{-s}\cdot\frac{\zeta_E(2s+2r)}{\zeta_E(-2s+2r)},
    \]
    we have
    \[
        M_{\psi_F}^\dagger(s)f^{(s)}=-q^{-s}\frac{c_-^r(s)}{c_-^r(-s)}\cdot\frac{a_{2r}(s)}{b_{2r}(s)}\cdot f^{(-s)}
    \]
    Then
    \begin{equation*}
        \begin{aligned}
         M_{\psi_F}^\dagger(s)f^{(s)}=&\frac{b_{2r}(-s)}{a_{2r}(s)}M(s)f^{(s)}\\
        =&\frac{b_{2r}(-s)}{a_{2r}(s)}(-q^{-s})\frac{c_-^r(s)}{c_-^r(-s)}\cdot\frac{a_{2r}(s)}{b_{2r}(s)}\cdot f_{r,-}^{(-s)}\\
        =&-q^{-s}\frac{b_{2r}(-s)}{b_{2r}(s)}\frac{c_-^r(s)}{c_-^r(-s)}\cdot f^{(-s)}
        \end{aligned}
    \end{equation*}
\end{proof}
\begin{theorem}\label{thm:L_function}
    \begin{enumerate}[(1)]
        \item \(L(s,\pi^\sigma_{W_r,+})=L^\sigma(s)\)
        \item 
        If \(\sigma\) contains either \(\frac{1}{2}\) or \(-\frac{1}{2}\), then we have
        \[
            L(s,\pi_{W_r,-}^\sigma)=L^\sigma_-(s)
        \]
        and
        \[
        \varepsilon(s,\pi_{W_r,-}^\sigma,\psi_F)=-q^{-s+\frac{1}{2}}.
        \]
        \end{enumerate}
\end{theorem}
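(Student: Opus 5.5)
Both parts will be proved the way \cite{liu2022theta}*{Corollary 5.7, Corollary 5.8} are proved: compare the explicit zeta integrals with Yamana's characterisation of the doubling \(L\)- and \(\varepsilon\)-factors through the local functional equation \cite{yamana2014lfunctions}*{Theorem 5.2}. Recall how that characterisation works. For a "good" section \(f^{(s)}\) with \(Z(\xi,f^{(s)})\neq 0\), the gcd property identifies \(L(s+\tfrac12,\pi)\) as the normalised zeta integral \(b_{2r}(s)Z(\xi,f^{(s)})\), up to a unit of \(\dC[q^{\pm s}]\). The functional equation
\[
Z(\xi,M^\dagger_{\psi_F}(s)f^{(s)})=\varepsilon(s+\tfrac12,\pi,\psi_F)\frac{L(\tfrac12-s,\pi^\vee)}{L(s+\tfrac12,\pi)}Z(\xi,f^{(s)})
\]
(with Yamana's normalisation) then determines \(\varepsilon\).

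For part (1) no new computation is needed. The representation \(\pi^\sigma_{W_r,+}\) is the \(K_r\)-spherical constituent, and Proposition \ref{prop:zeta_spherical_self_dual} gives \(b_{2r}(s)Z(\xi^\sigma,f^{(s)}_\Phi)=L^\sigma(s+\tfrac12)\) for the standard \(K_r^\square\)-spherical section. Since \(\Phi=1_{(\Lambda_{V_r^+})^{2r}}\) is the standard unramified-type datum, this section is "good" in Yamana's sense. So \(L(s,\pi^\sigma_{W_r,+})\) divides \(L^\sigma(s)\). I will get equality from the standard argument: \(L^\sigma\) is the product of the \(L\)-factors of the cuspidal support, and the doubling \(L\)-factor is multiplicative under parabolic induction (\cite{yamana2014lfunctions}*{Theorem 5.2}). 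Alternatively I can quote \cite{li2022chow}*{Proposition 3.6 and its corollary}, where exactly this is recorded.

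For part (2), take \(f^{(s)}=f^{(s)}_\Phi\) with \(\Phi=1_{(\Lambda_{V_{r-1}^-})^{2r}}\), which equals \(\phi^-_{K_{2r},\sigma^\square_s}\). Proposition \ref{prop:zeta_almost_spherical_almost_self_dual} gives
\[
\frac{b_{2r}(s)}{c^r_-(s)}Z(\xi^\sigma,f^{(s)})=L^\sigma_-(s+\tfrac12),
\]
because the prefactor there is exactly \(c_-^r(s)\). The factor \(c_-^r(s)\) is a unit times \(\zeta_E(2s+2r)\); I will handle it through the functional equation rather than absorb it blindly. The hypothesis that \(\sigma\) contains \(\pm\tfrac12\) is what makes \(L^\sigma_-\) a genuine \(L\)-factor: the numerator \(1-q^{-s+\frac12}\) cancels a pole of \(L^\sigma\), reflecting that \(\pi^\sigma_{W_r,-}\) is the non-spherical constituent at the reducibility point of the \(\rU(1)\)-block. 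This gives one divisibility. For the reverse divisibility I will use multiplicativity again: \(\pi^\sigma_{W_r,-}\) is induced from a Levi of the form \(\GL\times G_1\), with the \(G_1\)-part the almost spherical (Steinberg-type) constituent. The rank-one case I will compute directly, getting \(L=(1-q^{-s+1/2})\cdot\frac{1}{(1-q^{1/2-s})(1-q^{-1/2-s})}=\frac{1}{1-q^{-1/2-s}}\) up to the correct labelling.

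The \(\varepsilon\)-factor comes from Lemma \ref{lem:M_intertwine}, which says
\[
\frac{b_{2r}(s)}{c_-^r(s)}M^\dagger_{\psi_F}(s)f^{(s)}=-q^{-s}\frac{b_{2r}(-s)}{c^r_-(-s)}f^{(-s)}.
\]
Apply \(Z(\xi,\cdot)\) and substitute the formula above at \(\pm s\). The left side of the functional equation becomes \(-q^{-s}L^\sigma_-(\tfrac12-s)\) times the normalising factors. I also need \(L(s,\pi^\vee)=L^\sigma_-(s)\); this holds because \(\sigma\mapsto-\sigma\) preserves both \(L^\sigma_-\) and the property of containing \(\pm\tfrac12\). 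Comparing the two sides then yields \(\varepsilon(s+\tfrac12,\pi,\psi_F)=-q^{-s}\), i.e. \(\varepsilon(s,\pi,\psi_F)=-q^{-s+\frac12}\).

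I expect the main obstacle to be the bookkeeping of normalisations: matching Yamana's \(M^\dagger\), the Weil constant, and the \(\psi_F\)-conductor conventions with the sign \(-1\). That sign must come out consistent with \(\epsilon=-1\) and with the theta dichotomy in \cite{liu2022theta}. I will check it first in rank \(r=1\), where everything is explicit.
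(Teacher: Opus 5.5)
You follow the same route as the paper. Part (1) is a citation to \cite{li2022chow}. Part (2) runs \cite{liu2022theta}*{Theorem 5.8} with the section $\phi^-_{K_{2r},\sigma^\square_s}$, a rank-one base case and induction, and reads off $\varepsilon$ from Proposition~\ref{prop:zeta_almost_spherical_almost_self_dual} and Lemma~\ref{lem:M_intertwine}. The $\varepsilon$ step is fine, because $c^r_-$ cancels from both sides of the functional equation.

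The gap is in the $L$-factor, at exactly the one point where the ramified case differs from Liu's. It is not the normalisation bookkeeping you anticipate. Your claim that $c^r_-(s)$ is a unit times $\zeta_E(2s+2r)$ is false. By definition
\[
c^r_-(s)=\frac{2(1+q)}{(-q)^rq(1+q^{r-1})(1+q^r)}\,(1+q^{-s})\,\zeta_E(2s+2r),
\]
and $1+q^{-s}$ is not a unit of $\dC[q^{s},q^{-s}]$: it vanishes at $q^{-s}=-1$, i.e.\ on $\RE s=0$. Already for $r=1$, $\sigma=\tfrac12$ the zeta integral is $Z(\xi^{\frac12},f^{(s)})=-\frac{1+q^{-s}}{q^2}\cdot\frac{1}{1-q^{-1-s}}$, which vanishes there.

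This zero lies on the symmetry axis $s\mapsto -s$ of the functional equation, so it is invisible to both tools you use. First, the gcd property for this one section controls $L^\sigma_-(s+\tfrac12)/L(s+\tfrac12,\pi)$ only up to the factor $1+q^{-s}$, so it says nothing at $q^{-s}=-1$. (A single good section only ever gives a divisibility, not an identification up to a unit.) Second, suppose $L(s+\tfrac12,\pi)$ were $L^\sigma_-(s+\tfrac12)(1+q^{-s})^{-1}$ instead, and likewise for $\pi^\vee$. Then $L(\tfrac12-s,\pi^\vee)/L(\tfrac12+s,\pi)$ changes only by $(1+q^{-s})/(1+q^{s})=q^{-s}$, a unit that is absorbed into $\varepsilon$. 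So these data do not determine $L$.

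Your rank-one `computation' is just $L^\sigma_-$ evaluated at $\sigma=\tfrac12$, which is circular. Your unspecified appeal to multiplicativity is exactly where the missing input has to come from. The paper flags this obstruction: because of these zeros on $\RE s=0$, the argument of \cite{liu2022theta}*{Theorem 5.8} does not apply verbatim. In the inductive steps it therefore replaces \cite{yamana2014lfunctions}*{Lemma 6.1(2)} by \cite{yamana2014lfunctions}*{Theorem 6.1}. You need to name such an input for the constituent $\pi^\sigma_{W_r,-}$, giving an equality rather than a divisibility or a $\gamma$-factor identity. You must also check that its hypotheses survive the zero of $1+q^{-s}$, so that it rules out an extra pole of $L(s+\tfrac12,\pi)$ at $q^{-s}=-1$.
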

\begin{proof}
    (1) is \cite{li2022chow}*{Proposition 3.7}. 
    
    The proof of (2) is similar to \cite{liu2022theta}*{Theorem 5.8}. The only difference is that, for \(r=1,\sigma=\frac{1}{2}\), we still have
    \[
    Z(\xi^\frac{1}{2},f^{(s)})=-\frac{1+q^{-s}}{q^2}\frac{1}{1-q^{-1-s}},\ L(s,\pi_{W_1,-}^\frac{1}{2})=\frac{1}{(1-q^{-\frac{1}{2}-s})}
    \]
    by Proposition \ref{prop:zeta_almost_spherical_almost_self_dual}, but \(Z(\xi^\frac{1}{2},f^{(s)})\) has zeros on the line \(\mathrm{Re}s=0\) so the latter argument in \cite{liu2022theta}*{Theorem 5.8} does not apply directly. But we can replace the use of \cite{yamana2014lfunctions}*{Lemma 6.1(2)} in the inductive steps by \cite{yamana2014lfunctions}*{Theorem 6.1}. The computation of \(\varepsilon\) is easy by Proposition \ref{prop:zeta_almost_spherical_almost_self_dual} and Lemma \ref{lem:M_intertwine}.
\end{proof}
\subsection{The Local Theta Correspondence}

We prove some results on the local theta correspondence in this subsection based on Proposition \ref{prop:zeta_almost_spherical_almost_self_dual}. The idea is the same as in \cite{liu2022theta}*{Theorem 6.2} but we do not have \cite{liu2022theta}*{Assumption 3.3} anymore. However, if we restrict ourselves to the case of tempered representations, providing Theorem \ref{thm:irreducibility_unitary_principal_series}, \ref{thm:irreducibility_unitary_principal_series_non_split}, we can adopt the method of \cite{li2022chow}*{Proposition 3.9} and apply \cite{gan2016gross}*{Theorem 4.1(v)}. 

\begin{notation}
    For \(\sigma\in(\dC/\frac{i2\pi}{\log q}\dZ)^r\), we set 
    \[
    \overset{\rightarrow}{\sigma}=(\sigma_2,\cdots,\sigma_r)
    \]
\end{notation}
\begin{theorem}\label{thm:theta_correspondence}
    \begin{enumerate}[(1)]
        \item If \(\sigma\in(i\dR/\frac{i2\pi}{\log q}\dZ)^r\), then \(\theta(\pi_{W_r,+}^\sigma,V_{r-1}^+)\cong \pi_{W_r,+}^\sigma=\rI_r^\sigma\)
        \item If \(\sigma_1\in\{\pm\frac{1}{2}\}\) and \(\overset{\rightarrow}{\sigma}\in(i\dR/\frac{i2\pi}{\log q}\dZ)^{r-1}\), then \(\theta(\pi_{W_r,-}^{\sigma},V_{r-1}^-)\cong \pi_{V_{r-1}^-}^{\overset{\rightarrow}{\sigma}}=\rI_{V_r^-}^{\overset{\rightarrow}{\sigma}}\)
    \end{enumerate}
\end{theorem}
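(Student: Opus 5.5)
The plan is the one the paper announces just before the statement: follow \cite{li2022chow}*{Proposition 3.9}. We first identify the representations as irreducible principal series using Theorems \ref{thm:irreducibility_unitary_principal_series} and \ref{thm:irreducibility_unitary_principal_series_non_split}. Then we read off the theta lift from the tempered/Langlands-parameter description of the local theta correspondence in \cite{gan2016gross}*{Theorem 4.1(v)}.

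\emph{Part (1).} Since \(\sigma\) is unitary, Theorem \ref{thm:irreducibility_unitary_principal_series} says \(\rI^\sigma_{W_r}\) is irreducible, so \(\pi^\sigma_{W_r,+}=\rI^\sigma_{W_r}\). This representation is tempered. Its \(L\)-parameter is the sum of the unitary unramified characters \(|\cdot|_E^{\pm\sigma_i}\), conjugate-self-dual of the appropriate sign, and contains no copy of the trivial character twisted so as to create a reducibility point. In the equal-rank case, \cite{gan2016gross}*{Theorem 4.1(v)} gives the theta lift to the space \(V\) singled out by the local root number as having the same \(L\)-parameter. Theorem \ref{thm:L_function}(1) gives \(L(s,\pi^\sigma_{W_r,+})=L^\sigma(s)\), which has no pole at \(s=\frac12\); equivalently, the root number is \(+1\). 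That space is therefore the split one, and the lift is the tempered representation of \(\rU(V^+)\cong G_r\) with the same parameter. By irreducibility again, this lift is \(\rI^\sigma_{W_r}\).

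\emph{Part (2).} Now \(\pi^\sigma_{W_r,-}\) is a constituent of \(\rI^\sigma_{W_r}\) with \(\sigma_1=\pm\frac12\). We use induction in stages through the parabolic with Levi \(\Res_{E/F}\GL_1\times G_{r-1}\). The inner factor \(\rI^{\overset{\rightarrow}{\sigma}}_{W_{r-1}}\) is irreducible and tempered by Theorem \ref{thm:irreducibility_unitary_principal_series}. The rank-one piece at \(\sigma_1=\pm\frac12\) splits into a spherical quotient and a Steinberg-type (discrete series) subrepresentation, the latter carrying the \(\kappa^-\)-eigenvector.

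The first task is to show that \(\pi^\sigma_{W_r,-}\) is the tempered representation obtained by inducing this rank-one discrete series, tensored with \(\rI^{\overset{\rightarrow}{\sigma}}\). Its \(L\)-parameter then has the extra summand \(\mathbf 1\boxtimes S_2\). This step uses Proposition \ref{prop:eigenspace_span_almost_spherical} and the computation \(L(s,\pi^\sigma_{W_r,-})=L^\sigma_-(s)\), \(\varepsilon=-q^{-s+\frac12}\) from Theorem \ref{thm:L_function}(2). The sign of \(\varepsilon\) at the centre is \(-1\), so the nonvanishing lift in the equal-rank tower lies on the non-split space \(V^-_{r-1}\) of dimension \(2r\).

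The Gan--Ichino theorem then says the lift removes the \(\mathbf 1\boxtimes S_2\) summand, which is the \(\mathrm{U}(1)\)-type \(\rU(V_0^-)\) contribution. It produces the representation of \(H_{r-1}=\rU(V^-_{r-1})\) whose parameter is the remaining unitary unramified characters. That representation is \(\rI_{V}^{\overset{\rightarrow}{\sigma}}\), irreducible by Theorem \ref{thm:irreducibility_unitary_principal_series_non_split}, and it is spherical with respect to \(L^{-\circ}\).

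As a consistency check, and to pin down which constituent occurs, we would compare with the doubling computation of Proposition \ref{prop:zeta_almost_spherical_almost_self_dual}. The zeta integral against the section \(f_{1_{(\Lambda_{V_{r-1}^-})^{2r}}}\) is nonzero at \(s=0\) when \(\sigma_1=\pm\frac12\). By the Rallis inner product/local theta dichotomy argument of \cite{liu2022theta}*{Theorem 6.2}, this forces \(\theta(\pi^\sigma_{W_r,-},V^-_{r-1})\neq0\). Moreover the lift receives a \(K^-_{r-1}\)-fixed vector coming from \(1_{\Lambda_{V_{r-1}^-}}\). This independently identifies the lift among the irreducible representations with the given cuspidal support.

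\textbf{Main obstacle.} The hard step is the first identification in part (2). One must show that the \(\kappa_r^-\)-eigenline generates exactly the tempered constituent, and that this constituent has the \(L\)-parameter \(\mathbf 1\boxtimes S_2\oplus\bigoplus|\cdot|^{\pm\sigma_i}\). In particular one must exclude the non-tempered Langlands quotient. We would attempt this with the Langlands classification argument of Theorem \ref{thm:Langlands_classification_spherical}, together with Casselman's temperedness criterion applied to the exponents. These exponents are visible from the action of the intertwining operators in Theorem \ref{thm:casselman_relation_almost_spherical}; in particular, the factor \(c''_{2\epsilon_1}\) vanishes at \(\sigma_1=\frac12\).
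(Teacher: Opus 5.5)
The decisive step of part (2) is misstated, and as written it fails. The pair \((\rU(W_r),\rU(V^-_{r-1}))\) is equal rank: both spaces have dimension \(2r\). With the trivial splitting characters used here, the tempered equal-rank correspondence of \cite{gan2016gross} \emph{preserves} the \(L\)-parameter, exactly as you use it in part (1). So the lift does not ``remove'' \(\mathbf 1\boxtimes S_2\). A parameter consisting only of \(\bigoplus_{i\ge2}(|\cdot|_E^{\sigma_i}\oplus|\cdot|_E^{-\sigma_i})\) has dimension \(2r-2\), so it is not a parameter of \(\rU(V^-_{r-1})\) at all.

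What you must supply instead is the \(\rU(V^-_{r-1})\)-member of the Vogan packet of \(\phi=\bigoplus_{i\ge2}(|\cdot|_E^{\sigma_i}\oplus|\cdot|_E^{-\sigma_i})\oplus\mathbf 1\boxtimes S_2\). On the non-quasi-split side, the summand \(\mathbf 1\boxtimes S_2\) is carried by the trivial representation of the anisotropic kernel \(\rU(V_0^-)\), which is the packet partner of the Steinberg representation of \(G_1\). By compatibility of the local Langlands correspondence with parabolic induction, \(\rI_V^{\overset{\rightarrow}{\sigma}}\) then has parameter \(\phi\). This member is unique because \(S_\phi\cong\dZ/2\dZ\): conjugate-dual unramified characters of \(E^\times\) are conjugate-orthogonal (since \(\eta_{E/F}\) is ramified), so only \(\mathbf 1\boxtimes S_2\) contributes. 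This is input about the LLC of the non-quasi-split inner form, not something the theta correspondence itself provides.

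There are two smaller slips. First, the induction in stages must go through the Levi \((\Res_{E/F}\GL_1)^{r-1}\times G_1\) with \(\sigma_1\) on the \(G_1\)-factor, since that is where the Steinberg representation lives; the Levi \(\Res_{E/F}\GL_1\times G_{r-1}\) does not produce it. Second, absence of a pole of \(L^\sigma(s)\) at \(s=\frac12\) is not equivalent to root number \(+1\). Compute the root number directly: for unramified \(\chi\), \(\varepsilon(\frac12,\chi,\psi)\varepsilon(\frac12,\chi^{-1},\psi)=\chi(-1)=1\).

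With these repairs your route works, but it runs in the reverse direction from the paper's. The paper follows \cite{liu2022theta}*{Theorem 6.2} and makes Proposition \ref{prop:zeta_almost_spherical_almost_self_dual} the engine:
\begin{itemize}
\item nonvanishing of the zeta integral at \(s=0\) gives a nonzero lift to \(V^-_{r-1}\), containing a \(K^-_{r-1}\)-fixed vector coming from \(1_{\Lambda_{V^-_{r-1}}}\);
\item temperedness is used, via \cite{li2022chow}*{Proposition 3.9} and \cite{gan2016gross}*{Theorem 4.1(v)}, in place of Liu's Assumption 3.3 (in particular to know that the big theta lift is irreducible, cf.\ Remark \ref{rem:theta_correspondence});
\item Theorem \ref{thm:irreducibility_unitary_principal_series_non_split} finishes the argument.
\end{itemize}
Your ``consistency check'' is therefore the paper's actual argument. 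It gets the sphericity of the lift for free from the lattice function, whereas your main route needs the full LLC for the non-quasi-split group and its compatibility with induction. The temperedness of \(\pi^\sigma_{W_r,-}\), which you flag as the main obstacle, is simply asserted in the paper.
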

\begin{proof}
    (1) is \cite{li2022chow}*{Proposition 3.9} and Theorem \ref{thm:irreducibility_unitary_principal_series}.

    (2) is similar to \cite{liu2022theta}*{Theorem 6.2}. The only difference is that, now \(\pi_{W_r,-}^\sigma\) is tempered and we can replace the use of \cite{liu2022theta}*{Assumption 3.3} by \cite{li2022chow}*{Proposition 3.9} and \cite{gan2016gross}*{Theorem 4.1(v)}. The last step is to apply Theorem \ref{thm:irreducibility_unitary_principal_series_non_split}.
\end{proof}
\begin{remark}\label{rem:theta_correspondence}
    There are three points to be noted here:
    \begin{enumerate}[(i)]
        \item
        Though we have proved Theorem \ref{thm:spherical_L_is_spherical_K}, we still need the assumption that representations are tempered here because we need the fact that the big theta lifting is irreducible.
        \item 
        As a consequence of Theorem \ref{thm:spherical_L_is_spherical_K} and \ref{thm:spherical_L_is_spherical_K_non_split}, we know that if an irreducible admissible representation of \(\rU(V)\) is regularly spherical with respect to a unimodular lattice and it is tempered, then it is a unitary principal series representation and the theta liftings are given as above by the Howe duality \cite{gan2015proof}.
        \item
        Our proof of this theorem relies on computation of the doubling zeta integrals, but the matrix coefficient \(\xi^\sigma\) is chosen with respect to the eigenvector of an almost spherical spherical representation \(\pi^\sigma_{W_r,-}\) with respect to \(K_r\) there instead of the more natural \(L_r\).
    \end{enumerate}
\end{remark}

\subsection{More on the Hecke algebras}
In this subsection, we want to establish an isomorphism of an analogue of the almost unramified Hecke algebra in \cite{liu2022theta}*{Definition 2.8}. After this is completed, we are allowed to port most results there to our setting and in particular, combining with Theorem \ref{thm:satake_isomorphism_quasi_split}, we can construct \(\theta^\tR\) in \cite{li2021chow}*{Definition 6.8} at places \(v\in\tS'_\pi\). The process is somehow redundant but some new phenomenon has shown up here as the `special fiber' of \(L_r\) is not connected. We will keep most notations from Section \ref{sec:quasi-split}, but for convenience, we will change some notations like \(\cB_i\) which were used before.

\begin{remark}
    \begin{itemize}
        \item 
        Recall that \(V_r^+\) has a basis \(v_1,\cdots,v_r,v_{r+1},\cdots,v_{2r}\) such that the Hermitian form is given by \((v_i,v_{r+j})=\varpi_E^{-1}\delta_{ij}\) for \(1\le i,j\le r\). The Iwahori subgroup \(I_r\) were chosen to be looks like
        \[
            \begin{pmatrix}
                \cO_E^\times &\fp_E & \cdots & \fp_E & \cO_E & \cdots & \cO_E \\ 
                \cO_E &\cO_E^\times & \cdots & \fp_E & \cO_E & \cdots & \cO_E \\
                \vdots &\vdots & \ddots & \vdots & \vdots & \ddots & \vdots \\
                \cO_E &\cO_E & \cdots & \cO_E^\times & \cO_E & \cdots & \cO_E \\
                \fp_E &\fp_E & \cdots & \fp_E & \cO_E^\times & \cdots & \cO_E \\
                \vdots &\vdots & \ddots & \vdots & \vdots & \ddots & \vdots \\
                \fp_E &\fp_E & \cdots & \fp_E & \fp_E & \cdots & \cO_E^\times
            \end{pmatrix}
        \]
        with respect to this basis. But as we are interested in the unimodular lattice spanned by \(v_i,1\leq i\leq r\) and \(\varpi_Ev_{r+i},1\leq i\leq r\), the matrix form of \(I_r\) is given by a conjugation by a diagonal element \(\mathrm{diag}(1,\cdots,1,\varpi_E,\cdots,\varpi_E)\) in \(\GL_{2r}(E)\), so it looks like
        \[
            \begin{pmatrix}
                \cO_E^\times &\fp_E & \cdots & \fp_E & \fp_E & \cdots & \fp_E \\ 
                \cO_E &\cO_E^\times & \cdots & \fp_E & \fp_E & \cdots & \fp_E \\
                \vdots &\vdots & \ddots & \vdots & \vdots & \ddots & \vdots \\
                \cO_E &\cO_E & \cdots & \cO_E^\times & \fp_E & \cdots & \fp_E \\
                \cO_E &\cO_E & \cdots & \cO_E & \cO_E^\times & \cdots & \cO_E \\
                \vdots &\vdots & \ddots & \vdots & \vdots & \ddots & \vdots \\
                \cO_E &\cO_E & \cdots & \cO_E & \fp_E & \cdots & \cO_E^\times
            \end{pmatrix}
        \]
        in the new coordinates.
        \item
        Consider \(\Omega(w_r)\in L_r\) which were chosen as the representative of a generator of the Weyl group in Section \ref{sec:quasi-split}, we know that \(\Omega(w_r)v_r=\varpi_E v_{2r}\) and \(\Omega(w_r)v_{2r}=\varpi_E^{-1} v_r\), it has determinant \(-1\) (in this case the order of the roots are reversed).
        
        The interesting fact is that if we consider the group morphism \(L_r\rightarrow \rO^+(2r,\dF_q)\), then the image \(\Omega(w_r)\) normalizes \(\SO^+(2r,\dF_q)\) and \(I_r\) but not \(I_r^0\).
    \end{itemize}
\end{remark}
\begin{definition}
    We define the neutral part \(L_r^o\) of \(L_r\) to be the inverse image of \(\SO^+(2r,\dF_q)\) under the morphism \(L_r\rightarrow \rO^+(2r,\dF_q)\). Equivalently, the elements in \(L_r^o\) have determinant \(1 \modulo \fp_E\).

    Then \(L_r=L_r^o\sqcup \Omega(w_r)L^o_r\)
\end{definition}

We denote the \(D_r\)-type finite Weyl group by \(\cW_r\), which is an index \(2\) subgroup of the Weyl group \(\fW_r\). We have the Iwahori decomposition
    \[
    L_r^o=\bigsqcup_{w\in \cW_r}I_r\Omega(w)I_r
    \]
    and
    \[
    L_r=L_r^o\sqcup \Omega(w_r)L_r^o=\bigsqcup_{w\in\fW_r}I_r\Omega(w)I_r
    \]
    where \(w_i\) are the simple reflections in \(\fW_r\) defined in Section \ref{sec:quasi-split}. 

Consider the Iwahori Hecke algebra \(\dC[I_r\backslash L_r/I_r]\) where \(I_r\) has measure \(1\), by simple computation, we have the following presentation:
\begin{proposition}
    The Iwahori Hecke algebra \(\dC[I_r\backslash L_r/I_r]\) is generated by the characteristic functions \(1_{I_r\Omega(w_r)I_r}\) and \(1_{I_rw_{i,i+1}I_r}\) for \(1\le i\le r-1\) with the following relations:
    \begin{enumerate}[(1)]
        \item \((1_{I_r\Omega(w_r)I_r})^2=1\)
        \item \((1_{I_rw_{i,i+1}I_r})^2=(q-1)1_{I_rw_{i,i+1}I_r}+q\) for \(1\le i\le r-1\)
        \item Braid relations among these generators as in the Coxeter group \(\fW_r\).
    \end{enumerate}
\end{proposition}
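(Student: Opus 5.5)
The plan is to reduce everything to the structure of the finite reductive quotient of \(L_r\), and to exhibit \(\dC[I_r\backslash L_r/I_r]\) as a twisted tensor product of a type \(D_r\) finite Hecke algebra with the group algebra of \(\dZ/2\).

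First I establish the basic double-coset facts. Reduction modulo \(\varpi_E\) gives a surjection \(L_r\to\rO^+(2r,\dF_q)\), under which \(L_r^o\) maps onto \(\SO^+(2r,\dF_q)\). The Iwahori \(I_r\) is the inverse image of a Borel subgroup \(\bar B\). This uses the matrix description of \(I_r\) in the unimodular coordinates given in the Remark above. Since \(I_r\) contains the kernel of reduction, double cosets and their indices can be computed in the finite group. The finite Bruhat decomposition of \(\SO^+(2r,\dF_q)\) with Weyl group \(\cW_r\) of type \(D_r\) then gives \([I_r\Omega(w)I_r:I_r]=q^{\ell_D(w)}\) for \(w\in\cW_r\).

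Next I check that \(\Omega(w_r)\) normalizes \(I_r\). Its image in \(\rO^+(2r,\dF_q)\) is a reflection swapping the isotropic lines spanned by the images of \(v_r\) and \(\varpi_Ev_{2r}\). From the explicit shape of \(I_r\) in the new coordinates one sees that this reflection preserves \(\bar B\), which is the statement in the Remark. Hence \(I_r\Omega(w_r)I_r=\Omega(w_r)I_r\) is a single coset. This is the step I expect to require the most care: one has to verify directly from the matrices that conjugation by \(\Omega(w_r)\) preserves every entry condition defining \(I_r\), and that \(\Omega(w_r)^2\in T\cap L_r\subset I_r\). The latter is clear because \(\Omega(w_r)^2\) acts by \(\pm1\) on \(v_r,v_{2r}\) and trivially on the rest.

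Granting this, relation (1) follows because \(1_{\Omega(w_r)I_r}*1_{\Omega(w_r)I_r}=1_{\Omega(w_r)^2I_r}=1_{I_r}\). Relation (2) is the standard quadratic relation in the finite Hecke algebra \(\dC[\bar B\backslash\SO^+(2r,\dF_q)/\bar B]\), pulled back through the reduction map. The relations for the extra \(D_r\) generator \(s'=w_rw_{r-1,r}w_r\) come from conjugating by \(1_{\Omega(w_r)I_r}\).

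For the braid relations and the presentation, I use the fact that conjugation by \(\Omega(w_r)\) induces the diagram automorphism of \(D_r\) exchanging \(w_{r-1,r}\) and \(s'\). The braid relations among the \(w_{i,i+1}\) hold already in the \(D_r\) Hecke algebra. The relations between \(T_r:=1_{\Omega(w_r)I_r}\) and \(T_{i,i+1}\) for \(i<r-1\) are commutation, because \(\Omega(w_r)\) commutes with those permutation matrices. The remaining \(B_r\)-type relation \((T_rT_{r-1,r})^2=(T_{r-1,r}T_r)^2\) reduces, via \(T_rT_{r-1,r}T_r=T_{s'}\), to \(T_{s'}T_{r-1,r}=T_{r-1,r}T_{s'}\). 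This holds because \(s'\) and \(w_{r-1,r}\) are orthogonal simple reflections in \(D_r\).

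Finally, for generation and the absence of further relations, I use the decomposition
\[
L_r=\bigsqcup_{w\in\cW_r}I_r\Omega(w)I_r\sqcup\bigsqcup_{w\in\cW_r}\Omega(w_r)I_r\Omega(w)I_r.
\]
This shows that the algebra has basis \(\{T_w,\ T_rT_w\}_{w\in\cW_r}\) and is therefore the crossed product \(\cH(D_r)\rtimes\dZ/2\). On the other hand, the algebra abstractly presented by (1)–(3) is spanned by words in the generators. Using (1)–(3), each such word can be rewritten in the form \(T_w\) or \(T_rT_w\) with \(w\in\cW_r\), so the abstract algebra has dimension at most \(2|\cW_r|=|\fW_r|\). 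Comparing dimensions then yields the isomorphism.
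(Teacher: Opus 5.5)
Your proposal is correct and is essentially the argument the paper has in mind when it says ``by simple computation'': reduce \(L_r\) to \(\rO^+(2r,\dF_q)\), use that \(\Omega(w_r)\) is a length-zero element normalizing \(I_r\), and identify \(\dC[I_r\backslash L_r/I_r]\) with \(\cH(D_r)\rtimes\dZ/2\), i.e.\ the type \(B_r\) Hecke algebra with parameter \(1\) at \(w_r\) and \(q\) at the \(w_{i,i+1}\). One caution for the step you flagged: the displayed entry shape of \(I_r\) is not literally stable under swapping the \(r\)-th and \(2r\)-th coordinates, because the \((2r,r)\) entry is only required to lie in \(\cO_E\) while the \((r,2r)\) entry must lie in \(\fp_E\). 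So argue instead that \(\bar B\) is the stabilizer in \(\SO^+(2r,\dF_q)\) of the first \(r-1\) members of its isotropic flag; none of these involves the \(r\)-th or \(2r\)-th basis vector, so the reflection fixes them. Equivalently, use that \(g\) preserves the isotropy of \(\bar v_r\), which forces that entry into \(\fp_E\).
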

\begin{remark}
    It might be confusing that in \cite{badea2020hecke}*{Chapter 5}, the generators have different length compared to ours. The reason is that our Iwahori subgroup \(I_r\) defined in Section \ref{sec:quasi-split} are defined for the standard Borel subgroup in the skew-Hermitian spaces, when the field extension is ramified, the way Iwahori embedding into \(L_r\) corresponds to an opposite Borel subgroup. This is also why we are considering \(\Omega(w_r)\) in this subsection, because in the standard notation, it is an affine generator of the extended affine Weyl group. If we unfold the definitions, we can check that they agree up to some conjugation.
\end{remark}
This motivates us to define the character \(\kappa_r^\pm:\dC[I_r\backslash L_r/I_r]\rightarrow\dC\) as follows:
\begin{definition}
    For \(\epsilon\in\{+,-\}\), we define a character \(\kappa_r^\epsilon:\dC[I_r\backslash L_r/I_r]\rightarrow\dC\) by
    \begin{equation*}
    1_{I_r\Omega(w_r)I_r}\mapsto \epsilon 1,\qquad 1_{I_rw_{i,i+1}I_r}\mapsto q
    \end{equation*}
\end{definition}
Below is an analogue of \cite{liu2022theta}*{Lemma 2.2} but the proof is more straightforward.
\begin{lemma}
    The eigenspace \(\dC[I_r\backslash L_r/I_r][\kappa_r^\epsilon]\) is spanned over \(\dC\) by the function
    \[
    \fe_r^\epsilon:=1_{L_r^o}+\epsilon 1_{\Omega(w_r)L_r^o}.
    \]
    and in particular, \(\fe_r^+=1_{L_r}\).
\end{lemma}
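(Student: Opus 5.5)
The approach is to reduce the eigenspace computation to the coset decomposition $L_r=L_r^o\sqcup\Omega(w_r)L_r^o$ together with the Iwahori decompositions of $L_r^o$ and $L_r$ stated just above.

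\textbf{Step 1: the eigenspace is at most one-dimensional.} Let $f\in\dC[I_r\backslash L_r/I_r]$ satisfy $h*f=\kappa_r^\epsilon(h)f$ for every $h$. Write
\[
f=\sum_{w\in\fW_r}a_w 1_{I_r\Omega(w)I_r}.
\]
Let $w$ satisfy $\ell(w_{i,i+1}w)>\ell(w)$. The quadratic relation gives $1_{I_rw_{i,i+1}I_r}*1_{I_r\Omega(w)I_r}=1_{I_r\Omega(w_{i,i+1}w)I_r}$. Comparing coefficients in the equation $1_{I_rw_{i,i+1}I_r}*f=qf$ then yields $a_{w_{i,i+1}w}=q\,a_w$; this is the same computation as in \cite{liu2022theta}*{Lemma 2.2}. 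For the remaining generator, $(1_{I_r\Omega(w_r)I_r})^2=1$ shows that $I_r\Omega(w_r)I_r=\Omega(w_r)I_r$ is a single coset. Hence left convolution by $1_{I_r\Omega(w_r)I_r}$ is just left translation by $\Omega(w_r)$. The eigen-equation therefore reads $f(\Omega(w_r)^{-1}x)=\epsilon f(x)$. Combining the two relations, $f$ is determined by $a_1$, so the eigenspace has dimension at most one.

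\textbf{Step 2: $\fe_r^\epsilon$ is an eigenvector.} Since $I_r\subset L_r^o$ and $L_r^o$ is a group, $1_{L_r^o}$ is left $L_r^o$-invariant. It follows that
\[
1_{I_rw_{i,i+1}I_r}*1_{L_r^o}=\vol(I_rw_{i,i+1}I_r)\,1_{L_r^o}=q\,1_{L_r^o}.
\]
The same holds for $1_{\Omega(w_r)L_r^o}=1_{L_r^o\Omega(w_r)}$. This last identity uses that $L_r^o$ is normal of index $2$ in $L_r$, being the preimage of $\SO^+$. Next, left translation by $\Omega(w_r)$ exchanges $1_{L_r^o}$ and $1_{\Omega(w_r)L_r^o}$, because $\Omega(w_r)^2\in L_r^o$. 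Consequently
\[
1_{I_r\Omega(w_r)I_r}*\fe_r^\epsilon=1_{\Omega(w_r)L_r^o}+\epsilon 1_{L_r^o}=\epsilon\,\fe_r^\epsilon,
\]
using $\epsilon^2=1$. Thus $\fe_r^\epsilon$ lies in the $\kappa_r^\epsilon$-eigenspace, and it is nonzero. By Step 1 it spans that eigenspace. Taking $\epsilon=+$ gives $\fe_r^+=1_{L_r^o}+1_{\Omega(w_r)L_r^o}=1_{L_r}$.

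\textbf{Main obstacle.} The only delicate point is verifying that $I_r\Omega(w_r)I_r$ is a single $I_r$-coset. Equivalently, one must check that $\Omega(w_r)$ normalizes $I_r$, as the preceding remark asserts via the image in $\rO^+(2r,\dF_q)$. I would confirm this directly in the matrix description of $I_r$ in the unimodular coordinates given above. This normalizing property is exactly what is encoded in relation (1) of the presentation.
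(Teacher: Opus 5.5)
Your plan is the natural one: bound the eigenspace by one dimension using the generators, then check \(\fe_r^\epsilon\) directly. It is also what the paper intends when it calls this a more straightforward analogue of \cite{liu2022theta}*{Lemma~2.2}. Step~2 is correct, and so is your treatment of \(\Omega(w_r)\) in Step~1.

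The coefficient relation for \(w_{i,i+1}\) in Step~1, however, is wrong. Put \(s=w_{i,i+1}\), \(T_s=1_{I_rsI_r}\), \(T_w=1_{I_r\Omega(w)I_r}\), and assume \(\ell(sw)>\ell(w)\). Then \(T_sT_w=T_{sw}\); this is length additivity, not the quadratic relation. On the other hand, \(T_sT_{sw}=T_s^2T_w=(q-1)T_{sw}+qT_w\), so
\[
T_s\bigl(a_wT_w+a_{sw}T_{sw}\bigr)=q\,a_{sw}T_w+\bigl(a_w+(q-1)a_{sw}\bigr)T_{sw}.
\]
The eigenvalue \(q\) therefore forces \(a_{sw}=a_w\), not \(a_{sw}=q\,a_w\). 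As written, Step~1 contradicts Step~2, since \(\fe_r^\epsilon\) has coefficient \(1\) on every \(I_r\)-double coset contained in \(L_r^o\).

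The repair is local, because the dimension bound only needs \(a_{sw}\) to be a fixed nonzero multiple of \(a_w\). Evaluating \(f(\Omega(w_r)^{-1}x)=\epsilon f(x)\) at \(x=\Omega(w_r)\Omega(w)\in I_r\Omega(w_rw)I_r\) gives \(a_{w_rw}=\epsilon\,a_w\). Combined with the corrected relation, Step~1 then shows directly that every eigenvector equals \(a_1\fe_r^\epsilon\). The reason is that \(w\in\fW_r\) lies in \(\cW_r\) exactly when it is written with an even number of letters \(w_r\). Step~2 then supplies the converse.

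One further remark: the length in \(T_sT_w=T_{sw}\) should be the \(D_r\)-length, with \(\Omega(w_r)\) of length \(0\). For \(s=w_{i,i+1}\), though, the condition \(\ell(sw)>\ell(w)\) agrees with the one for the \(C_r\)-length, so nothing breaks.
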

Thus we can choose a unique element \(e_r^\epsilon\in \dC[I_r\backslash L_r/I_r][\kappa_r^\epsilon]\) so that \((e_r^\epsilon)^2=e_r^\epsilon\).

Apply the same argument as in \cite{liu2022theta}*{Section 2}, we have the following isomorphism of Hecke algebras:
\begin{lemma}\label{lemma:hecke_algebra_isomorphism}
    Let \(\cH_r:=\dC[I_r\backslash G_r/I_r]\) be the Iwahori Hecke algebra of \(G_r\) where \(I_r\) has measure \(1\) and \(Z(\cH_r)\) be its center. We have isomorphisms of Hecke algebras given as follows:
    \begin{equation*}
        \begin{aligned}
            \beta_r^\epsilon:\quad & Z(\cH_r)\xrightarrow{\sim} e_r^\epsilon Z(\cH_r)=:\cH_r^\epsilon &f\mapsto e_r^\epsilon f\\
            and \quad &\cH_r^+\xrightarrow{\sim} \dC[L_r\backslash G_r/L_r] &f\mapsto [L_r:I_r]f
        \end{aligned}
    \end{equation*}
\end{lemma}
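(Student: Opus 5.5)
We follow the structure of \cite{liu2022theta}*{Section 2}, adapting it to the pair \((G_r,L_r)\). The plan has three parts: show that \(e_r^\epsilon\) is a central-enough idempotent, prove \(\beta_r^\epsilon\) is bijective, and identify \(\cH_r^+\) with the \(L_r\)-spherical Hecke algebra.

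\textbf{The idempotent.} Since \(\fe_r^\epsilon\) spans the \(\kappa_r^\epsilon\)-eigenline of the finite Hecke algebra \(\dC[I_r\backslash L_r/I_r]\), we set
\[
e_r^\epsilon=\fe_r^\epsilon/\kappa_r^\epsilon(\fe_r^\epsilon).
\]
Here \(\kappa_r^\epsilon(\fe_r^\epsilon)=\sum_{w\in\fW_r}\kappa_r^\epsilon(1_{I_r\Omega(w)I_r})\), which is a nonzero Poincar\'e-type polynomial in \(q\) (with a factor \(2\) coming from the two components \(L_r^o\) and \(\Omega(w_r)L_r^o\)). Then \(e_r^\epsilon h=he_r^\epsilon=\kappa_r^\epsilon(h)e_r^\epsilon\) for every \(h\) in the finite Hecke algebra, and in particular \((e_r^\epsilon)^2=e_r^\epsilon\).

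\textbf{Bijectivity of \(\beta_r^\epsilon\).} The map \(\beta_r^\epsilon\) is an algebra homomorphism because \(e_r^\epsilon\) commutes with \(Z(\cH_r)\). For the structure of \(Z(\cH_r)\) I would use the Bernstein presentation of the Iwahori--Hecke algebra of the ramified group \(G_r\). Its finite part is the Hecke algebra of \(\fW_r\); by the Proposition above, the generator attached to the simple reflection \(w_r\) has quadratic parameter \(1\), so the finite part is \(\dC[I_r\backslash L_r/I_r]\). The translation part is \(\dC[\Lambda]\), and Bernstein's theorem gives \(Z(\cH_r)=\dC[\Lambda]^{\fW_r}\), with \(\cH_r\) free over the finite Hecke algebra via \(\cH_r=\dC[\Lambda]\otimes\dC[I_r\backslash L_r/I_r]\).

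It follows that \(e_r^\epsilon\cH_r e_r^\epsilon\simeq e_r^\epsilon\dC[\Lambda]\) as vector spaces. The product \(e_r^\epsilon z\) with \(z=\sum_\lambda a_\lambda\theta_\lambda\) vanishes only if \(z=0\), by freeness. Hence \(\beta_r^\epsilon\) is injective, and it is surjective by the definition of \(\cH_r^\epsilon\).

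A more concrete alternative to quoting Bernstein is to evaluate on principal series. For \(z\in Z(\cH_r)\), the element \(e_r^\epsilon z\) acts on the line \((\rI^\sigma_{W_r})^{I_r}[\kappa_r^\epsilon]\) by the scalar \(\hat z(\sigma)\), the Bernstein/Satake transform. This line is one-dimensional, spanned by \(\phi_{L_r}\) or \(\phi_{L_r}^-\) (Proposition \ref{def:one} and Remark \ref{def:another_one}). The functions \(\sigma\mapsto\hat z(\sigma)\) separate elements of \(Z(\cH_r)\), which again gives injectivity.

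\textbf{The case \(\epsilon=+\).} Here \(e_r^+=[L_r:I_r]^{-1}1_{L_r}\). For any \(f\) in \(\cH_r^+\) we have \(f=e_r^+fe_r^+\), so \(f\) is bi-\(L_r\)-invariant, and \(f\mapsto[L_r:I_r]f\) lands in \(\dC[L_r\backslash G_r/L_r]\). Multiplication is preserved because convolution of bi-\(L_r\)-invariant functions changes by the factor \([L_r:I_r]\) when the measure normalization of \(L_r\) is switched from that of \(I_r\).

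Injectivity is clear. For surjectivity, compose with the Satake isomorphism of Theorem \ref{thm:satake_isomorphism_quasi_split}, \(\dC[L_r\backslash G_r/L_r]\simeq\dC[\Lambda]^{\fW_r}\). The composite \(Z(\cH_r)\to\dC[\Lambda]^{\fW_r}\) is the Bernstein-to-Satake comparison. To see this concretely, evaluate both sides on \(\phi_{L_r,\sigma}\): by Theorem \ref{thm:iwasawa_semi_standard} it is the unique \(L_r\)-fixed vector, and its Satake eigenvalue is \(\chi^\sigma\) applied to the transform. The composite therefore sends \(\theta\)-symmetric sums to monomial symmetric sums up to a triangular change with respect to the dominance order, as in the proof of Theorem \ref{thm:satake_isomorphism_quasi_split}. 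A map of this kind is bijective.

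\textbf{Main obstacle.} The hard step is justifying the Bernstein presentation for \(I_r\subset L_r\). In particular one must check that the finite Hecke algebra occurring in the Bernstein presentation is really \(\dC[I_r\backslash L_r/I_r]\), even though \(L_r\) is not parahoric and \(\Omega(w_r)\) corresponds to an affine generator of the extended affine Weyl group. I would handle this by conjugating into the standard alcove picture, as in the Remark after the Proposition above. The point is that \(\Omega(w_r)\) has quadratic relation with parameter \(1\), so \(L_r\) plays the role of a finite Weyl group cell of the extended affine Hecke algebra.
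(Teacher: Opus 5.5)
Your proposal is correct and follows the route the paper intends; the paper itself only says to transport the argument of \cite{liu2022theta}*{Section 2}, and your combination of the idempotent $e_r^\epsilon$, injectivity of $z\mapsto e_r^\epsilon z$ from Bernstein's description of $Z(\cH_r)$, and the $\epsilon=+$ identification via the Satake isomorphism for $(G_r,L_r)$ is that argument made explicit. Your principal-series evaluation on the lines spanned by $\phi_{L_r}$ and $\phi_{L_r}^-$ already gives injectivity, so you do not need the sketched (nonstandard) claim that $\dC[I_r\backslash L_r/I_r]$ is the finite part of a Bernstein presentation. Two small corrections:
\begin{itemize}
\item $\kappa_r^\epsilon(\fe_r^\epsilon)=2[L_r^o:I_r]=[L_r:I_r]$. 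The sum $\sum_{w\in\fW_r}\kappa_r^\epsilon(1_{I_r\Omega(w)I_r})$ that you wrote equals $\kappa_r^\epsilon(1_{L_r})$, which vanishes for $\epsilon=-$.
\item Evaluating on $\phi_{L_r,\sigma}$ gives $\cS([L_r:I_r]e_r^+z)=\hat z$ exactly. Bijectivity therefore follows directly from the Bernstein and Satake isomorphisms, and no triangularity argument is needed.
\end{itemize}
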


As mentioned in Remark \ref{rem:theta_correspondence}, our proof of Theorem \ref{thm:theta_correspondence} is somehow an elementary trick: we describe the theta correspondence of another representation and then prove that it is isomorphic to the one we want. It is expected that one can prove a more intrinsic version of Theorem \ref{thm:theta_correspondence} following the proposition below, which is an analogue of \cite{liu2022theta}*{Lemma 3.2}.
\begin{proposition}
    Via the Schr\"odinger model of the local
    Weil representation \(\omega_{W_r,V_{s}^-}\), the Hecke algebra \(\dC[I_r\backslash L_r/I_r]\) acts on \(1_{(\Lambda_{V_s^-}')^r}\) by the character \(\kappa_r^-\).
\end{proposition}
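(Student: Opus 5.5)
The plan is to check the character on the generators of \(\dC[I_r\backslash L_r/I_r]\) given by the presentation above: the identity double coset, the \(1_{I_rw_{i,i+1}I_r}\), and \(1_{I_r\Omega(w_r)I_r}\). Since \(\kappa_r^-\) respects the quadratic and braid relations, it suffices to show three things, writing \(\Phi':=1_{(\Lambda'_{V_s^-})^r}\) and \(d=s+1\) (so \(\dim V_s^-=2d\)):
\begin{enumerate}[(a)]
\item \(\Phi'\) is \(I_r\)-invariant;
\item \(1_{I_rw_{i,i+1}I_r}\) acts on \(\Phi'\) by \(q\);
\item \(\omega_{W_r,V_s^-}(\Omega(w_r))\Phi'=-\Phi'\).
\end{enumerate}
Given (a), the convolution \(1_{I_rgI_r}*\Phi'\) equals \([I_rgI_r:I_r]\,\omega(g)\Phi'\). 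So (b) and (c) reduce to computing \(\omega(g)\Phi'\) for single representatives \(g\).

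For (a), I would quote Proposition \ref{prop:non_split_sigel_weil_section}(3), which gives invariance under \(I_r^0\supset I_r\). One point needs checking: in the present coordinates \(I_r\) is the conjugate of the earlier Iwahori by \(\diag(1,\dots,1,\varpi_E,\dots,\varpi_E)\). I would verify this directly instead. Use the Iwahori factorization of \(I_r\) into a lower unipotent part, the torus \(T\cap K_r\), and an upper unipotent part. Elements of \(m(\GL_r(\cO_E))\) and of \(n(b)\) with \(\psi_F(\tr bT(x))=1\) on \(\Lambda'^r\) fix \(\Phi'\). The opposite unipotent part is handled by conjugating with the long element, i.e. by a Fourier transform. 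This uses that \(\widehat{1_{\Lambda'}}\) is a multiple of \(1_{\Lambda'^\vee}\) with \(\Lambda'^\vee=\varpi_E^{-1}\Lambda'\), exactly as in Proposition \ref{prop:split_sigel_weil_section}(2).

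For (b), \(\omega(w_{i,i+1})\) lies in \(m(\GL_r(\cO_E))\). It therefore acts by \(\phi(x)\mapsto\phi(x\cdot w_{i,i+1})\), which fixes \(\Phi'\). The index \([I_rw_{i,i+1}I_r:I_r]\) is \(q\), since the associated root is a long root of the \(D_r\)-type finite part with residue field \(\dF_q\). Hence \(1_{I_rw_{i,i+1}I_r}\) acts by \(q=\kappa_r^-(1_{I_rw_{i,i+1}I_r})\).

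For (c), note that \(\Omega(w_r)\) normalizes \(I_r\), so its double coset is a single coset and the action is just \(\omega(\Omega(w_r))\). I factor \(\Omega(w_r)\), analogously to the displayed factorization of \(\Omega(w_1)\), as a torus element \(m(\diag(1,\dots,\varpi_E^{\pm1}))\) in the \(r\)-th coordinate times the rank-one Weyl element in the \(r\)-th coordinate. The Weyl element acts on the \(x_r\)-variable by the partial Fourier transform times the Weil constant \(\gamma_{V_s^-,\psi_F}\). The torus element acts by rescaling \(x_r\) together with the factor \(|\varpi_E^{\pm1}|_E^{d}\).

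Since \(\widehat{1_{\Lambda'}}=\vol(\Lambda')\,1_{\Lambda'^\vee}\) with \(\Lambda'^\vee=\varpi_E^{-1}\Lambda'\), the rescaling by \(\varpi_E\) returns the indicator of \(\Lambda'\) in the \(x_r\)-variable. The self-dual-measure volume is \(\vol(\Lambda')=[\Lambda'^\vee:\Lambda']^{-1/2}=q^{-d}\), and it cancels against the factor \(q^{d}\) from the torus. Thus \(\omega(\Omega(w_r))\Phi'=\gamma\cdot\Phi'\) for the appropriate Weil constant \(\gamma\). I expect the main obstacle to be pinning down \(\gamma=-1\) together with the sign conventions of \(\Omega(w_r)\): the factor \(-1\) in its entries and the lower-triangular Borel. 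I would compute \(\gamma\) by matching with Proposition \ref{prop:non_split_sigel_weil_section}(4),(5). There the evaluation of \(f_{\Phi'}\) at \(\omega(w_1\cdots w_i)\) produced the sign \((-1)^i\), i.e. the rank-one Weil constant for the non-split space with \(\epsilon=-1\) is \(-1\), the same mechanism as in \cite{liu2022theta}*{Lemma 3.2}. This yields the eigenvalue \(-1=\kappa_r^-(1_{I_r\Omega(w_r)I_r})\) and completes the proof.
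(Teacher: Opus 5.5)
Your proposal is correct and matches the paper's argument. The paper also reduces, via the argument of Liu's Lemma 3.2, to the single non-trivial generator, the reflection \(\Omega(w_1)\) (your \(\Omega(w_r)\)), which normalizes the Iwahori. It writes that generator as \(\omega(w_1)\) times a torus element, and the volume factor from the Fourier transform cancels the modulus factor from the torus, leaving the Weil constant \(-1\).

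The only difference is the source of that sign. The paper cites Lemma 4.10 of \cite{he2023proof}. Reading the sign off Proposition \ref{prop:non_split_sigel_weil_section}(4)--(5) is weak, because the proofs of those parts in the paper do not actually derive it. Take it instead directly from the Weil-representation formula \(\omega(w)\phi=(\epsilon 1)^r\widehat{\phi}\), with \(\epsilon=-1\) for \(V_s^-\).
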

\begin{proof}
    For simplicity, we denote \(\Lambda_{V_s^-}'\) by \(\Lambda\) and omit the notation of Weil representation in the proof as there is no ambiguity.

    By the same argument from \cite{liu2022theta}*{Lemma 3.2}, it suffices to check that \(1_{I_1\Omega(w_1)I_1}.1_{\Lambda}=-1_{\Lambda}\).

    Note that for \(a,b\in I_1\), we have
    \begin{equation*}
        \begin{aligned}
            a\Omega(w_1)b.1_{\Lambda}=&a\omega(w_1)
            \begin{pmatrix}
            -\varpi_E&0\\0&\varpi_E^{-1}
            \end{pmatrix}.
            1_{\Lambda}\\=&a\omega(w_1).(q^{-s}1_{\Lambda^\vee})\\
        \end{aligned}
    \end{equation*}
    as \(\Lambda=\Lambda^\sharp=\varpi_E\Lambda^\vee\) and \([\Lambda^\vee:\Lambda]=q^{2s}\)
    then by \cite{he2023proof}*{Lemma 4.10}, we have
    \begin{equation*}
        \begin{aligned}
            a\Omega(w_1)b.1_{\Lambda}=&q^{-s}a\omega(w_1).1_{\Lambda^\vee}\\
            =&-q^{-s}a.q^{s}1_{\Lambda}\\
            =&-1_{\Lambda}
        \end{aligned}
    \end{equation*}
    The result follows from the measure computation of \(I_1\Omega(w_1)I_1=\Omega(w_1)I_1\).
\end{proof}

\section{Semi-global Integral Models of Unitary Shimura Varieties}\label{sec:semi-global}

In this section we examine the geometric properties of the semi-global integral models of unitary Shimura varieties. We will first introduce the notion of integral models of Shimura varieties, and then we will discuss the construction of the semi-global integral models of unitary Shimura varieties. We will also discuss the properties of the semi-global integral models, and the relation between the semi-global integral models and the local models of the unitary Shimura varieties. We will follow the construction in \cite{liu2022beilinson}*{Section 5}.

\begin{notation}
    We denote by \(T_0\) the torus over \(\dQ\) such that for every commutative \(\dQ\)-algebra \(R\), we have \(T_0(R)=\{a\in\dE\otimes_\dQ R|\rN_{\dE/\dF}(a)\in R^\times\}\).
\end{notation}

Recall that \(\dE/\dF\) is a CM extension of number fields, and we fix an embedding \(\biota:\dE\hookrightarrow \dC\). Then we fix a CM type \(\Phi\) of \(\dE\) containing \(\biota\). We denote by \(\dE'\) the subfield of \(\dC\) generated by the reflex field of \(\Phi\) and \(\dE\). Also we need to require that \(\Phi\) is admissible in the sense of the following Notation \ref{notation:CM_type_admissible}. Then we fix a skew hermitian space \(W\) over \(\dE\) of dimension \(1\) with respect to the nontrivial Galois involution of \(\dE/\dF\), whose group of rational similitude is canonically \(T_0\). Then for an open compact subgroup \(L_0\) of \(T_0(\dA^\infty)\), we have the PEL type Shimura variety \(Y\) of CM abelian varieties with CM type \(\Phi\) and level \(L_0\), which is a smooth projective scheme over \(\dE'\) of dimension 0.

For a fixed place \(u\) over \(p\), let the local field \(K\) be the subfield of \(\bar{\dQ}_p\) generated by \(\dE_u\) and the reflex field of \(\Phi\). Our local model will be defined over this field and it obviously depends on \(u\). Let \(\breve{K}\) be the completion of the maximal unramified extension of \(K\).

We need to choose a place \(u\) in \(\tV_\dE\) \cite{li2022chow}*{Notation 4.19}. When we write \(v\), we always mean a place of \(\dF\) lying above the same rational prime \(p\) as \(u\) and the underlying place of \(u\) is denoted by \(\underline{u}\) so both \(v\) and \(\underline{u}\) are in \(\tV_\dF^{(p)}\).

\begin{notation}\label{notation:CM_type_admissible}
We further require that $\Phi$ is \emph{admissible} in the following sense: if $\Phi_v\subseteq\Phi$ denotes the subset inducing the place $v$ for every $v\in\tV_\dF^{(p)}$, then it satisfies
\begin{enumerate}
  \item when $v\in\tV_\dF^{(p)}\cap\tV_\dF^\spl$, $\Phi_v$ induces the same place of $\dE$ above $v$;

  \item when $v\in\tV_\dF^{(p)}\cap\tV_\dF^\text{int}$, $\Phi_v$ is the pullback of a CM type of the maximal subfield of $\dE_v$ unramified over $\dQ_p$;

  \item when $v\in\tV_\dF^{(p)}\cap\tV_\dF^\ram$, the subfield of $\ol\dQ_p$ generated by $\dE_u$ and the reflex field of $\Phi_v$ is unramified over $\dE_u$.
\end{enumerate}
\end{notation}
\begin{remark}
    The admissible conditions are the same as that in \cite{li2022chow}*{Remark 4.19} and (3) is even more important in our setting because we need to ensure the semi-global Kr\"amer model of Shimura varieties is regular. See \cite{luo2025unitary} for further discussion.
\end{remark}

 If there is no confusion, we will omit the subscript \(u\) and \(v\), then \(E:=\dE_u\) is an extension of a \(p\)-adic field \(F:=\dF_{\underline{u}}\).

\subsection{Recollection on Unitary Shimura Varieties}
We recall the definition of unitary Shimura varieties from \cite{liu2022beilinson}*{Section 3.2}.
\begin{notation}
    \begin{enumerate}
    \item\label{notion:unitary_shimura_varieties}
    Let \(V\) be a standard \emph{indefinite} (See \cite{liu2022beilinson}*{Definition 3.2.1} for a definition) hermitian space over \(\dF\) of \(\rank\) \(N\). For every neat open compact subgroup \(L\subset \Res_{\dF/\dQ}\rU(V)(\dA_\dQ^\infty)\), we have a scheme \(\Sh(V,L)\) over \(\dF\). In our application, we will consider the \(\bu\)-nearby space \({}^\bu V\) (so it is defined over \(\dF\)) and denote the Shimura variety with level \(L\) by \(X_L\).
    \item \label{notion:unitary_shimura_sets}
    Let \(V\) be a standard \emph{definite} hermitian space over \(\dF\) of \(\rank\) \(N\). For every neat open compact subgroup \(L\subset \Res_{\dF/\dQ}\rU(V)(\dA_\dQ)\), we have a set 
    \[
    \Sh(V,L):=\rU(V)(\dF)\backslash \rU(V)(\dA^\infty_\dF)/L
    \]
    \end{enumerate}
\end{notation}

Unitary Shimura varieties are not of PEL type, but they are closely related to certain RSZ Shimura varieties which are the moduli spaces of abelian schemes. The connection between them is explained in \cites{rapoport2020arithmetic,rapoport2021shimura} and also \cite{liu2021fourier}*{Appendix C.3,C.4}. Our goal in this section is to construct the semi-global integral models of RSZ Shimura varieties in the Kr\"amer model of \cite{rapoport2021shimura}*{Section 6} and establish certain basic correspondence analogues to \cite{liu2022beilinson}*{Section 5}.

\subsection{Semi-global Integral Models of Unitary Shimura Varieties}
Because what we are interested in is the semi-global integral model at the place where the local root number is \(-1\), we are indeed considering the nearby space. 

We will only consider a \emph{definite} Hermitian space when we consider a Hermitian space (also the corresponding level structures) and thus we will omit the terminology in this subsection. Let \(L\subset H(\dA_\dF^\infty) \) be an open compact subgroup of the form \(L_\tR L^\tR\) in Notation \ref{notation:special_hecke_algebra}.

We need to formulate a finite \'etale model \(\cY\) of \(Y\). For an \(\cO_K\)-scheme \(S\), \(\cY(S)\) consists of the equivalence classes of a unitary \(\cO_\dE\)-abelian scheme over \(S\) of signature type \(\Phi\), a \(p\)-polarization, and a level structure.

\begin{definition}[Integral Model of auxiliary moduli]
    For an \(\cO_K\)-scheme \(S\), let \(\cY(S)\) be the set of equivalence classes of quadruples \((A_0,\iota_{A_0},\lambda_{A_0},\eta_{A_0}^p)\) where
\begin{itemize}
    \item \((A_0,\iota_{A_0},\lambda_{A_0})\) is a unitary \(\cO_\dE\)-abelian scheme over \(S\) of signature type \(\Phi\) such that \(\lambda_{A_0}\) is \(p\)-principal, i.e., a prime-to-\(p\) quasi-isogeny.
    \item \(\eta_{A_0}^p\) is an \(L_0^p\)-level structure \cite{liu2022beilinson}*{Definition 3.5.4}
\end{itemize}
\begin{remark}
    This level structure is imposed to give a scheme (for sufficiently small \(L\)) rather than a DM stack.
\end{remark}

Then we define the local integral model of the Shimura variety \(X\) for \(u\in \tV_\dE^{\ram}\). For our \(L=L_\tR L^\tR\), consider the projective system of Shimura varieties \(\{X_{\tilde{L}}\}\) indexed by \(\{\tilde{L}\subset L\}\) with \(\tilde{L}_v=L_v\) for \(v\in \tV_{\dF}^{(p)}\backslash \tV_\dF^{\spl}\) and we have semistable schemes \(\cX_{\tilde{L}}\) over \(\cO_K\) as our semi-global integral models so that 
\[
\cX_{\tilde{L}}\times_{\Spec \cO_K}\Spec K \cong (X_{\tilde{L}} \otimes_{\dE}Y)\otimes_{\dE'}K
\]
Here we only consider the Kr\"amer condition for convenience. In fact, this is the model \(\mathcal{M}\) defined in \cite{shi2023special}*{Section 8.1 (8.6)}.
\end{definition}
\begin{definition}[Semi-global Integral Model of Shimura variety]
\(\cX_{\tilde{L}}(S)\) is the set of equivalence classes of tuples
\[
(A_0,\iota_{A_0},\lambda_{A_0},\eta_{A_0}^p;A,\iota_{A},\lambda_{A},\eta_{A}^p,\{\eta_{A,v}\}_{v\in (\tV_\dF^{\spl}\cap\tV_{\dF}^{(p)})};\cF_A)
\]

such that \begin{itemize}
    \item \((A_0,\iota_{A_0},\lambda_{A_0},\eta_{A_0}^p)\) is an element in \(\cY(S)\)
    \item \((A,\iota_{A},\lambda_{A})\) is a unitary \(\cO_\dE\)-abelian scheme of signature type \(n\Phi-\biota+\biota^c\) over \(S\), such that 
    \begin{enumerate}[(i)]
        \item for \(v\) unramified, \(\lambda_A[v^\infty]\) is an isogeny whose kernel has order \(q_v^{1-\epsilon_v}\)
        
        \item for ramified \(v\) which is not the underlying place of \(u\), the triple 
        \[(A[v^\infty],\iota_{A}[v^\infty],\lambda_{A}[v^\infty])\otimes_{\cO_K}\cO_{\breve{K}}\]
        satisfies the sign conditions and the Eisenstein conditions \cite{shi2023speciala}*{4B} and \cite{he2023proof}*{Section 10.2 (H3) (H4)}\footnote{In \cite{he2023proof}*{Section 10.2}, they use the notation \(E\) for reflex field. Their assumption that \(v\) is unramified over \(p\) can be removed by \cite{luo2025unitary}.}
        \item \(\cF_A \subset \Lie A\) is an \(\cO_\dE\)-stable \(\cO_S\) -module local direct summand of rank \(n-1\) satisfying the Kr\"amer condition: \(\cO_\dE\) acts on \(\cF_A\) by the structure map \(s :\cO_\dE\rightarrow\cO_{K}\rightarrow\cO_S\) and acts on \(\Lie A/\cF_A\) by the complex conjugate of the structure map. 
        
        This is the same as given a filtration \((\cF_A)_v\subset (\Lie A)_v\) by \cite{rapoport2021shimura}*{Lemma 6.11}.
        \item \(\eta_A^p\) is an \(\tilde{L}^p\)-level structure \cite{liu2022beilinson}*{Definition 5.2.1}
        \item for every \(v\in \tV_\dF^{(p)}\cap \tV_\dF^{\spl}\), \(\eta_{A,v}\) is an \(\tilde{L}_v\)-level structure.
    \end{enumerate}
\end{itemize}
\end{definition}

From \cite{liu2022beilinson}*{(3.2)} we have a Hodge exact sequence
\[
    0\rightarrow\omega_{A^\vee/S}\rightarrow \rH_1^{\dr}(A/S)\rightarrow\Lie_{A/S}\rightarrow0
\]
Then following \cite{liu2022beilinson}*{Notation 3.4.10}, we have
\begin{itemize}
    \item \(\rH_1^{\dr}(A/S)\simeq\cD(A)/p\cD(A)\)
    \item \(\tV\cD(A)_{\sigma\tau}/p\cD(A)_\tau\simeq\omega_{A^\vee,\tau}\).
    \item \(\cD(A)_\tau/\tV\cD(A)_{\sigma\tau}\simeq\Lie_{A,\tau}\)
    \item An obvious short exact sequence
        \[
            0\rightarrow\tV\cD(A)_{\sigma\tau}/p\cD(A)_\tau\rightarrow \cD(A)_\tau/p\cD(A)_\tau\rightarrow\cD(A)_\tau/\tV\cD(A)_{\sigma\tau}\rightarrow0
        \]
\end{itemize}
\begin{remark}
    We also mention some important results about Rapoport--Zink spaces (or RZ spaces in short) that we need here. 
    \begin{itemize}
        \item 
        The first one is the comparison of relative RZ spaces and the absolute RZ spaces, it can be found in \cite{luo2025unitary}*{Theorem 3.17 and Remark 3.19}.
        \item
        The other is the uniformization map to the Shimura variety, which can also be found in \cite{luo2025unitary}*{Theorem 7.12} with a slight modification on the moduli description of the Kr\"amer condition.
    \end{itemize}
\end{remark}

\subsection{Geometry of the Special Fiber}
In this subsection, we omit the index \(\tilde{L}\) for convenience, and we will study the geometry of the special fiber of \(\cX\) at the place \(u\in \tV_\dE^{\ram}\).

All schemes in this subsection are considered over \(\overline{\dF}_p\) and we will use the notation \(\bX\) for \(\cX\otimes_{\cO_K}\overline{\dF}_p\) and similarly for other schemes with the level structure described in the last subsection.

We further define the balloon stratum and the ground stratum, which reflect the fact that \(\cX\) is the blow-up of the Pappas model \cite{shi2023special}*{Appendix A}.

When there is no confusion, we will set \(\Pi=\iota_{A,u}(\varpi_v)\).
\begin{definition}[Balloon Strata]\label{def:balloon_strata}
    Let \(\bX^\circ\) be the locus of \(\bX\) where \(\Pi\) acts on \(\Lie A\) by 0 \footnote{it also acts on \(\omega_{A^\vee/S}\) by zero because there exists a \(\cO_\dE\)-equivariant nondegenerate pairing, so this is compatible with \cite{zachos2024semistable}*{Section 6.1}}. Then \(\bX^\circ\) should be understood as the exceptional divisor.
\end{definition}

\begin{definition}[Ground Strata]\label{def:ground_strata}
    Let \(\bX^\bullet\) be the (Zariski closure of the) strict transform of the complement of \(\bX^\circ\) in \(\bX\). \(\bX^\bullet\) is called the ground stratum.
    
\end{definition}
\begin{remark}
    This stratum also has the following description, which comes from a similar description of its Rapoport--Zink space and the corresponding local model (see \cite{kramer2003local}*{Theorem 4.5}): let \(k\) be the field \(\overline{\dF}_p\), we can define a non-degenerate anti-symmetric pairing \(\langle,\rangle\) on \(\rH^{\dr}_1(A_k/S_k)\) via the polarization, then define another symmetric form \(\{,\}\) on \(\Pi\rH^{\dr}_1(A_k/S_k)\) by \(\{\Pi x,\Pi y\}:=\langle \Pi x,y\rangle\). This form is well-defined because \(\Pi^2\) acts as zero as we are on the special fiber. Then \(\bX^\bullet\) is the locus where \(\cF_A^\perp\subset \omega_{A^\vee_k}
        \) is isotropic with respect to the form \(\{,\}\), here \(\cF_A^\perp\) is the orthogonal complement of \(\cF_A\) in \(\rH^{\dr}_1(A_k/S_k)\) with respect to the pairing \(\langle,\rangle\).
\end{remark}
\begin{definition}[Link Strata]\label{def:link_strata}
    Let \(\bX^\dagger\) be the intersection of \(\bX^\circ\) and \(\bX^\bullet\). \(\bX^\dagger\) is called the link strata. 
\end{definition}

We construct \(\bS^\circ\) as the moduli space similar to \(\bX^\circ\) but with signature type \(n\Phi\) rather than \(n\Phi-\biota+\biota^c\). 

\begin{remark}
    Note that in this case we do not need to choose a filtration \(\cF_A\) because by \cite{rapoport2021shimura}*{Lemma 6.12}, there exists a unique choice.

    One can also see this because in a discrete set, the Lie algebra is zero and thus the Kr\"amer condition is vacuous.
\end{remark}
\begin{definition}[Source of balloon strata]
    We define \(\bS^\circ(S)\) to be the set of equivalence of tuples \((A_0,\iota_{A_0},\lambda_{A_0},\eta_{A_0}^p;A^\circ,\iota_{A^\circ},\lambda_{A^\circ},\eta_{A^\circ}^p,\{\eta_{A^\circ,v}\}_{v\in \tV_\dF^{\spl,p}})\) where
    \begin{itemize}
        \item \((A_0,\iota_{A_0},\lambda_{A_0},\eta_{A_0}^p)\) is an element in \(\cY(S)\)
        \item \((A^\circ,\iota_{A^\circ},\lambda_{A^\circ})\) is a unitary \(\cO_\dE\)-abelian scheme of signature type \(n\Phi\) over \(S\), such that \(\lambda_{A^\circ}\) is \(p\)-principal, i.e., a prime-to-\(p\) quasi-isogeny.
        \item \(\eta^p_{A^\circ}\) is given in \cite{liu2022beilinson}*{Definition 5.3.1}
    \end{itemize}
\end{definition}

\begin{definition}[Basic correspondence of balloon strata]
    We define \(\bB^\circ(S)\) to be the set of equivalence of tuples 
    \[
    (A_0,\iota_{A_0},\lambda_{A_0},\eta_{A_0}^p;
    A,\iota_{A},\lambda_{A},\eta_{A}^p,\{\eta_{A,v}\}_{v\in \tV_\dF^{\spl,p}},\cF_{A};
    A^\circ,\iota_{A^\circ},\lambda_{A^\circ},\eta_{A^\circ}^p,\{\eta_{A^\circ,v}\}_{v\in \tV_\dF^{\spl,p}};\beta)\]
    where
    \begin{itemize}
        \item 
        \((A_0,\iota_{A_0},\lambda_{A_0},\eta_{A_0}^p;
        A,\iota_{A},\lambda_{A},\eta_{A}^p,\{\eta_{A,v}\}_{v\in \tV_\dF^{\spl,p}},\cF_{A})\) is an element of \(\bX^\circ(S)\),
        \item
        \((A_0,\iota_{A_0},\lambda_{A_0},\eta_{A_0}^p;
        A^\circ,\iota_{A^\circ},\lambda_{A^\circ},\eta_{A^\circ}^p,\{\eta_{A^\circ,v}\}_{v\in \tV_\dF^{\spl,p}})\) is an element of \(\bS^\circ(S)\)
        \item
        and \(\beta:A\rightarrow A^\circ\) is an \(\cO_\dE\)-linear quasi-\(p\)-isogeny such that
        \begin{enumerate}[(a)]
            \item \(\ker\beta[p^\infty]\) is contained in \(A[\fp]\) where \(\fp\) is the maximal ideal of \(\cO_E\) above \(p\) corresponding to the place \(u\);
            \item we have \(\lambda_A=\beta^\vee\circ\lambda_{A^\circ}\circ\beta\); and
            \item the (away from \(p\)) \(L^{p\circ}\)-orbit of maps \(v\mapsto\beta_*\circ\eta^p_A(v)\) for v in \(V\otimes_\dQ \dA^{\infty,p}\) coincides with \(\eta_A^{p\circ}\).
        \end{enumerate}
    \end{itemize}
    The equivalence relation and the action of morphisms are defined similarly as in \cite{liu2022beilinson}*{Definition 4.3.3}.
\end{definition}

We have the basic correspondence of the balloon strata similar to \cite{liu2022beilinson}*{Definition 5.3.3}:
\begin{center}
    \begin{tikzcd}
        \bS^\circ& \bB^\circ\ar[l,"\pi^\circ"']\ar[r,"\iota^\circ"]&\bX^\circ 
    \end{tikzcd}
\end{center}
But for our application, we only need the following properties of the basic correspondence.
\begin{proposition}
    The morphism \(\iota^\circ\) is an isomorphism and the fiber of each \(s\) in \(\bB^\circ\) is isomorphic to \(\dP^{n-1}\), where \(n=2r\).
\end{proposition}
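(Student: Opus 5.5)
The plan is to follow the pattern of \cite{liu2022beilinson}*{Section 5.3}, adapted to the Kr\"amer model. We prove the two claims separately, starting with the isomorphism \(\iota^\circ\colon\bB^\circ\to\bX^\circ\) and then computing the fibers of \(\pi^\circ\).

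For \(\iota^\circ\), we construct an inverse. Take a point \((A_0,\dots;A,\iota_A,\lambda_A,\dots,\cF_A)\) of \(\bX^\circ(S)\). Since \(\Pi\) acts by zero on \(\Lie A\), and by the footnote to Definition \ref{def:balloon_strata} also on \(\omega_{A^\vee/S}\), the kernel \(\cK\) of \(\Pi\) on \(A[\fp]\) is a finite flat \(\cO_\dE\)-stable subgroup scheme. We show this via Dieudonn\'e theory: the condition means \(\Pi\cD(A)\subset \tV\cD(A)\) at \(u\), so \(\Pi^{-1}\tV\cD(A)\cap\cD(A)\) defines a Dieudonn\'e lattice. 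We then set \(A^\circ:=A/\cK\), with \(\beta\) the quotient map (equivalently \(\beta=\Pi^{-1}\circ\) the appropriate factorisation). Using \(\lambda_A\) and the unimodularity of the \(u\)-component (\(\Lambda=\Lambda^\sharp\)), we check that \(\lambda_A\) descends to a \(p\)-principal polarization \(\lambda_{A^\circ}\) with \(\lambda_A=\beta^\vee\lambda_{A^\circ}\beta\). The signature changes from \(n\Phi-\biota+\biota^\tc\) to \(n\Phi\) because \(\Pi\) acting by zero forces the \(\biota^\tc\)-part of \(\Lie A\) to be swallowed. Level structures transport via \(\beta_*\). Uniqueness of \(\beta\) under condition (a) (\(\ker\beta\subset A[\fp]\)) then shows this is inverse to \(\iota^\circ\), first on \(S\)-points functorially and hence as schemes. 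The filtration \(\cF_A\) has to be tracked as part of the data: we expect that on the balloon locus \(\cF_A\) is exactly the extra datum that the fibers of \(\pi^\circ\) parametrize, not something determined by \(A^\circ\).

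For the fibers, we fix \(s=(A^\circ,\dots)\in\bS^\circ(k)\) and identify \((\pi^\circ)^{-1}(s)\) with the data of \(\beta\) together with \(\cF_A\). Dually, giving \(A\) with \(\ker\beta\subset A[\fp]\) amounts to giving \(\cD(A)\) between \(\Pi\cD(A^\circ)\) and \(\cD(A^\circ)\). Self-duality and \(\Pi\cD(A)\subset\tV\cD(A)\) force \(\cD(A)\) to be determined, namely \(\cD(A)=\Pi\cD(A^\circ)+\tV\cD(A^\circ)\) or its analogue, with no moduli. The remaining freedom is then the Kr\"amer filtration \(\cF_A\subset\Lie A\). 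On \(\bX^\circ\), \(\Lie A\) at \(u\) is an \(n\)-dimensional \(k\)-vector space on which \(\cO_\dE\) acts through \(k\), since \(\Pi=0\). The Kr\"amer condition then becomes vacuous beyond \(\cF_A\) being a rank-\(n-1\) direct summand, so the fiber is the Grassmannian of hyperplanes, which is \(\dP^{n-1}\). This matches the local model description in \cite{kramer2003local}*{Theorem 4.5}: the exceptional divisor of the blow-up of the Pappas model at the worst point is a projective space bundle.

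We expect the main obstacle to be the first step. We must show precisely that on \(\bX^\circ\) the Dieudonn\'e module is rigidly determined by \(A^\circ\), and that the polarization descends with the correct type. This requires the unimodular (\(\sharp\)-self-dual) normalization at \(u\) and a careful comparison of \(\vee\)- and \(\sharp\)-duals. We would first verify the argument on the Rapoport--Zink space, using the relative/absolute comparison and uniformization from \cite{luo2025unitary}*{Theorems 3.17, 7.12}, and then globalize via the uniformization and Serre--Tate.
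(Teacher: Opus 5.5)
Your identification of the fibers with the choices of \(\cF_A\) is right, but the first step, the construction of the inverse of \(\iota^\circ\), fails, and the mechanism behind it cannot work.

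\textbf{The quotient construction is trivial.} Since \(\fp=(\Pi)\), the group \(A[\fp]\) is already the kernel of \(\Pi\). So your \(\mathcal{K}\) is all of \(A[\fp]\), and \(A/\mathcal{K}\cong A\) \(\cO_\dE\)-linearly via \(\Pi\): nothing is ``swallowed'' and the signature is unchanged. Likewise, on \(\bX^\circ\) the inclusion \(\Pi\cD(A)_u\subset\tV\cD(A)_u\) is between two sublattices of the same colength. Hence \(\Pi\cD(A)_u=\tV\cD(A)_u\), and your lattice \(\Pi^{-1}\tV\cD(A)\cap\cD(A)\) is just \(\cD(A)\).

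\textbf{No nontrivial \(\beta\) exists.} In the Kr\"amer model \(\lambda_A\) is principal at \(u\) (unimodular level), and \(\lambda_{A^\circ}\) is \(p\)-principal. Condition (b) then forces \((\deg\beta[u^\infty])^2=1\). In Dieudonn\'e terms, (b) makes \(\cD(A)_u\subset\cD(A^\circ)_u\) an isometric inclusion of two self-dual lattices, hence an equality. Your candidate \(\Pi\cD(A^\circ)+\tV\cD(A^\circ)=\Pi\cD(A^\circ)\) has dual \(\Pi^{-1}\cD(A^\circ)\), so it is not self-dual. Thus \(\lambda_A\) cannot descend to a \(p\)-principal polarization along any nontrivial quotient, and the inverse you describe does not exist.

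\textbf{The missing observation.} The change of signature is automatic on \(\bX^\circ\) and needs no isogeny.
\begin{itemize}
\item Because \(u\) is ramified, \(\biota(a)\equiv\biota^\tc(a)\) modulo the maximal ideal of \(\cO_K\) for every \(a\in\cO_\dE\). So over \(\ol{\dF}_p\)-schemes the Kottwitz conditions for \(n\Phi-\biota+\biota^\tc\) and for \(n\Phi\) coincide.
\item The extra condition for signature \(n\Phi\) at \(u\) is that \(\Pi\) act on \((\Lie A)_u\) through the structure map. Over \(\ol{\dF}_p\)-schemes this reduces to \(\Pi|_{\Lie A}=0\), which is exactly the defining condition of \(\bX^\circ\).
\item Hence \(A\) itself, with its polarization and level structures, is a point of \(\bS^\circ\), and the inverse of \(\iota^\circ\) is \((A,\cF_A)\mapsto((A,\cF_A),A,\id)\).
\item This inverse is two-sided: the degree count above forces every \(\beta\) in a point of \(\bB^\circ\) to be an isomorphism on \(p\)-divisible groups, so \((A^\circ,\beta)\simeq(A,\id)\) uniquely.
\end{itemize}

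With this in place your second step goes through as written. The map \(\pi^\circ\) forgets \(\cF_A\), and the Kr\"amer condition is vacuous because \(\Pi\) and the image of the uniformizer in \(\cO_S\) both vanish. The fiber is therefore the scheme of corank-one direct summands of the rank-\(n\) module \((\Lie A)_u\), i.e.\ \(\dP^{n-1}\). This is what the paper takes from \cite{zachos2024semistable}*{Theorem 6.1}. For \(\iota^\circ\) the paper cites \cite{rapoport2021shimura}*{Lemma 6.12} and the pattern of \cite{liu2022beilinson}*{Proposition 5.3.4}, not any quotient construction.
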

\begin{proof}
    The proof is much simpler than that in \cite{liu2022beilinson}*{Proposition 5.3.4}. The first part is due to \cite{rapoport2021shimura}*{Lemma 6.12} and the proof of \cite{liu2022beilinson}*{Proposition 5.3.4} (just replace the DL variety to a \(\dP^{2r-1}\) or follow \cite{he2023proof}*{Section 3}). 
    
    The second part follows from \cite{zachos2024semistable}*{Theorem 6.1}
\end{proof}

Denote by \(L^\star_{\underline{u}}\subset {}^uH(F)\) the stabilizer of a unimodular lattice in the nearby space \({}^uV\) at the place \(\underline{u}\), which may not be a speciam maximal compact subgroup as we are in the ramified case. We still have the following uniformization result of the source of the balloon strata.
\begin{proposition}[Uniformization of the source of the balloon strata]
        We have an isomorphism
        \begin{equation}
            \bS^\circ(\overline{\dF}_p)\overset{\sim}{\longrightarrow} \Sh(V^\circ,\tilde{L}^{\underline{u}}L^\star_{\underline{u}})\times (\cY\otimes_{\cO_K}\overline{\dF}_p)
        \end{equation}
\begin{proof}
    This is the same as \cite{liu2022beilinson}*{Proposition 4.4.5 and 5.3.6}.

    Another way to see this is to consider the local model of \(\bX^\circ\), which is the blow-up along a point, then the local model of \(\bS^\circ\) is just this point.
\end{proof}
\end{proposition}

For our application, we do not need the full basic correspondence of the ground strata so we omit it for now.

\subsection{Cohomology Vanishing Lemma}

\begin{definition}
  Similar to \cite{li2021chow}*{Definition 6.8}, we have a unique character \(\chi_\pi^\tR:\dT^\tR_{\dQ^{\ac}}\to\dQ^{\ac}\), in our setup, this one is well-defined at places in \(\tS\cap \tV_\dF^{\ram}\) by the argument of \cite{liu2022theta} and Lemma~\ref{lemma:hecke_algebra_isomorphism}. Then we can define \(\fm_\pi^\tR:=\ker \chi_\pi^\tR\) which is a maximal ideal of \(\dT^\tR_{\dQ^{\ac}}\).\label{def:maximal_ideal}
\end{definition}
By the basic correspondence in last section, we have the following properties which is similar to \cite{li2021chow}*{Page 868} and \cite{zachos2024semistable}*{Section 6}:
\begin{proposition}
    \begin{enumerate}[(1)]
        \item \(\bX^\circ_{\tilde{L}}\) is a \(\dP^{2r-1}\)-fibration over the Shimura set 
        \[
        {}^uH(\dF)\backslash ^uH(\dA_{\dF}^\infty)/\tilde{L}^{\underline{u}}L^\star_{\underline{u}} \times (\cY\otimes_{\cO_K}\overline{\dF}_p)
        \]
        \item \(\bX^\bullet_{\tilde{L}}\) is proper and smooth over \(\overline{\dF}_p\) of dimension \(2r-1\);
        \item The intersection \(\bX^\dagger_{\tilde{L}}\) is a relative Fermat hypersurface of degree \(2\) in \(\bX^\circ_{\tilde{L}}\).
    \end{enumerate}
\end{proposition}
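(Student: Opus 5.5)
We will reduce all three assertions to the local structure of the Krämer model at $u$, via the local model diagram and Rapoport--Zink uniformization along the supersingular locus. The semi-global model $\cX_{\tilde L}$ is étale-locally isomorphic to the Krämer local model $\mathrm{M}^{\mathrm{Kr}}$ for the unimodular (self-dual for $\sharp$) lattice chain, because the admissibility condition (3) of Notation \ref{notation:CM_type_admissible} keeps $K/\dE_u$ unramified. By \cite{kramer2003local}*{Theorem 4.5}, $\mathrm{M}^{\mathrm{Kr}}$ is the blow-up of the Pappas local model along its unique worst point. Its special fibre therefore has two smooth irreducible components: the exceptional divisor, which is a $\dP^{2r-1}$, and the strict transform of the Pappas special fibre. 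These meet transversally in a smooth quadric in $\dP^{2r-1}$, namely the locus where $\cF_A^\perp$ is isotropic for the symmetric form $\{,\}$ in the Remark after Definition \ref{def:ground_strata}. Statements (2) and (3) will follow locally once we match $\bX^\circ$ and $\bX^\bullet$ with these two components, using Definitions \ref{def:balloon_strata}--\ref{def:link_strata}.

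For (1) we use the basic correspondence $\bS^\circ\xleftarrow{\pi^\circ}\bB^\circ\xrightarrow{\iota^\circ}\bX^\circ$. By the preceding proposition, $\iota^\circ$ is an isomorphism and each fibre of $\pi^\circ$ is $\dP^{n-1}=\dP^{2r-1}$. Concretely, given $A^\circ$ with signature $n\Phi$, the isogenies $\beta\colon A\to A^\circ$ with kernel in $A[\fp]$ correspond to lines (equivalently, hyperplanes) in the $2r$-dimensional $\overline{\dF}_p$-space $\cD(A^\circ)_\tau/\Pi\cD(A^\circ)_\tau$. This identification is algebraic in families and comes from Dieudonné theory, following \cite{zachos2024semistable}*{Theorem 6.1}. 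Composing with the uniformization $\bS^\circ(\overline{\dF}_p)\simeq \Sh(V^\circ,\tilde L^{\underline u}L^\star_{\underline u})\times(\cY\otimes\overline{\dF}_p)$ exhibits $\bX^\circ_{\tilde L}$ as a $\dP^{2r-1}$-fibration over the stated Shimura set. Here $V^\circ$ is identified with ${}^uV$, since the nearby space is the one with $\epsilon_{\underline u}$ flipped, and the stabilizer $L^\star_{\underline u}$ is that of $\cD(A^\circ)$, which is unimodular.

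For (2): smoothness of $\bX^\bullet_{\tilde L}$ is the local statement that the strict transform of the Pappas special fibre is smooth, which holds by \cite{kramer2003local} and the semistability in \cite{shi2023special}*{Appendix A}. Its dimension is $2r-1$ because it is a component of a special fibre of relative dimension $n-1$ and contains the dense open complement of $\bX^\circ$. Properness follows because $\cX_{\tilde L}$ is proper over $\cO_K$: the nearby hermitian space is anisotropic in the relevant sense (the Shimura variety is compact since $\dF\neq\dQ$), so the valuative criterion holds for abelian schemes with this PEL data. A closed subscheme of the proper special fibre is then proper.

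For (3): in each fibre $\dP(\cD(A^\circ)_\tau/\Pi\cD(A^\circ)_\tau)$, the condition defining $\bX^\bullet$ is isotropy of the line for the symmetric form induced by $\lambda_{A^\circ}$. This is nondegenerate because $\cD(A^\circ)$ is unimodular. So $\bX^\dagger$ is cut out by a single nondegenerate quadratic form, giving a relative smooth quadric, which is a degree-$2$ Fermat hypersurface after diagonalizing in characteristic $\neq2$. The main obstacle will be checking that the form $\{,\}$ transported through $\beta$ matches the form from $\lambda_{A^\circ}$ up to a unit, and that the scheme-theoretic intersection is reduced (transversality). We would verify both on the local model, using the explicit blow-up coordinates of \cite{kramer2003local}.
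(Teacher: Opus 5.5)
Your overall route is the paper's. You read off (1) from the basic correspondence (\(\iota^\circ\) an isomorphism, \(\pi^\circ\) with \(\dP^{2r-1}\)-fibres) plus the uniformization of \(\bS^\circ(\overline{\dF}_p)\). You get (2) and (3) from the \'etale-local comparison of \(\bX\) with the special fibre of the Kr\"amer local model, which is the alternative the paper points to via \cite{zachos2024semistable}*{Theorem 6.1}. However, your ``concrete'' description of the fibres is wrong, and your argument for (3) is built on it; (1) survives only because you also invoke the preceding proposition. At \(\underline{u}\) the lattice is unimodular, so \(\lambda_A[\underline{u}^\infty]\) is principal, and \(\lambda_{A^\circ}\) is \(p\)-principal by the definition of \(\bS^\circ\). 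Comparing \(\underline{u}\)-parts in \(\lambda_A=\beta^\vee\circ\lambda_{A^\circ}\circ\beta\) gives \(\deg\beta[\underline{u}^\infty]=1\), and condition (a) makes \(\beta\) an isomorphism on the other \(p\)-primary parts. So \(\beta\) is an isomorphism on \(p\)-divisible groups, and the \(\dP^{2r-1}\) cannot parametrize isogenies \(\beta\colon A\to A^\circ\) with kernel in \(A[\fp]\); that is the picture at inert places with almost self-dual level in \cite{li2021chow}. What the fibre actually parametrizes is the Kr\"amer datum \(\cF_A\), which your description omits.

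The fix is short. On \(\bX^\circ\) we have \(\Pi\Lie A=0\), and a dimension count gives \(\omega_{A^\vee}=\Pi\rH_1^{\dr}(A)\) and \(\Lie A=\cD(A)_\tau/\Pi\cD(A)_\tau\), an \(n\)-dimensional space. The structure map and its conjugate both send the uniformizer to \(0\) in characteristic \(p\), so every hyperplane \(\cF_A\subset\Lie A\) satisfies the Kr\"amer condition. Hence the fibre of \(\pi^\circ\) over a point \(A^\circ\cong A\) is the \(\dP^{2r-1}\) of such hyperplanes; this is where the \(2r\)-dimensional space you wrote down really comes from. With this correction, the ``main obstacle'' you list for (3) disappears, since nothing is transported through \(\beta\). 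The line to test is \(\cF_A^\perp\subset\omega_{A^\vee}=\Pi\rH_1^{\dr}(A)\), dual to the hyperplane \(\cF_A\). The form \(\{\Pi x,\Pi y\}=\langle\Pi x,y\rangle\) on this space is symmetric and nondegenerate because \(\langle\,,\rangle\) is perfect. So \(\bX^\dagger\) is fibrewise the smooth quadric of isotropic lines in \(\dP(\Pi\rH_1^{\dr}(A))\cong\dP^{2r-1}\), and Kr\"amer's transversality on the local model gives reducedness of the intersection. Your arguments for (2), including properness via anisotropy of \({}^{\bu}V\) when \(\dF\neq\dQ\), are fine.
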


\begin{remark}
    These can also be obtained from the \'etale local structure of \(\bX\) which is the same as that of the local model by \cite{zachos2024semistable}*{Theorem 6.1}.
\end{remark}

Now let \((\pi,\cV_\pi)\) be as in Assumption \ref{asp:main}, then following \cite{li2021chow}*{Lemma 9.2}, we need to show the following proposition.
\begin{proposition}\label{prop:cohomology_vanishing}
    \begin{enumerate}[(1)]
        \item \(\rH^i(\bX^\circ_L,\overline{\dQ}_\ell)_\fm=0\ \mathrm{for}\ i\le 2r-2\),
        \item \(\rH^i(\bX^\bullet_L,\overline{\dQ}_\ell)_\fm=0\ \mathrm{for}\ i\le 2r-2\),
        \item \(\rH^i(\bX^\dagger_L,\overline{\dQ}_\ell)_\fm=0 \ \mathrm{for}\ i\le 2r-3\),
    \end{enumerate}
    where \(\fm:=\fm_\pi^\tR \cap \dS^\tR_{\dQ^{\ac}}\) is well-defined by Definition~\ref{def:maximal_ideal}.
    \begin{proof}
        For (1), because \(\bX^\circ_L\) is a \(\dP^{2r-1}\)-fibration over a discrete Shimura set, we know that \(\rH^i(\bX^\circ_L,\overline{\dQ}_\ell)=0\) for odd \(i\). When \(i\) is even, \(\rH^i(\bX_L^\circ,\overline{\dQ}_\ell)_\fm\) is a direct sum of \(\tilde{L}^{\underline{u}}\times L^\star_{\underline{u}}\)-invariants of \(\pi'\) for finitely many cuspidal automorphic representations \(\pi'\) with \(\pi'_\infty\) is trivial and \(\pi'_v\simeq\pi_v\) for all but finitely many \(v\in\rV^{\mathrm{spl}}_\dF\). We thus have \(\BC(\pi)\simeq\BC(\pi')\). For a local place with root number -1, the local component is a regularly almost spherical representation which can not be the base change of a representation with \(L_{\underline{u}}^\star\)-invariants unless the Satake parameter contains \(\{-q^{\pm\frac{1}{2}}\}\), so we have \((\pi^{\chi_{\underline{u}}})^{L^\star_{\underline{u}}}=0\) (following the notations in \cite{li2021chow}*{Below Table 1, Page 869}). So we have
        \[
            \rH^i(\bX^\circ_L,\overline{\dQ}_\ell)_\fm=0.
            \]
            For (2), this follows the same argument as \cite{li2021chow} as \(\bX^\dagger_L\subset \bX^\circ_L\) is a relative quadratic surface.
            
            For (3), we compute the weight spectral sequence, the proof is identical to \cite{li2021chow}*{Lemma 9.2} except now we need to apply Theorem \ref{thm:irreducibility_unitary_principal_series_non_split}.
        \end{proof}
    \end{proposition}
    \begin{remark}\label{rem:ideal_redundant}
        Though we used Definition \ref{def:maximal_ideal} to define \(\fm_\pi^\tR\), the definition of \(\fm\) here does not depend on the ramified places because \(\tS_{\dQ^{\ac}}^\tR\) supports only at split places. Thus we did not actually use Definition \ref{def:maximal_ideal} in the proof of Proposition \ref{prop:cohomology_vanishing}.
    \end{remark}


\section{Local Indices}\label{sec:local-indices}

    We adapt the notations from last section. Fix a place \(u\in \tV^{\ram}_\dE\) above a rational prime \(p\). We also assume that \(\tV_\dF^{(p)}\cap\tR\subset \tV_\dF^{\spl}\) and \(\underline{u}\notin\tV_{\dF}^\heartsuit\), so that the reflex field is unramified over \(\dE_u\).

Similar to \cite{li2021kudla}*{Remark 3.4.2}, the relation of the local intersection indices and the local Whittaker function is given by the following proposition.
\begin{lemma}\label{lemma:local_siegel_weil}
    Let \(\gamma=\gamma_{V_{\underline{u}},\psi_{\dF,\underline{u}}}\) be the Weil constant, then we have
    \begin{equation}
        \begin{aligned}
            W_{I_{2r}^{-\epsilon}}(0,1_{4r},\Phi^0_{\underline{u}})=&\frac{\gamma^{2r}}{b_{2r,\underline{u}}(0)}\vol(L_{\underline{u}})\\
        \end{aligned}
    \end{equation}
    where \(L_{\underline{u}}\) is the stabilizer of a unimodular lattice in \(V^{-\epsilon}\) and the volume is defined by \cite{li2021chow}*{Definition 3.8}.
\begin{proof}
    By \cite{li2021chow}*{Definition 3.8}, we have 
    \begin{equation}
        \begin{aligned}
            W_{I_{2r}^{-\epsilon}}(0,1_{4r},\Phi^0_{\underline{u}})=&\frac{\gamma^{2r}}{b_{2r,\underline{u}}(0)}\int_{H(\dF_{\underline{u}})}\Phi_{\underline{u}}^0(h_{\underline{u}}^{-1}x)\rd h_{\underline{u}}\\
        \end{aligned}
    \end{equation}
    for any \(x\) with the fixed corresponding moment matrix and \(\Phi_{\underline{u}}^0\) is the Gaussian function defined in \cite{li2021chow}*{Definition 3.8}. We can \(x\) so that each \(x_i\) is a generator of the unimodular lattice, then the integral is simply the volume of \(L_{\underline{u}}\) .
\end{proof}
\end{lemma}
\begin{proposition}\label{prop:local_siegel_weil}
    \[\chi (\pi_*^{\wedge K}\cN (x)) =\frac{1}{\log q_u}\frac{W_{T^{\square}}'(0,1_{4r},\Phi_{\underline{u}})}{W_{I_{2r}^{-\epsilon}}(0,1_{4r},\Phi_{\underline{u}})}=\frac{b_{2r,\underline{u}}(0)}{\log q_u \gamma^{2r}\vol(L_{\underline{u}})}W'_{T^\square}(0,1_{4r},\Phi_{\underline{u}})\]

    where \(\Phi_{\underline{u}}\) is defined in \cite{he2023kudla}*{Section 12.2 (12.10)} and \(\epsilon\) is the sign of the Hermitian space.
\end{proposition}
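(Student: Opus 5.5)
The plan is to reduce the first equality to the Kudla--Rapoport theorem for the Kr\"amer model proved in \cite{he2023proof}, and then obtain the second equality directly from Lemma \ref{lemma:local_siegel_weil}.

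\emph{Step 1: reduction to the Rapoport--Zink space.} Using the uniformization of the basic locus of \(\cX\) along the special fiber at \(u\) (\cite{luo2025unitary}*{Theorem 7.12}, with the Kr\"amer moduli description above), I identify the formal completion of the special cycle \(\cN(x)\) along the supersingular locus with a union of translates of the local special cycle on the Kr\"amer RZ space \(\cN^{\mathrm{Kra}}_{2r}\) over \(\breve K\). The relative and absolute RZ comparison of \cite{luo2025unitary}*{Theorem 3.17} then removes the auxiliary CM factor \(\cY\). Admissibility condition (3) of Notation \ref{notation:CM_type_admissible} ensures that \(K/\dE_u\) is unramified, so \(\log q_u\) is the correct normalisation and the Euler characteristic \(\chi(\pi_*^{\wedge K}\cN(x))\) becomes the local arithmetic intersection number \(\mathrm{Int}(x)\) on \(\cN^{\mathrm{Kra}}_{2r}\). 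The moment matrix \(T^\square\) of \(x\) has full rank and is represented only by the nearby (non-split) space.

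\emph{Step 2: Kudla--Rapoport formula.} I will invoke the main theorem of \cite{he2023proof}, in the form of \cite{he2023kudla}*{Section 12}. It gives \(\mathrm{Int}(x)=\partial\mathrm{Den}(T^\square)\), where the derivative of local densities is taken for the test function \(\Phi_{\underline{u}}\) of (12.10) in loc. cit. (this is the ``leading term plus error'' section). I then express local densities as Whittaker functions via the standard identity \(W_T(0,1,\Phi)=\mathrm{Den}(T)\cdot W_{I^{-\epsilon}_{2r}}(0,1,\Phi)/\mathrm{Den}(I^{-\epsilon}_{2r})\), as in \cite{li2021kudla}*{Remark 3.4.2}. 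The normalisation is fixed by the self-intersection of the unimodular (minimal) point, and differentiating at \(s=0\) yields \(\mathrm{Int}(x)=\frac{1}{\log q_u}W'_{T^\square}/W_{I^{-\epsilon}_{2r}}\).

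\emph{Step 3: second equality.} This is immediate from Lemma \ref{lemma:local_siegel_weil}, which gives \(W_{I^{-\epsilon}_{2r}}(0,1_{4r},\Phi_{\underline u})=\gamma^{2r}\vol(L_{\underline u})/b_{2r,\underline u}(0)\).

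The main obstacle is Step 2, specifically reconciling normalisations. The section \(\Phi_{\underline u}\) of \cite{he2023kudla} is not the characteristic function \(1_{(\Lambda')^{2r}}\), and Lemma \ref{lemma:local_siegel_weil} was stated for a Gaussian-type \(\Phi^0\). I would first check that the error terms of (12.10) vanish when evaluated at the nonsingular \(T=I_{2r}^{-\epsilon}\), so that \(W_{I}(0,\Phi_{\underline u})=W_I(0,\Phi^0_{\underline u})\). I would also check that the sign and power of \(q\) conventions (the \(\sharp\)-dual versus the \(\vee\)-dual, and the measure normalisations) match those of \cite{li2021chow}*{Definition 3.8}.
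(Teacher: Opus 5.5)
This is essentially the paper's argument: the first equality is the He--Li--Shi--Yang theorem in the Whittaker-function form of \cite{he2023kudla}*{Conjecture 12.2}, and the second is the evaluation of \(W_{I_{2r}^{-\epsilon}}(0,1_{4r},\Phi_{\underline u})\), which the paper reads off from \cite{he2023kudla}*{Theorem 6.1 and (12.14)} while you take it from Lemma \ref{lemma:local_siegel_weil}. The check you flag does go through, but for a more general reason than the one you suggest: the coefficients \(A^i_{p,+}(s)\) of the correction terms in (12.10) vanish at \(s=0\) (the expansion of \(W'_{T^\square}\) in the next proposition presupposes this), so \(W_T(0,1_{4r},\Phi_{\underline u})=W_T(0,1_{4r},\Phi^0_{\underline u})\) for every \(T\), not only \(T=I_{2r}^{-\epsilon}\); also note a small slip in your Step 1: here the nearby space at \(\underline u\) is the split one (\(\epsilon=+\)), not the non-split one.
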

\begin{proof}
    The first identity is exactly \cite{he2023kudla}*{Conjecture 12.2}. The second one follows from 
    \cite{he2023kudla}*{Theorem 6.1 and (12.14)}.
\end{proof}

\begin{proposition}
    Take an element \(u\in\tV_\dE^{\ram}\) with local root number \(\epsilon_{\underline{u}}=-1\) whose underlying rational prime \(p\) is odd and unramified in \(\dF\), and satisfies \(\tV^{(p)}_\dF\cap \tR\subset \tV_\dF^{\spl}\). Fix a \(\psi_{\dE,\underline{u}}\)-unimodular lattice \(\Lambda_{\underline{u}}^\star\) of the nearby space \(^uV_{\underline{u}}\). 
    
    Then there exist elements \(s_1^u,s_2^u\in\dS_{\dQ^{ac}}^\tR\backslash\fm_{\pi}^\tR\) such that

    \begin{align*}
        \log q_u\cdot \vol^{\natural}(L) &\cdot I_{T_1,T_2}(\phi_1^{\infty}, \phi_2^{\infty}, \mathrm{s}_1^u\mathrm{s}_1, \mathrm{s}_2^u\mathrm{s}_2, g_1, g_2)_{L,u}^{\ell} \\
                =&(\fE_{T_1,T_2}((g_1,g_2), \Phi_{\infty}^0 \otimes (\mathrm{s}_1^u\mathrm{s}_1\phi_1^{\infty} \otimes (\mathrm{s}_2^u\mathrm{s}_2\phi_2^{\infty})^c))_u \\
                +&\log q_u\sum_{i=1}^r\frac{c_{2r,i}^+}{q^{2i}_{\underline{u}}} E_{T_1,T_2}((g_1,g_2), \Phi_{\infty}^0 \otimes (\mathrm{s}_1^u\mathrm{s}_1\phi_1^{\infty,u} \otimes (\mathrm{s}_2^u\mathrm{s}_2\phi_2^{\infty,u})^c) \otimes \Phi_{\underline{u}}^i))\\
    \end{align*}
    for every $(\tR, \tR', \ell, L)$-admissible sextuple $(\phi_1^{\infty}, \phi_2^{\infty}, \mathrm{s}_1, \mathrm{s}_2, g_1, g_2)$. For every pair $(T_1, T_2)$ in $\Herm_r^{\circ}(\dF)^+$, the right-hand side is defined in \cite{li2021chow}*{Definition 3.11} with the Gaussian function $\Phi_{\infty}^0 \in \mathscr{S}(V^{2r} \otimes_{\dA_\dF} \dF_{\infty})$.
    \begin{proof}
        The proof is similar to \cite{li2021chow}*{Proposition 8.1 and 9.1}. Modify the original proof, we have
        \begin{equation}
            \begin{aligned}
                &\frac{\vol(H(F_\infty)L^{\underline{u}})}{\deg (Y/K)} \Phi^0_\infty(T_1,T_2)\langle s_1^*Z_{T_1}(\phi_1^\infty)'_L,s_2^*Z_{T_2}(\phi_2^\infty)'_L\rangle_{X'_L,K}\\
                =&\frac{\vol(H(F_\infty)L^{\underline{u}})}{\deg (Y/K)} \Phi^0_\infty(T_1,T_2)\cdot\sum_{T^\square}\sum_{x}\sum_{h}(s_1\phi_1^{\infty,\underline{u}}\otimes(s_2\phi_2^{\infty,\underline{u}})^\tc)(h^{-1}x)\cdot\chi(\pi_{*}^{\wedge K}\cN(x))\\
            \end{aligned} \end{equation}
            Use Proposition \ref{prop:local_siegel_weil}, here \(\epsilon=+\), we have
            \begin{equation}\label{eq:local_siegel_weil1}
                \begin{aligned}
                \chi(\pi_{*}^{\wedge K}\cN(x))=&\frac{1}{\log q_u}\frac{W_{T^\square}'(0,1_{4r},\Phi_{\underline{u}})}{2q_{\underline{u}}^{-4r^2}(1+ q_{\underline{u}}^{-r})\prod_{i=1}^{r-1}(1-q_{\underline{u}}^{-2i})}\\
                =&\frac{b_{2r,\underline{u}}(0)}{\log q_u}\frac{1-q_{\underline{u}}^{-2r}}{2q_{\underline{u}}^{-4r^2}(1+ q_{\underline{u}}^{-r})}W_{T^\square}'(0,1_{4r},\Phi_{\underline{u}})\\
                =&\frac{b_{2r,\underline{u}}(0)}{\log q_u}\frac{1-q_{\underline{u}}^{-r}}{2q_{\underline{u}}^{-4r^2}}W_{T^\square}'(0,1_{4r},\Phi_{\underline{u}})\\
                =&\frac{b_{2r,\underline{u}}(0)}{\log q_u}\frac{q_{\underline{u}}^{r}-1}{2q_{\underline{u}}^{-4r^2+r}}W_{T^\square}'(0,1_{4r},\Phi_{\underline{u}})\\
                \end{aligned}
            \end{equation}
            
            Recall that by the definition of \(\Phi_{\underline{u}}\), see \cite{he2023kudla}*{Section 12.2 (12.10)}, we have 

            \[
                \Phi_{\underline{u}}=\Phi_{\underline{u}}^0+\sum_{i=1}^r A_{p,+}^i(s)\cdot\Phi_{\underline{u}}^i
            \]
            so 
            \[
            W_{T^\square}'(0,1_{4r},\Phi_{\underline{u}})=W_{T^\square}'(0,1_{4r},\Phi_{\underline{u}}^0)+\log q_{\underline{u}}\sum_{i=1}^r \frac{c_{2r,i}^+}{q^{2i}_{\underline{u}}}\cdot W_{T^\square}(0,1_{4r},\Phi_{\underline{u}}^i)
            \]
            and Equation \eqref{eq:local_siegel_weil1} becomes
            \begin{equation}\label{eq:local_siegel_weil2}
                \chi(\pi_{*}^{\wedge K}\cN(x))=\frac{b_{2r,\underline{u}}(0)}{\log q_u}\frac{q_{\underline{u}}^{r}-1}{2q_{\underline{u}}^{-4r^2+r}}\cdot (W_{T^\square}'(0,1_{4r},\Phi_{\underline{u}}^0)+\log q_{\underline{u}}\sum_{i=1}^r \frac{c_{2r,i}^+}{q^{2i}_{\underline{u}}}\cdot W_{T^\square}(0,1_{4r},\Phi_{\underline{u}}^i))
            \end{equation}
            Then combine Proposition \ref{prop:local_siegel_weil} and Lemma \ref{lemma:local_siegel_weil} we have
            
            \begin{align*}
                \log q_u\cdot \vol^{\natural}(L) &\cdot I_{T_1,T_2}(\phi_1^{\infty}, \phi_2^{\infty}, \mathrm{s}_1^u\mathrm{s}_1, \mathrm{s}_2^u\mathrm{s}_2, g_1, g_2)_{L,u}^{\ell} \\
                =&(\fE_{T_1,T_2}((g_1,g_2), \Phi_{\infty}^0 \otimes (\mathrm{s}_1^u\mathrm{s}_1\phi_1^{\infty} \otimes (\mathrm{s}_2^u\mathrm{s}_2\phi_2^{\infty})^c))_u \\
                +&\log q_u\sum_{i=1}^r\frac{c_{2r,i}^+}{q^{2i}_{\underline{u}}} E_{T_1,T_2}((g_1,g_2), \Phi_{\infty}^0 \otimes (\mathrm{s}_1^u\mathrm{s}_1\phi_1^{\infty,u} \otimes (\mathrm{s}_2^u\mathrm{s}_2\phi_2^{\infty,u})^c) \otimes \Phi_{\underline{u}}^i))\\
            \end{align*}
        \end{proof}
        \end{proposition}

\section{The Arithmetic Inner Product Formula}\label{sec:aipf}

The proof of our arithmetic inner product formula is the same as \cite{li2021chow}*{Proposition 11.1}. Except that now we allow the place with local root number \((-1)\) to be ramified.

\subsection{Preparation}
Let \((\pi,\cV_\pi)\) be as in \cite{li2022chow}*{Assumption 1.3} with \(|\tS_\pi|\) odd. Take
\begin{itemize}
\item a totally positive definite hermitian space \(V\) over \(\dA_\dE\) of rank \(n=2r\) as in
Notation 2.2 satisfying that \(\epsilon(V_v) = -1\) if and only if \(v \in \tS_\pi\) (so that \(V\) is
incoherent);
\item \(\tS = \tS_\pi\cup\tS'_\pi\) (the underlying rational prime of \(v\in\tS\) is unramified in \(\dE\) if and only if \(v \in \tS_\pi\));
\item \(\tR\) is a finite subset of \(\tV_\dF^{\fin}\) containing \(\tR_\pi\) and of cardinality at least 2. We set \(\tR'=\tR\);
\item an \(\tR\)-good rational prime \(\ell\) (\cite{li2021chow}*{Definition 6.1} and \cite{li2022chow}*{Definition 4.13});
\item for \(i = 1,2\), a nonzero element \(\varphi_i = \otimes_v \varphi_{iv} \in \cV_{[\pi]}^\tR\) satisfying that \(\langle \varphi_{1v}^c, \varphi_{2v} \rangle_{\pi_v} = 1\) for \(v \in \tV^\fin_\dF \setminus \tR\);
\item for \(i = 1,2\), an element \(\phi_i^\infty = \otimes_v \phi_{iv}^\infty \in \sS(V^r \otimes_{ \dA_\dF} \dA_\dF^\infty)\) satisfying
\begin{itemize}
\item \(\phi_{iv}^\infty = 1_{(\Lambda_v^{\tR})^r}\) for \(v \in \tV_\dF^{\fin} \setminus (\tR)\);
\item \(\text{supp}(\phi_{1v}^\infty \otimes (\phi_{2v}^\infty)^c) \subseteq (V_v^{2r})^{\text{reg}}\) for \(v \in \tR\);
\end{itemize}
\item an open compact subgroup \(L\) of \(H(\dA_\dF^\infty)\) of the form \(L^{\tR} L_{\tR}\), where \(L^{\tR}\) is
defined in \cite{li2022chow}*{Notation 2.2(H8)}, that fixes both \(\varphi_1^\infty\) and \(\varphi_2^\infty\);
\item an open compact subgroup \(K \subseteq G_r(\dA_\dF^\infty)\) that fixes \(\phi_1, \phi_2, \varphi_1^\infty, \varphi_2^\infty\);
\item a set of representatives \(\{g^{(1)}, \ldots, g^{(s)}\}\) of the double coset
\[G_r(\dF) \backslash G_r(\dA_\dF^\infty) / K\]
satisfying \(g^{(j)} \in G_r(\dA_{\dF}^{\infty,\tR})\) for \(1 \leq j \leq s\), together with a Siegel fundamental domain \(\mathcal{F}^{(j)} \subseteq G_r(F_\infty)\) for the congruence subgroup \(G_r(\dF) \cap g^{(j)} K (g^{(j)})^{-1}\) for each \(1 \leq j \leq s\);
\item for \(i = 1,2\), \(s_i\) a product of two elements in \((\dS_{\mathbb{Q}^{\text{ac}}}^{\tR})^{\langle \ell \rangle}_{L_\tR}\) satisfying \(\chi_\pi^{\tR}(s_i) = 1\) (which is possible by Proposition 6.9(2));
\item for \(i = 1,2\), an element \(s_i^u \in (\dS_{\mathbb{Q}^{\text{ac}}}^{\tR})^{\langle \ell \rangle}_{L_\tR}\) for every \(u \in \tV_\dE^{\spl} \cup \tS_\dE\), where \(\tS_\dE\) denotes the subset of \(\tV_\dE^{\text{int}}\) above \(\tS\), satisfying \(\chi_\pi^{\tR}(s_i^u) = 1\) and that \(s_i^u = 1\) for all but finitely many \(u\)\footnote{In deed, we do not need to modify the definition (except the condition on \(\chi_\pi^{\tR}\)) because \( (\dS_{\mathbb{Q}^{\text{ac}}}^{\tR})^{\langle \ell \rangle}_{L_\tR}\) is only supported in the split places.}.
\end{itemize}
In what follows, we put
\[\tilde{s}_i := s_i \cdot \prod_{u \in \tV_\dE^{\spl} \cup \tS_\dE} s_i^u\]
for \(i = 1,2\).

\subsection{Proof of the main result}
\begin{proof}[Proof of Theorem \ref{thm:main}, Theorem \ref{thm:aipf} and Corollary \ref{co:aipf}]\label{proof:main}
  The proof is almost the same as \cite{li2021chow}*{Section 11} and \cite{li2022chow}*{Remark 4.32}. The main difference here is that we need to deal with the places in \(\tS'_\pi\). 

  Now we put
  \begin{equation}
    \begin{aligned}
      \fE^{S}_{T_1,T_2}((g_1,g_2),\Phi^0_\infty\otimes\Phi^\infty):=&\sum_{u\in\tV_{\dE}\setminus \tV_\dE^\spl}\fE_{T_1,T_2}((g_1,g_2),\Phi^0_\infty\otimes\Phi^\infty)_u\\
      &-\sum_{u\in \dS_{\dE},\underline{u}\in \tS_\pi}\frac{\log q_u}{q_u^r-1}E_{T_1,T_2}((g_1,g_2),\Phi^0_\infty\otimes\Phi^{\infty,\underline{u}}\otimes 1_{(\Lambda_{\underline{u}}^{\star})^{2r}})\\
      &-\sum_{u\in \dS_{\dE},\underline{u}\in \tS'_\pi}\log q_{\underline{u}}\sum_{i=1}^r\frac{c_{2r,i}^+}{q_{\underline{u}}^{2i}}E_{T_1,T_2}((g_1,g_2),\Phi_\infty^0\otimes\Phi^{\infty,\underline{u}}\otimes\Phi^i_{\underline{u}})
    \end{aligned}
  \end{equation}
  And the remaining part of the proof still works here as the new error terms from \(\underline{u}\in\tS'_\pi\) also vanish after integration by the Rallis inner product formula (see, e.g. \cite{liu2011arithmetic}*{(2)-(6)}).

  The computation of the doubling zeta integral in Corollary \ref{co:aipf} then follows Proposition \ref{prop:zeta_almost_spherical_unimodular_non_split}.
\end{proof}

\appendix
\section{Modified Poincar\'e Polynomials for \texorpdfstring{\(\fW_n\)}{Wn}}
In this appendix, we collect some combinatorial results about modified Poincar\'e polynomials which may be of independent interest.

\subsection{Formalization}
For any positive integer \(n\), we have the group \(\fW_n=\{\pm1\}^n\rtimes\fS_n\) where \(\fS_n\) is the symmetric group on \(n\) elements. For any \(w\in\fW_n\), we can write it uniquely as \(w_I\tau\) where for \(I\subset\{1,\cdots,n\}\), \(w_I\) is the element of \(\{\pm1\}^n\) whose coordinates in \(I\) are set to be \(-1\), and \(\tau\in\fS_n\).

In this appendix, we prove that, if we equip different length functions on \(\fW_n\) according to different choices of generators (corresponding to different root systems), then we have different versions of Poincar\'e polynomial identities.

The following proposition is for \(C\)-type Coxeter group which is an analogue (and a slight generalization) of Poincar\'e's polynomials (see, for example, \cite{bjorner2005combinatorics}):
\begin{proposition}[A Modified Poincar\'e polynomial identity for \(C_n\)-type Weyl group]\label{prop:Toad_identity}
     Suppose that we fix a system of generators \(\left<w_1,w_{1,2},\cdots,w_{n-1,n}\right>\), then we have the following identity:
    \begin{equation}
        \begin{aligned}
            \sum_{w\in\fW_n}q^{\ell(w)}X^w=&\sum_{I\subset[n]}\sum_{\tau\in\fS_n}q^{\ell(w_I\tau)}X^I\\
            =&[n]_q!\prod_{i=1}^n(1+q^ix_i)
        \end{aligned}
    \end{equation}
    where \(X^w=X^{W_I\tau}:=X^I:=\prod_{i\in I}x_i\) and \([n]_q!\) is the \(q\)-factorial.
\end{proposition}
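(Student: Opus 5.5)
We prove the identity by induction on \(n\), factoring \(\fW_n\) through the parabolic subgroup \(\fW_{n-1}\) generated by \(w_1,w_{1,2},\dots,w_{n-2,n-1}\). The standard fact we use is that every \(w\in\fW_n\) factors uniquely as \(w=u\,v\) with \(v\in\fW_{n-1}\) and \(u\) a minimal-length coset representative of \(\fW_n/\fW_{n-1}\), and that then \(\ell(w)=\ell(u)+\ell(v)\). Since \(\fW_n/\fW_{n-1}\) has \(2n\) elements, the minimal representatives correspond to the images \(\pm\epsilon_k\), \(1\le k\le n\), of the basis vector \(\epsilon_n\). In terms of generators, the representative sending \(\epsilon_n\mapsto\epsilon_k\) is the chain
\[
w_{k,k+1}\cdots w_{n-1,n}\qquad\text{of length } n-k,
\]
and the representative sending \(\epsilon_n\mapsto-\epsilon_k\) is
\[
w_{k-1,k}\cdots w_{1,2}\,w_1\,w_{1,2}\cdots w_{n-1,n}\qquad\text{of length } n+k-1 .
\]
We will double-check these lengths against the Björner--Brenti inversion count \cite{bjorner2005combinatorics}*{Proposition 8.1.1}, as already used in Lemma \ref{lem:trick}.

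Next we track the sign set \(I\) through the factorization \(w=uv\), writing \(v=w_{I'}\tau'\in\fW_{n-1}\). If \(u\) sends \(\epsilon_n\) to \(+\epsilon_k\), then \(u\) is a pure permutation, so the negated coordinates of \(w\) are those of \(v\) relabelled by the cycle. If \(u\) sends \(\epsilon_n\) to \(-\epsilon_k\), exactly one extra coordinate is negated. Because the factor \(X^I\) records the \emph{positions} in \(I\) and not merely its size, this relabelling does not give a clean product in general. We therefore first reduce to the case where the product can be taken: we read the identity with the convention that \(X^w\) depends only on which coordinates \(i\) are negated, in the order in which the generators \(w_1,w_{1,2},\dots\) introduce them. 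Under this convention, we aim to show that the coset contributions sum to
\[
(1+q+\cdots+q^{n-1})\bigl(1+q^n x_n\bigr)\cdot\sum_{v\in\fW_{n-1}}q^{\ell(v)}X^v ,
\]
where the first factor comes from the \(+\epsilon_k\) representatives (lengths \(n-k\)) and the second from the \(-\epsilon_k\) representatives, with the new sign attached to the variable \(x_n\).

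The main obstacle is this bookkeeping of \(I\). If the \(-\epsilon_k\) coset sums produce \(q^{n+k-1}x_{\sigma(k)}\) for some relabelling \(\sigma\), we get \(q^n x_n[n]_q\) only when the variables are symmetrized. Our first attempt is therefore to prove the identity directly on coefficients. We expand \(\prod_i(1+q^ix_i)\), and for each fixed \(I\) we show
\[
\sum_{\tau}q^{\ell(w_I\tau)}=[n]_q!\,q^{\sum_{i\in I}i}.
\]
Using the inversion formula, \(\ell(w_I\tau)\) splits as \(\ell_{S_n}(\tau)\) plus a term depending only on \(I\). Two ingredients make this work: the count of pairs \(i<j\) with \(v(i)+v(j)<0\), and the negative entries, which contribute \(\sum_{i\in I} i\) after choosing the signed-permutation normalisation appropriately. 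We then sum \(q^{\ell(\tau)}\) over \(\tau\in\fS_n\) with the classical formula \(\sum_{\tau\in\fS_n}q^{\ell(\tau)}=[n]_q!\). Finally, we set all \(x_i=1\) as a consistency check against the classical Poincaré polynomial \(\prod_i [2i]_q\) of type \(C_n\).
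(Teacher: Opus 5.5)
Your fallback route is the paper's own: fix \(I\), show \(\sum_{\tau\in\fS_n}q^{\ell(w_I\tau)}=q^{\sum_{i\in I}i}[n]_q!\), then use \(\sum_{\tau\in\fS_n}q^{\ell(\tau)}=[n]_q!\). However, the justification you give for the key step is false: \(\ell(w_I\tau)\) does \emph{not} split as \(\ell(\tau)\) plus a term depending only on \(I\). Already for \(n=2\) and \(I=\{2\}\), the element \(w_{\{2\}}=w_{1,2}w_1w_{1,2}\) has length \(3\), while \(w_{\{2\}}w_{1,2}=w_{1,2}w_1\) has length \(2\). So \(\ell(w_I\tau)-\ell(\tau)\) equals \(3\) for \(\tau=1\) and \(1\) for \(\tau=w_{1,2}\); only the total \(q^3+q^2=q^2[2]_q\) comes out right.

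In the Bj\"orner--Brenti formula \(\ell(w)=\mathrm{inv}(w(1),\dots,w(n))+\mathrm{neg}(w)+\mathrm{nsp}(w)\) \cite{bjorner2005combinatorics}*{Proposition 8.1.1}, the candidate \(I\)-term is \(\mathrm{neg}+\mathrm{nsp}=-\sum_{w(j)<0}w(j)\). So the pair count and the negative entries give \(\sum_{i\in I}i\) together, not each separately. Moreover, this equals \(\sum_{i\in I}i\) only when \(I\) is the set of absolute values of the negative entries of the window, which is the paper's normalisation; if \(I\) recorded negative positions, it would depend on \(\tau\). The leftover term \(\mathrm{inv}(w(1),\dots,w(n))\) is the inversion number of a signed sequence, and it is not \(\ell(\tau)\).

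The missing idea is a reindexing bijection, and it is precisely the content of the paper's Step 2 (``permute the elements of \(I\) to be the least \(|I|\) elements''). For fixed \(I\), as \(\tau\) runs over \(\fS_n\), the window of \(w_I\tau\) runs over all orderings of the fixed set \(\{-i:i\in I\}\cup([n]\setminus I)\). Relabelling its entries order-preservingly by \(1,\dots,n\) gives a bijection \(\tau\mapsto\tau'\) of \(\fS_n\) with \(\ell(w_I\tau)=\ell(\tau')+\sum_{i\in I}i\), and summing over \(\tau\) gives the coefficient identity. With this inserted, your proof is the paper's, with the inversion formula in place of the wiring-diagram count.

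Incidentally, the coset induction you abandoned also works and needs no special convention. Suppose \(X^w\) records negative \emph{positions}. Each minimal representative \(u\) you list maps every \(\epsilon_j\) with \(j<n\) to some \(+\epsilon_m\). Hence \(X^{uv}\) is \(X^v\) or \(X^vx_n\) according as \(u(\epsilon_n)=\epsilon_k\) or \(-\epsilon_k\), which gives \(\sum_{w\in\fW_n}q^{\ell(w)}X^w=[n]_q(1+q^nx_n)\sum_{v\in\fW_{n-1}}q^{\ell(v)}X^v\) directly. Since \(w\mapsto w^{-1}\) preserves length and exchanges negative positions with negated values, the identity is the same in either convention.
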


\begin{proof}[Proof of Proposition \ref{prop:Toad_identity}]
    The proof is broken into two steps. 
    \begin{enumerate}[(1)]
        \item 
        First, we prove that the identity for constant terms holds, which states
        \begin{equation}
            [n]_q!=\sum_{\tau\in\fS_n}q^{\ell(\tau)}
        \end{equation}
        where \(\ell(w)\) is the length of the permutation \(w\in\fS_n\).
        \item 
        Then we compare the coefficients of \(X^I\) on both sides of the identity and prove that these new identities holds. That is to say, we need to prove that
        \begin{equation}
            \sum_{\tau\in\fS_n}q^{\ell(w_I\tau)}=\sum_{\tau\in\fS_n}q^{(\sum_{i\in I}i)+\ell(\tau)}
        \end{equation}
    \end{enumerate}
    Combine these two steps, we have 
    \begin{equation}
        \begin{aligned}
            [n]_q!\prod_{i=1}^n(1+q^ix_i)=&\sum_{I\subset[n]}X^Iq^{(\sum_{i\in I}i)}[n]_q!\\
            =&\sum_{I\subset[n]}X^I\sum_{\tau\in\fS_n}q^{(\sum_{i\in I}i)+\ell(\tau)}\\
            =&\sum_{I\subset[n]}X^I\sum_{\tau\in\fS_n}q^{\ell(w_I\tau)}\\
            =&\sum_{w\in\fW_n}q^{\ell(w)}X^w
        \end{aligned}
    \end{equation}
    The proposition is thus proved. Now we prove the two steps.
    \begin{enumerate}[(1)]
        \item This is due to the Poincar\'e's polynomial (see, for example, \cite{bjorner2005combinatorics}*{(7.5)}).

        \item The strategy is that, for any \(I\) fixed, we prove that for any \(w_I w_\tau\in\fW_{n}\), there exists a unique \(\tau'\in\fS_n\) such that \(\ell(w_Iw_\tau)=\sum_{i\in I}i+\ell(\tau')\). 

        Identify an element \(\tau\) in \(\fW_n\) with the function \(\tau\) from \([n]\) to \(\{-n,\cdots,-1,1,\cdots,n\}\) such that the set of the absolute values of the image of \(\tau\) is exactly \([n]\), this function is extended to \(\{-1,\cdots,-n\}\) by \(w(-x)=-w(x)\). We will consider its wiring diagram. We know that the length of \(\tau\) is the number of the equivalent classes of intersection points.
        
        For a fixed \(I\subset[n]\), we consider all elements of the form \(w=w_I\tau\). This means the negative elements in \(w([n])\) consists of \(-i\) for \(i\in I\).

        For any \(i\in I\), if \(0<j<i\), then the segment connecting \(w^{-1}j\) and \(j\) will intersect the segments from \(w^{-1}i\) to \(i\) at one point or two inequivalent points. The first case happens if and only if \(w^{-1}(j)>w^{-1}(-i)\) or \(w^{-1}(j)<w^{-1}(i)\), and equivalently, \(|w^{-1}(j)|>|w^{-1}(i)|\) which means in the wiring diagram of \(|w|\), they do meet each other. 

        So we know that the number of intersection points is at least
        \(\sum_{i\in I}(1+\sum_{j<i}1)=\sum_{i\in I}i\) for \(w=w_I\tau\). Moreover, we know that the additional intersection points are from three classes:
        \begin{enumerate}[(i)]
            \item \(i\in I\), \(j<i\), \(|w^{-1}(j)|<|w^{-1}(i)|\)
            \item \(i\in I\), \(j>i\), \(|w^{-1}(j)|<|w^{-1}(i)|\)
            \item \(i,j\notin I\), \(j>i\), \(|w^{-1}(j)|<|w^{-1}(i)|\)
        \end{enumerate}
        It is now clear that, if we permute the elements in \(I\) to be the least \(|I|\) elements in \([n]\), then the above comes from the length of an element in \(\fS_n\). So we have proved the identity:
        \begin{equation}
            \sum_{\tau\in\fS_n}q^{\ell(w_I\tau)}=\sum_{\tau'\in\fS_n}q^{(\sum_{i\in I}i)+\ell(\tau')}
        \end{equation}
    \end{enumerate}
     Combining these two steps, we have proved the proposition.
\end{proof}

We have another version if we give the odd generator \(w_1\) length 0, which reflects the case of type \(D_n\).
\begin{proposition}\label{prop:Toad_identity2}
    Suppose that we fix a system of generators of the subgroup of type \(D_n\) root system \(\left<w_1w_2w_{1,2},w_{1,2},\cdots,w_{n-1,n}\right>\) and include one new element \(w_1\) which flips the sign of the first coordinate. We define a new length function \(\ell\) by setting \(\ell(w_1)=0\), \(\ell(w_1w_2w_{12})=1\), and \(\ell(w_{i,i+1})=1\) for \(i=1,\cdots,n-1\), then we have the following identity:
    \begin{equation}
        \begin{aligned}
            \sum_{w\in\fW_n}q^{\ell(w)}X^w=&\sum_{I\subset[n]}\sum_{\tau\in\fS_n}q^{\ell(w_I\tau)}X^I\\
            =&[n]_q!\prod_{i=1}^n(1+q^{i-1}x_i)
        \end{aligned}
    \end{equation}
    where \(X^w=X^{W_I\tau}:=X^I:=\prod_{i\in I}x_i\) and \([n]_q!\) is the \(q\)-factorial.
\end{proposition}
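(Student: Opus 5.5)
The plan is to reduce Proposition~\ref{prop:Toad_identity2} to Proposition~\ref{prop:Toad_identity} by comparing the two length functions on each coset $\{w_I\tau:\tau\in\fS_n\}$. First I will make the new length concrete. Since $\ell(w_1)=0$ and $w_1$ normalises the $D_n$-subgroup $\cW_n$ (it acts as the diagram automorphism swapping $w_{1,2}$ and $w_1w_2w_{1,2}$), the new length of $w\in\fW_n$ is the $D_n$-length of $w$ or of $w_1w$, whichever lies in $\cW_n$. For the $D_n$ length I will use the standard combinatorial formula from \cite{bjorner2005combinatorics}*{Proposition 8.2.1}: it counts the pairs $i<j$ with $w(i)>w(j)$ plus the pairs $i<j$ with $w(i)+w(j)<0$, with no contribution from the single sign flips $w(i)<0$. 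Flipping the sign of the entry in position $1$ (or of the value $1$, depending on which side $w_1$ acts) does not change this count: entry $\pm1$ compares to every other entry $w(j)$ in both the inversion pair and the negative-sum pair, and exactly one of the two conditions holds, independently of the sign. So the new length is just the $D_n$ inversion count, extended to all of $\fW_n$.

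Next, comparing with the $C_n$ (type $B$) count from \cite{bjorner2005combinatorics}*{Proposition 8.1.1}, which was already used in the proof of Lemma~\ref{lem:trick}, the $C_n$ length equals the $D_n$ count plus the number of negative entries, that is $|I|$. Hence $\ell_{new}(w_I\tau)=\ell_C(w_I\tau)-|I|$. Substituting into step (2) of the proof of Proposition~\ref{prop:Toad_identity} gives
\[
\sum_{\tau\in\fS_n}q^{\ell_{new}(w_I\tau)}=q^{-|I|}\sum_{\tau\in\fS_n}q^{\ell(w_I\tau)}=q^{\sum_{i\in I}(i-1)}[n]_q!.
\]
Summing over $I$ with weight $X^I$ then factors as $[n]_q!\prod_{i=1}^n(1+q^{i-1}x_i)$, which is the claimed identity.

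The main obstacle I expect is the first step, namely checking that the abstractly defined length (the generator $w_1$ has weight $0$ and the $D_n$ generators have weight $1$) is well defined and agrees with the combinatorial $D_n$ count. I would handle this through the semidirect product $\fW_n=\cW_n\rtimes\langle w_1\rangle$ and the diagram-automorphism invariance of the $D_n$ length. I also need to be careful about which side the sign flip acts on, i.e. positions versus values. If the conventions make $w_1$ act on the value $1$, I will apply the same argument to $w^{-1}$, using that both length functions are invariant under inversion. The check that $|I|$ is the difference between the two counts is routine.
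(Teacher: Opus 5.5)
Your argument is correct, but it takes a different route from the paper. The paper's proof is a single sentence saying that one reruns the wiring-diagram count of Proposition~\ref{prop:Toad_identity} for the type $D_n$ Coxeter group. That means re-deriving $\sum_{\tau}q^{\ell(w_I\tau)}=q^{\sum_{i\in I}(i-1)}[n]_q!$ from scratch, and it leaves implicit both what the length function with $\ell(w_1)=0$ means and how the length-zero element is handled in the count.

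You proceed in two steps instead. First you make the length precise through $\fW_n=\cW_n\rtimes\langle w_1\rangle$, with conjugation by $w_1$ exchanging $w_{1,2}$ and $w_1w_2w_{1,2}$ (this is correct). You then identify the resulting length with the Bj\"orner--Brenti statistic $\mathrm{inv}+\mathrm{nsp}$ on all of $\fW_n$. Second, you use $\ell_C=\mathrm{inv}+\mathrm{nsp}+\mathrm{neg}$ and $\mathrm{neg}(w_I\tau)=|I|$ to get $\ell_{\mathrm{new}}(w_I\tau)=\ell_C(w_I\tau)-|I|$. This makes the statement a formal corollary of step (2) of Proposition~\ref{prop:Toad_identity}, with no new counting.

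What your route buys:
\begin{itemize}
\item It shows the appendix identities are one identity with the weight of $w_1$ shifted. The same trick with $\ell_C+|I|$ also gives Proposition~\ref{prop:Toad_identity3}.
\item Since $\mathrm{neg}(w_I\tau)=|I|$ and all lengths involved are invariant under inversion, the argument does not depend on whether $I$ and $w_1$ refer to positions or to values.
\end{itemize}
The paper's route, if written out, would be self-contained within type $D$, but it repeats work that your comparison avoids.

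One small correction to your first step: the phrase ``exactly one of the two conditions holds'' is not accurate.
\begin{itemize}
\item If the flipped entry $a$ comes before $b$, the pair contributes $[a>b]+[-a>b]$. This can be $0$, $1$ or $2$, but it is symmetric under $a\mapsto -a$.
\item If $a$ comes after $b$ (which only happens when you flip the value $\pm1$), the pair contributes $[b>a]+[b<-a]=1$, because $|b|\geq 2>|a|$.
\end{itemize}
So the invariance you need holds in both readings; only the stated justification should be replaced by this one.
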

The proof is similar but we need to consider the \(D_r\)-type Coxeter group instead.

If we consider the non-split orthogonal group, we have the following version:
\begin{proposition}\label{prop:Toad_identity3}
    Suppose that we fix a system of generators of the subgroup of type \(^2D_n\) root system \(\left<w_1,w_{1,2},\cdots,w_{n-1,n}\right>\) and define a function \(\ell\) by setting \(\ell(w_1)=2\), \(\ell(w_{1,2})=1\), and \(\ell(w_{i,i+1})=1\) for \(i=1,\cdots,n-1\) and extending to the whole group by \cite{ariki2008finite}, then we have the following identity:
    \begin{equation}
        \begin{aligned}
            \sum_{w\in\fW_n}q^{\ell(w)}X^w=&\sum_{I\subset[n]}\sum_{\tau\in\fS_n}q^{\ell(w_I\tau)}X^I\\
            =&[n]_q!\prod_{i=1}^n(1+q^{i+1}x_i)
        \end{aligned}
    \end{equation}
\end{proposition}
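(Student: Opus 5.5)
The plan mirrors the proof of Proposition \ref{prop:Toad_identity}: fix $I\subset[n]$ and compare the coefficient of $X^I$ on both sides. The right-hand side expands as $[n]_q!\sum_{I}q^{\sum_{i\in I}(i+1)}X^I$, so the identity to prove for each fixed $I$ is
\[
\sum_{\tau\in\fS_n}q^{\ell(w_I\tau)}=q^{|I|+\sum_{i\in I}i}\sum_{\tau'\in\fS_n}q^{\ell(\tau')}.
\]
The constant term ($I=\emptyset$) is again the classical Poincar\'e polynomial identity $\sum_{\tau\in\fS_n}q^{\ell(\tau)}=[n]_q!$, since $\ell$ restricted to $\fS_n$ is the usual Coxeter length (the generators $w_{i,i+1}$ have weight $1$).

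The main step is a formula for the weighted length $\ell(w_I\tau)$. Although $w_1$ has weight $2$, the word structure is still that of the Coxeter group of type $C_n=B_n$, and we interpret $\ell$ as a weighted length in the sense of \cite{ariki2008finite}. In that sense $\ell(w)$ equals the $C_n$-Coxeter length plus the number of occurrences of $w_1$ in any reduced word. This count is well defined because $w_1$ is not conjugate to any $w_{i,i+1}$, and each braid relation preserves the multiset of generator classes. By the argument in Lemma \ref{lem:trick} and \cite{bjorner2005combinatorics}*{Proposition 8.1.1}, the number of occurrences of $w_1$ in a reduced word of $w=w_I\tau$ is $|\{i:0>w(i)\}|=\ell_{long}(w)=|I|$. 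Hence
\[
\ell(w_I\tau)=\ell_{C}(w_I\tau)+|I|.
\]

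Next, the wiring-diagram bijection from step (2) of the proof of Proposition \ref{prop:Toad_identity} gives, for fixed $I$, a bijection $\tau\mapsto\tau'$ on $\fS_n$ with $\ell_C(w_I\tau)=\sum_{i\in I}i+\ell(\tau')$. Substituting, the weighted generating function of the coset $\{w_I\tau:\tau\in\fS_n\}$ equals $q^{|I|+\sum_{i\in I}i}[n]_q!$. Summing over $I$ then produces $[n]_q!\prod_{i=1}^n(1+q^{i+1}x_i)$.

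The hardest step is justifying that $\ell$ is well defined on all of $\fW_n$ and equals Coxeter length plus the number of $w_1$'s. My first approach would be the standard parabolic argument: any two reduced words are connected by braid moves (Matsumoto's theorem), and the only braid relation involving $w_1$ is $(w_1w_{1,2})^4=1$. This relation has two $w_1$'s on each side, so the count of $w_1$'s is invariant.
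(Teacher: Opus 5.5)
Your proposal is correct and is essentially the argument the paper intends: the paper gives no separate proof here, only the template of Proposition~\ref{prop:Toad_identity}. You compare coefficients of $X^I$ and reduce to the $C_n$ coset identity $\sum_{\tau\in\fS_n}q^{\ell_C(w_I\tau)}=q^{\sum_{i\in I}i}[n]_q!$, and the only new input, $\ell(w_I\tau)=\ell_C(w_I\tau)+|I|$, follows correctly from Lemma~\ref{lem:trick}: each length-increasing left multiplication by $w_1$ raises $\ell_{long}$ by one, while each $w_{i,i+1}$ preserves it. One small imprecision: the commutation relations $w_1w_{i,i+1}=w_{i,i+1}w_1$ for $i\geq 2$ also involve $w_1$, but they trivially preserve the number of $w_1$'s, so your Matsumoto argument still goes through (and the Lemma~\ref{lem:trick} induction already shows the count is $|I|$ for every reduced word).
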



\subsection{Application}

One of the possible uses of the above combinatorial identities is to detect the relation of vectors in the unramified principal series. For example, one wants to understand whether \(\phi_{L_r}\) from Section \ref{sec:quasi-split} generates the \(K_r\)-spherical vector \(\phi_{K_r}\) in \(\rI_{W_r}^\sigma\), a possible way is to compute \(1_{K_r}.\phi_{L_r}\), which must be a scalar multiple of \(\phi_{K_r}\). For convenience, we use \(w\) to represent the representative \(\omega(w)\) of \(w\) in \(K_r\) or \(K_r^-\) as we do not need \(\Omega(w)\) here.
\begin{proposition}\label{prop:L-to-K-quasi-split}
    Let \(\phi_{L_r}\) be the \(L_r\)-spherical vector in \(\rI_{W_r}^\sigma\) defined in Section \ref{sec:quasi-split} and we equip \(I_r\) with Haar measure \(1\), then 
    \[
    (1_{K_r}.\phi_{L_r})(1)=[r]_q!\prod_{i=1}^r(1+q^{-\sigma_i+\frac{1}{2}}).
    \]
\end{proposition}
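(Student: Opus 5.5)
We compute $(1_{K_r}.\phi_{L_r})(1)=\int_{K_r}\phi_{L_r}(k)\,\rd k$ directly, using the Iwahori decomposition $K_r=\bigsqcup_{w\in\fW_r}I_r\omega(w)I_r$ and the explicit expansion of $\phi_{L_r}$ from Proposition~\ref{def:one}:
\[
\phi_{L_r}=\sum_{I\subset\{1,\dots,r\}}q^{-\sum_{i\in I}(\sigma_i+i-\frac12)}\sum_{\tau\in\fS_r}\phi_{w_I\tau}.
\]
Since $\phi_{w}$ is right $I_r$-invariant, its integral over $K_r$ reduces to a sum over the double cosets $I_r\omega(w')I_r$. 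On such a coset $\phi_w$ is supported only for the matching $w$, by Proposition~\ref{prop:casselman_basis} together with $P_r\omega(w')I_r\cap K_r\supseteq I_r\omega(w')I_r$. It is constant there and equal to $1$: writing $i\omega(w')i'$ with $i\in I_r\subset P_rI_r$-type factorization, the $P_r$-part lies in $P_r\cap K_r$, where $\delta^{1/2}\chi^\sigma$ is trivial. Hence, with $I_r$ of measure $1$,
\[
\int_{K_r}\phi_w\,\rd k=[I_r\omega(w)I_r:I_r]=q^{\ell(w)},
\]
where $\ell$ is the length in $\fW_r$ with respect to the generators $w_1,w_{i,i+1}$. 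This index is consistent with Definition~\ref{def:character_spherical}, which gives each simple generator index $q$.

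Substituting this into the expansion of $\phi_{L_r}$ yields
\[
(1_{K_r}.\phi_{L_r})(1)=\sum_{I}\sum_{\tau}q^{\ell(w_I\tau)}\prod_{i\in I}q^{-\sigma_i-i+\frac12}.
\]
This is exactly the left side of Proposition~\ref{prop:Toad_identity} (type $C_r$, generators $w_1,w_{1,2},\dots$) with the specialization $x_i=q^{-\sigma_i-i+\frac12}$. The proposition then gives
\[
[r]_q!\prod_{i=1}^r\bigl(1+q^ix_i\bigr)=[r]_q!\prod_{i=1}^r\bigl(1+q^{-\sigma_i+\frac12}\bigr),
\]
as claimed.

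The main obstacle is confirming that the length used in Proposition~\ref{prop:Toad_identity} matches $\log_q[I_r\omega(w)I_r:I_r]$ in our ramified setting, with index $q$ (not $q^2$) for $w_{i,i+1}$ and $q$ for $w_1$. I would check this by multiplicativity of indices along reduced words, which holds for Iwahori double cosets by the BN-pair property, reducing it to the two simple cases already recorded in Definition~\ref{def:character_spherical}. A secondary check is the ordering convention: the factor $q^{i}$ attached to coordinate $i$ in Proposition~\ref{prop:Toad_identity} must match the index $i$ in $q^{-\sum_{i\in I}(i-\frac12)}$ from Proposition~\ref{prop:basis_relation}, given the chosen positive roots with $2\epsilon_1$ simple.
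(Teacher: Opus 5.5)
Your proposal is correct and follows the paper's proof essentially line by line: expand $\phi_{L_r}$ via Proposition~\ref{def:one}, use $\int_{K_r}\phi_{w}\,\rd k=[I_r\omega(w)I_r:I_r]=q^{\ell(w)}$, and apply Proposition~\ref{prop:Toad_identity} with $x_i=q^{-\sigma_i-i+\frac12}$. The two checks you flag both hold under the paper's conventions: each simple reflection has index $q$, and the minimal-length element of $w_I\fS_r$ has length $\sum_{i\in I}i$, so the factor $q^{i}$ in the identity cancels the $q^{-i}$ in the coefficients.
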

\begin{proof}
    
    Now we have
    \begin{equation}
        \begin{aligned}
            (1_{K_r}.\phi_{L_r})(1)=&\int_{K_r}\phi_{L_r}(k)\rd k\\
        =&\int_{K_r}\sum_{w\in\fW_r}\phi'_w(k)\rd k\\
        =&\int_{K_r}\sum_{w_Iw_\tau\in\fW_r}q^{-(\sum_{i\in I}\sigma_i+i-\frac{1}{2})}\phi_{w_Iw_\tau}(k)\rd k\\
        =&\sum_{w_Iw_\tau\in\fW_r}q^{-\sum_{i\in I}(\sigma_i+i-\frac{1}{2})}\int_{K_r}\phi_{w_Iw_\tau}(k)\rd k\\
        =&\sum_{w_Iw_\tau\in\fW_r}q^{-\sum_{i\in I}(\sigma_i+i-\frac{1}{2})}\int_{I_r\omega(w_Iw_\tau) I_r}\phi_{w_Iw_\tau}(k)\rd k\\
        =&\sum_{w_Iw_\tau\in\fW_r}q^{-\sum_{i\in I}(\sigma_i+i-\frac{1}{2})}\vol(I_r\omega(w_Iw_\tau) I_r)\\
        =&\sum_{w_Iw_\tau\in\fW_r}q^{-\sum_{i\in I}(\sigma_i+i-\frac{1}{2})}[I_r\omega(w_Iw_\tau) I_r:I_r]\\
        =&\sum_{w_Iw_\tau\in\fW_r}q^{-\sum_{i\in I}(\sigma_i+i-\frac{1}{2})}q^{\ell(w_Iw_\tau)}\\
        =&\sum_{I\subset\{1,\cdots,r\}}\sum_{w_\tau\in\fS_r}q^{-\sum_{i\in I}(\sigma_i+i-\frac{1}{2})}q^{\ell(w_Iw_\tau)}\\
        \overset{\text{Proposition}\ref{prop:Toad_identity}}{=} &[r]_q!\prod_{i=1}^r(1+q^iq^{-\sigma_i-i+\frac{1}{2}})\\
        =&[r]_q!\prod_{i=1}^r(1+q^{-\sigma_i+\frac{1}{2}})
        \end{aligned}
    \end{equation}
\end{proof}
\begin{remark}
    According to the above computation, we know that when certain Satake parameter satisfies \(q^{-\sigma_i}=-q^{-\frac{1}{2}}\) for some \(i\), then \((1_{K_r}.\phi_{L_r})(1)=0\). That is why we need additional assumption to prove Theorem \ref{thm:spherical_L_is_spherical_K} directly. 
\end{remark}
\begin{proposition}\label{prop:L-to-K-non-quasi-split}
    Let \(\phi_{L_r^-}\) be the \(L_r^-\)-spherical vector in \(\rI_{V_r^-}^\sigma\) defined in Section \ref{sec:non-quasi-split} and we equip \(I_r^-\) with Haar measure \(1\), then 
    \[
    (1_{L_r^-}.\phi_{K_r^-})(1)=[r]_q!\prod_{i=1}^r(1+q^{-\sigma_i-\frac{1}{2}}).
    \]
\begin{proof}
    The proof is similar to Proposition \ref{prop:L-to-K-quasi-split}.
we have
    \begin{equation}
        \begin{aligned}
            (1_{L_r^-}.\phi_{K_r^-})(1)=&\int_{L_r^-}\phi_{K_r^-}(k)\rd k\\
        =&\int_{L_r^-}\sum_{w\in\fW_r}\phi'_w(k)\rd k\\
        =&\int_{L_r^-}\sum_{w_Iw_\tau\in\fW_r}q^{-(\sum_{i\in I}\sigma_i+i+\frac{1}{2})}\phi_{w_Iw_\tau}(k)\rd k\\
        =&\sum_{w_Iw_\tau\in\fW_r}q^{-\sum_{i\in I}(\sigma_i+i+\frac{1}{2})}\int_{K_r^-}\phi_{w_Iw_\tau}(k)\rd k\\
        =&\sum_{w_Iw_\tau\in\fW_r}q^{-\sum_{i\in I}(\sigma_i+i+\frac{1}{2})}\int_{I_r^-\Omega(w_Iw_\tau) I_r^-}\phi_{w_Iw_\tau}(k)\rd k\\
        =&\sum_{w_Iw_\tau\in\fW_r}q^{-\sum_{i\in I}(\sigma_i+i+\frac{1}{2})}\vol(I_r^-\Omega(w_Iw_\tau) I_r^-)\\
        =&\sum_{w_Iw_\tau\in\fW_r}q^{-\sum_{i\in I}(\sigma_i+i+\frac{1}{2})}[I_r^-\Omega(w_Iw_\tau) I_r^-:I_r^-]\\
        \end{aligned}
    \end{equation}

Now we need to compute the index \([I_r^-\Omega(w_Iw_\tau) I_r^-:I_r^-]\). Note that in this case, the length function is defined by setting \(\ell(w_1)=2\), \(\ell(w_{1,2})=1\), \dots \(\ell(w_{i,i+1})=1\) for \(i=1,\cdots,r-1\). So we have

    \begin{equation}
        \begin{aligned}
        =&\sum_{w_Iw_\tau\in\fW_r}q^{-\sum_{i\in I}(\sigma_i+i+\frac{1}{2})}q^{\ell(w_Iw_\tau)}\\
        =&\sum_{I\subset\{1,\cdots,r\}}\sum_{w_\tau\in\fS_r}q^{-\sum_{i\in I}(\sigma_i+i+\frac{1}{2})}q^{\ell(w_Iw_\tau)}\\
        \overset{\text{Proposition}\ref{prop:Toad_identity3}}{=} &[r]_q!\prod_{i=1}^r(1+q^iq^{-\sigma_i-i-\frac{1}{2}})\\
        =&[r]_q!\prod_{i=1}^r(1+q^{-\sigma_i-\frac{1}{2}})
        \end{aligned}
    \end{equation}

\end{proof}
\end{proposition}

\bibliographystyle{plain}
\bibliography{ref}

@incollection{ariki2008finite,
  title     = {Finite dimensional {Hecke} algebras},
  booktitle = {{{EMS Series}} of {{Congress Reports}}},
  author    = {Ariki, Susumu},
  editor    = {Skowro{\'n}ski, Andrzej},
  year      = 2008,
  edition   = {1},
  volume    = {1},
  pages     = {1--48},
  publisher = {EMS Press},
  doi       = {10.4171/062-1/1},
  urldate   = {2026-01-14},
  isbn      = {978-3-03719-062-3 978-3-03719-562-8},
  langid    = {english}
}

@phdthesis{badea2020hecke,
  title   = {Hecke algebras for types of quasisplit unitary groups over local fields in the principal series},
  author  = {Badea, M. P.},
  year    = 2020,
  school  = {Radboud University},
  urldate = {2025-08-14},
  langid  = {english},
  note    = {available at \url{https://hdl.handle.net/2066/226860}}
}

@book{bjorner2005combinatorics,
  title     = {Combinatorics of Coxeter Groups},
  author    = {Bj{\"o}rner, Anders and Brenti, Francesco},
  year      = 2005,
  series    = {Graduate Texts in Mathematics},
  number    = {231},
  publisher = {Springer},
  address   = {New York, NY},
  isbn      = {978-3-540-44238-7},
  langid    = {english},
  lccn      = {QA182.5 .B56 2005}
}

@article{borel1976admissible,
  title    = {Admissible representations of a semi-simple group over a local field with vectors fixed under an {Iwahori} subgroup},
  author   = {Borel, Armand},
  year     = 1976,
  journal  = {Inventiones mathematicae},
  volume   = {35},
  number   = {1},
  pages    = {233--259},
  issn     = {1432-1297},
  doi      = {10.1007/BF01390139},
  urldate  = {2025-01-15},
  langid   = {english}
}

@article{brubaker2024colored,
  title    = {Colored vertex models and {Iwahori} {Whittaker} functions},
  author   = {Brubaker, Ben and Buciumas, Valentin and Bump, Daniel and Gustafsson, Henrik P. A.},
  year     = 2024,
  journal  = {Selecta Mathematica},
  volume   = {30},
  number   = {4},
  pages    = {78},
  issn     = {1420-9020},
  doi      = {10.1007/s00029-024-00950-6},
  urldate  = {2025-01-10},
  langid   = {english}
}

@inproceedings{cartier1979representations,
  title      = {Representations of p-adic groups: A survey},
  shorttitle = {Representations of $p$-adic groups},
  booktitle  = {Automorphic Forms, Representations and {{$L$-functions}} ({{Proc}}. {{Sympos}}. {{Pure Math}}., {{Oregon State Univ}}., {{Corvallis}}, {{Ore}}., 1977), {{Part}}},
  author     = {Cartier, Pierre},
  year       = 1979,
  volume     = {1},
  pages      = {111--155},
  urldate    = {2025-01-14}
}

@article{casselman1980unramified,
  title     = {The unramified principal series of $p$-adic groups. {I}. the spherical function},
  author    = {Casselman, W.},
  year      = 1980,
  journal   = {Compositio Mathematica},
  volume    = {40},
  number    = {3},
  pages     = {387--406},
  publisher = {Sijthoff et Noordhoff International Publishers},
  issn      = {0010-437X},
  urldate   = {2025-01-14},
  langid    = {english}
}

@article{gan2015proof,
  title     = {A proof of the {Howe} duality conjecture},
  author    = {Gan, Wee Teck and Takeda, Shuichiro},
  year      = 2015,
  journal   = {Journal of the American Mathematical Society},
  volume    = {29},
  number    = {2},
  pages     = {473--493},
  issn      = {0894-0347, 1088-6834},
  doi       = {10.1090/jams/839},
  urldate   = {2025-01-06},
  copyright = {https://www.ams.org/publications/copyright-and-permissions},
  langid    = {english}
}

@article{gan2016gross,
  title   = {The {Gross--Prasad} conjecture and local theta correspondence},
  author  = {Gan, Wee Teck and Ichino, Atsushi},
  year    = 2016,
  journal = {Inventiones mathematicae},
  volume  = {206},
  number  = {3},
  pages   = {705--799},
  issn    = {0020-9910, 1432-1297},
  doi     = {10.1007/s00222-016-0662-8},
  urldate = {2025-01-15},
  langid  = {english}
}

@article{gross1986heegner,
  title     = {Heegner points and derivatives of {$L$}-series},
  author    = {Gross, Benedict H. and Zagier, Don B.},
  year      = 1986,
  journal   = {Inventiones Mathematicae},
  volume    = {84},
  number    = {2},
  pages     = {225--320},
  issn      = {0020-9910, 1432-1297},
  doi       = {10.1007/BF01388809},
  urldate   = {2026-01-14},
  copyright = {http://www.springer.com/tdm},
  langid    = {english}
}

@article{gross1991applications,
  title     = {Some applications of {Gelfand} pairs to number theory},
  author    = {Gross, Benedict H.},
  volume    = {24},
  journal   = {Bulletin (New Series) of the American Mathematical Society},
  number    = {2},
  publisher = {American Mathematical Society},
  pages     = {277 -- 301},
  year      = {1991}
}

@article{he2023kudla,
  title   = {{Kudla--Rapoport} conjecture for {Kr\"amer} models},
  author  = {He, Qiao and Shi, Yousheng and Yang, Tonghai},
  year    = 2023,
  journal = {Compositio Mathematica},
  volume  = {159},
  number  = {8},
  pages   = {1673--1740},
  issn    = {0010-437X, 1570-5846},
  doi     = {10.1112/S0010437X23007273},
  urldate = {2025-01-01},
  langid  = {english}
}

@article{he2023proof,
  title   = {A proof of the {Kudla--Rapoport} conjecture for {Kr\"amer} models},
  author  = {He, Qiao and Li, Chao and Shi, Yousheng and Yang, Tonghai},
  year    = 2023,
  journal = {Inventiones mathematicae},
  volume  = {234},
  number  = {2},
  pages   = {721--817},
  issn    = {0020-9910, 1432-1297},
  doi     = {10.1007/s00222-023-01209-1},
  urldate = {2024-11-26},
  langid  = {english}
}

@book{kaletha2023bruhat,
  title      = {Bruhat--{{Tits Theory}}: {{A New Approach}}},
  shorttitle = {Bruhat--{{Tits Theory}}},
  author     = {Kaletha, Tasho and Prasad, Gopal},
  year       = 2023,
  edition    = {1},
  publisher  = {Cambridge University Press},
  doi        = {10.1017/9781108933049},
  urldate    = {2025-01-14},
  copyright  = {https://www.cambridge.org/core/terms},
  isbn       = {978-1-108-93304-9 978-1-108-83196-3},
  langid     = {english}
}

@article{kato1981irreducibility,
  title   = {Irreducibility of principal series representations for {Hecke} algebras of affine type},
  author  = {Kato, Shin-ichi},
  year    = 1981,
  journal = {J. Fac. Sci. Univ. Tokyo Sect. IA Math},
  volume  = {28},
  number  = {3},
  pages   = {929--943},
  urldate = {2025-01-26}
}

@article{kisin2021stable,
  title   = {The stable trace formula for {Shimura} varieties of abelian type},
  author  = {Kisin, Mark and Shin, Sug Woo and Zhu, Yihang},
  year    = 2021,
  journal = {arXiv preprint arXiv:2110.05381},
  url     = {https://arxiv.org/abs/2110.05381},
  note    = {available at \url{https://arxiv.org/abs/2110.05381}}
}

@inproceedings{kramer2003local,
  title     = {Local models for ramified unitary groups},
  booktitle = {Abhandlungen Aus Dem {{Mathematischen Seminar}} Der {{Universit\"at Hamburg}}},
  author    = {Kr{\"a}mer, Nicole},
  year      = 2003,
  volume    = {73},
  pages     = {67--80},
  publisher = {Springer}
}

@article{kudla1997central,
  title      = {Central derivatives of {Eisenstein} series and height pairings},
  author     = {Kudla, Stephen S.},
  year       = 1997,
  journal    = {Annals of Mathematics},
  volume     = {146},
  number     = {3},
  eprint     = {2952456},
  eprinttype = {jstor},
  pages      = {545--646},
  publisher  = {[Annals of Mathematics, Trustees of Princeton University on Behalf of the Annals of Mathematics, Mathematics Department, Princeton University]},
  issn       = {0003-486X},
  doi        = {10.2307/2952456},
  urldate    = {2026-01-14}
}

@article{kudla1999arithmetic,
  title   = {Arithmetic {Hirzebruch} {Zagier} cycles},
  author  = {Kudla, S. S. and Rapoport, M.},
  year    = 1999,
  journal = {Journal f\"ur die reine und angewandte Mathematik (Crelles Journal)},
  volume  = {1999},
  number  = {515},
  pages   = {155--244},
  issn    = {0075-4102, 1435-5345},
  doi     = {10.1515/crll.1999.076},
  urldate = {2026-01-22}
}

@article{kudla1999derivative,
  title   = {On the derivative of an {Eisenstein} series of weight one},
  author  = {Kudla, Stephen S. and Rapoport, Michael and Yang, Tonghai},
  year    = 1999,
  journal = {International Mathematics Research Notices},
  volume  = {1999},
  number  = {7},
  pages   = {347--385},
  issn    = {1073-7928},
  doi     = {10.1155/S1073792899000185},
  urldate = {2026-01-22}
}

@article{kudla2000height,
  title    = {Height pairings on {Shimura} curves and p-adic uniformization},
  author   = {Kudla, Stephen S. and Rapoport, Michael},
  year     = 2000,
  journal  = {Inventiones mathematicae},
  volume   = {142},
  number   = {1},
  pages    = {153--223},
  issn     = {1432-1297},
  doi      = {10.1007/s002220000087},
  urldate  = {2026-01-22},
  langid   = {english}
}

@incollection{kudla2002derivatives,
  title     = {Derivatives of {Eisenstein} series and generating functions for arithmetic cycles},
  booktitle = {S\'eminaire {{Bourbaki}} : Volume 1999/2000, Expos\'es 865-879},
  author    = {Kudla, Stephen S.},
  year      = 2002,
  series    = {Ast\'erisque},
  number    = {276},
  pages     = {341--368},
  publisher = {Soci\'et\'e math\'ematique de France},
  langid    = {english},
  mrnumber  = {1886765},
  zmnumber  = {1066.11026}
}

@article{kudla2002modular,
  title   = {Modular forms and arithmetic geometry},
  author  = {Kudla, S. S.},
  year    = 2002,
  journal = {Current Developments in Mathematics},
  volume  = {2002},
  number  = {1},
  pages   = {135--179},
  issn    = {10896384, 21644829},
  doi     = {10.4310/CDM.2002.v2002.n1.a4},
  urldate = {2026-01-14},
  langid  = {english}
}

@article{kudla2004derivatives,
  title    = {Derivatives of {Eisenstein} series and {Faltings} heights},
  author   = {Kudla, Stephen S. and Rapoport, Michael and Yang, Tonghai},
  year     = 2004,
  journal  = {Compositio Mathematica},
  volume   = {140},
  number   = {4},
  pages    = {887--951},
  issn     = {1570-5846, 0010-437X},
  doi      = {10.1112/S0010437X03000459},
  urldate  = {2026-01-22},
  langid   = {english}
}

@book{kudla2006modular,
  ids       = {kudla2010modulara,kudla2010modularb},
  title     = {Modular Forms and Special Cycles on {{Shimura}} Curves},
  author    = {Kudla, Stephen S. and Rapoport, Michael and Yang, Tonghai},
  year      = 2006,
  series    = {Annals of Mathematics Studies},
  number    = {no. 161},
  publisher = {Princeton University Press},
  address   = {Princeton},
  isbn      = {978-0-691-12550-3 978-0-691-12551-0 978-1-4008-3716-8},
  langid    = {english}
}

@article{kudla2011special,
  title={Special cycles on unitary {Shimura} varieties {I}. {Unramified} local theory},
  author={Kudla, Stephen and Rapoport, Michael},
  journal={Inventiones mathematicae},
  volume={184},
  number={3},
  pages={629--682},
  year={2011},
  publisher={Springer}
}

@article{kudla2014special,
  title={Special cycles on unitary {Shimura} varieties {II}: Global theory},
  author={Kudla, Stephen and Rapoport, Michael},
  journal={Journal f{\"u}r die reine und angewandte Mathematik (Crelles Journal)},
  volume={2014},
  number={697},
  pages={91--157},
  year={2014},
  publisher={De Gruyter}
}

@article{li1992nonvanishing,
  title   = {Non-vanishing theorems for the cohomology of certain arithmetic quotients.},
  author  = {Li, Jian-Shu},
  year    = 1992,
  journal = {Journal f\"ur die reine und angewandte Mathematik (Crelles Journal)},
  volume  = {1992},
  number  = {428},
  pages   = {177--217},
  issn    = {0075-4102, 1435-5345},
  doi     = {10.1515/crll.1992.428.177},
  urldate = {2025-01-06},
  langid  = {english}
}

@article{li2021chow,
  title     = {Chow groups and {L}-derivatives of automorphic motives for unitary groups},
  author    = {Li, Chao and Liu, Yifeng},
  journal   = {Annals of Mathematics},
  volume    = {194},
  number    = {3},
  pages     = {817--901},
  year      = {2021},
  publisher = {Department of Mathematics, Princeton University Princeton, New Jersey, USA}
}

@article{li2021kudla,
  title     = {{Kudla--Rapoport} cycles and derivatives of local densities},
  author    = {Li, Chao and Zhang, Wei},
  year      = 2021,
  journal   = {Journal of the American Mathematical Society},
  volume    = {35},
  number    = {3},
  pages     = {705--797},
  issn      = {0894-0347, 1088-6834},
  doi       = {10.1090/jams/988},
  urldate   = {2025-01-02},
  copyright = {https://www.ams.org/publications/copyright-and-permissions},
  langid    = {english}
}

@article{li2022chow,
  title   = {Chow groups and {L} -derivatives of automorphic motives for unitary groups, {II}.},
  author  = {Li, Chao and Liu, Yifeng},
  year    = 2022,
  journal = {Forum of Mathematics, Pi},
  volume  = {10},
  pages   = {e5},
  issn    = {2050-5086},
  doi     = {10.1017/fmp.2022.2},
  urldate = {2025-08-09},
  langid  = {english}
}

@article{liu2011arithmetic,
  title   = {Arithmetic theta lifting and {L}-derivatives for unitary groups, {I}},
  author  = {Liu, Yifeng},
  year    = 2011,
  journal = {Algebra \& Number Theory},
  volume  = {5},
  number  = {7},
  pages   = {849--921},
  issn    = {1944-7833, 1937-0652},
  doi     = {10.2140/ant.2011.5.849},
  urldate = {2024-11-26},
  langid  = {english}
}

@article{liu2011arithmetica,
  title   = {Arithmetic theta lifting and {L}-derivatives for unitary groups, {II}},
  author  = {Liu, Yifeng},
  year    = 2011,
  journal = {Algebra \& Number Theory},
  volume  = {5},
  number  = {7},
  pages   = {923--1000},
  issn    = {1944-7833, 1937-0652},
  doi     = {10.2140/ant.2011.5.923},
  urldate = {2024-11-26},
  langid  = {english}
}

@article{liu2021fourier,
  title   = {{Fourier--Jacobi} cycles and arithmetic relative trace formula (with an appendix by {Chao Li} and {Yihang Zhu})},
  author  = {Liu, Yifeng and Zhu, Yihang and Li, Chao},
  year    = 2021,
  journal = {Cambridge Journal of Mathematics},
  volume  = {9},
  number  = {1},
  pages   = {1--147},
  issn    = {21680930, 21680949},
  doi     = {10.4310/CJM.2021.v9.n1.a1},
  urldate = {2025-02-22},
  langid  = {english}
}

@article{liu2022beilinson,
  title   = {On the {Beilinson--Bloch--Kato} conjecture for {Rankin--Selberg} motives},
  author  = {Liu, Yifeng and Tian, Yichao and Xiao, Liang and Zhang, Wei and Zhu, Xinwen},
  year    = 2022,
  journal = {Inventiones mathematicae},
  volume  = {228},
  number  = {1},
  pages   = {107--375},
  issn    = {0020-9910, 1432-1297},
  doi     = {10.1007/s00222-021-01088-4},
  urldate = {2024-11-26},
  langid  = {english}
}

@article{liu2022theta,
  title   = {Theta correspondence for almost unramified representations of unitary groups},
  author  = {Liu, Yifeng},
  year    = 2022,
  journal = {Journal of Number Theory},
  volume  = {230},
  pages   = {196--224},
  issn    = {0022314X},
  doi     = {10.1016/j.jnt.2021.03.027},
  urldate = {2025-01-06},
  langid  = {english}
}

@misc{luo2025kudlarapoport,
  ids           = {luo2025kudlarapoporta,luo2025kudlarapoportb},
  title         = {{Kudla--Rapoport} conjecture for unramified maximal parahoric level},
  author        = {Luo, Yu},
  year          = 2025,
  number        = {arXiv:2504.18528},
  eprint        = {2504.18528},
  primaryclass  = {math},
  publisher     = {arXiv},
  doi           = {10.48550/arXiv.2504.18528},
  note          = {available at \url{https://arxiv.org/abs/2504.18528}},
  urldate       = {2025-05-08},
  archiveprefix = {arXiv}
}

@misc{luo2025unitary,
  ids           = {luo2025unitarya},
  title         = {On unitary {Shimura} varieties at ramified primes},
  author        = {Luo, Yu and Mihatsch, Andreas and Zhang, Zhiyu},
  year          = 2025,
  number        = {arXiv:2504.17484},
  eprint        = {2504.17484},
  primaryclass  = {math},
  publisher     = {arXiv},
  doi           = {10.48550/arXiv.2504.17484},
  note          = {available at \url{https://arxiv.org/abs/2504.17484}},
  urldate       = {2025-04-25},
  archiveprefix = {arXiv}
}

@article{pappas2008twisted,
  title     = {Twisted loop groups and their affine flag varieties},
  author    = {Pappas, Georgios and Rapoport, Michael},
  year      = 2008,
  journal   = {Advances in Mathematics},
  volume    = {219},
  number    = {1},
  pages     = {118--198},
  publisher = {Elsevier},
  urldate   = {2025-01-19}
}

@article{rallis1984injectivity,
  ids       = {Rallis1984},
  title     = {Injectivity properties of liftings associated to {Weil} representations},
  author    = {Rallis, Stephen},
  year      = 1984,
  journal   = {Compositio Mathematica},
  volume    = {52},
  number    = {2},
  pages     = {139--169},
  publisher = {Martinus Nijhoff Publishers},
  langid    = {english}
}

@article{rapoport2020arithmetic,
  title   = {Arithmetic diagonal cycles on unitary {Shimura} varieties},
  author  = {Rapoport, M. and Smithling, B. and Zhang, W.},
  year    = 2020,
  journal = {Compositio Mathematica},
  volume  = {156},
  number  = {9},
  pages   = {1745--1824},
  issn    = {0010-437X, 1570-5846},
  doi     = {10.1112/S0010437X20007289},
  urldate = {2024-11-26},
  langid  = {english}
}

@article{rapoport2021shimura,
  title   = {On {Shimura} varieties for unitary groups},
  author  = {Rapoport, M. and Smithling, B. and Zhang, W.},
  year    = 2021,
  journal = {Pure and Applied Mathematics Quarterly},
  volume  = {17},
  number  = {2},
  pages   = {773--837},
  issn    = {15588599, 15588602},
  doi     = {10.4310/PAMQ.2021.v17.n2.a8},
  urldate = {2024-11-26},
  langid  = {english}
}

@misc{raum2026geometrica,
  title = {The {{Geometric Unitary Kudla Conjecture}}},
  author = {Raum, Martin},
  year = 2026,
  number = {arXiv:2603.04282},
  eprint = {2603.04282},
  primaryclass = {math.NT},
  publisher = {arXiv},
  doi = {10.48550/arXiv.2603.04282},
  urldate = {2026-05-20},
  archiveprefix = {arXiv}
}

@article{shi2023special,
  title   = {Special cycles on unitary {Shimura} curves at ramified primes},
  author  = {Shi, Yousheng},
  year    = 2023,
  journal = {Manuscripta Mathematica},
  volume  = {172},
  number  = {1-2},
  pages   = {221--290},
  issn    = {0025-2611, 1432-1785},
  doi     = {10.1007/s00229-022-01412-z},
  urldate = {2024-11-26},
  langid  = {english}
}

@article{shi2023speciala,
  title   = {Special cycles on the basic locus of unitary {Shimura} varieties at ramified primes},
  author  = {Shi, Yousheng},
  year    = 2023,
  journal = {Algebra \& Number Theory},
  volume  = {17},
  number  = {10},
  pages   = {1681--1714},
  issn    = {1944-7833, 1937-0652},
  doi     = {10.2140/ant.2023.17.1681},
  urldate = {2024-11-26},
  langid  = {english}
}

@inproceedings{tits1979reductive,
  title     = {Reductive groups over local fields},
  booktitle = {Automorphic Forms, Representations and {{L-functions}} ({{Proc}}. {{Sympos}}. {{Pure Math}}., {{Oregon State Univ}}., {{Corvallis}}, {{Ore}}., 1977), {{Part}}},
  author    = {Tits, Jacques},
  year      = 1979,
  volume    = {1},
  pages     = {29--69},
  urldate   = {2025-01-14}
}

@article{yamana2014lfunctions,
  title     = {L-functions and theta correspondence for classical groups},
  author    = {Yamana, Shunsuke},
  year      = 2014,
  journal   = {Inventiones mathematicae},
  volume    = {196},
  number    = {3},
  pages     = {651--732},
  issn      = {0020-9910, 1432-1297},
  doi       = {10.1007/s00222-013-0476-x},
  urldate   = {2025-01-05},
  copyright = {http://www.springer.com/tdm},
  langid    = {english}
}

@misc{yao2024kudlarapoport,
  title         = {A {Kudla--Rapoport} formula for exotic smooth models of odd dimension},
  author        = {Yao, Haodong},
  year          = 2024,
  number        = {arXiv:2404.14431},
  eprint        = {2404.14431},
  primaryclass  = {math},
  publisher     = {arXiv},
  doi           = {10.48550/arXiv.2404.14431},
  note          = {available at \url{https://arxiv.org/abs/2404.14431}},
  urldate       = {2026-01-14},
  archiveprefix = {arXiv}
}

@article{zachos2024semistable,
  title   = {Semistable models for some unitary {Shimura} varieties over ramified primes},
  author  = {Zachos, Ioannis},
  year    = 2024,
  journal = {Algebra \& Number Theory},
  volume  = {18},
  number  = {9},
  pages   = {1715--1736},
  issn    = {1944-7833, 1937-0652},
  doi     = {10.2140/ant.2024.18.1715},
  urldate = {2025-04-22},
  langid  = {english}
}

\end{document}